\newtheorem{theorem}{Theorem}[section]
\newtheorem{prop}[theorem]{Proposition}
\newtheorem{lemma}[theorem]{Lemma}
\newtheorem{conj}[theorem]{Conjecture}
\newtheorem{mainThm}{Theorem}
\newtheorem*{claim}{Claim}
\newtheorem{cor}[theorem]{Corollary}
\theoremstyle{definition}
\newtheorem{defini}[theorem]{Definition}
\newtheorem{assum}[theorem]{Assumption}
\newtheorem{remark}[theorem]{Remark}
\def\calA{\mathscr{A}}
\def\calF{\mathscr{F}}
\def\calH{\mathscr{H}}
\def\calI{\mathscr{I}}
\def\calK{\mathscr{K}}
\def\calS{\mathscr{S}}
\def\Im{\mathrm{Im}\,}
\def\Aut{\mathrm{Aut}\,}
\def\Conf{\mathrm{Conf}}
\def\bConf{\overline{\Conf}}
\def\bC{\overline{C}}
\def\dR{\mathrm{dR}}
\def\Z{\mathbb{Z}}
\def\Q{\mathbb{Q}}
\def\C{\mathbb{C}}
\def\R{\mathbb{R}}
\def\F{\mathbb{F}}
\def\End{\mathrm{End}}
\def\Hom{\mathrm{Hom}}
\def\Tr{\mathrm{Tr}}
\def\ve{\varepsilon}
\def\odd{\mathrm{odd}}
\def\Sym{\mathrm{Sym}}
\def\tbigwedge{\textstyle\bigwedge}
\def\Map{\mathrm{Map}}
\def\pr{\mathrm{pr}}
\newcommand{\floor}[1]{\left\lfloor #1 \right\rfloor}
\begin{document}
\normalsize

\title{Theta-invariants of $\Z\pi$-homology equivalences to spherical 3-manifolds}
\author{Hisatoshi Kodani, Tadayuki Watanabe}
\date{\small\it Dedicated to Professor Tomotada Ohtsuki on the occasion of his 60th birthday.}

\maketitle

\begin{abstract}
We study Bott and Cattaneo's $\Theta$-invariant of 3-manifolds applied to $\Z\pi$-homology equivalences from 3-manifolds to a fixed spherical 3-manifold. The $\Theta$-invariants are defined by integrals over configuration spaces of two points with local systems and by choosing some invariant tensors. We compute upper bounds of the dimensions of the space spanned by the Bott--Cattaneo $\Theta$-invariants and of that spanned by Garoufalidis and Levine's finite type invariants of type 2. The computation is based on representation theory of finite groups. 
\end{abstract}

\noindent
\section{Introduction}

\newcommand{\fig}[1]
        {\raisebox{-0.5\height}
                 {\includegraphics{#1}}
        }
\def\Span{\mathrm{span}}

We fix a connected, closed, oriented smooth 3-manifold $N$. Let $\pi=\pi_1N$ and let $\widetilde{N}$ denote the universal cover of $N$. For a commutative ring $R$, let $R\pi$ denote the group algebra associated to $\pi$.
\begin{defini}[$\Z\pi$-homology equivalence]
\begin{enumerate}
\item A degree 1 map $f\colon M\to N$ from another closed oriented 3-manifold $M$ is said to be a {\it $\Z\pi$-homology equivalence} if it induces an isomorphism $f_*\colon H_*(\widetilde{M})\to H_*(\widetilde{N})$, where $\widetilde{M}$ is the $\pi$-covering of $M$ obtained by pulling back $\widetilde{N}$ by $f$. 

\item Two $\Z\pi$-homology equivalences $f\colon M\to N$ and $f'\colon M'\to N$ are {\it diffeomorphically equivalent} if there exists a diffeomorphism $g\colon M\to M'$ such that $f$ is homotopic to $f'\circ g$. 

\item Let $\calH(N)$ be the set of all diffeomorphism equivalence classes of $\Z\pi$-homology equivalences $f\colon M\to N$.
\end{enumerate}
\end{defini}

Understanding the set $\calH(N)$ is a fundamental problem for $\Z\pi$-homology equivalences (see \cite[\S{2.3}]{GL} for the motivation from surgery theory). We will mention in \S\ref{ss:3-mfd-over-Kpi1} that a $\Z\pi$-homology equivalence can be considered as a special case of a 3-manifold over $K(\pi,1)$. 
In this paper, we propose a method to study $\calH(N)$. 
The first result of this paper is to give a formulation of an invariant of $\Z\pi$-homology equivalences to spherical 3-manifolds which unifies the ``$\Theta$-invariants'' for acyclic local systems  (\cite{AS,Kon,BC,CS}) induced from those on $N$ by $\Z\pi$-homology equivalences. 
By an {\it acyclic local system $A$ on $N$}, we mean a $\C\pi$-module $A$ such that $H_*(N;A):=H(S_*(\widetilde{N};\C)\otimes_{\C\pi} A)=0$, where $S_*(-;\C)$ is the $\C$-complex of singular chains. In \cite[Theorem~2.5]{BC}, Bott and Cattaneo defined an invariant $I_{(\Theta,\rho_1,\rho_2)}$ of a framed 3-manifold associated to an acyclic local system, in the spirit of the Chern--Simons perturbation theory of \cite{AS,Kon}, but in a purely topological, and more general setting (see \S\ref{ss:Bott-Cattaneo} for more detail). It is a topological invariant of a framed 3-manifold $(M,\tau)$ that may depend on the choice of the isomorphism class of an acyclic local system $A$ on $M$ and $\pi_1M$-equivariant linear maps $\rho_1\colon \C\to A^{\otimes 3}$, $\rho_2\colon A^{\otimes 3}\to \C$. There may be infinitely many choices of $(A,\rho_1,\rho_2)$ for each $M$. For $\pi=\pi_1N$, let 
\[ \begin{split}
  \calS_\Theta^\odd(\C\pi):=\bigl(\Sym^3\,\C\pi\bigr)_{\pi\times \pi},
\end{split} \]
where we consider $\C\pi$ as a left $\pi\times \pi$-module by $(g,h)\cdot x\mapsto gxh^{-1}$ and we take the coinvariants, namely, the quotient by the action (\S\ref{ss:rep}). 
We also consider the space $\calS_\Theta^{\odd}(\mathrm{Ker}\,\ve)$ or $\calS_\Theta^{\odd}(W)$ defined similarly as above by replacing $\C\pi$ with the $\pi\times \pi$-submodule $\mathrm{Ker}\,\ve$ for the augmentation map $\ve\colon \C\pi\to \C$ or its $\pi\times\pi$-submodule $W$.
\begin{mainThm}[Theorem~\ref{thm:unframed-inv}, \ref{thm:recover-I}, Corollary~\ref{cor:inv-framed}]\label{thm:Z_universal}
Let $N$ be a spherical 3-manifold, and let $\pi=\pi_1N$. Then, configuration space integrals for local systems give a well-defined invariant 
\[ \widehat{Z}_\Theta^\odd\colon \calH(N)\to \calS_\Theta^\odd(\mathrm{Ker}\,\ve) \]
of $\Z\pi$-homology equivalences to $N$. It is universal among the Bott--Cattaneo $\Theta$-invariant $I_{(\Theta,\rho_1,\rho_2)}(f,A)$ for finite dimensional acyclic local systems $A$ on $N$ and $\pi$-equivariant linear maps $\rho_1\colon \C\to A^{\otimes 3}$, $\rho_2\colon A^{\otimes 3}\to \C$.
\end{mainThm}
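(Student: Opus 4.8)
The plan is to realise $\widehat Z_\Theta^\odd$ as a \emph{universal} version of Bott and Cattaneo's configuration space integral, in which the finite-dimensional local system $A$ and the invariant tensors $\rho_1,\rho_2$ are not yet inserted but are replaced by the regular local system $\C\pi$, or rather by its acyclic part $\mathrm{Ker}\,\ve$. Since $N$ is spherical, $\pi$ is finite and $\widetilde N=S^3$; the long exact sequence associated with $0\to\mathrm{Ker}\,\ve\to\C\pi\to\C\to0$ then gives $H_*(N;\mathrm{Ker}\,\ve)=0$, so $\mathrm{Ker}\,\ve$ is itself a finite-dimensional acyclic local system. The hypothesis that $f\colon M\to N$ is a $\Z\pi$-homology equivalence is exactly what is needed to transport this acyclicity to $M$: since $f_*\colon H_*(\widetilde M)\to H_*(\widetilde N)$ is an isomorphism, the pulled-back system $f^*\mathrm{Ker}\,\ve$ is acyclic on $M$ and hence admits a propagator. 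I would then define $\widehat Z_\Theta^\odd(f)$ as the integral over the compactified configuration space $\bConf_2$ of the cube of this universal propagator, the three parallel edges of the theta graph each contributing one $\mathrm{Ker}\,\ve$-factor, symmetrised over the $S_3$ of edge permutations to land in $\Sym^3\mathrm{Ker}\,\ve$.

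First I would verify that the integral converges and lands in $\calS_\Theta^\odd(\mathrm{Ker}\,\ve)=(\Sym^3\mathrm{Ker}\,\ve)_{\pi\times\pi}$. Convergence is the usual consequence of the smoothness of the propagator on the compact manifold-with-corners $\bConf_2$. The symmetrisation to $\Sym^3$ is forced by the automorphisms of the theta graph permuting its three edges, while the passage to $\pi\times\pi$-coinvariants is forced by equivariance: the two vertices carry independent deck symmetries, and replacing the lifts used to build the propagator at the two vertices changes the integrand precisely by the $(g,h)\cdot x\mapsto gxh^{-1}$ action, so that only the class in the coinvariants is well defined. The more delicate point is independence of the propagator and of the Riemannian metric. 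Here I would run the standard Stokes/variation argument: a change of propagator produces an exact correction whose integral reduces to boundary contributions over the codimension-one faces of $\bConf_2$, and the acyclicity $H_*(-;\mathrm{Ker}\,\ve)=0$ kills the cohomology classes that would otherwise survive. This is also where the target refines from $\C\pi$ to $\mathrm{Ker}\,\ve$: the trivial (augmentation) summand of $\C\pi$ is \emph{not} acyclic, carries the fundamental class of $N$, and is the sole source of the framing anomaly; discarding it removes the metric/framing dependence and yields the unframed invariant of Theorem~\ref{thm:unframed-inv}, while Corollary~\ref{cor:inv-framed} records the framed refinement.

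For universality (Theorem~\ref{thm:recover-I}) I would exhibit the comparison map explicitly. A finite-dimensional acyclic local system $A$ is a $\C\pi$-module, so the multiplication $\C\pi\to\End(A)$ together with the $\pi$-equivariant tensors $\rho_1\colon\C\to A^{\otimes3}$ and $\rho_2\colon A^{\otimes3}\to\C$ assembles into a $\pi\times\pi$-invariant linear functional on $\Sym^3\C\pi$, hence a functional $\Phi_{A,\rho_1,\rho_2}$ on $\calS_\Theta^\odd(\C\pi)$. Because $\pi$ is finite and $A$ is acyclic, $A$ contains no trivial summand, so the idempotent $|\pi|^{-1}\sum_{g}g$ acts as zero on $A$; consequently $\Phi_{A,\rho_1,\rho_2}$ annihilates the trivial summand of $\C\pi$ and factors through $\calS_\Theta^\odd(\mathrm{Ker}\,\ve)$. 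Since contraction of the local-system indices commutes with integration over $\bConf_2$, applying $\Phi_{A,\rho_1,\rho_2}$ to $\widehat Z_\Theta^\odd(f)$ reproduces Bott and Cattaneo's integral, i.e.\ $I_{(\Theta,\rho_1,\rho_2)}(f,A)=\Phi_{A,\rho_1,\rho_2}\bigl(\widehat Z_\Theta^\odd(f)\bigr)$. This is the asserted universality.

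The main obstacle I anticipate is the vanishing of the anomalous and hidden-face contributions in the invariance argument of the second step. The acyclic faces are dispatched directly by $H_*(-;\mathrm{Ker}\,\ve)=0$, but the diagonal (collision) face and the faces at infinity demand the Axelrod--Singer/Bott--Cattaneo analysis of the short-distance behaviour of the propagator, and it is precisely here that the spherical geometry of $N$ and the degree-one, $\Z\pi$-homology-equivalence hypotheses on $f$ enter, to show that the leftover terms either cancel by the theta-graph symmetry or lie in the trivial summand that has been quotiented away. Establishing that these contributions land in $\mathrm{Ker}\,\ve$, rather than contaminating the construction with an ill-defined augmentation part, is the crux that makes $\widehat Z_\Theta^\odd$ well defined on $\calH(N)$.
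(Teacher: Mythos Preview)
Your overall architecture matches the paper's: work with the regular $\pi\times\pi$-module $\mathrm{Ker}\,\ve$ as a ``universal'' acyclic local system, build a $\mathrm{Ker}\,\ve$-valued propagator on $\bConf_2(M)$, integrate its cube with the projection $\Tr\colon(\mathrm{Ker}\,\ve)^{\otimes3}\to\calS_\Theta^\odd(\mathrm{Ker}\,\ve)$, and recover each $I_{(\Theta,\rho_1,\rho_2)}$ by post-composing with the functional $\varpi_E$ induced from $\rho_\pi\colon\C\pi\to\End(E)$ (your $\Phi_{A,\rho_1,\rho_2}$). Your observation that an acyclic $A$ has no trivial summand, so that $\varpi_E$ factors through $\mathrm{Ker}\,\ve$, is correct and is essentially Lemma~\ref{lem:Tr-commute}.

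There is, however, a genuine error in your handling of framing independence. You claim that the trivial summand of $\C\pi$ ``is the sole source of the framing anomaly; discarding it removes the metric/framing dependence.'' This is false: the framing anomaly lives in the boundary face $\partial\bConf_2(M)=ST(M)$ and is controlled by the element $\Tr(\mathbf{1}_{\mathrm{Ker}\,\ve}^{\otimes3})\in\calS_\Theta^\odd(\mathrm{Ker}\,\ve)$, which is \emph{nonzero} (it is the image of the sum of the identities on the nontrivial irreducibles). The paper does not obtain an unframed invariant by restricting to $\mathrm{Ker}\,\ve$; it obtains it by subtracting the Hirzebruch signature defect correction
\[
\widehat Z_\Theta^\odd(f)\;=\;Z_\Theta^\odd(f,\mathrm{Ker}\,\ve,\tau)\;-\;\tfrac{1}{6}\,\Tr(\mathbf{1}_{\mathrm{Ker}\,\ve}^{\otimes3})\,\delta(\tau),
\]
following \cite{BC,CS}. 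The same correction is built into the definition of $I_{(\Theta,\rho_1,\rho_2)}$, and universality holds because $\varpi_E$ sends $\Tr(\mathbf{1}_{\mathrm{Ker}\,\ve}^{\otimes3})$ to $\Tr_{\rho_1,\rho_2}(\mathbf{1}_{E\boxtimes E^*}^{\otimes3})$. Without this term your $\widehat Z_\Theta^\odd$ is only a framed invariant.

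A smaller point: your final paragraph overcomplicates the face analysis. For the theta graph there are only two points, so $\bConf_2(M)$ has a single boundary stratum $\partial\bConf_2(M)$ and no hidden faces. Propagator independence (Theorem~\ref{thm:inv-framed}) follows because the boundary restriction of any $W$-propagator is prescribed to be $\phi_\tau^*\mathrm{Vol}_{S^2}\cdot\mathbf{1}_W$, and the pullback of $(\mathrm{Vol}_{S^2})^{\wedge3}$ through a map to $S^2$ vanishes for dimension reasons; acyclicity of $\mathrm{Ker}\,\ve$ enters only to guarantee existence and essential uniqueness of the propagator (Lemma~\ref{lem:propagator}), not to kill face contributions.
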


Note that we consider here local systems on 3-manifolds which factor through those on a spherical 3-manifold $N$. We emphasize that the definition and the argument for the topological invariance of the configuration space integral are the same as those of (\cite{Kon,BC,KT,Les,CS}) and are not new (more recent works on $\Theta$-invariants are found in \cite{CMW,Sh,Wer,KL}). What is new in this paper is to use an algebraic operation to unify the Bott--Cattaneo $\Theta$-invariants for all finite dimensional local systems on $N$.

We will see later that the image of $\widehat{Z}_\Theta^\odd$ is included in a subspace smaller than $\calS_\Theta^\odd(\mathrm{Ker}\,\ve)$. We define
\[ \begin{split}
  \calA_\Theta^\odd(\C\pi):=\Bigl(\bigl(\Sym^3\,\C\pi\bigr)_{\pi\times \pi}\Bigr)^{\Z_2},
\end{split} \]
the subspace of $\calS_\Theta^\odd(\C\pi)$ of $\Z_2$-invariants (\S\ref{ss:rep}).
The action of $\Z_2$ on $\bigl(\Sym^3\,\C\pi\bigr)_{\pi\times \pi}$ is that induced by the involution $x\otimes y\otimes z\mapsto x^{-1}\otimes y^{-1}\otimes z^{-1}$ ($x,y,z\in\pi$) and we take the invariant. We also consider the space $\calA_\Theta^{\odd}(W)$ defined similarly as above by replacing $\C\pi$ with the $\pi\times \pi$-submodule $W$ of $\C\pi$ that is invariant under the $\Z_2$-action. We will see in Proposition~\ref{prop:graph-inv} that the space $\calA_\Theta^\odd(\C\pi)$ is isomorphic to the space of two-loop $\pi$-decorated graphs, which is relevant to finite type invariants of $\Z\pi$-homology equivalences (\cite{GL}). The following theorem gives an upper bound of the dimension of the span of the Bott--Cattaneo $\Theta$-invariants.

\begin{mainThm}[Proposition~\ref{prop:Z2-invariance}, Theorem~\ref{thm:upper-bound2}]\label{thm:upper-bound}
The image of $\widehat{Z}_\Theta^\odd$ is included in $\calA_\Theta^\odd(\mathrm{Ker}\,\ve)$. Furthermore, let $\calI_\Theta^{\mathrm{BC}}(N)$ denote the subspace of $\Map(\calH(N),\C)$ spanned by the Bott--Cattaneo $\Theta$-invariants $I_{(\Theta,\rho_1,\rho_2)}(-,A)$ for finite dimensional acyclic local systems $A$ on $N$ and $\pi$-equivariant linear maps $\rho_1\colon \C\to A^{\otimes 3}$, $\rho_2\colon A^{\otimes 3}\to \C$. Then there is a canonical epimorphism 
\[ \Hom_\C(\calA_\Theta^\odd(\mathrm{Ker}\,\ve),\C)\to \calI_\Theta^{\mathrm{BC}}(N). \]
\end{mainThm}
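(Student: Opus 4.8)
The plan is to realize $\calI_\Theta^{\mathrm{BC}}(N)$ as the image of the precomposition map
\[ \Phi\colon\Hom_\C(\calA_\Theta^\odd(\mathrm{Ker}\,\ve),\C)\to\Map(\calH(N),\C),\qquad \Phi(\phi)=\phi\circ\widehat{Z}_\Theta^\odd, \]
which is defined once the image of $\widehat{Z}_\Theta^\odd$ is known to land in $\calA_\Theta^\odd(\mathrm{Ker}\,\ve)$. I would prove the two inclusions $\calI_\Theta^{\mathrm{BC}}(N)\subseteq\Im\Phi$ and $\Im\Phi\subseteq\calI_\Theta^{\mathrm{BC}}(N)$ separately: the first is formal from the universality in Theorem~\ref{thm:recover-I}, while the second is a spanning statement that I would settle using the representation theory of the finite group $\pi$ (finite because $N$ is spherical). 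Together they give $\Im\Phi=\calI_\Theta^{\mathrm{BC}}(N)$, i.e.\ the desired epimorphism.

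For the containment $\Im\widehat{Z}_\Theta^\odd\subseteq\calA_\Theta^\odd(\mathrm{Ker}\,\ve)$ of Proposition~\ref{prop:Z2-invariance}, I would use the symmetry of the $\Theta$-graph under the involution exchanging its two trivalent vertices. This involution reverses the orientation of each of the three edges and hence inverts the group element recorded along each edge, which is precisely the $\Z_2$-action $x\otimes y\otimes z\mapsto x^{-1}\otimes y^{-1}\otimes z^{-1}$ on $(\Sym^3\,\C\pi)_{\pi\times\pi}$. Tracking the orientation signs of the configuration-space integral for the odd part shows the integral is preserved, so $\widehat{Z}_\Theta^\odd(f)$ is $\Z_2$-invariant and $\Phi$ is well-posed.

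To each finite-dimensional acyclic local system $A$ on $N$, with $\pi$-representation $\rho_A$, together with $\pi$-equivariant $\rho_1\colon\C\to A^{\otimes 3}$ and $\rho_2\colon A^{\otimes 3}\to\C$, I attach the functional
\[ \ell_{(A,\rho_1,\rho_2)}(x\otimes y\otimes z)=\rho_2\bigl((\rho_A(x)\otimes\rho_A(y)\otimes\rho_A(z))\,\rho_1(1)\bigr),\qquad x,y,z\in\pi. \]
Equivariance of $\rho_1,\rho_2$ makes this invariant under $(g,h)\cdot x=gxh^{-1}$, so it descends to $(\Sym^3\,\C\pi)_{\pi\times\pi}$; moreover, since acyclicity over $S^3/\pi$ forces $A$ to have no trivial constituent, $\rho_A$ vanishes on the central idempotent $\tfrac1{|\pi|}\sum_{g}g$, so $\ell_{(A,\rho_1,\rho_2)}$ kills the trivial $\pi\times\pi$-summand and is a functional on $\calS_\Theta^\odd(\mathrm{Ker}\,\ve)$. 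By Theorem~\ref{thm:recover-I} this $\ell_{(A,\rho_1,\rho_2)}$ is exactly the functional computing the Bott--Cattaneo invariant, so its restriction to $\calA_\Theta^\odd(\mathrm{Ker}\,\ve)$ satisfies $I_{(\Theta,\rho_1,\rho_2)}(-,A)=\Phi\bigl(\ell_{(A,\rho_1,\rho_2)}|_{\calA}\bigr)$, whence $\calI_\Theta^{\mathrm{BC}}(N)\subseteq\Im\Phi$.

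The crux is the reverse inclusion, that the $\ell_{(A,\rho_1,\rho_2)}$ span $\Hom_\C(\calA_\Theta^\odd(\mathrm{Ker}\,\ve),\C)$. Artin--Wedderburn gives $\mathrm{Ker}\,\ve\cong\bigoplus_{\lambda\neq 1}V_\lambda\boxtimes V_\lambda^*$ as a $\pi\times\pi$-module (the nontrivial blocks of the regular bimodule), and dualizing the coinvariants identifies the target, before the $\Sym^3$- and $\Z_2$-symmetrizations, with $\bigl((\mathrm{Ker}\,\ve)^{\otimes 3}\bigr)^{\pi\times\pi}\cong\bigoplus_{\lambda_1,\lambda_2,\lambda_3\neq 1}\End\bigl((V_{\lambda_1}\otimes V_{\lambda_2}\otimes V_{\lambda_3})^\pi\bigr)$ by Schur's lemma. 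The essential observation is that an irreducible $A$ reaches only the diagonal summands $\lambda_1=\lambda_2=\lambda_3$; to hit an off-diagonal summand one takes the reducible (still acyclic) system $A=V_{\lambda_1}\oplus V_{\lambda_2}\oplus V_{\lambda_3}$ and chooses $\rho_1,\rho_2$ supported on the cross-term $V_{\lambda_1}\otimes V_{\lambda_2}\otimes V_{\lambda_3}$ with prescribed invariant vectors $a\in(V_{\lambda_1}\otimes V_{\lambda_2}\otimes V_{\lambda_3})^\pi$ and $b\in((V_{\lambda_1}\otimes V_{\lambda_2}\otimes V_{\lambda_3})^*)^\pi$. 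Running $a,b$ over bases realizes all of $\End\bigl((V_{\lambda_1}\otimes V_{\lambda_2}\otimes V_{\lambda_3})^\pi\bigr)$, so the $\ell_{(A,\rho_1,\rho_2)}$ span the full invariant space; since $\Sym^3\,\mathrm{Ker}\,\ve$ and its $\Z_2$-invariant part are $\pi\times\pi$-direct summands (the $S_3$- and $\Z_2$-actions commuting with the $\pi\times\pi$-action), projecting — equivalently, averaging $\rho_1,\rho_2$ over the edge permutations and the vertex swap — still spans $\Hom_\C(\calA_\Theta^\odd(\mathrm{Ker}\,\ve),\C)$, and any $\phi$ becomes a finite combination of the $\ell_{(A,\rho_1,\rho_2)}|_{\calA}$, so $\Phi(\phi)\in\calI_\Theta^{\mathrm{BC}}(N)$. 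I expect the main obstacle to be exactly this spanning step: matching the acyclicity restriction (nontrivial irreducibles only) with the summand $\mathrm{Ker}\,\ve$, and verifying that the $S_3$-symmetrization from the edge permutations together with the $\Z_2$-symmetrization from the vertex swap does not collapse the span on the symmetric quotient $\calA_\Theta^\odd(\mathrm{Ker}\,\ve)$; the Artin--Wedderburn/Schur identification of the target, by contrast, is routine.
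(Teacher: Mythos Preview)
Your overall strategy matches the paper's: define the precomposition map $\Phi(\phi)=\phi\circ\widehat{Z}_\Theta^\odd$, use Theorem~\ref{thm:recover-I} for $\calI_\Theta^{\mathrm{BC}}(N)\subseteq\Im\Phi$, and prove the spanning statement via Artin--Wedderburn. The details differ in two places worth noting.

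For Proposition~\ref{prop:Z2-invariance}, the paper does not argue abstractly from the $\Theta$-graph symmetry; instead it constructs \emph{antisymmetric} $W$-propagators $\omega$ satisfying $T^*\omega=-\omega^*$ (by averaging an arbitrary propagator against the involution $T$), and then computes directly that $T^*\Tr(\omega_1\wedge\omega_2\wedge\omega_3)=-\Tr(\omega_1^*\wedge\omega_2^*\wedge\omega_3^*)$ while $T$ reverses the orientation of $\bConf_2(M)$. Your ``tracking orientation signs'' sentence is the right idea but is not yet a proof; the antisymmetric-propagator trick is the clean way to make it rigorous.

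For the spanning step, the paper takes a single universal module $E=\bigoplus_{i}A_i$ (the sum of all nontrivial irreducibles), embeds $\mathrm{Ker}\,\ve$ as a $\pi\times\pi$-summand of $E\boxtimes E^*$, and observes that $\Hom\bigl((\Sym^3(E\boxtimes E^*))_{\pi\times\pi},\C\bigr)$ is a summand of $\Hom((E^{\otimes 3})_\pi,\C)\boxtimes\Hom(((E^*)^{\otimes 3})_\pi,\C)$, the latter being spanned by the $\Tr_{\rho_1,\rho_2}$ by definition. Your approach instead decomposes $((\mathrm{Ker}\,\ve)^{\otimes 3})^{\pi\times\pi}$ Schur-block-by-block and hits each $\End\bigl((V_{\lambda_1}\otimes V_{\lambda_2}\otimes V_{\lambda_3})^\pi\bigr)$ with the reducible $A=V_{\lambda_1}\oplus V_{\lambda_2}\oplus V_{\lambda_3}$. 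Both are correct; the paper's argument is more uniform (one $E$, no case analysis, and the passage to $\Sym^3$ and the $\Z_2$-invariants is a single ``direct summand'' remark), while yours is more explicit about which local systems realize which functionals.
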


The next result of this paper is the computation of the dimensions of the spaces $\calA_\Theta^\odd(\C\pi)$ and $\calA_\Theta^\odd(\mathrm{Ker}\,\ve)$ for the fundamental groups $\pi$ of spherical 3-manifolds. It is known that the fundamental groups of spherical 3-manifolds have been classified.
\begin{theorem}[e.g. \cite{Sav02}]\label{thm:classification}
Let $\pi$ be the fundamental group of a spherical 3-manifold. Then $\pi$ is isomorphic to one of the following.
\begin{itemize}
\item[\rm (a)] $\Z_n$, the cyclic group of order $n$, $n\geq 1$.
\item[\rm (b)] $\Z_m\times D_{4p}^*$, where $D_{4p}^*=\langle x,y\mid x^2=(xy)^2=y^p\rangle$ is the binary dihedral group of order $4p$, $m,p>0$, and $(m,2p)=1$. 
\item[\rm (c)] $\Z_m\times D_{2^{k+2}p}'$, where $D_{2^{k+2}p}'=\langle x,y\mid x^{2^{k+2}}=1,y^p=1,xy^{-1}=yx\rangle$, $m>0$, $k\geq 0$, $p\geq 3$ odd, and $(m,2p)=1$.
\item[\rm (d)] $\Z_m\times T^*$, where $T^*=\langle a,b\mid (ab)^2=a^3=b^3\rangle$ is the binary tetrahedral group of order 24, $m>0$, and $(m,6)=1$. 
\item[\rm (e)] $\Z_m\times T_{8\cdot 3^k}'$, where $T_{8\cdot 3^k}'=\langle x,y,z\mid x^2=(xy)^2=y^2, zxz^{-1}=y,zyz^{-1}=xy,z^{3^k}=1\rangle$, $m>0$, $(m,6)=1$, and $k\geq 1$.
\item[\rm (f)] $\Z_m\times O^*$, where $O^*=\langle a,b\mid (ab)^2=a^3=b^4\rangle$ is the binary octahedral group of order 48, $m>0$, and $(m,6)=1$.
\item[\rm (g)] $\Z_m\times I^*$, where $I^*=\langle a,b\mid (ab)^2=a^3=b^5\rangle$ is the binary icosahedral group of order 120, $m>0$, and $(m,30)=1$.
\end{itemize}
\end{theorem}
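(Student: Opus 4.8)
The plan is to reduce the topological classification to a purely group-theoretic one and then exploit the quaternionic description of $\mathrm{SO}(4)$. First I would invoke elliptization (the relevant case of geometrization): since $\pi$ is the fundamental group of a spherical $3$-manifold, that manifold is a quotient $S^3/\pi$ on which $\pi$ acts freely by orientation-preserving isometries, so $\pi$ embeds as a finite subgroup of $\mathrm{SO}(4)=\mathrm{Isom}^+(S^3)$ acting freely on $S^3$. The whole problem then becomes: classify the finite subgroups of $\mathrm{SO}(4)$ that act freely on $S^3$.

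Next I would pass to the double cover $\mathrm{Sp}(1)\times \mathrm{Sp}(1)\to \mathrm{SO}(4)$, where $\mathrm{Sp}(1)$ is the group of unit quaternions and $(p,q)$ acts on $x\in S^3\subset \mathbb{H}$ by $x\mapsto p\,x\,q^{-1}$. Lifting $\pi$ produces a finite subgroup $\widetilde{\pi}\subset \mathrm{Sp}(1)\times \mathrm{Sp}(1)$ containing $(-1,-1)$, with $\pi=\widetilde{\pi}/\{\pm(1,1)\}$. The key observation is that $(p,q)$ has a fixed point on $S^3$ if and only if $px=xq$ for some $x$, i.e.\ $p$ and $q$ are conjugate in $\mathrm{Sp}(1)$ (equivalently, have equal real part). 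Hence freeness of the $\pi$-action translates into the condition that every element of $\widetilde{\pi}$ other than $\pm(1,1)$ has its two coordinates non-conjugate. I would then record the two coordinate projections $\widetilde{\pi}\to \mathrm{Sp}(1)$, whose images $G_1,G_2$ are finite subgroups of $\mathrm{Sp}(1)$ and therefore lie on the classical $ADE$ list: cyclic, binary dihedral $D_{4p}^*$, binary tetrahedral $T^*$, binary octahedral $O^*$, and binary icosahedral $I^*$.

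The core of the proof is a case analysis of the admissible pairs $(G_1,G_2)$ together with the fibre-product structure of $\widetilde{\pi}$ over its two images. Using the non-conjugacy (freeness) constraint one shows that one of the factors must act through a central cyclic rotation group, which produces the direct factor $\Z_m$ appearing in cases (b)--(g), while the other factor supplies the ``spherical'' part. Tracking which exceptional subgroup occurs, and which cyclic or metacyclic extensions of it are compatible with the non-conjugacy constraint, yields exactly the seven families: the cyclic groups (a), and the products of a cyclic group with a binary dihedral group (b), with the metacyclic family $D'$ (c), with $T^*$ (d), with the family $T'$ (e), with $O^*$ (f), and with $I^*$ (g). The coprimality conditions $(m,2p)=1$, $(m,6)=1$, and $(m,30)=1$ are forced precisely because they are what guarantees that no nontrivial element fixes a point, i.e.\ that $\Z_m$ is a genuine direct factor acting freely.

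The hard part will be the last step: one must correctly separate the fibre-product subgroups $\widetilde{\pi}\subset G_1\times G_2$ from the honest direct products and verify the freeness condition in each, which is delicate for the non-split metacyclic families (c) and (e), where the arithmetic constraints on $m$, $p$, and $k$ are pinned down. To ensure the resulting list is exhaustive, I would cross-check against the classification of finite groups with periodic cohomology of period dividing $4$ (the Suzuki--Zassenhaus--Wolf theory), since every group acting freely on $S^3$ necessarily has such periodic cohomology; this guarantees that no admissible group has been overlooked.
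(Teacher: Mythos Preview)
The paper does not prove this theorem; it is stated as a known classification result with the citation ``e.g.\ \cite{Sav02}'' and a brief historical remark pointing to \cite{Wol11}, \cite{Mil}, and Perelman's resolution of geometrization. So there is no ``paper's own proof'' to compare against.

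That said, your sketch is essentially the classical argument one finds in the cited references (Wolf, Milnor): invoke elliptization to reduce to free finite group actions on $S^3$ by isometries, lift to the double cover $\mathrm{Sp}(1)\times\mathrm{Sp}(1)\to\mathrm{SO}(4)$, use the $ADE$ classification of finite subgroups of $\mathrm{Sp}(1)$, and run the fibre-product case analysis subject to the freeness constraint. Your remark about cross-checking with groups of periodic cohomology of period $4$ is also the standard consistency check. The paper's historical remark adds one point you might mention explicitly: before geometrization, Milnor's list of candidate groups acting freely on a homotopy $3$-sphere included further families such as $Q(8n,k,\ell)$, and it is Perelman's proof that rules these out and confirms that the isometric list in Theorem~\ref{thm:classification} is complete for fundamental groups of spherical $3$-manifolds.
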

It may be useful to make a historical remark on the classification of the fundamental groups of spherical 3-manifolds. The above list is, in fact, the classification of subgroups of $\mathrm{SO}(4)$ which act on $3$-sphere $S^3 \subset \R^4$ fixed point free given in \cite[Section 7.5]{Wol11}. In \cite[Section 1.2]{Sav02}, in addition to the above list, it is mentioned that other candidates of finite groups which may act on $S^3$ fixed point free are considered by Milnor (\cite{Mil}) such as $Q(8n,k,\ell)$. However, now that Thurston's geometrization conjecture has been proved by Perelman (\cite{pere}), it shows that these candidates are excluded from the list and the groups listed in Theorem~\ref{thm:classification} are the only ones which are fundamental groups of spherical $3$-manifolds.

We obtain the following result by using the character theory of finite groups. 
\begin{mainThm}[Proof in \S\ref{s:computation_dim}] \label{thm:dim-formula}
We have a complete list of the values of $\dim\calA_\Theta^\odd(\C\pi)$ and $\dim\calA_\Theta^\odd(\mathrm{Ker}\,\ve)$ for all the groups $\pi$ of Theorem~\ref{thm:classification}. More concretely, the values are given as follows.
	\begin{enumerate}
	\item[\rm (a)] When $\pi = \Z_n$, 
	\begin{equation}
	\dim \calA_\Theta^\odd(\C\pi)= p_3(n) = \begin{cases}
        \frac{1}{12} n^2 + \frac{1}{2}n + 1 & (n \equiv 0 \bmod 2, n \equiv 0 \bmod 3),\\
        \frac{1}{12} n^2 + \frac{1}{2}n + \frac{2}{3} & (n \equiv 0 \bmod 2, n \not \equiv 0 \bmod 3),\\
         \frac{1}{12} n^2 + \frac{1}{2}n + \frac{3}{4} & (n \not \equiv 0 \bmod 2, n \equiv 0 \bmod 3),\\
          \frac{1}{12} n^2 + \frac{1}{2}n + \frac{5}{12} & (n \not \equiv 0 \bmod 2, n \not \equiv 0 \bmod 3),
    \end{cases}
\end{equation}
\begin{equation}
	\dim \calA_\Theta^\odd(\mathrm{Ker}\,\ve)= p_3(n-3)= \begin{cases}
        \frac{1}{12} n^2  & (n \equiv 0 \bmod 2, n \equiv 0 \bmod 3),\\
        \frac{1}{12} n^2 - \frac{1}{3} & (n \equiv 0 \bmod 2, n \not \equiv 0 \bmod 3),\\
         \frac{1}{12} n^2 + \frac{1}{4} & (n \not \equiv 0 \bmod 2, n \equiv 0 \bmod 3),\\
          \frac{1}{12} n^2 - \frac{1}{12} & (n \not \equiv 0 \bmod 2, n \not \equiv 0 \bmod 3).
    \end{cases}
\end{equation}
Here, for an integer $m \geq 0$, $p_3(m)$ denotes the number of partitions of $n$ into at most three parts, namely, the number of integer solutions of the equation $x + y +z = m$ $(0 \leq x \leq y \leq z )$, and we set $p_3(m) =0$ for $m<0$.
	\item[\rm (b)] $(1)$ When $\pi = \Z_m \times D_{4p}^{\ast}$ where $m \geq 1$, $p>0$ even, and $(m, 2p)=1$,	
	\begin{equation}
	\begin{split}
	&\dim \calA_\Theta^\odd(\C\pi) \\
	&= \begin{cases}
        \frac{1}{6} \, m^{2} p^{2} + \frac{1}{2} \, m^{2} p + \frac{2}{3} \, m^{2} + \frac{3}{2} \, m p + \frac{1}{6} \, p^{2} + m + \frac{1}{2} \, p + 1 & (p \not \equiv 0 \bmod 3, m \not \equiv 0 \bmod 3),\\
        \frac{1}{6} \, m^{2} p^{2} + \frac{1}{2} \, m^{2} p + \frac{2}{3} \, m^{2} + \frac{3}{2} \, m p + \frac{1}{6} \, p^{2} + m + \frac{1}{2} \, p + \frac{4}{3} & (\text{otherwise}),\\
    \end{cases}\\
    & \dim \calA_\Theta^\odd(\mathrm{Ker}\,\ve) \\
    &= \begin{cases}
        \frac{1}{6} \, m^{2} p^{2} + \frac{1}{2} \, m^{2} p + \frac{2}{3} \, m^{2} + m p + \frac{1}{6} \, p^{2} - \frac{1}{2} \, m - \frac{1}{2}  & (p \not \equiv 0 \bmod 3, m \not \equiv 0 \bmod 3),\\
        \frac{1}{6} \, m^{2} p^{2} + \frac{1}{2} \, m^{2} p + \frac{2}{3} \, m^{2} + m p + \frac{1}{6} \, p^{2} - \frac{1}{2} \, m - \frac{1}{6} & (\text{otherwise}).\\
    \end{cases}
    \end{split}
\end{equation}

$(2)$ When $\pi = \Z_m \times D_{4p}^{\ast}$ where $m \geq 1$, $p>0$ odd, and $(m, 2p)=1$,	
	\begin{equation}
	\begin{split}
	&\dim \calA_\Theta^\odd(\C\pi) \\
	&= \begin{cases}
        \frac{1}{6} \, m^{2} p^{2} + \frac{1}{2} \, m^{2} p + \frac{2}{3} \, m^{2} + \frac{3}{2} \, m p + \frac{1}{6} \, p^{2} + \frac{1}{2} \, m + \frac{1}{2}  & (p \not \equiv 0 \bmod 3, m \not \equiv 0 \bmod 3),\\
        \frac{1}{6} \, m^{2} p^{2} + \frac{1}{2} \, m^{2} p + \frac{2}{3} \, m^{2} + \frac{3}{2} \, m p + \frac{1}{6} \, p^{2} + \frac{1}{2} \, m + \frac{5}{6} & (\text{otherwise}),\\
    \end{cases}\\
    & \dim \calA_\Theta^\odd(\mathrm{Ker}\,\ve) \\
    &= \begin{cases}
         \frac{1}{6} \, m^{2} p^{2} + \frac{1}{2} \, m^{2} p + \frac{2}{3} \, m^{2} + m p + \frac{1}{6} \, p^{2} - m - \frac{1}{2} \, p  & (p \not \equiv 0 \bmod 3, m \not \equiv 0 \bmod 3),\\
         \frac{1}{6} \, m^{2} p^{2} + \frac{1}{2} \, m^{2} p + \frac{2}{3} \, m^{2} + m p + \frac{1}{6} \, p^{2} - m - \frac{1}{2} \, p + \frac{1}{3} & (\text{otherwise}).\\
    \end{cases}
    \end{split}
\end{equation}
(The values of the dimensions for $m=1, p\leq 15$ are shown in Table~\ref{tab:val_dims_d4p}.)
\item[\rm (c)] When $\pi = \Z_m \times D_{2^{k+2}p}'$ where $m\geq 1$, $k\geq 0$, $p \geq 3$ odd, and $(m, 2p)=1$,
    \begin{equation}
    \begin{split}
    	&\dim \calA_\Theta^\odd(\C\pi)\\
    	&= \begin{cases}
        \frac{1}{6}2^{2k}m^2 p^2  + \frac{1}{2}2^{2k}m^2 p + \frac{2}{3} 2^{2k}m^2 + \frac{3}{2} \cdot 2^{k} mp   + \frac{1}{6} \, p^{2} + \frac{1}{2} 2^k m  + \frac{1}{2} &  (p \not \equiv 0 \bmod 3, m \not \equiv 0 \bmod 3),\\
        \frac{1}{6}2^{2k}m^2 p^2  + \frac{1}{2}2^{2k}m^2 p + \frac{2}{3} 2^{2k}m^2 + \frac{3}{2} \cdot 2^{k} mp   + \frac{1}{6} \, p^{2} + \frac{1}{2} 2^k m  + \frac{5}{6} & (\text{otherwise}),\\
       \end{cases}\\
        & \dim \calA_\Theta^\odd(\mathrm{Ker}\,\ve)\\
        &=\begin{cases}
      \frac{1}{6}  2^{2k} m^{2} p^{2} + \frac{1}{2}  2^{2k} m^{2} p + \frac{2}{3} 2^{2k} m^{2} + 2^{k} m p - 2^{k} m + \frac{1}{6} \, p^{2} - \frac{1}{2} \, p &  (p \not \equiv 0 \bmod 3, m \not \equiv 0 \bmod 3),\\
        \frac{1}{6}  2^{2k} m^{2} p^{2} + \frac{1}{2}  2^{2k} m^{2} p + \frac{2}{3}  2^{2k} m^{2} + 2^{k} m p - 2^{k} m + \frac{1}{6} \, p^{2} - \frac{1}{2} \, p + \frac{1}{3} & (\text{otherwise}).\\
        \end{cases}
    \end{split}
\end{equation}

\item[\rm (d)] When $\pi = \Z_m \times T^{\ast}$ where $m\geq 1$ and $(m, 6)=1$,
	\begin{equation}
	\dim \calA_\Theta^\odd(\C\pi) = \frac{19}{3} \, m^{2} + 6 \, m + \frac{8}{3},\quad  \dim \calA_\Theta^\odd(\mathrm{Ker}\,\ve)=  \frac{19}{3} \, m^{2} + \frac{5}{2} \, m + \frac{7}{6}.
\end{equation}
\item[\rm (e)] When $\pi = \Z_m \times T_{8\cdot 3^k}'$ where $m\geq 1$, $(m, 6)=1$, and $k \geq 2$,
\begin{equation}
\dim \calA_\Theta^\odd(\C\pi) = 19 \cdot 3^{2k-3}m^2 + 2 \cdot 3^k m + 3,\quad \dim \calA_\Theta^\odd(\mathrm{Ker}\,\ve)= 19 \cdot 3^{2k - 3} m^{2} + \frac{5}{6} \cdot 3^{k} m + \frac{3}{2}.
\end{equation}
(The values of the dimensions for $m=1, k\leq 9$ are shown in Table~\ref{tab:val_dims_t83k}.)
\item[\rm (f)]  When $\pi=\Z_m \times O^{\ast}$ where $m\geq 1$ and $(m,6)=1$, 
\begin{equation}
\dim \calA_\Theta^\odd(\C\pi)=\frac{34}{3} m^2 + 12m +  \frac{35}{3},\quad \dim \calA_\Theta^\odd(\mathrm{Ker}\,\ve) = \frac{34}{3} m^2 + 8m +  \frac{23}{3}.
\end{equation}
\item[\rm (g)]  When $\pi = \Z_m \times I^{\ast}$ where $m\geq 1$ and $(m,30)=1$,
\begin{equation}
\dim \calA_\Theta^\odd(\C\pi)=\frac{74}{3} \, m^{2} + 19 \, m + \frac{64}{3},\quad \dim \calA_\Theta^\odd(\mathrm{Ker}\,\ve)= \frac{74}{3}m^2 + \frac{29}{2}m + \frac{101}{6}.
\end{equation}
	\end{enumerate}

\end{mainThm}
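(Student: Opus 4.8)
The plan is to turn the computation into a character-theoretic count over the (now fully classified) groups $\pi$ of Theorem~\ref{thm:classification}, and first I would record the structural identifications. Since $\pi$ is finite and we work over $\C$, coinvariants and invariants agree, so $\dim\bigl(\Sym^3\,\C\pi\bigr)_{\pi\times\pi}=\dim\bigl(\Sym^3\,\C\pi\bigr)^{\pi\times\pi}$, and likewise after taking $\Z_2$-invariants. As a $\pi\times\pi$-module under $(g,h)\cdot x=gxh^{-1}$, Artin--Wedderburn gives $\C\pi\cong\bigoplus_{V\in\mathrm{Irr}(\pi)}V\boxtimes V^*$, and the inversion involution $x\mapsto x^{-1}$ is the antipode, which sends the block $V\boxtimes V^*$ to $V^*\boxtimes V$ and intertwines the $\pi\times\pi$-action with its pullback under the swap $(g,h)\mapsto(h,g)$. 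Hence
\[
 \dim\calA_\Theta^\odd(\C\pi)=\dim\bigl(\Sym^3\,\C\pi\bigr)^{(\pi\times\pi)\rtimes\Z_2},
\]
with $\Z_2$ acting by swapping tensor factors, and $\mathrm{Ker}\,\ve$ corresponds to deleting the trivial summand $\mathbf 1\boxtimes\mathbf 1$, i.e.\ forbidding the trivial colour $V=\mathbf 1$.

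Second, I would evaluate this dimension through the symmetric-cube character formula $\chi_{\Sym^3U}(\gamma)=\tfrac16\bigl(\chi_U(\gamma)^3+3\chi_U(\gamma)\chi_U(\gamma^2)+2\chi_U(\gamma^3)\bigr)$ applied to $U=\C\pi$, averaged over $(\pi\times\pi)\rtimes\Z_2$. On the ``even'' part the character is $\chi_U(g,h)=\#\{x\in\pi:x^{-1}gx=h\}$, equal to $|C_\pi(g)|$ when $g\sim h$ and $0$ otherwise, so the even average organizes by conjugacy classes. On the ``odd'' elements $(g,h)\iota$ one gets $\chi_U((g,h)\iota)=\#\{x\in\pi:xhx=g\}$, and since $((g,h)\iota)^2=(gh,hg)$ the odd average brings in (twisted) square-root counts governed by the Frobenius--Schur indicators $\nu_2(V)$: self-dual real ($\nu_2=+1$) and quaternionic ($\nu_2=-1$) $V$ contribute with opposite signs, while the pairs $\{V,V^*\}$ with $V\not\cong V^*$ contribute together. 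Equivalently, $\dim\calA_\Theta^\odd(\C\pi)$ is the number of orbits of $\bigl((\pi\times\pi)\rtimes\Z_2\bigr)\times S_3$ on $\pi^3$, which matches the interpretation of $\calA_\Theta^\odd(\C\pi)$ as $\mathrm{Aut}(\Theta)$-orbits of $\pi$-colourings of the theta graph ($\mathrm{Aut}(\Theta)\cong S_3\times\Z_2$, $|\mathrm{Aut}(\Theta)|=12$).

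Third, I would specialize family by family. For $\pi=\Z_m\times Q$ the irreducibles are $\psi_a\boxtimes\rho$ and the triple-tensor trivial-multiplicity factorizes as [the cyclic condition $a_1+a_2+a_3\equiv0\bmod m$]$\times$[the fusion coefficient of $Q$], the coprimality hypotheses ensuring the two parts separate cleanly. For the purely cyclic case (a) this reduces to counting, up to the negation involution $i\mapsto-i$, the multisets $\{i,j,k\}\subset\Z_n$ with $i+j+k\equiv0\bmod n$; a Burnside split $\tfrac12(\#\text{all}+\#\text{fixed})$ over negation produces exactly the four cases according to $n\bmod2$ and $n\bmod3$ and, via a bijection with partitions, the value $p_3(n)$ (respectively $p_3(n-3)$ for $\mathrm{Ker}\,\ve$, where each colour is forbidden to be trivial). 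For the binary dihedral, dicyclic and binary polyhedral groups $Q$ I would read the needed data --- conjugacy classes, centralizer orders, fusion rules and the reality of each $\rho$ --- off their standard character tables and assemble the even and odd averages; the passage from $\calA_\Theta^\odd(\C\pi)$ to $\calA_\Theta^\odd(\mathrm{Ker}\,\ve)$ is the same sum with the trivial colour removed, computed by subtracting the lower-degree contributions of colourings having at least one trivial edge.

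The main obstacle I anticipate is the bookkeeping of the odd, $\Z_2$-twisted part of the average for the non-abelian families: it is exactly here that the Frobenius--Schur indicators and the small conjugacy classes (central elements and their centralizer data) enter, and they are responsible for the lower-order and constant corrections --- including the delicate piecewise dependence on $p,m\bmod3$ and on the parity of $p$ --- on top of the quadratic leading terms coming from the bulk of the regular representation. I would organize this by first confirming the leading $O(m^2)$, $O(p^2)$, $O(2^{2k})$, $O(3^{2k})$ terms from the regular-representation contribution, then computing the finitely many correction terms group-by-group from the character tables, and finally cross-checking the closed formulas against the tabulated small values (Tables~\ref{tab:val_dims_d4p} and \ref{tab:val_dims_t83k}).
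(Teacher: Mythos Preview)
Your approach is essentially the same as the paper's: identify $\calA_\Theta^\odd(\C\pi)$ with $(\Sym^3\C\pi)^{(\pi\times\pi)\rtimes\Z_2}$, apply the symmetric-cube character formula to split the average into an ``even'' piece $d_1$ and an ``odd'' piece $d_2$, simplify $d_1$ via column orthogonality, and then work through the character tables case by case, with the $\Z_m$ factor separated out at the end.

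Two places where the paper is slightly more direct than your sketch. First, for the odd piece $d_2$, the paper (Lemma~\ref{lem:flip}) shows that the antipode sends the block $\End(A_i)$ to $\End(A_i^*)$ by $\phi\mapsto\phi^*$, so only the self-dual (real-character) blocks contribute to $\chi_{\C\pi}(\tau\cdot(g,h))$, and on such a block the trace of $\tau\cdot(g,h)$ is $\chi_{A_i}(gh)$ regardless of Frobenius--Schur type; there is no sign distinction between real-type and quaternionic-type irreducibles here, contrary to what you suggest. Second, for $\calA_\Theta^\odd(\mathrm{Ker}\,\ve)$ the paper does not run an inclusion--exclusion over trivial colours but uses the direct-sum decomposition $\calA_\Theta^\odd(\C\pi)=\calA_\Theta^\odd(\mathrm{Ker}\,\ve)\oplus(\C\hat\pi)_{\Z_2}$ of Proposition~\ref{prop:graph-inv}, reducing the computation to a single subtraction of $\dim(\C\hat\pi)_{\Z_2}$, which is itself computed family by family (Lemma~\ref{lem:dim_cpi_z2_sherical}).
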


\begin{remark}
We have double checked the result of the computation by using {\tt Sage} for many groups in the list\footnote{See the code at \url{https://gist.github.com/ht-kodani/ea051b980dcb915eb9158ea55dffd60d} for details.}.
It would be notable that the leading terms of $\dim \calA_\Theta^\odd(\C\pi)$ and $\dim \calA_\Theta^\odd(\mathrm{Ker}\,\ve)$ agree (see also Conjecture~\ref{conj:lower-bound}-2 below). 
\end{remark}
As mentioned below, this result would also be applied to finite type invariants of $\Z\pi$-homology equivalences.

\subsection{Background: Finite type invariants of $\Z\pi$-homology equivalences}
We briefly review Garoufalidis and Levine's work on finite type invariants of $\Z\pi$-homology equivalence in \cite{GL}. In the following, we assume for simplicity that $\pi_2N=0$. In \cite[Lemma~2.4]{GL}, it is shown that a degree 1 map $f\colon M\to N$ can be represented by a null-homotopic framed link in $N$. When $\pi_2N=0$ the diffeomorphism equivalence class of $f$ is uniquely determined by such a framed link\footnote{In \cite[Proposition~2.5]{GL}, the indeterminacy for the case $\pi_2N\neq 0$ is determined.}. Moreover, in \cite[\S{2.2}]{GL}, a ``surgery obstruction map''
\[ \Phi\colon \calH(N)\to B(\pi) \]
to certain semi-group $B(\pi)$ was defined by assigning a matrix of $\Z\pi$-valued linking numbers (or $\pi$-equivariant linking number) of the framed link in $N$. 

Let $\calK(N)=\mathrm{Ker}\,\Phi$ and let $\calF(N)$ be the vector space over $\Q$ spanned by the set $\calK(N)$. In \cite{GL}, Garoufalidis and Levine introduced a descending filtration on $\calF(N)$ by $\Q$-subspaces: 
\[ \calF(N)=\calF_0^Y(N)\supset \calF_1^Y(N)\supset \calF_2^Y(N)\supset\cdots \]
using surgeries of $\Z\pi$-homology equivalences along ``graph claspers" or ``Y-links" (\cite{Hab,GGP}), which replaces a $\Z\pi$-homology equivalence $f\colon M\to N$ with another one $f^{G}\colon M^G\to N$ obtained by modifying $f$ in a neighborhood of an embedded graph $G$ in $M$. This filtration is an analogue of Ohtsuki's filtration of homology 3-spheres (\cite{Ohts}). {\it Finite type invariants of type $n$} are defined as linear maps $\lambda\colon \calK(N)\to A$ to a vector space $A$ such that $\lambda(\calF_{n+1}^Y(N))=0$. It is proved in \cite{GL} that $\calF^Y_0(N)/\calF^Y_1(N)=0$ and $\calF^Y_1(N)/\calF^Y_2(N)=0$. 

According to \cite[Theorem~2]{GL}, the space $\calF_n^Y(N)/\calF_{n+1}^Y(N)$ for $n$ even is generated by $\pi$-decorated graphs. 
A {\it $\pi$-decorated graph} is a pair $(\Gamma,\alpha)$ of an abstract, vertex-oriented, edge-oriented trivalent graph $\Gamma$ and a map $\alpha\colon \mathrm{Edges}(\Gamma)\to \Q\pi$. Let $\calA_n(\pi)$ be the vector space over $\Q$ spanned by $(\Gamma,\alpha)$ with $n$ vertices quotiented by the relations AS, IHX, Orientation Reversal, Linearity, and Holonomy (Figure~\ref{fig:relations}). %
\begin{figure}[h]%
\centering
\includegraphics[height=35mm]{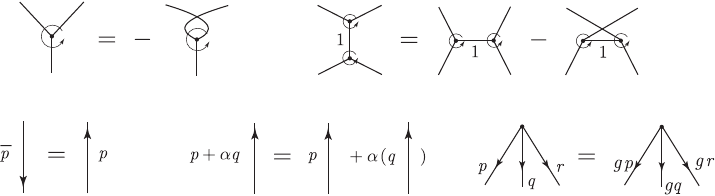}
\caption{The relations AS, IHX, Orientation reversal, Linearity, and Holonomy. Here $\bar{p}$ is the involution of $\Z\pi$ given by $\bar{g}=g^{-1}$, $p,q,r\in \Q\pi$, $\alpha\in \Q$, and $g\in \pi$.}\label{fig:relations}
\end{figure}%

When $\mathrm{Im}\,\alpha\subset \pi$, a $\pi$-decorated graph $(\Gamma,\alpha)$ determines uniquely a homotopy class $\bar{\alpha}$ of a map $\Gamma\to N$. Surgery of $\Z\pi$-homology equivalences on Y-links associated to graphs embedded in $N$ defines a linear map
\[ \psi_n\colon\calA_n(\pi)\to \calF_n^Y(N)/\calF_{n+1}^Y(N). \]
\begin{theorem}[{Garoufalidis--Levine \cite[Theorem~2]{GL}}]\label{thm:GL}
For all integers $n\geq 1$, $\psi_n$ is surjective.
\end{theorem}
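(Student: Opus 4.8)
The plan is to run the surjectivity argument through the Goussarov--Habiro calculus of Y-graphs (claspers), adapted to the $\Z\pi$-equivariant setting, under the standing assumption $\pi_2N=0$. By construction, $\calF_n^Y(N)$ is generated over $\Q$ by the $n$-th order differences
\[
[f;L]:=\sum_{L'\subseteq L}(-1)^{|L'|}\,f^{L'},
\]
where $f\in\calK(N)$ and $L$ is a Y-link in $M$ with exactly $n$ nodes, and $f^{L'}$ is the $\Z\pi$-homology equivalence obtained from $f$ by surgery along the sub-Y-link $L'\subseteq L$. Modulo $\calF_{n+1}^Y(N)$ every element is a $\Q$-combination of such classes $[f;L]$, so to prove that $\psi_n$ is surjective I would associate to each $L$ a $\pi$-decorated trivalent graph $(\Gamma_L,\alpha_L)$ on $n$ vertices and show $\psi_n(\Gamma_L,\alpha_L)=[f;L]$ in $\calF_n^Y(N)/\calF_{n+1}^Y(N)$.

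The heart of the argument is a \emph{normal form} for $[f;L]$ modulo $\calF_{n+1}^Y(N)$. First I would show, via the standard clasper moves, that changing a leaf of a Y-graph by a crossing change, an isotopy, or an unknotting alters $[f;L]$ only by terms in $\calF_{n+1}^Y(N)$, since each such modification is produced by adjoining one further Y-component. Hence one may assume that every leaf bounds an embedded disk and that distinct leaves are unlinked apart from the essential clasping encoding the graph structure. A Y-graph one of whose leaves bounds a disk meeting nothing else gives a trivial surgery, so that term vanishes --- this is exactly the \emph{Linearity} relation. After this reduction the surviving data are: (i) the pairing of the $3n$ half-edges (leaves) of distinct nodes into edges, yielding a trivalent graph $\Gamma_L$ with the $n$ nodes as vertices; and (ii) the homotopy class in $\pi$ of each connecting arc, read off in the $\pi$-cover, yielding the decoration $\alpha_L\colon\mathrm{Edges}(\Gamma_L)\to\pi$ (extended $\Q$-linearly by Linearity). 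Note that when $n$ is odd, $3n$ is odd, so no complete pairing exists; some leaf must bound a disjoint disk, forcing $[f;L]=0$ modulo $\calF_{n+1}^Y(N)$, which matches $\calA_n(\pi)=0$ and handles the odd case uniformly.

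It then remains to verify that the ambiguities of this construction are precisely the relations defining $\calA_n(\pi)$, so that the class of $(\Gamma_L,\alpha_L)$ is well defined and maps to $[f;L]$. The cyclic order of the three leaves at a node is the vertex orientation, and transposing two of them produces the sign of AS; the clasper IHX move, whose error is one degree higher, yields the IHX relation in the quotient; sliding a connecting arc around a loop of $N$ or reversing its orientation realizes the Holonomy relation and the Orientation Reversal rule $g\mapsto g^{-1}$. Finally, the Habiro-type splitting of a connected graph clasper into a Y-link (with agreement modulo higher degree) identifies $\psi_n(\Gamma_L,\alpha_L)$ with $[f;L]$, completing the surjectivity.

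The hard part will be the equivariant normal-form reduction. One must perform the leaf-unknotting, leaf-unlinking, and IHX-type moves while simultaneously (a) keeping each connecting arc on a definite homotopy class in $\pi$ so that $\alpha_L$ is well defined, (b) guaranteeing that every intermediate surgered manifold remains a $\Z\pi$-homology equivalence with unchanged surgery obstruction $\Phi$, so that all terms stay inside $\calK(N)$, and (c) checking that each move costs at most one extra Y-component and is therefore invisible in $\calF_n^Y(N)/\calF_{n+1}^Y(N)$. Controlling the homology of the $\pi$-cover $\widetilde{M}$ under Y-surgery, where the hypothesis $\pi_2N=0$ is essential, is the technical crux that makes this bookkeeping go through.
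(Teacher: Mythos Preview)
The paper does not contain a proof of this theorem at all: it is quoted verbatim as a result of Garoufalidis and Levine \cite[Theorem~2]{GL}, and the only comment the authors add is that in \cite{GL} the statement was proved over $\Z[\tfrac12]$-coefficients. There is therefore nothing in the present paper against which to compare your argument.

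What you have written is a reasonable outline of the clasper-calculus strategy that underlies the original Garoufalidis--Levine proof (normal form for Y-links modulo higher degree, identification of the graph relations with clasper moves, odd-degree vanishing from the parity of leaves). If your goal is to reproduce the proof from \cite{GL}, that is the right shape; but be aware that several steps you flag as routine---in particular the equivariant control of the $\pi$-cover homology under each move and the verification that every intermediate manifold remains in $\calK(N)$---are exactly the places where \cite{GL} does nontrivial work, and your sketch does not yet supply those arguments. For the purposes of the present paper, however, no proof is expected: the theorem is simply imported from the literature.
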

In \cite{GL}, this was proved for $\Z[\frac{1}{2}]$-coefficients. When $N=S^3$, it is known that $\psi_n$ is an isomorphism over $\Q$ for all $n$ (\cite{LMO,BGRT,BGRT2,BGRT3,KT}, see also \cite[Remark~4.2]{GL}). Moreover, when $N=S^3$, the inverse of $\psi_n$ can be given by Kontsevich's configuration space integrals (\cite{KT}, see also \cite{Les2,Les3}). With this in mind, it would be natural to expect a similar result for general $N$. In particular, we have the following application of $\widehat{Z}_\Theta^\odd$ in mind. 

\begin{conj}\label{conj:lower-bound}
Let $N$ be a spherical 3-manifold with nontrivial fundamental group $\pi$.
\begin{enumerate}
\item $\widehat{Z}_\Theta^\odd\colon \calF(N)\to \calA_\Theta^\odd(\mathrm{Ker}\,\ve)$ descends to a linear map $\overline{Z}_\Theta^\odd\colon\calF(N)/\calF_3^Y(N)\to \calA_\Theta^\odd(\mathrm{Ker}\,\ve)$. In other words, $\widehat{Z}_\Theta^\odd$ is a finite type invariant of $\Z\pi$-homology equivalences of type 2.
\item The composition $\overline{Z}_\Theta^\odd\circ \psi_2\colon \calA_\Theta^\odd(\C\pi)\to \calF_2^Y(N)/\calF_3^Y(N)\otimes\C\to \calA_\Theta^\odd(\mathrm{Ker}\,\ve)$ agrees with the projection (see Proposition~\ref{prop:graph-inv} below for the reason that $\psi_2$ can be defined on $\calA_\Theta^\odd(\C\pi)$). Hence we have
\[ \dim\calA_\Theta^\odd(\mathrm{Ker}\,\ve) \leq \dim \calF_2^Y(N)/\calF_3^Y(N)\otimes\C \leq \dim \calA_\Theta^\odd(\C\pi).\]
\item The canonical map $\Hom_\C(\calA_\Theta^\odd(\mathrm{Ker}\,\ve),\C)\to \calI_\Theta^{\mathrm{BC}}(N)$ of Theorem~\ref{thm:upper-bound} is an isomorphism.
\end{enumerate}
\end{conj}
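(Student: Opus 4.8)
The plan is to prove the three parts of Conjecture~\ref{conj:lower-bound} in the order (1)$\Rightarrow$(2)$\Rightarrow$(3), observing at the outset that part (3) is a purely formal consequence of (1) and (2), so that the geometric and analytic work is concentrated in the first two parts. Throughout I would follow the template developed by Kuperberg--Thurston and Lescop for the case $N=S^3$ (where the configuration space integral is known to invert the surgery map), upgrading every step so as to incorporate the acyclic local systems pulled back from $N$. The central new feature, relative to the classical $S^3$ story, is that the propagator must be twisted by a nontrivial acyclic local system, and that the target must be reduced from $\calA_\Theta^\odd(\C\pi)$ to $\calA_\Theta^\odd(\mathrm{Ker}\,\ve)$.

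For part (1), the finite-type property, I would first derive a surgery variation formula for $\widehat Z_\Theta^\odd$ under a single Y-surgery. A Y-surgery replaces $f$ by $f^G$ inside a ball, and the twisted propagator changes only by a term supported near the surgery locus; expanding the two-vertex configuration integral defining $\widehat Z_\Theta^\odd$, the variation under insertion of $k$ disjoint Y-links becomes a sum over distributions of the $k$ surgery sites among the two vertices and three edges of the graph $\Theta$. Because $\Theta$ has only two trivalent vertices, this variation is polynomial of degree at most $2$ in the surgery data, so the third finite difference over three disjoint Y-links vanishes. This is exactly the assertion $\widehat Z_\Theta^\odd(\calF_3^Y(N))=0$, yielding the descent to a linear map $\overline Z_\Theta^\odd$ on $\calF(N)/\calF_3^Y(N)$, i.e.\ that $\widehat Z_\Theta^\odd$ is of type $2$.

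For part (2), I would carry out the local model computation of $\overline Z_\Theta^\odd\circ\psi_2$ on a two-loop $\pi$-colored Jacobi diagram $(\Theta,\alpha)$, using the identification of Proposition~\ref{prop:graph-inv}. The Y-link realizing $\psi_2(\Theta,\alpha)$ is supported near an embedding of $\Theta$ into $N$ whose edges carry the holonomies prescribed by $\alpha$; I would show that, in the leading order detected by a type-$2$ invariant, the integral localizes at the surgery site, the three edge-integrals of the twisted propagator reproducing the three holonomy factors of $\alpha$ and the two vertex-integrals reproducing the invariant tensors $\rho_1,\rho_2$, so the total evaluation is the class of $(\Theta,\alpha)$. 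The isotypic line $\C\cdot\sum_{g}g$ complementary to $\mathrm{Ker}\,\ve$ contributes only through the untwisted propagator, for which the surgery produces an ordinary $\Z$-homology equivalence and the $\Theta$-integral vanishes by acyclicity; this is precisely why the composition lands in, and equals the edgewise projection onto, $\calA_\Theta^\odd(\mathrm{Ker}\,\ve)$. Combining this identification with the surjectivity of $\psi_2$ (Theorem~\ref{thm:GL}) gives the chain $\dim\calA_\Theta^\odd(\mathrm{Ker}\,\ve)\le\dim\calF_2^Y(N)/\calF_3^Y(N)\otimes\C\le\dim\calA_\Theta^\odd(\C\pi)$.

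Part (3) is then formal: a functional $\phi\in\Hom_\C(\calA_\Theta^\odd(\mathrm{Ker}\,\ve),\C)$ maps to the zero Bott--Cattaneo invariant if and only if $\phi$ vanishes on the image of $\widehat Z_\Theta^\odd$; since part (2) shows $\overline Z_\Theta^\odd\circ\psi_2$ is the surjective projection, the image of $\widehat Z_\Theta^\odd$ spans $\calA_\Theta^\odd(\mathrm{Ker}\,\ve)$, forcing $\phi=0$, so the canonical epimorphism of Theorem~\ref{thm:upper-bound} is injective and hence an isomorphism. I expect the main obstacle to be the local computation in step (2): unlike the $S^3$ case one must produce a closed \emph{twisted} propagator form with controlled behavior near both the diagonal and the surgery locus, and then prove that its integral against the Y-surgery data reproduces the holonomy decoration $\alpha$ exactly rather than up to lower-order corrections. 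Controlling those corrections --- equivalently, showing the anomaly and boundary contributions either vanish or are absorbed into the framing correction --- together with checking that the $\Z_2$-equivariance of the local model matches the involution defining $\calA_\Theta^\odd$, is where the bulk of the difficulty lies.
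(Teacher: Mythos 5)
You should note first that the paper contains no proof of this statement: it is stated as Conjecture~1.4, and the authors explicitly defer its proof to a subsequent work (``We plan to write a proof of Conjecture~\ref{conj:lower-bound} in a subsequent work''). So there is nothing in the paper to compare your argument against, and your text must stand on its own as a proof. It does not: it is a program, not a proof. Your overall architecture is the expected one --- a Kuperberg--Thurston/Lescop-style counting argument for the type-2 property in (1), a local computation at the Y-link for (2), and the observation that (3) follows formally from (1) and (2) --- and that last reduction is indeed sound: if $\overline{Z}_\Theta^\odd\circ\psi_2$ is the (surjective) projection, then $\widehat{Z}_\Theta^\odd(\calF_2^Y(N))$ spans $\calA_\Theta^\odd(\mathrm{Ker}\,\ve)$, so no nonzero functional $\varpi$ can have $\varpi\circ\widehat{Z}_\Theta^\odd=0$, and the epimorphism of Theorem~\ref{thm:upper-bound2} is injective. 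But in (1) and (2) every genuinely hard step is named rather than carried out: the existence of twisted propagators adapted to a family of disjoint Y-surgeries (agreeing outside the surgered handlebodies, which requires that acyclicity of $\mathrm{Ker}\,\ve$-coefficients persists under each partial surgery), the compatibility of the framing correction $\delta(\tau)$ with the alternating sum, and the leading-order localization producing exactly the holonomy decoration $\alpha$ with no anomalous correction. You acknowledge these as ``the main obstacle,'' which is honest, but it means parts (1) and (2) remain unproved.

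Beyond incompleteness, one justification you give is concretely wrong. You claim the complement of $\calA_\Theta^\odd(\mathrm{Ker}\,\ve)$ in $\calA_\Theta^\odd(\C\pi)$ dies because ``the untwisted propagator'' yields a $\Theta$-integral that ``vanishes by acyclicity.'' The trivial local system on a closed oriented 3-manifold is never acyclic ($H_0(M;\C)\neq 0$ and $H_3(M;\C)\neq 0$); this failure is precisely why the trivial summand $\C\subset\C\pi$ is discarded and why $\widehat{Z}_\Theta^\odd$ is defined with coefficients in $\mathrm{Ker}\,\ve$ in the first place (Lemma~\ref{lem:M-acyclic}, Lemma~\ref{lem:W}). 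There is no untwisted propagator or untwisted $\Theta$-integral in this framework that could vanish. Moreover, the fact that the composition \emph{lands} in $\calA_\Theta^\odd(\mathrm{Ker}\,\ve)$ is automatic from the target of $\overline{Z}_\Theta^\odd$ and carries no content; the entire substance of part (2) is the \emph{equality} of $\overline{Z}_\Theta^\odd\circ\psi_2$ with the projection killing the summand $(\C\hat\pi)_{\Z_2}$ of Proposition~\ref{prop:graph-inv}-2, i.e.\ showing the invariant evaluates each $\Theta(a,b,c)$ to the projected class with coefficient exactly $1$. Your proposed mechanism for that step rests on the erroneous acyclicity claim, so as written the key step of (2) --- and hence (3), which depends on it --- would fail.
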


We plan to write a proof of Conjecture~\ref{conj:lower-bound} in a subsequent paper (\cite{KSWII}). There are also analogues of Conjecture~\ref{conj:lower-bound} for $\calF_{2n}^Y(N)/\calF_{2n+1}^Y(N)$  and for more general trivalent graphs for $n>2$. At present, we work only on the 2-loop part for simplicity.

\begin{prop}\label{prop:graph-inv}
For a finite group $\pi$, let $\hat{\pi}$ denote the set of conjugacy classes of $\pi$, and let $\C\hat{\pi}$ denote the vector space over $\C$ spanned by the set $\hat{\pi}$. We have the following.
\begin{enumerate}
\item $\calA_\Theta^\odd(\C\pi)\cong\calA_2(\pi)\otimes\C$. 
\item $\calA_\Theta^\odd(\C\pi)=\calA_\Theta^\odd(\mathrm{Ker}\,\ve)\oplus (\C\hat\pi)_{\Z_2}$ as a vector space over $\C$, where the $\Z_2$-action on $\C\hat{\pi}$ is induced by the inversion $g\mapsto g^{-1}$ on $\pi$.
\end{enumerate}
\end{prop}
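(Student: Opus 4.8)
The plan is to prove the two parts by different mechanisms: (1) by translating the combinatorics of $\pi$-decorated trivalent graphs on two vertices into symmetric tensors, and (2) by an algebraic splitting of $\C\pi$ through the augmentation map.

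For (1), first I would observe that a connected, vertex-oriented trivalent graph with exactly two vertices is either the theta graph (two vertices joined by three edges) or the dumbbell (two vertices joined by one edge, with a self-loop at each). Orienting all three edges of the theta graph from a fixed vertex $v_+$ to the other vertex $v_-$ and recording the decorations $g_1,g_2,g_3$, I would define $\Psi(\Theta)=g_1\otimes g_2\otimes g_3$ and check the defining relations one at a time: Linearity is $\C$-linearity of the decoration; a transposition of two edges is a simultaneous transposition of the edge-ends at \emph{both} $v_+$ and $v_-$, so the two AS-signs cancel and the assignment is symmetric, landing in $\Sym^3\C\pi$ rather than $\tbigwedge^3\C\pi$; the Holonomy relation at $v_+$ and at $v_-$ is exactly left and right translation $(g,h)\cdot x=gxh^{-1}$, so passing to the $\pi\times\pi$-coinvariants $\bigl(\Sym^3\,\C\pi\bigr)_{\pi\times\pi}$ quotients precisely by Holonomy; and the combination of Orientation reversal on all three edges with the interchange $v_+\leftrightarrow v_-$ is the involution $g_i\mapsto g_i^{-1}$, so taking $\Z_2$-invariants yields $\calA_\Theta^\odd(\C\pi)$. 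The inverse map $g_1\otimes g_2\otimes g_3\mapsto\Theta(g_1,g_2,g_3)$ is well defined into $\calA_2(\pi)\otimes\C$ because each of the symmetry, coinvariance and $\Z_2$ relations it must respect is one of the relations (or an immediate consequence) in $\calA_2(\pi)$.

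The genuinely delicate point of (1) is the IHX relation together with the dumbbell graphs, and this is where I expect the main obstacle. Applying IHX along one edge of the theta graph produces a linear relation involving the original theta, a second theta with two edges interchanged, and a single dumbbell; solving this relation expresses every dumbbell as a combination of theta graphs, so that $\calA_2(\pi)\otimes\C$ is spanned by theta graphs and $\Psi$ extends to all of it. What remains, and is the crux, is to verify that $\Psi$ \emph{respects} IHX, i.e.\ that the resulting relation among the symmetric tensors $\Psi(\Theta_0),\Psi(\Theta_1)$ already holds in $\bigl(\Sym^3\,\C\pi\bigr)_{\pi\times\pi}$; concretely one must track how the group-element decorations on the reconnected edges combine and check that, after using Holonomy to normalize the internal edge, the two surviving theta-terms coincide in the coinvariants. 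One must also confirm that applying IHX along the remaining edges imposes nothing new, which follows from the $S_3$-symmetry already built into $\Sym^3$. Granting this, $\Psi$ and its inverse are mutually inverse isomorphisms.

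For (2), I would use the splitting of $\pi\times\pi$-modules $\C\pi=\mathrm{Ker}\,\ve\oplus\C s$, where $s=\sum_{g\in\pi}g$ spans the (trivial) one-dimensional submodule complementary to the augmentation ideal. Expanding
\[ \Sym^3(\mathrm{Ker}\,\ve\oplus\C s)=\Sym^3\mathrm{Ker}\,\ve\ \oplus\ (\Sym^2\mathrm{Ker}\,\ve)\otimes\C s\ \oplus\ (\mathrm{Ker}\,\ve)\otimes\C s^{\otimes 2}\ \oplus\ \C s^{\otimes 3} \]
as $\pi\times\pi$-modules (each summand $\Z_2$-stable, since inversion preserves both $\mathrm{Ker}\,\ve$ and $s$) and applying the additive functors of $\pi\times\pi$-coinvariants and $\Z_2$-invariants, the first summand gives precisely $\calA_\Theta^\odd(\mathrm{Ker}\,\ve)$ as a canonical direct summand. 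To identify the complement with $(\C\hat\pi)_{\Z_2}$, I would introduce the $\pi\times\pi$- and $\Z_2$-equivariant contraction $P\colon\Sym^3\C\pi\to\Sym^2\C\pi$ that applies $\ve$ to one factor and symmetrizes; since $\ve$ is $\pi\times\pi$-invariant and inversion-invariant, $P$ descends to $\overline{P}\colon\calA_\Theta^\odd(\C\pi)\to\bigl[(\Sym^2\C\pi)_{\pi\times\pi}\bigr]^{\Z_2}$, which vanishes exactly on $\calA_\Theta^\odd(\mathrm{Ker}\,\ve)$. The target is then computed by the orbit analysis of the two-sided action: $(\C\pi\otimes\C\pi)_{\pi\times\pi}\cong\C\hat\pi$ via $x\otimes y\mapsto[x^{-1}y]$, under which both the symmetrization involution and the inversion involution act as $[z]\mapsto[z^{-1}]$; hence $(\Sym^2\C\pi)_{\pi\times\pi}\cong(\C\hat\pi)_{\Z_2}$ and the $\Z_2$ of $\calA_\Theta^\odd$ acts trivially on it. This produces a short exact sequence $0\to\calA_\Theta^\odd(\mathrm{Ker}\,\ve)\to\calA_\Theta^\odd(\C\pi)\xrightarrow{\overline P}(\C\hat\pi)_{\Z_2}\to 0$, which splits over $\C$. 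The one computation to record carefully is $(\mathrm{Ker}\,\ve)_{\pi\times\pi}=0$ (because $\pi\times\pi$ acts transitively on the basis $\pi$, so $(\C\pi)_{\pi\times\pi}=\C=(\C s)_{\pi\times\pi}$), which is exactly what makes the complement $(\C\hat\pi)_{\Z_2}$ and nothing larger.
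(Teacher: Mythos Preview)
Your approach to part~1 is the same as the paper's: set up the correspondence $\Theta(a,b,c)\leftrightarrow[a\cdot b\cdot c]$, use IHX to show that theta graphs span $\calA_2(\pi)\otimes\C$, and then verify that the graph relations match the defining relations of $\calA_\Theta^\odd(\C\pi)$. The paper's proof is terser---it simply asserts that the relations correspond---while you spell out where the work lies (AS plus the two vertex orientations yielding $\Sym^3$ rather than $\tbigwedge^3$, Holonomy giving the $\pi\times\pi$-coinvariants, and the IHX/dumbbell reduction as the one nontrivial check) without fully executing the last step; this is the same level of completeness as the paper.

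For part~2 the paper gives no argument and simply cites \cite{OW}. Your route is genuinely different and self-contained: split $\C\pi=\mathrm{Ker}\,\ve\oplus\C s$ as $\pi\times\pi$-modules, expand $\Sym^3$, and use the contraction $P$ (applying $\ve$ to one factor) to get a short exact sequence identifying the complement of $\calA_\Theta^\odd(\mathrm{Ker}\,\ve)$ with $\bigl((\Sym^2\C\pi)_{\pi\times\pi}\bigr)^{\Z_2}$. The computation $(\C\pi\otimes\C\pi)_{\pi\times\pi}\cong\C\hat\pi$ via $x\otimes y\mapsto[x^{-1}y]$, together with your observation that both the swap and the inversion act as $[z]\mapsto[z^{-1}]$ on $\C\hat\pi$, correctly yields $(\C\hat\pi)_{\Z_2}$ with the outer $\Z_2$ acting trivially. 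This is a clean, explicit argument that avoids the external reference; the paper's citation buys only brevity.
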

\begin{proof}
1. For $a,b,c\in \C\pi$, let $\Theta(a,b,c)$ denote the theta-graph, i.e. the graph with two vertices connected by three parallel edges, with three parallel arrows colored by $a,b,c$ respectively.
It can be shown by using the graph relations that the space $\calA_2(\pi)\otimes\C$ is spanned by $\Theta(a,b,c)$ for $a,b,c\in \C\pi$. Then an isomorphism $\calA_2(\pi)\otimes\C\to \calA_\Theta^\odd(\C\pi)$ is defined by the correspondence 
$\Theta(a,b,c) \leftrightarrow [a\cdot b\cdot c]$.
This is well-defined since the graph relations in $\calA_2(\pi)\otimes\C$ correspond to the invariance condition in $\calA_\Theta^\odd(\C\pi)$. 

2. This has been proved in \cite[Proposition~4.3]{OW}.
\end{proof}

In principle, it is possible to compute the dimension of $\calA_2(\pi)\otimes \C$ for each fixed finite group $\pi$ by using the presentation by the set $\{\Theta(1,b,c)\mid b,c\in \pi\}$ of generators and the relations induced from those of Figure~\ref{fig:relations}. However, it requires to enumerate all the elements in $\pi$ and to solve a system of linear equations in $|\pi|^2$ variables, and it would be difficult in such an approach to determine the dimensions for groups with large orders $|\pi|$ and for general infinite sequences of groups. We instead use character theory of finite groups to make the general computation possible.

\subsection{$\Z\pi$-homology equivalence vs. 3-manifold over $K(\pi,1)$}\label{ss:3-mfd-over-Kpi1}

A {\it 3-manifold over $K(\pi,1)$} is a pair $(M,f)$ of a 3-manifold $M$ and a continuous map $f\colon M\to K(\pi,1)$. We say that such pairs $(M,f)$ and $(M',f')$ are {\it $\pi$-diffeomorphic} if there is a diffeomorphism $g\colon M\to M'$ such that $f\simeq f'\circ g$ (we follow the terminology of \cite{HW}). Such objects are special cases of those studied in Turaev's HQFT (\cite{Tu}). 
We consider the case where $N$ is a closed orientable irreducible 3-manifold. In this case, $\Z\pi$-homology equivalences to $N$ are related to 3-manifolds over $K(\pi,1)$ as follows.

Since $\pi_2N=0$, we may consider $K(\pi,1)$ as obtained from $N$ by attaching cells of dimensions $\geq 4$. Let $\iota_N\colon N\to K(\pi,1)$ denote the inclusion. The following proposition is an analogue of \cite[Example~2.10]{GL} for closed orientable irreducible 3-manifolds.

\begin{prop}\label{prop:f_f'}
Let $N$, $M$ be closed orientable irreducible 3-manifolds. Let $\pi=\pi_1N$ and let $f,f'\colon M\to N$ be $\Z\pi$-homology equivalences. Let $\overline{f},\overline{f'}\colon M\to K(\pi,1)$ be the maps defined by the compositions $\iota_N\circ f,\iota_N\circ f'$, respectively. Then the following conditions (i)--(iii) are equivalent.
\begin{enumerate}
\item[\rm(i)] $f\simeq f'$.
\item[\rm(ii)] $\overline{f}\simeq\overline{f'}$.
\item[\rm(iii)] $f_*=f_*'\in \Hom(\pi_1M,\pi)$.
\end{enumerate}
\end{prop}
\begin{proof}
We prove (i)$\Rightarrow$(iii)$\Rightarrow$(ii)$\Rightarrow$(i). (i)$\Rightarrow$(iii) is obvious. To prove (iii)$\Rightarrow$(ii), we assume (iii), namely, $f_*=f_*'\colon \pi_1M\to \pi$. Then the maps
$\phi_f,\phi_{f'}\colon K(\pi_1M,1)\to K(\pi,1)$
induced by $f,f'$, respectively, are homotopic. We may assume that they are extensions of $f,f'$ such that $\iota_N\circ f\simeq \phi_f\circ \iota_M$ and $\iota_N\circ f'\simeq \phi_{f'}\circ \iota_M$ in the following diagram.
\[ \xymatrix{
    M \ar@/^/[rr]^-{f}\ar@/_/[rr]_-{f'} \ar[d]_-{\iota_M} & & N \ar[d]^-{\iota_N}\\
    K(\pi_1M,1) \ar@/^/[rr]^-{\phi_f}\ar@/_/[rr]_-{\phi_{f'}} & & K(\pi,1)
}\]
Then (ii) follows since $\overline{f}\simeq\phi_f\circ \iota_M\simeq \phi_{f'}\circ\iota_M\simeq\overline{f'}$.

Next we assume (ii) and prove (i). We take a cell decomposition of $M$ with one 0-cell and one 3-cell. Let $M^{(2)}$ denote the 2-skeleton of $M$. By cellular approximation, the homotopy between the restrictions $\overline{f}|_{M^{(2)}}$ and $\overline{f'}|_{M^{(2)}}$ can be homotoped into that between $f|_{M^{(2)}}$ and $f'|_{M^{(2)}}$, hence $f'\simeq f\# \psi$ for some $\psi\colon S^3\to N$. Since $f$ and $f'$ are degree one, we have $f'\simeq f$. This proves (i).
\end{proof}

When $N$ is a lens space, for which $\pi$ is abelian, the condition (iii) is equivalent to $f_*=f_*'\in\Hom(H_1(M),H_1(N))=H^1(M;\pi)$. If $N$ is orientable and irreducible, but not spherical, then it is $K(\pi,1)$ (e.g. \cite{AFW}), and the equivalence (i)$\Leftrightarrow$(ii) is obvious. 

\begin{cor}\label{cor:diffeomorphic}
Let $N$ be as in Proposition~\ref{prop:f_f'}.
Let $f\colon M\to N$ and $f'\colon M'\to N$ be two $\Z\pi$-homology equivalences from irreducible 3-manifolds. Then the following conditions are equivalent.
\begin{enumerate}
\item $f$ and $f'$ are diffeomorphically equivalent.
\item $(M,\overline{f})$ and $(M',\overline{f'})$ are $\pi$-diffeomorphic. 
\end{enumerate}
Hence $\calH(N)$ is canonically embedded into the set $\calH_3(\pi)$ of $\pi$-diffeomorphism classes of irreducible closed 3-manifolds over $K(\pi,1)$.
\end{cor}
\begin{proof}
That 1 implies 2 is obvious. For the converse, we assume that $(M,\overline{f})$ and $(M',\overline{f'})$ are $\pi$-diffeomorphic and prove that $f'\circ g\simeq f$ for some diffeomorphism $g\colon M\to M'$. By the assumption, we know that $\overline{f'\circ g}=\overline{f'}\circ g\simeq \overline{f}$ for a $\pi$-diffeomorphism $g\colon M\to M'$. Then by Proposition~\ref{prop:f_f'}, we have $f'\circ g\simeq f$. 
\end{proof}

Corollary~\ref{cor:diffeomorphic} shows that a $\Z\pi$-homology equivalence to $N$ is roughly a 3-manifold $M$ equipped with a structure of a principal $\pi$-bundle (or a normal $\pi$-covering) over it satisfying some homological triviality property of the total space $\widetilde{M}$: it is a homology sphere ($H_*(\widetilde{M})\cong H_*(S^3)$) or a homology $\R^3$ ($H_*(\widetilde{M})\cong H_*(\R^3)$), depending on whether $N$ is spherical ($\widetilde{N}\cong S^3$) or $K(\pi,1)$ ($\widetilde{N}\cong \R^3$). This observation suggests that the study of $\Z\pi$-homology equivalences  to a spherical 3-manifold would be relevant to \cite{LT} or \cite[\S{11.5}]{Ohts2} etc. 

\subsection*{Acknowledgements}
The authors thank Tatsuro Shimizu for fruitful discussions and important comments throughout this work. The authors thank Minoru Hirose for giving them useful information and comments about SageMath, and, in particular, improving the initial trial SageMath code for the dimension computation. The authors thank Hiroyuki Ochiai for several useful discussions on integer sequences. The authors thank Yuta Nozaki for valuable comments on an earlier version of this paper.
HK was partially supported by JSPS Grant-in-Aid for Scientific Research 25K06954. TW was partially supported by JSPS Grant-in-Aid for Scientific Research 21K03225, 20K03594, and by RIMS, Kyoto University. 

\section{Preliminaries}

\subsection{Notations and facts about representations of finite groups}\label{ss:rep}

In this paper, we consider linear representation $\rho\colon \C\pi\to \End_\C(V)$, where $V$ is a finite-dimensional vector space over $\C$, for a finite group $\pi$. For such a $\pi$-module $V$, the subspace of {\it invariants} $V^\pi$ and the space of {\it coinvariants} $V_\pi$ are defined as follows.
\[\begin{split}
& V^\pi:=\{v\in V\mid gv=v\,\,(\forall g\in \pi)\},\\
& V_\pi:=V/{\Span_\C\{gv-v\mid g\in\pi,\,v\in V\}}.
\end{split} \]
The map $V_\pi\to V^\pi$ induced from the map $V\to V^\pi$; $v\mapsto \frac{1}{|\pi|}\sum_{g\in\pi}gv$ is an isomorphism, whose inverse is given by the composition $V^\pi\stackrel{\subset}{\to} V\twoheadrightarrow V_\pi$. We will often identify $V_\pi$ and $V^\pi$ by this isomorphism. The dimension of $V^\pi$ can be computed by the following formula (\cite[(2.9)]{FH}):
\begin{equation}\label{eq:dim_invariant_part}
    \dim V^\pi=\frac{1}{|\pi|}\sum_{g\in \pi}\chi_V(g),
\end{equation}
where $\chi_V$ is the character of $V$. 

For a $\pi$-module $V$, the dual $V^*=\Hom_\C(V,\C)$ is also a $\pi$-module by $g\varphi(v)=\varphi(g^{-1}v)=((g^{-1})^*\varphi)(v)$. The $\C$-linear map $(\,\cdot\,)^*\colon \End(V)\to \End(V^*)$ assigning to an endomorphism $\phi$ its dual $\phi^*$ is defined.

Let $\{A_i\}_{i=1}^r$ be the collection of all distinct irreducible $\pi$-modules. The $\pi\times \pi$-module $\C\pi$ and its submodule $\mathrm{Ker}\,\ve$ are decomposed into irreducible $\pi\times\pi$-modules as
\begin{equation} \label{eq:AW-isom}
  \C\pi=\bigoplus_{i=1}^r\End(A_i)\cong \bigoplus_{i=1}^r(A_i\boxtimes A_i^*),\qquad 
  \mathrm{Ker}\,\ve=\bigoplus_{i=2}^r(A_i\boxtimes A_i^*),
\end{equation}
where $A_1$ is the trivial irreducible $\pi$-module (\cite[Proposition~3.29]{FH}).

\subsection{Homology of configuration spaces of two points}

Suppose that $N$ is a spherical 3-manifold with nontrivial fundamental group $\pi$, which is finite. 
Let $A$ be a nontrivial irreducible $\C\pi$-module. 
The homology $H_*(N;A)$ for the local system $A$ is defined by
\[ H_*(N;A):=H(S_*(\widetilde{N})\otimes_{\C\pi}A), \]
where $\widetilde{N}$ is the universal cover of $N$, $S_*(\cdot)$ is the $\C$-complex of singular chains. We collect some facts on the homology of the configuration space of two points on a spherical 3-manifold from \cite{OW} with proofs.

\begin{lemma}[\cite{OW}]\label{lem:M-acyclic}
Let $N$ be as above and let $\pi=\pi_1N$. Let $A$ be a nontrivial irreducible $\C\pi$-module. 
\begin{enumerate}
\item We have $H_*(N;A)=0$ and $H_*(N;A^*)=0$. 
\item For a $\Z\pi$-homology equivalence $f\colon M\to N$, we have $H_*(M;A)=0$ and $H_*(M;A^*)=0$,
where we also consider $A$ as a $\C\pi_1M$-module through $f_*\colon\pi_1M\to \pi$. 
\end{enumerate}
\end{lemma}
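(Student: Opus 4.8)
The plan is to reduce both statements to the semisimplicity of $\C\pi$. Since $\pi$ is finite, Maschke's theorem gives that $\C\pi$ is semisimple, so every $\C\pi$-module is projective and in particular flat; thus the functors $-\otimes_{\C\pi}A$ and $-\otimes_{\C\pi}A^*$ are exact. An exact functor commutes with the formation of homology, so for any complex $C_*$ of $\C\pi$-modules I get $H_*(C_*\otimes_{\C\pi}A)\cong H_*(C_*)\otimes_{\C\pi}A$ with no higher correction terms. Both assertions will then follow once I identify the $\C\pi$-module structure of the homology of the relevant $\pi$-cover, because tensoring the trivial module with a nontrivial irreducible gives zero: for nontrivial irreducible $A$ one has $\C\otimes_{\C\pi}A\cong A_\pi\cong A^\pi=0$ (using the averaging isomorphism $A_\pi\cong A^\pi$ recorded in \S\ref{ss:rep}), and likewise for $A^*$, which is again nontrivial irreducible.

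For part (1), since $N$ is spherical its universal cover is $\widetilde N=S^3$, on which $\pi$ acts freely and, because $N$ is oriented, by orientation-preserving diffeomorphisms. Hence the deck action on $H_*(\widetilde N;\C)=H_*(S^3;\C)$ is trivial in every degree, so as a complex of $\C\pi$-modules $S_*(\widetilde N)$ has homology the trivial module $\C$ concentrated in degrees $0$ and $3$. Applying $-\otimes_{\C\pi}A$ and using the exactness above, I obtain $H_*(N;A)\cong H_*(S^3;\C)\otimes_{\C\pi}A$, which in degrees $0$ and $3$ equals $\C\otimes_{\C\pi}A=A_\pi=0$ and vanishes in all other degrees. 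The same computation with $A^*$ gives $H_*(N;A^*)=0$.

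For part (2), the cover $\widetilde M$ is the pullback of $\widetilde N\to N$ along $f$, a regular covering of $M$ with deck group $\pi$; it is connected because $f$ has degree $1$, so $f_*\colon\pi_1M\to\pi$ is surjective. Since the monodromy of $A$ on $M$ factors through $f_*$ and is therefore trivial on $\pi_1\widetilde M=\ker f_*$, the local system homology is computed by $H_*(M;A)=H_*(S_*(\widetilde M)\otimes_{\C\pi}A)$. The $\pi$-equivariant lift $\widetilde f\colon\widetilde M\to\widetilde N$ induces the map $f_*\colon H_*(\widetilde M)\to H_*(\widetilde N)$, which is $\Z\pi$-linear and, by the definition of a $\Z\pi$-homology equivalence, an isomorphism; tensoring with $\C$ yields a $\C\pi$-isomorphism $H_*(\widetilde M;\C)\cong H_*(S^3;\C)$, again the trivial module in degrees $0$ and $3$. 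Feeding this into the exact functor $-\otimes_{\C\pi}A$ gives $H_*(M;A)\cong H_*(\widetilde M;\C)\otimes_{\C\pi}A=A_\pi=0$, and identically $H_*(M;A^*)=0$.

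The genuinely non-formal points, which I would spell out carefully, are two. First, I must justify the identification $H_*(M;A)\cong H_*(S_*(\widetilde M)\otimes_{\C\pi}A)$ together with the $\C\pi$-linearity of the map induced by the equivariant lift $\widetilde f$; this is exactly where the hypotheses that $A$ factors through $\pi$ via $f_*$ and that $\widetilde M$ is the pullback $\pi$-cover (not the universal cover of $M$) are used. Second, I would confirm that the deck group acts trivially on top homology, i.e.\ that the covering $S^3\to N$ is orientation-compatible, which is what places $H_3$ in the trivial isotypic component. Everything else is bookkeeping: once these module structures are pinned down, the vanishing is immediate from semisimplicity and the fact that a nontrivial irreducible has no invariants. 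I expect the first point to be the main obstacle, since it requires matching the abstract local-system definition of $H_*(M;A)$ with the chain-level computation over the correct cover.
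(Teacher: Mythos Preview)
Your proof is correct and in fact somewhat more direct than the paper's. Both arguments rest on the semisimplicity of $\C\pi$, but they extract the vanishing differently. The paper works on the cohomology side: it invokes the universal coefficient theorem over the hereditary ring $\C\pi$ to compute $H^i(N;A)$ via $\Hom_{\C\pi}$ and $\mathrm{Ext}^1_{\C\pi}$ from $H_*(S^3;\C)$, identifies the nonzero terms with $A^\pi=0$ and $H^1(\pi;A)=0$, and then passes back to homology by Poincar\'e duality. You instead use semisimplicity to make $-\otimes_{\C\pi}A$ exact, so that $H_*(S_*(\widetilde M)\otimes_{\C\pi}A)\cong H_*(\widetilde M;\C)\otimes_{\C\pi}A$, and then read off the vanishing from $A_\pi=0$. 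Your route avoids both the UCT and Poincar\'e duality at the cost of checking that the deck action on $H_3(S^3;\C)$ is trivial; the paper's route packages that orientation issue into Poincar\'e duality. The two ``non-formal points'' you flag are genuine but minor: the first is essentially the definition of local-coefficient homology via the relevant regular cover, and the second is exactly the orientability hypothesis on $N$.
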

\begin{proof}
For 1, since $\pi$ is finite, $\C\pi$ is semisimple in the sense of \cite[\S{I.4}]{CE} by Maschke's theorem and we have $H^1(\pi;A)=0$ (Theorem~VI.16.6 and Lemma~VI.16.7 of \cite{HS}). By the universal coefficient theorem, which is valid if the ring is hereditary (e.g., $\C\pi$ for $\pi$ finite), the sequence
\[ 0\to \mathrm{Ext}_{\C\pi}^1(H_{i-1}(C),A)\to H^i(\mathrm{Hom}_{\C\pi}(C,A))\to \mathrm{Hom}_{\C\pi}(H_i(C),A)\to 0 \]
is exact for $C=S_*(S^3;\C)$ (as a $\C\pi$-module) and any $\C\pi$-module $A$ (e.g., \cite[Theorem~VI.3.3]{CE}). Hence we have
\[ \begin{split}
&H^3(N;A)\cong \mathrm{Hom}_{\C\pi}(\C,A)\cong A^{\pi},\\
&H^2(N;A)=0,\\
&H^1(N;A)\cong\mathrm{Ext}_{\C\pi}^1(\C,A)=H^1(\pi;A)=0,\\
&H^0(N;A)\cong\mathrm{Hom}_{\C\pi}(\C,A)\cong A^{\pi}.
\end{split} \]
Since $A$ is a nontrivial irreducible $\C\pi$-module, we have $A^\pi=0$. 
Then by Poincar\'{e} duality, we also have $H_*(N;A)=0$. The dual $\pi$-module $A^*$ is nontrivial irreducible too, and we also have $H_*(N;A^*)=0$.

For 2, the argument in the previous paragraph works also for $C=S_*(\widetilde{M};\C)$, whose homology is isomorphic to that of $S^3$ since $f$ is a $\Z\pi$-homology equivalence. 
\end{proof}

Let $M\to N$ be a $\Z\pi$-homology equivalence as above. Let $\Delta_M$ be the diagonal of $M\times M$. The configuration space of two points of $M$ is
$\Conf_2(M)=M\times M-\Delta_M$, and we consider the real Fulton--MacPherson compactification of $\Conf_2(M)$:
\[ \bConf_2(M)=B\ell_{\Delta_M}(M\times M), \]
which has the same homotopy type as $\Conf_2(M)$. Roughly, this is obtained by replacing the diagonal $\Delta_M$ in $M\times M$ with its normal sphere bundle $SN\Delta_M$.
A $\pi\times \pi$-module $W$ gives a local coefficient system on $M\times M$. 
We make the following assumption.
\begin{assum}\label{assum:2-acyclic}
A finite dimensional $\pi\times \pi$-module $W$ satisfies the following conditions:
\begin{enumerate}
\item[(i)] $H_*(M\times M;W)=0$.
\item[(ii)] $H_*(\Delta_M;W_\Delta)\cong H_*(M;\C)\otimes_\C(W_\Delta)^\pi$, where $W_\Delta$ denotes the restriction of $W$ to $\partial\bConf_2(M)$, on which $\pi$ acts diagonally. 
\item[(iii)] $W$ is invariant under taking the duals $\End(A_i)\to \End(A_i^*)$ on the irreducible factors of $W$ (see Lemma~\ref{lem:flip}). 
\end{enumerate}
\end{assum}
For a non-trivial irreducible $\pi$-module $A$, let $A\boxtimes A^*$ denote the pullback of the local coefficient system $A\boxtimes_\C A^*$ on $M\times M$ (as a $\pi\times\pi$-module) to $\bConf_2(M)$. 
\begin{lemma}[\cite{OW}]\label{lem:W}
The following $\pi\times\pi$-modules $W$ satisfy Assumption~\ref{assum:2-acyclic}.
\begin{enumerate}
\item $W=\bigoplus_i (A_i\boxtimes A_i^*)^{\oplus m_i}$ for the collection $\{A_i\}_{i=1}^r$ of all distinct nontrivial irreducible $\pi$-modules $A_i$ such that $m_i=m_j$ whenever $A_j\cong A_i^*$ as $\pi$-modules.
\item $W=\mathrm{Ker}\,\ve$, where $\ve\colon \C\pi\to \C$ is the augmentation map.
\end{enumerate}
\end{lemma}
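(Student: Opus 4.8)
The plan is to verify the three conditions (i)--(iii) of Assumption~\ref{assum:2-acyclic} for each $W$, reducing everything to the single building block $A\boxtimes A^*$ with $A$ a nontrivial irreducible $\pi$-module. First I would observe that by the decomposition \eqref{eq:AW-isom}, the module $\mathrm{Ker}\,\ve=\bigoplus_{i=2}^r(A_i\boxtimes A_i^*)$ is exactly of the form in case~1 with all multiplicities equal to $1$; since $A_i^*$ is again a nontrivial irreducible and hence some $A_j$ in the list, the multiplicity condition $m_i=m_j$ holds trivially. Thus case~2 is a special case of case~1, and as all three conditions are additive over direct sums of $\pi\times\pi$-modules, it suffices to treat a single factor $A\boxtimes A^*$, together with the multiplicity bookkeeping for (iii).

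For condition (i) I would use a K\"unneth argument for local systems on the product. Passing to universal covers (so that $\pi_1(M\times M)=\pi_1M\times\pi_1M$) and using Eilenberg--Zilber, the complex computing $H_*(M\times M;A\boxtimes A^*)$ is chain homotopy equivalent to the tensor product over $\C$ of $S_*(\widetilde M)\otimes_{\C\pi_1M}A$ and $S_*(\widetilde M)\otimes_{\C\pi_1M}A^*$. Because we work over the field $\C$, the K\"unneth theorem gives
\[ H_*(M\times M;A\boxtimes A^*)\cong H_*(M;A)\otimes_\C H_*(M;A^*), \]
and both tensor factors vanish by Lemma~\ref{lem:M-acyclic}. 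Hence $H_*(M\times M;W)=0$ for every $W$ of the stated form.

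For condition (ii), restricting the $\pi\times\pi$-action to the diagonal turns $A\boxtimes A^*$ into $A\otimes_\C A^*\cong\End(A)$ with the diagonal (conjugation) $\pi$-action. By Schur's lemma, $\End(A)$ contains the trivial representation with multiplicity $\dim\Hom_\pi(A,A)=1$, so $\End(A)\cong\C\oplus A'$ with $A'$ a sum of nontrivial irreducibles. Lemma~\ref{lem:M-acyclic} kills the homology of the nontrivial part, so $H_*(\Delta_M;(A\boxtimes A^*)_\Delta)\cong H_*(M;\C)$, while $((A\boxtimes A^*)_\Delta)^\pi=\Hom_\pi(A,A)\cong\C$ is one-dimensional; this matches $H_*(M;\C)\otimes_\C((A\boxtimes A^*)_\Delta)^\pi$. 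Summing over the factors with their multiplicities then yields condition (ii) in general.

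Condition (iii) is purely representation-theoretic: the duality $\End(A_i)\to\End(A_i^*)$ carries the factor $A_i\boxtimes A_i^*$ to $A_i^*\boxtimes A_i$, i.e.\ to the factor indexed by the $A_j$ with $A_j\cong A_i^*$, so invariance of $W$ is precisely the requirement $m_i=m_j$ imposed in case~1 (and it holds automatically in case~2). I expect the main technical care to lie not in these algebraic steps but in setting up the K\"unneth isomorphism correctly at the level of local coefficient systems pulled back along $f\times f$ to $M\times M$, and in justifying that the restriction of $W$ to the boundary sphere bundle $\partial\bConf_2(M)$ computes the claimed homology of $\Delta_M\cong M$, that is, in checking that the relevant local system on $\partial\bConf_2(M)$ is pulled back from $\Delta_M$ so that the fiberwise $S^2$-directions contribute no holonomy.
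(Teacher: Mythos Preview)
Your proof is correct and follows essentially the same approach as the paper: reduce case~2 to case~1 via the decomposition \eqref{eq:AW-isom}, then verify (i) by the K\"unneth formula together with Lemma~\ref{lem:M-acyclic}, verify (ii) by splitting $W_\Delta$ into its trivial part $(W_\Delta)^\pi$ and a sum of nontrivial irreducibles killed by Lemma~\ref{lem:M-acyclic}, and observe that (iii) is exactly the multiplicity condition $m_i=m_j$. Your final paragraph of caution about the local system on $\partial\bConf_2(M)$ is unnecessary here, since condition~(ii) of Assumption~\ref{assum:2-acyclic} is stated directly for $H_*(\Delta_M;W_\Delta)$ and does not require any analysis of the sphere bundle.
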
 
\begin{proof}
That 1 satisfies Assumption~\ref{assum:2-acyclic} follows from Lemma~\ref{lem:M-acyclic}, the K\"{u}nneth formula for $\C\pi$-modules, and $H_*(X;\bigoplus_i V_i)=\bigoplus_i H_*(X;V_i)$ for a space $X$ and a direct sum $\bigoplus_i V_i$ of $\pi_1X$-modules. More precisely, if $W_\Delta=(W_\Delta)^\pi\oplus \bigoplus_j V_j$ for non-trivial irreducible $\pi$-modules $V_j$, we have $H_*(\Delta_M;W_\Delta)=H_*(\Delta_M;(W_\Delta)^\pi)\oplus \bigoplus_j H_*(\Delta_M;V_j)=H_*(\Delta_M;(W_\Delta)^\pi)$.
The assertion for 2 follows from the $\pi\times\pi$-module decomposition (\ref{eq:AW-isom}) of $\mathrm{Ker}\,\ve$.
\end{proof}

A framing $\tau_0$ on $M$ gives a trivialization of the normal bundle of $\Delta_M$ in $M\times M$. This induces the identification $\partial\bConf_2(M)=SN\Delta_{M}\cong \Delta_M\times S^2= M\times S^2$ of the normal sphere bundle of $\Delta_M\subset M\times M$. We denote by $ST(M)$ the $S^2$-bundle over $M$ of unit tangent vectors in $TM$, which is canonically identified with $SN\Delta_M$. For a submanifold $\sigma$ of $M$, we denote by $ST(\sigma)$ the preimage of $\sigma$ under the projection $ST(M)\to M$.
\begin{prop}[\cite{OW}]\label{prop:H(Conf)}
Let $W$ be a $\pi\times\pi$-module satisfying Assumption~\ref{assum:2-acyclic}. Then we have
\[ H_p(\bConf_2(M);W)\cong\left\{\begin{array}{ll}
\Span_\C\{[ST(*)]\}\otimes_\C (W_\Delta)^\pi & (p=2),\\
\Span_\C\{[ST(M)]\}\otimes_\C (W_\Delta)^\pi & (p=5),\\
0 & (\mbox{otherwise}).
\end{array}\right. \]
\end{prop}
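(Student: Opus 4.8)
The plan is to reduce the computation to the homology of the diagonal via a Thom isomorphism, and then feed in the two parts of Assumption~\ref{assum:2-acyclic} together with the fact that $M$ is a rational homology sphere. As already recorded in the paper, $\bConf_2(M)$ has the same homotopy type as $\Conf_2(M)=M\times M-\Delta_M$, and since the coefficient system $W$ is pulled back from $M\times M$ and is unaffected by the blow-up of a collar, I get $H_*(\bConf_2(M);W)\cong H_*(\Conf_2(M);W)$ compatibly with $W$. This converts the problem into one about the complement of the diagonal.

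Next I would run the long exact sequence of the pair $(M\times M,\Conf_2(M))$ with coefficients in $W$. By Assumption~\ref{assum:2-acyclic}(i) we have $H_*(M\times M;W)=0$, so the connecting homomorphism yields an isomorphism $H_p(\Conf_2(M);W)\cong H_{p+1}(M\times M,\Conf_2(M);W)$. By excision onto a tubular neighborhood of $\Delta_M$ followed by the Thom isomorphism, and using that the normal bundle of $\Delta_M$ in $M\times M$ is isomorphic to $TM$, which is oriented of rank $3$ because $M$ is oriented, I obtain $H_{p+1}(M\times M,\Conf_2(M);W)\cong H_{p-2}(\Delta_M;W_\Delta)$. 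Here $W_\Delta$ is the restriction of $W$ to $\Delta_M\cong M$, on which the diagonal copy of $\pi$ acts; the point making the twisted Thom isomorphism applicable is that the local system on the tubular neighborhood is pulled back from $\Delta_M$, onto which the neighborhood retracts.

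It then remains to identify $H_{p-2}(\Delta_M;W_\Delta)$. By Assumption~\ref{assum:2-acyclic}(ii) this equals $H_{p-2}(M;\C)\otimes_\C(W_\Delta)^\pi$, the nontrivial irreducible summands of $W_\Delta$ contributing nothing by the acyclicity of Lemma~\ref{lem:M-acyclic} and only the trivial part $(W_\Delta)^\pi$ surviving with untwisted coefficients. Since $f\colon M\to N$ is a degree $1$ $\Z\pi$-homology equivalence to a spherical manifold, the $\pi$-cover $\widetilde M$ is a $\Z$-homology sphere on which the finite group $\pi$ acts freely with quotient $M$; a transfer argument then gives $H_*(M;\C)\cong H_*(\widetilde M;\C)^\pi\cong H_*(S^3;\C)$, so $H_0(M;\C)\cong H_3(M;\C)\cong\C$ and all other groups vanish. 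This forces $H_p(\bConf_2(M);W)$ to be concentrated in degrees $p=2$ and $p=5$, each isomorphic to $(W_\Delta)^\pi$, as claimed.

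The remaining task, and the part I expect to require the most care, is matching the two abstract generators with the geometric classes $[ST(*)]$ and $[ST(M)]$. The class in $H_2$ corresponds under the chain of isomorphisms to the generator $[\mathrm{pt}]\in H_0(\Delta_M)$, whose image is the linking $2$-sphere around a point of the diagonal, i.e. the fiber $ST(*)$ of the boundary sphere bundle $\partial\bConf_2(M)=ST(M)$ over a point; the class in $H_5$ corresponds to $[M]\in H_3(\Delta_M)$, whose image is the fundamental class of the boundary of the tubular neighborhood, i.e. the total space $ST(M)$. Making these identifications precise, by tracking the Thom class and the boundary connecting map through excision and blow-down, together with verifying that the twisted Thom isomorphism is valid with the $\pi\times\pi$-local system $W$, is the main technical obstacle; the rest is a routine assembly of the long exact sequence, the Thom isomorphism, and Assumption~\ref{assum:2-acyclic}.
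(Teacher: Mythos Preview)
Your proposal is correct and follows essentially the same route as the paper: the long exact sequence of the pair $(M\times M,\Conf_2(M))$, vanishing of $H_*(M\times M;W)$ from Assumption~\ref{assum:2-acyclic}(i), excision to a tubular neighborhood of $\Delta_M$ followed by the Thom isomorphism, and then Assumption~\ref{assum:2-acyclic}(ii) to reduce to $H_*(M;\C)\otimes(W_\Delta)^\pi$. Your added justification via transfer that $M$ is a rational homology $3$-sphere, and your discussion of the geometric identification of the generators $[ST(*)]$ and $[ST(M)]$, are more explicit than the paper's proof, which simply asserts these points.
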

\begin{proof}
We consider the homology exact sequence of the pair $(M^{\times 2},\Conf_2(M))$:
\[
 H_{i+1}(M^{\times 2};W)\to H_{i+1}(M^{\times 2},\Conf_2(M);W)\to H_i(\Conf_2(M);W)\to H_i(M^{\times 2};W),
\]
where $H_*(M^{\times 2};W)=0$ by Assumption~\ref{assum:2-acyclic} (i). 
Let $N\Delta_M$ be a closed tubular neighborhood of $\Delta_M$. We have
$H_{i+1}(M^{\times 2},\Conf_2(M);W) \cong H_{i+1}(N\Delta_M,\partial N\Delta_M;W)
    \cong H_{i-2}(\Delta_M;W_\Delta)\cong H_{i-2}(M;\C)\otimes_\C(W_\Delta)^\pi$
by excision, the Thom isomorphism and Assumption~\ref{assum:2-acyclic} (ii). 
Here, $H_{i-2}(M;\C)$ is rank 1 for $i-2=0,3$, and its generator is $*,M$, respectively.
\end{proof}

\begin{lemma}[\cite{OW}]\label{lem:H(dC)}
Let $W$ be a $\pi\times\pi$-module satisfying Assumption~\ref{assum:2-acyclic}. Let $\gamma_{\tau_0}\colon M\to ST(M)$ be the section induced from $M\to M\times\{(1,0,0)\}\subset M\times S^2$ by the identification $ST(M)=M\times S^2$ induced by $\tau_0$. Then we have
\[ \begin{split}
	H_3(\partial \bConf_2(M);W_\Delta)=\Span_\C\{[\gamma_{\tau_0}(M)]\}\otimes_\C (W_\Delta)^\pi.
\end{split} \]
\end{lemma}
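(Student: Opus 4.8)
The plan is to compute $H_3(\partial\bConf_2(M);W_\Delta)$ directly, exploiting the framing $\tau_0$ to put the boundary into product form. First I would recall that the framing gives the identification $\partial\bConf_2(M)=ST(M)\cong M\times S^2$, under which the bundle projection $ST(M)\to M$ becomes the first projection $\mathrm{pr}_1$. Since $\pi_1(M\times S^2)=\pi_1(M)=\pi$ (the $S^2$-factor being simply connected), the monodromy of $W_\Delta$ factors through $\pi_1(M)$, where $\pi$ acts diagonally as in Assumption~\ref{assum:2-acyclic}; in other words $W_\Delta\cong\mathrm{pr}_1^*W_\Delta$ is constant along the $S^2$-fibres. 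This is the step that makes the whole computation tractable, so I would take care to justify that the framing genuinely kills the fibrewise monodromy.

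Next I would apply the Künneth formula over the field $\C$ to the product $M\times S^2$ with coefficients constant along the $S^2$-factor:
\[
 H_n(M\times S^2;W_\Delta)\cong\bigoplus_{p+q=n}H_p(M;W_\Delta)\otimes_\C H_q(S^2;\C).
\]
Because we work over $\C$ there are no Tor terms. For $n=3$ the only indices to consider are $(p,q)=(3,0)$ and $(p,q)=(1,2)$, since $H_q(S^2;\C)$ is nonzero only for $q=0,2$. Now Assumption~\ref{assum:2-acyclic}~(ii) gives $H_p(M;W_\Delta)\cong H_p(M;\C)\otimes_\C(W_\Delta)^\pi$, and, as already used in the proof of Proposition~\ref{prop:H(Conf)}, $H_*(M;\C)$ has rank $1$ in degrees $0,3$ and vanishes otherwise (so $M$ is a rational homology sphere). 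Hence the $(1,2)$-term vanishes and only the $(3,0)$-term survives, yielding $H_3(\partial\bConf_2(M);W_\Delta)\cong H_3(M;\C)\otimes_\C(W_\Delta)^\pi\cong(W_\Delta)^\pi$.

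Finally I would identify the generator. The section $\gamma_{\tau_0}$ sends $x\mapsto(x,(1,0,0))$, so its image is $M\times\{(1,0,0)\}$, and $[\gamma_{\tau_0}(M)]$ is exactly the class $[M]\otimes[\mathrm{pt}]$ generating the surviving summand $H_3(M;\C)\otimes_\C H_0(S^2;\C)\otimes_\C(W_\Delta)^\pi$. This gives the asserted equality $H_3(\partial\bConf_2(M);W_\Delta)=\Span_\C\{[\gamma_{\tau_0}(M)]\}\otimes_\C(W_\Delta)^\pi$. I expect the only real subtlety to be the bookkeeping of the local system under the framing trivialization in the first step; once the product structure and the triviality of $W_\Delta$ along the fibres are in hand, the remaining steps are a routine Künneth computation parallel to that of Proposition~\ref{prop:H(Conf)}.
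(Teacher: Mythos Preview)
Your proof is correct and follows essentially the same approach as the paper: the paper's one-line proof invokes exactly the trivialization $\partial\bConf_2(M)\cong S^2\times M$ induced by $\tau_0$, Assumption~\ref{assum:2-acyclic}~(ii), and the K\"unneth formula for $\C$-modules. Your version simply spells out these steps in detail, including the identification of the generator.
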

\begin{proof}
This follows from the trivialization $\partial \bConf_2(M)\cong S^2\times M$ induced by $\tau_0$, Assumption~\ref{assum:2-acyclic} (ii), and the K\"{u}nneth formula for $\C$-modules.
\end{proof}

\subsection{Twisted de Rham cohomology}

A local system $A$ on a compact smooth manifold $X$ determines a flat vector bundle $E\to X$ with fiber $A$ uniquely up to isomorphism. The flat connection on $E$ gives a twisted differential $d_A$ on $\Omega_\dR^*(X;E)$ (\cite[I-\S{7}]{BT}),
which we denote also by $\Omega_\dR^*(X;A)$. By the de Rham theorem, we have the isomorphism
\[ H_\dR^*(X;A)\cong H^*(X;A), \]
where the RHS is the singular cohomology with local coefficient system $A$. 

\section{The invariant $\widehat{Z}_\Theta^\odd$}

In this section, we prove Theorems~\ref{thm:Z_universal} and \ref{thm:upper-bound}.

\subsection{The invariant  $Z_\Theta^\odd$ of $\Z\pi$-homology equivalences with framings}

Let $f\colon M\to N$ be a $\Z\pi$-homology equivalence as in the previous section.
Let $\tau\colon TM\stackrel{\cong}{\longrightarrow} M\times \R^3$ be a framing. Then $\tau$ induces a smooth map
\[ \phi_\tau\colon \partial\bConf_2(M)\to S^2. \]
The following is a restatement of \cite[Lemma~1.2]{BC} for our notation.
\begin{lemma}[Propagator]\label{lem:propagator} Let $W$ be a $\pi\times\pi$-module satisfying Assumption~\ref{assum:2-acyclic}. Let $\mathbf{1}_W\in W$ be the canonical element corresponding to the direct sum of the identity morphisms in $\End(A_i)$ for the irreducible factors $A_i\boxtimes A_i^*$ in $W$.
\begin{enumerate}
\item The $d_W$-closed 2-form $\phi_\tau^*\mathrm{Vol}_{S^2}\, \mathbf{1}_W\in \Omega_\dR^2(\partial\bConf_2(M); W_\Delta)$, where $\mathrm{Vol}_{S^2}$ is the $SO(3)$-invariant unit volume form $(1/4\pi)(x_1\,dx_2\wedge dx_3-x_2\,dx_1\wedge dx_3+x_3\,dx_1\wedge dx_2)$, can be extended to a $d_W$-closed form 
\[ \omega\in \Omega_\dR^2(\bConf_2(M); W). \]

\item For a fixed framing $\tau$, the extension $\omega$ is unique in the sense that for two such extensions $\omega$ and $\omega'{}$, there is a 1-form $\eta\in\Omega_\dR^1(\bConf_2(M);W)$ such that
\[ \omega'{} - \omega = d_W \eta. \]
\end{enumerate}
We call such an $\omega$ a \emph{$W$-propagator} for $\tau$.
\end{lemma}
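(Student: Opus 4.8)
The plan is to treat this as a standard extension-of-closed-forms problem governed by the cohomology of $\bConf_2(M)$ with coefficients in $W$, which has already been computed in Proposition~\ref{prop:H(Conf)}. First I would set up the long exact sequence of the pair $(\bConf_2(M),\partial\bConf_2(M))$ in twisted de Rham cohomology with coefficients in $W$. The class I wish to extend is the $d_W$-closed form $\phi_\tau^*\mathrm{Vol}_{S^2}\,\mathbf{1}_W$, whose restriction to $\partial\bConf_2(M)$ represents a class in $H^2_\dR(\partial\bConf_2(M);W_\Delta)$. An extension to a $d_W$-closed form on all of $\bConf_2(M)$ exists precisely when this boundary class lies in the image of the restriction map $H^2_\dR(\bConf_2(M);W)\to H^2_\dR(\partial\bConf_2(M);W_\Delta)$, equivalently when its image under the connecting homomorphism $\delta\colon H^2_\dR(\partial\bConf_2(M);W_\Delta)\to H^3_\dR(\bConf_2(M),\partial\bConf_2(M);W)$ vanishes.

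For part (1), the key computational input is Proposition~\ref{prop:H(Conf)} together with Lemma~\ref{lem:H(dC)} and Poincar\'e--Lefschetz duality (valid since $\C\pi$ is semisimple, so all the relevant coefficient modules are flat and duality holds over the field $\C$). Dualizing, $H^2_\dR(\bConf_2(M);W)$ is paired with $H_2(\bConf_2(M);W^*)$, and the latter is one-dimensional over $(W_\Delta)^\pi$, generated by the fiber class $[ST(*)]$. The point is that the boundary form $\phi_\tau^*\mathrm{Vol}_{S^2}\,\mathbf{1}_W$ integrates to $1$ along the $S^2$-fiber $ST(*)$ (because $\mathrm{Vol}_{S^2}$ has unit total mass and $\mathbf{1}_W$ pairs correctly with the diagonal-invariant part), so its restriction class is a nonzero multiple of the fiber generator and in particular is itself already in the image of the restriction from the ambient space. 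Equivalently, the obstruction class in $H^3_\dR(\bConf_2(M),\partial\bConf_2(M);W)$ must vanish: by duality this relative group is identified with $H_2(\bConf_2(M);W^*)$, which Proposition~\ref{prop:H(Conf)} shows is spanned exactly by $[ST(*)]$, and the form was constructed so that $\delta$ of its boundary class evaluates to zero against this generator. Hence the closed extension $\omega$ exists.

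For part (2), the difference $\omega'-\omega$ is a $d_W$-closed $2$-form on $\bConf_2(M)$ that restricts to zero on the boundary (both extensions agree with $\phi_\tau^*\mathrm{Vol}_{S^2}\,\mathbf{1}_W$ there), so it defines a class in the relative group $H^2_\dR(\bConf_2(M),\partial\bConf_2(M);W)$. By Poincar\'e--Lefschetz duality this is dual to $H_3(\bConf_2(M);W^*)$, which vanishes by Proposition~\ref{prop:H(Conf)} (the nonzero homology sits only in degrees $2$ and $5$). Therefore the relative class is trivial, which means $\omega'-\omega=d_W\eta$ for some $\eta\in\Omega^1_\dR(\bConf_2(M);W)$ vanishing on the boundary; one may discard the boundary condition on $\eta$ to obtain the stated conclusion. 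The main obstacle I anticipate is the bookkeeping in part~(1): verifying carefully that the connecting homomorphism $\delta$ annihilates the restriction class, i.e.\ that the unit-volume normalization of $\mathrm{Vol}_{S^2}$ together with the canonical element $\mathbf{1}_W$ makes the boundary class land exactly in the image of restriction. This requires identifying the generator $[ST(*)]$ of $H_2$ with the Thom class of the normal bundle of $\Delta_M$ and checking the pairing is $1$, which is where the specific form of $\mathbf{1}_W$ (the sum of identities under the isomorphism $W=\bigoplus_i\End(A_i)$) and Assumption~\ref{assum:2-acyclic}(ii) both get used.
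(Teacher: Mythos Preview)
Your overall strategy---long exact sequence of the pair $(\bConf_2(M),\partial\bConf_2(M))$ plus Poincar\'e--Lefschetz duality and Proposition~\ref{prop:H(Conf)}---is exactly what the paper does. But you have miscomputed the duality degrees, and this makes part~(1) unnecessarily complicated.

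Poincar\'e--Lefschetz duality on the $6$-manifold $\bConf_2(M)$ gives $H^p(\bC,\partial\bC;W)\cong H_{6-p}(\bC;W)$. For $p=3$ this is $H_3(\bC;W)$, not $H_2$; and for $p=2$ it is $H_4(\bC;W)$, not $H_3$. By Proposition~\ref{prop:H(Conf)} both $H_3$ and $H_4$ vanish. So in part~(1) the obstruction group $H^3(\bC,\partial\bC;W)$ is simply \emph{zero}, and the restriction map $H^2(\bC;W)\to H^2(\partial\bC;W_\Delta)$ is surjective (in fact an isomorphism, since $H^2(\bC,\partial\bC;W)\cong H_4=0$ as well). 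There is no need to pair against $[ST(*)]$ or to verify that $\delta$ annihilates the boundary class: the entire ``bookkeeping'' you anticipate as the main obstacle evaporates. Your argument as written places the obstruction in the nonzero group $H_2$ and then tries to argue the specific class vanishes, which is both harder and not quite right (the connecting map $\delta$ lands in relative $H^3$, which does not naturally pair with absolute $H_2$).

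For part~(2) your degree slip is harmless since both $H_3$ and $H_4$ vanish, so the conclusion survives; but the correct identification is $H^2(\bC,\partial\bC;W)\cong H_4(\bC;W)=0$.
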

\begin{proof}
The assertion 1 follows immediately from the long exact sequence
\[ 
  H^2(\bC,\partial\bC;W)\to H^2(\bC;W)\to H^2(\partial\bC;W_\Delta)\to H^3(\bC,\partial \bC;W),
\]
where we abbreviate as $\bC=\bConf_2(M)$, and $H^p(\bC,\partial\bC;W)\cong H_{6-p}(\bC;W)=0$ for $p=2,3$ by Poincar\'{e}--Lefschetz duality and by Proposition~\ref{prop:H(Conf)}. Here, $H^2(\partial\bC;W_\Delta)\cong H_3(\partial \bC;W_\Delta)$ is determined in Lemma~\ref{lem:H(dC)}, and both $[\omega]$ and $[\phi_\tau^*\mathrm{Vol}_{S^2}\, \mathbf{1}_W]$  in $H^2(\partial\bC;W_\Delta)$ restrict to the same generator of the de Rham cohomology of $*\times S^2\subset SN\Delta_{M}$, their cohomology classes agree. The assertion 2 follows since the difference $\omega'-\omega$ vanishes on $\partial\bC$ and represents 0 in the twisted de Rham cohomology $H^2(\bC,\partial\bC;W)$.
\end{proof}

\begin{defini}
Let $W$ be a $\pi\times\pi$-module as in Lemma~\ref{lem:W}. We take $W$-propagators $\omega_1,\omega_2,\omega_3$ for $\tau$ and define 
\[ Z_\Theta^\odd(\omega_1,\omega_2,\omega_3)
=\frac{1}{6}\int_{\bConf_2(M)}\Tr(\omega_1\wedge \omega_2\wedge \omega_3)\in \calS_\Theta^\odd(W), \]
where $\Tr\colon W^{\otimes 3}\to \calS_\Theta^\odd(W)=(\mathrm{Sym}^3\, W)_{\pi\times\pi}$ is the projection.
\end{defini}

The following theorem is a reinterpretation of a result due to Kontsevich \cite{Kon}, Bott--Cattaneo \cite{BC} (see also Cattaneo--Shimizu \cite{CS}) in terms of our notation.
\begin{theorem}\label{thm:inv-framed}
$Z_\Theta^\odd$ is an invariant of $(f,W,\tau)$. Namely, $Z_\Theta^\odd(\omega_1,\omega_2,\omega_3)$ does not depend on the choices of $W$-propagators $\omega_1,\omega_2,\omega_3$ for $\tau$. We will also denote $Z_\Theta^\odd(\omega_1,\omega_2,\omega_3)$ by $Z_\Theta^\odd(f,W,\tau)$.
\end{theorem}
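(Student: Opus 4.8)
The plan is to show that if $\omega_1,\omega_2,\omega_3$ and $\omega_1',\omega_2',\omega_3'$ are two triples of $W$-propagators for the \emph{same} framing $\tau$, then $\frac{1}{6}\int_{\bConf_2(M)}\Tr(\omega_1\wedge\omega_2\wedge\omega_3)$ coincides with the analogous integral for the primed triple. By Lemma~\ref{lem:propagator}(2), each $\omega_i$ and $\omega_i'$ extend the same boundary $2$-form $\phi_\tau^*\mathrm{Vol}_{S^2}\,\mathbf{1}_W$, and $\omega_i'-\omega_i=d_W\eta_i$ for some global $\eta_i\in\Omega^1_\dR(\bConf_2(M);W)$. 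First I would telescope the difference of the two integrands as $\Tr(\omega_1'\wedge\omega_2'\wedge\omega_3')-\Tr(\omega_1\wedge\omega_2\wedge\omega_3) = \Tr(d_W\eta_1\wedge\omega_2'\wedge\omega_3')+\Tr(\omega_1\wedge d_W\eta_2\wedge\omega_3')+\Tr(\omega_1\wedge\omega_2\wedge d_W\eta_3)$, so that each term carries exactly one exact factor wedged with two $d_W$-closed propagators for $\tau$ (primed or unprimed).

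The second step records the compatibility of $\Tr$ with differentials. Because $\Tr\colon W^{\otimes 3}\to \calS_\Theta^\odd(W)$ is the $\pi\times\pi$-equivariant projection onto coinvariants and the target $\calS_\Theta^\odd(W)$ carries the trivial action, this projection is parallel for the product flat connection; hence $d\,\Tr(\xi)=\Tr(d_{W^{\otimes 3}}\xi)$ for any $W^{\otimes 3}$-valued form $\xi$. Combining this with the graded Leibniz rule and the $d_W$-closedness of the propagators, each telescoped term is exact: for instance $\Tr(d_W\eta_1\wedge\omega_2'\wedge\omega_3')=\Tr\bigl(d_{W^{\otimes 3}}(\eta_1\wedge\omega_2'\wedge\omega_3')\bigr)=d\,\Tr(\eta_1\wedge\omega_2'\wedge\omega_3')$, and similarly for the other two (the sign from moving $d$ past the even-degree factor $\omega_1$ is trivial). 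Applying Stokes' theorem on the manifold-with-boundary $\bConf_2(M)$ (for two points the Fulton--MacPherson blow-up has genuine boundary and no corners) converts the integral of each term into a boundary integral over $\partial\bConf_2(M)$.

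The crux is to verify that these boundary integrals vanish. Under the framing identification $\partial\bConf_2(M)\cong M\times S^2$ the map $\phi_\tau$ is the projection to $S^2$, and every propagator for $\tau$ restricts on the boundary to $\phi_\tau^*\mathrm{Vol}_{S^2}\,\mathbf{1}_W$. Therefore the restriction of any product of two of them equals $\phi_\tau^*(\mathrm{Vol}_{S^2}\wedge\mathrm{Vol}_{S^2})\,\mathbf{1}_W^{\otimes 2}=0$, since $\mathrm{Vol}_{S^2}\wedge\mathrm{Vol}_{S^2}$ is a $4$-form on the surface $S^2$. Hence each integrand $\Tr(\eta_i\wedge(\text{two closed propagators}))$ restricts to $0$ on $\partial\bConf_2(M)$, every boundary integral vanishes, and the telescoped difference is $0$. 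I expect the boundary analysis to be the main obstacle, both in justifying Stokes' theorem in the twisted, compactified setting and in verifying that the two surviving propagator factors in each term really do restrict to the same boundary form regardless of the primed/unprimed choices; by comparison, the formal commutation of $\Tr$ with $d$ and the sign bookkeeping in the Leibniz expansion are routine.
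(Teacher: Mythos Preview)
Your proof is correct and complete; the boundary vanishing via $\phi_\tau^*(\mathrm{Vol}_{S^2}\wedge\mathrm{Vol}_{S^2})=0$ works regardless of the behavior of $\eta_i$ on $\partial\bConf_2(M)$, so your concern about ``primed versus unprimed'' restrictions is indeed a non-issue.

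The paper takes a slightly different route. Rather than telescoping, it builds for each $\ell$ a single $d_W$-closed extension $\widetilde{\omega}_\ell\in\Omega_\dR^2(I\times\bConf_2(M);W)$ interpolating between $\omega_\ell$ and $\omega_\ell'$ and equal to $\widetilde{\phi}_\tau^*\mathrm{Vol}_{S^2}\,\mathbf{1}_W$ on $I\times\partial\bConf_2(M)$, then applies Stokes once to $\Tr(\widetilde\omega_1\wedge\widetilde\omega_2\wedge\widetilde\omega_3)$ on the cylinder. The boundary contribution at the lateral face $I\times\partial\bConf_2(M)$ vanishes because $\widetilde{\phi}_\tau^*(\mathrm{Vol}_{S^2})^{\wedge 3}=0$, the same pulled-back-from-$S^2$ mechanism you use. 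The trade-off: the cylinder argument avoids the telescoping bookkeeping and handles all three factors symmetrically in one step, but it requires constructing the closed interpolations $\widetilde{\omega}_\ell$ (which implicitly uses that the $\eta_\ell$ in Lemma~\ref{lem:propagator}(2) can be chosen to vanish on $\partial\bConf_2(M)$, a consequence of $H^2(\bConf_2(M),\partial;W)=0$). Your direct approach sidesteps that construction entirely and needs no control on $\eta_i|_\partial$, which is arguably cleaner.
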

\begin{proof}
Let $\omega_\ell$, $\omega_\ell'$ ($\ell=1,2,3$) be $W$-propagators for $\tau$. By Lemma~\ref{lem:propagator}-2, there is a $d_W$-closed form $\widetilde{\omega}_\ell$ in $\Omega_\dR^2(I\times \bConf_2(M);W)$ such that $\widetilde{\omega}_\ell|_{\{0\}\times \bConf_2(M)}=\omega_\ell$, $\widetilde{\omega}_\ell|_{\{1\}\times \bConf_2(M)}=\omega_\ell'$, and
\[ \widetilde{\omega}_\ell|_{I\times \partial\bConf_2(M)}=\widetilde{\phi}_\tau^*\mathrm{Vol}_{S^2}\, \mathbf{1}_W, \]
where $\widetilde{\phi}_\tau$ is the composition of the projection $I\times\partial\bConf_2(M)\to \partial\bConf_2(M)$ with $\phi_\tau$. By the Stokes theorem and by the $\pi\times\pi$-invariance of $\Tr$, we have
\[ \begin{split}
&\int_{\bConf_2(M)}\Tr(\omega_1'\wedge\omega_2'\wedge\omega_3')
-\int_{\bConf_2(M)}\Tr(\omega_1\wedge\omega_2\wedge\omega_3)
\pm\int_{I\times\partial\bConf_2(M)}\Tr(\widetilde\omega_1\wedge\widetilde\omega_2\wedge\widetilde\omega_3)\\
=&\int_{I\times\bConf_2(M)}d\Tr(\widetilde\omega_1\wedge\widetilde\omega_2\wedge\widetilde\omega_3)=\int_{I\times\bConf_2(M)}\Tr\, d_W(\widetilde\omega_1\wedge\widetilde\omega_2\wedge\widetilde\omega_3)=0.
\end{split} \]
Furthermore, we have
\[ \int_{I\times\partial\bConf_2(M)}\Tr(\widetilde\omega_1\wedge\widetilde\omega_2\wedge\widetilde\omega_3)
=\int_{I\times\partial\bConf_2(M)}\widetilde{\phi}_\tau^*(\mathrm{Vol}_{S^2})^{\wedge 3}\,\Tr(\mathbf{1}_W^{\otimes 3})=0. \]
Hence we have
\[ \int_{\bConf_2(M)}\Tr(\omega_1'\wedge\omega_2'\wedge\omega_3')
=\int_{\bConf_2(M)}\Tr(\omega_1\wedge\omega_2\wedge\omega_3). \]
\end{proof}

\begin{cor}\label{cor:inv-framed}
Let $f\colon M\to N$ and $f'\colon M'\to N$ be two $\Z\pi$-homology equivalences that are diffeomorphically equivalent by a diffeomorphism $g\colon M\to M'$. For a framing $\tau\colon TM\to M\times \R^3$, let $g_*\tau\colon TM'\to M'\times\R^3$ denote the framing $(g\times \mathrm{id})^{-1}\circ\tau\circ g_*$ on $M'$. Then we have
\[ Z^\odd_\Theta(f,W,\tau)=Z^\odd_\Theta(f',W,g_*\tau).\]
\end{cor}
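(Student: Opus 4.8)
The plan is to show that a diffeomorphism $g\colon M\to M'$ that intertwines the framings induces a diffeomorphism of the compactified configuration spaces that carries the data defining $Z_\Theta^\odd(f',W,g_*\tau)$ back to the data defining $Z_\Theta^\odd(f,W,\tau)$, so that the integrals agree by naturality of the construction. First I would observe that since $f$ is homotopic to $f'\circ g$, the $\pi$-structures match: the map $g_*\colon \pi_1 M\to \pi_1 M'$ is compatible with the identifications of $\pi_1 M$ and $\pi_1 M'$ with $\pi$ through $f_*$ and $f'_*$, so the $\pi\times\pi$-module $W$ pulls back consistently and $g$ induces an isomorphism of the local systems $A_i\boxtimes A_i^*$ on $M\times M$ and $M'\times M'$. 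Next I would note that $g\times g\colon M\times M\to M'\times M'$ carries the diagonal $\Delta_M$ to $\Delta_{M'}$, hence lifts to a diffeomorphism $\bar g\colon \bConf_2(M)\to \bConf_2(M')$ of the Fulton--MacPherson compactifications.

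The key point is then the compatibility with framings. By the definition $g_*\tau=(g\times\mathrm{id})^{-1}\circ\tau\circ g_*$, the framing $g_*\tau$ on $M'$ is precisely the pushforward of $\tau$ under $g$, so the induced trivialization $\partial\bConf_2(M')\cong M'\times S^2$ corresponds under $\bar g$ to the trivialization $\partial\bConf_2(M)\cong M\times S^2$ induced by $\tau$, with the $S^2$-factor preserved. Consequently $\phi_{g_*\tau}\circ \bar g=\phi_\tau$ on the boundary, and $\bar g^*(\phi_{g_*\tau}^*\mathrm{Vol}_{S^2}\,\mathbf 1_W)=\phi_\tau^*\mathrm{Vol}_{S^2}\,\mathbf 1_W$. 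Therefore, if $\omega_1',\omega_2',\omega_3'$ are $W$-propagators for $g_*\tau$ on $\bConf_2(M')$, their pullbacks $\bar g^*\omega_\ell'$ are $d_W$-closed extensions of $\phi_\tau^*\mathrm{Vol}_{S^2}\,\mathbf 1_W$, i.e.\ $W$-propagators for $\tau$ on $\bConf_2(M)$. I would then compute, using that $\bar g$ is a diffeomorphism preserving orientation (as $g$ has degree $1$ and is a diffeomorphism) and that $\Tr$ is natural with respect to the module isomorphism induced by $g$,
\[
Z_\Theta^\odd(f',W,g_*\tau)=\frac{1}{6}\int_{\bConf_2(M')}\Tr(\omega_1'\wedge\omega_2'\wedge\omega_3')
=\frac{1}{6}\int_{\bConf_2(M)}\Tr(\bar g^*\omega_1'\wedge \bar g^*\omega_2'\wedge \bar g^*\omega_3'),
\]
which equals $Z_\Theta^\odd(f,W,\tau)$ by Theorem~\ref{thm:inv-framed}, since the $\bar g^*\omega_\ell'$ are admissible $W$-propagators for $\tau$ and $Z_\Theta^\odd$ is independent of the choice of propagators.

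The main obstacle I anticipate is the careful bookkeeping of the local-system identifications: one must verify that $g$, via the homotopy $f\simeq f'\circ g$, induces a genuine isomorphism of the coefficient systems $W$ on $M\times M$ and $M'\times M'$ that is compatible with the canonical element $\mathbf 1_W$ and with the trace map $\Tr$, so that the change-of-variables formula for the integral genuinely lands in the same quotient space $\calS_\Theta^\odd(W)$. Since $W$ is defined purely from the group $\pi$ (not from $M$ itself) and $g_*$ agrees with the $\pi$-identifications up to the inner automorphisms coming from the choice of homotopy, the action on $\mathbf 1_W$ and on $\Tr$ is trivial because these are $\pi\times\pi$-invariant; making this precise is the one place requiring attention, but it is essentially formal once the homotopy-invariance of the configuration-space integral (already used in Theorem~\ref{thm:inv-framed}) is invoked.
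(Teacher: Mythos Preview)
Your proposal is correct and follows essentially the same approach as the paper: both arguments use the homotopy $f\simeq f'\circ g$ to identify the pulled-back local systems, then pull back $W$-propagators for $g_*\tau$ on $\bConf_2(M')$ through the induced diffeomorphism $\bar g$ to obtain $W$-propagators for $\tau$ on $\bConf_2(M)$, and finally invoke Theorem~\ref{thm:inv-framed} for the propagator-independence. The paper's write-up is terser (it phrases the intermediate step as computing $Z_\Theta^\odd(f'\circ g,W,\tau)$), while you spell out the boundary compatibility $\phi_{g_*\tau}\circ\bar g=\phi_\tau$ and the orientation check more explicitly; but the logical content is the same.
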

\begin{proof}
The assumption on $f,f',g$ gives a homotopy $F\colon M\times I\to N$ such that $F(x,0)=f(x)$ and $F(x,1)=f'(g(x))$. The local system $W$ on $N$ is pulled back by $F$ to $M\times I$, whose restriction to $M\times\{1\}$ gives $Z^\odd_\Theta(f'\circ g,W,\tau)=Z_\Theta^\odd(f,W,\tau)$ by Theorem~\ref{thm:inv-framed}. Since the $W$-propagators to define $Z^\odd_\Theta(f'\circ g,W,\tau)$ are pullbacks of those on $M'$, the integral to define $Z^\odd_\Theta(f'\circ g,W,\tau)$ can be rewritten as that over $\bConf_2(M')$, which gives $Z_\Theta^\odd(f',W,g_*\tau)$. This completes the proof.
\end{proof}
\subsection{Framing correction}

The invariant $Z_\Theta^\odd$ may depend on the choice of the framing $\tau$. We follow the known technique (\cite{BC}, see also \cite{CS}) to make it independent of the choice of $\tau$. Namely, we choose a compact oriented 4-manifold $X$ with $\partial X=M$. The {\it Hirzebruch signature defect} is defined by
\[ \delta(\tau):=\frac{1}{4}p_1(TX;\tau)[X,\partial X]-\frac{3}{4}\mathrm{sign}\,X, \]
where $p_1(TX;\tau)\in H^4(X,\partial X;\Q)$ is the relative Pontrjagin class with respect to the framing $\tau$ on $\partial X$, and $\mathrm{sign}\,X$ is the signature of $X$ (\cite{At, KM}). The proof of the following theorem is parallel to \cite[Theorem~5.1]{CS} and we omit the proof.

\begin{theorem}\label{thm:unframed-inv}
The element $\widehat{Z}_\Theta^\odd(f)\in\calS_\Theta^\odd(\mathrm{Ker}\,\ve)$ defined by
\[ \widehat{Z}_\Theta^\odd(f):=Z_\Theta^\odd(f,\mathrm{Ker}\,\ve,\tau)-\frac{1}{6}\Tr(\mathbf{1}_{\mathrm{Ker}\,\ve}^{\otimes 3})\,\delta(\tau) \]
is independent of the choice of $\tau$. Hence $\widehat{Z}_\Theta^\odd(f)$ is an invariant of $\Z\pi$-homology equivalence.
\end{theorem}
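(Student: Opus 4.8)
The plan is to show that the framing-dependence of $Z_\Theta^\odd(f,\mathrm{Ker}\,\ve,\tau)$ is exactly cancelled by the correction term $\frac{1}{6}\Tr(\mathbf{1}_{\mathrm{Ker}\,\ve}^{\otimes 3})\,\delta(\tau)$. First I would reduce to understanding how $Z_\Theta^\odd(f,\mathrm{Ker}\,\ve,\tau)$ changes under a change of framing. By Theorem~\ref{thm:inv-framed} the invariant depends only on $(f,W,\tau)$, not on the choice of propagators, so the entire $\tau$-dependence is concentrated in the boundary condition $\phi_\tau^*\mathrm{Vol}_{S^2}\,\mathbf{1}_W$ imposed on the propagators along $\partial\bConf_2(M)$. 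Two framings $\tau,\tau'$ differ by a map $M\to SO(3)$, and the difference of the associated Gauss maps $\phi_\tau,\phi_{\tau'}\colon\partial\bConf_2(M)\to S^2$ is measured by a degree-type integer invariant; concretely, on the trivialized boundary $\partial\bConf_2(M)\cong M\times S^2$ the comparison is governed by the homotopy class in $\pi_3(SO(3))\cong\Z$ together with the first Pontrjagin data. The key computation is to express the difference $Z_\Theta^\odd(f,\mathrm{Ker}\,\ve,\tau')-Z_\Theta^\odd(f,\mathrm{Ker}\,\ve,\tau)$ as a boundary integral over $I\times\partial\bConf_2(M)$ of $\Tr(\widetilde\omega_1\wedge\widetilde\omega_2\wedge\widetilde\omega_3)$, exactly as in the proof of Theorem~\ref{thm:inv-framed}, but now with the boundary term genuinely nonzero because the two framings impose \emph{different} Gauss maps at the two ends.

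The heart of the argument, which I would import verbatim from \cite[Theorem~5.1]{CS} (and ultimately from \cite{BC,Kon}), is that this boundary integral evaluates to a universal multiple of the anomaly. The trace factors out the tensorial part, leaving $\frac{1}{6}\Tr(\mathbf{1}_{\mathrm{Ker}\,\ve}^{\otimes 3})$ times a purely geometric integral of $(\mathrm{Vol}_{S^2})^{\wedge 3}$-type over the cobordism of Gauss maps. That geometric integral is precisely the quantity computed by Bott--Cattaneo and by Cattaneo--Shimizu to equal the change $\delta(\tau')-\delta(\tau)$ in the Hirzebruch signature defect. The point is that $\delta(\tau)$ is designed so that its framing-dependence matches the anomaly: $\delta(\tau')-\delta(\tau)$ computes the difference of relative Pontrjagin numbers $p_1(TX;\tau')-p_1(TX;\tau)$ (the signature term cancels since $X$ is fixed), which is well known to coincide with the configuration-space anomaly up to the normalization built into $\mathrm{Vol}_{S^2}$. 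Combining these, the $\tau$-dependent pieces of $\widehat{Z}_\Theta^\odd(f)$ cancel and independence of $\tau$ follows.

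Two points require care. First, one must verify that $\widehat{Z}_\Theta^\odd(f)$ really lands in $\calS_\Theta^\odd(\mathrm{Ker}\,\ve)$ rather than merely $\calS_\Theta^\odd(\C\pi)$; this is immediate once $W=\mathrm{Ker}\,\ve$ is chosen, since the trace takes values in $(\mathrm{Sym}^3\,\mathrm{Ker}\,\ve)_{\pi\times\pi}$ and the correction term $\Tr(\mathbf{1}_{\mathrm{Ker}\,\ve}^{\otimes 3})$ already lies in this space. Second, one must check that $\delta(\tau)$ is well-defined independently of the choice of bounding $4$-manifold $X$; this is the standard consequence of the Atiyah--Patodi--Singer signature theorem and the Novikov additivity of signatures, so that the signature defect depends only on $(M,\tau)$, as recorded in \cite{At,KM}.

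The main obstacle I anticipate is pinning down the precise normalization constant in the anomaly computation so that the coefficient $\frac{1}{6}$ and the factor $\Tr(\mathbf{1}_{\mathrm{Ker}\,\ve}^{\otimes 3})$ are exactly right, rather than right up to a scalar. In the classical $N=S^3$ case this normalization is a delicate but settled computation; here the novelty is the presence of the local system $\mathrm{Ker}\,\ve$, and one must confirm that the anomaly is still a \emph{scalar} multiple of the identity tensor $\mathbf{1}_{\mathrm{Ker}\,\ve}^{\otimes 3}$ and not some more complicated $\pi\times\pi$-equivariant endomorphism. This follows because the boundary Gauss map $\phi_\tau$ is valued in $S^2$ and carries no local-system data, so the local system enters only through the untwisted constant section $\mathbf{1}_W$; consequently the anomaly computation decouples into the scalar geometric integral (identical to the trivial-coefficient case) times the fixed tensor $\mathbf{1}_{\mathrm{Ker}\,\ve}^{\otimes 3}$. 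Since this decoupling and the resulting constant are precisely what \cite[Theorem~5.1]{CS} establishes, I would cite that result for the final identification and omit the lengthy recomputation, exactly as the paper proposes to do.
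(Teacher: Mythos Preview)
Your proposal is correct and follows exactly the approach the paper indicates: the paper omits the proof entirely, stating only that it is parallel to \cite[Theorem~5.1]{CS}, and your sketch is precisely an unpacking of that citation---the Stokes-theorem reduction to a boundary anomaly integral, the decoupling of the local-system data via the constant section $\mathbf{1}_{\mathrm{Ker}\,\ve}$, and the identification of the scalar anomaly with $\delta(\tau')-\delta(\tau)$. There is nothing to add beyond noting that the paper itself defers all of this to \cite{CS}.
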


\subsection{Bott--Cattaneo's interpretation of the $\Theta$-invariants}\label{ss:Bott-Cattaneo}

We shall prove that $\widehat{Z}_\Theta^\odd(f)$ is universal among the Bott--Cattaneo $\Theta$-invariants.
Bott and Cattaneo interpreted in \cite{BC} the $\Theta$-invariant for {\it orthogonal} representations of $\pi_1M$, which can be straightforwardly generalized to arbitrary finite dimensional acyclic local systems as follows. Here we only consider acyclic local systems on $M$ induced from that on $N$ by a $\Z\pi$-homology equivalence $f\colon M\to N$. Namely, let $E$ be a finite dimensional $\pi$-module such that $H^*(N;E)=0$. Then we have $H^*(N;E^*)=0$ since the dual of nontrivial irreducible modules are nontrivial irreducible too. One can take $E\boxtimes E^*$-propagators $\omega_1,\omega_2,\omega_3\in \Omega^2_{\mathrm{dR}}(\bConf_2(M);E\boxtimes E^*)$, and obtains
\[ \omega_1\wedge\omega_2\wedge\omega_3\in \Omega_{\mathrm{dR}}^6(\bConf_2(M); (E\boxtimes E^*)^{\otimes 3}).\]
We choose $\pi$-equivariant linear maps $\rho_1\colon \C\to E\otimes E\otimes E$ and $\rho_2\colon E\otimes E\otimes E\to \C$. We define a linear map $\Tr_{\rho_1,\rho_2}\colon (E\boxtimes E^*)^{\otimes 3}\to \C$ by
\[ \Tr_{\rho_1,\rho_2}(\phi_1\otimes \phi_2\otimes \phi_3)
=\rho_2\bigl(\langle \rho_1(1),\phi_1\otimes \phi_2\otimes \phi_3\rangle\bigr),\]
where $\phi_1,\phi_2,\phi_3\in E\boxtimes E^*\cong \End(E)$, and $\langle -,-\rangle$ is the evaluation $E^{\otimes 3}\otimes \End(E)^{\otimes 3}\to E^{\otimes 3}$. The Bott--Cattaneo $\Theta$-invariant $I_{(\Theta,\rho_1,\rho_2)}$ for a $\Z\pi$-homology equivalence $f\colon M\to N$ and an acyclic local system $E$ on $N$ is defined by
\[ I_{(\Theta,\rho_1,\rho_2)}(f,E)=\frac{1}{6}\int_{\bConf_2(M)}\Tr_{\rho_1,\rho_2}(\omega_1\wedge \omega_2\wedge \omega_3)-\frac{1}{6}\Tr_{\rho_1,\rho_2}(\mathbf{1}_{E\boxtimes E^*}^{\otimes 3})\,\delta(\tau) \in \C. \]
It follows from Theorem~\ref{thm:inv-framed} that replacing $\omega_1\wedge\omega_2\wedge\omega_3$ in the definition of $I_{(\Theta,\rho_1,\rho_2)}$ with the symmetrization $\frac{1}{3!}\sum_{\sigma\in \mathfrak{S}_3}\omega_{\sigma(1)}\wedge\omega_{\sigma(2)}\wedge\omega_{\sigma(3)}\in \Omega_{\mathrm{dR}}^6(\bConf_2(M); \Sym^3(E\boxtimes E^*))$ gives the same result.

\begin{lemma}\label{lem:Tr-commute}
Let $S^3\rho_\pi\colon \Sym^3(\mathrm{Ker}\,\ve)\to \Sym^3(E\boxtimes E^*)$ be the linear map induced by the representation $\rho_\pi\colon \C\pi\to \End(E)\cong E\boxtimes E^*$, where we consider $\mathrm{Ker}\,\ve$ as a subspace of $\C\pi$. Then $S^3\rho_\pi$ is $\pi\times \pi$-equivariant, and it induces a linear map $\varpi_E\colon \bigl(\Sym^3(\mathrm{Ker}\,\ve)\bigr)_{\pi\times \pi}\to \C$ which makes the following diagram commutative.
\begin{equation}\label{eq:Tr-Tr}
 \xymatrix{
  \Sym^3(\mathrm{Ker}\,\ve) \ar[r]^-{S^3\rho_\pi} \ar[d]_-{\Tr} & \Sym^3(E\boxtimes E^*) \ar[d]^-{\Tr_{\rho_1,\rho_2}} \\
  \bigl(\Sym^3(\mathrm{Ker}\,\ve)\bigr)_{\pi\times \pi} \ar[r]^-{\varpi_E} & \C
} 
\end{equation}
\end{lemma}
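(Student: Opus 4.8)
The plan is to obtain $\varpi_E$ as the map induced on coinvariants by the composite $\Tr_{\rho_1,\rho_2}\circ S^3\rho_\pi$, which I will show is $\pi\times\pi$-invariant. This reduces the statement to two separate facts: that $S^3\rho_\pi$ is $\pi\times\pi$-equivariant, and that $\Tr_{\rho_1,\rho_2}$ is $\pi\times\pi$-invariant (i.e. constant on $\pi\times\pi$-orbits).

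First I would check the equivariance of $\rho_\pi$ itself. Recall that $\pi\times\pi$ acts on $\C\pi$ by $(g,h)\cdot x=gxh^{-1}$ and, compatibly with the decomposition \eqref{eq:AW-isom}, on $\End(E)\cong E\boxtimes E^*$ by $(g,h)\cdot\phi=\rho(g)\circ\phi\circ\rho(h)^{-1}$. Since $\rho_\pi$ extends the algebra homomorphism $\rho$ linearly, $\rho_\pi(gxh^{-1})=\rho(g)\,\rho_\pi(x)\,\rho(h)^{-1}=(g,h)\cdot\rho_\pi(x)$, so $\rho_\pi$ is $\pi\times\pi$-equivariant; its restriction to the submodule $\mathrm{Ker}\,\ve$ is then equivariant as well, and the third symmetric power $S^3\rho_\pi$ of an equivariant map is again equivariant.

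The main content is the $\pi\times\pi$-invariance of $\Tr_{\rho_1,\rho_2}$. Viewing $\rho_1(1)\in E^{\otimes 3}$ as a fixed vector and rewriting $\Tr_{\rho_1,\rho_2}(\phi_1\otimes\phi_2\otimes\phi_3)=\rho_2\bigl((\phi_1\otimes\phi_2\otimes\phi_3)(\rho_1(1))\bigr)$, the action of $(g,h)$ replaces each $\phi_i$ by $\rho(g)\phi_i\rho(h)^{-1}$, so that $(g,h)\cdot(\phi_1\otimes\phi_2\otimes\phi_3)$ acts on $E^{\otimes 3}$ as $\rho^{\otimes 3}(g)\circ(\phi_1\otimes\phi_2\otimes\phi_3)\circ\rho^{\otimes 3}(h)^{-1}$, where $\rho^{\otimes 3}$ denotes the diagonal $\pi$-action on $E^{\otimes 3}$. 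The two group elements are then absorbed by $\rho_1$ and $\rho_2$ separately: equivariance of $\rho_1\colon\C\to E^{\otimes 3}$ (with trivial action on $\C$) says precisely that $\rho_1(1)$ is $\pi$-fixed, whence $\rho^{\otimes 3}(h)^{-1}\rho_1(1)=\rho_1(1)$; and equivariance of $\rho_2\colon E^{\otimes 3}\to\C$ (again trivial on $\C$) gives $\rho_2\circ\rho^{\otimes 3}(g)=\rho_2$. Hence $\Tr_{\rho_1,\rho_2}\bigl((g,h)\cdot(\phi_1\otimes\phi_2\otimes\phi_3)\bigr)=\Tr_{\rho_1,\rho_2}(\phi_1\otimes\phi_2\otimes\phi_3)$, so by linearity $\Tr_{\rho_1,\rho_2}$ vanishes on $\Span_\C\{(g,h)w-w\}$.

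Finally I would assemble the two steps. Using equivariance of $S^3\rho_\pi$, the composite $\Tr_{\rho_1,\rho_2}\circ S^3\rho_\pi$ carries a generator $(g,h)w-w$ to $(g,h)\cdot S^3\rho_\pi(w)-S^3\rho_\pi(w)$, which $\Tr_{\rho_1,\rho_2}$ annihilates by the previous paragraph; hence the composite kills the relation subspace defining the coinvariants and factors uniquely through the projection $\Tr\colon\Sym^3(\mathrm{Ker}\,\ve)\to\bigl(\Sym^3(\mathrm{Ker}\,\ve)\bigr)_{\pi\times\pi}$. The induced map is $\varpi_E$, and commutativity of \eqref{eq:Tr-Tr} holds by construction. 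I expect no genuine obstacle here: the only point requiring care is tracking the left/right conventions so that the $g$-action is absorbed by $\rho_2$ and the $h$-action by $\rho_1$, everything else being a routine unwinding of definitions.
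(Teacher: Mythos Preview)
Your proof is correct and follows essentially the same approach as the paper: both arguments rest on the $\pi\times\pi$-equivariance of $\rho_\pi$ (hence of $S^3\rho_\pi$) and the $\pi\times\pi$-invariance of $\Tr_{\rho_1,\rho_2}$ coming from the $\pi$-equivariance of $\rho_1,\rho_2$. The paper's only organizational difference is that it first factors $\Tr_{\rho_1,\rho_2}$ through $(\Sym^3(E\boxtimes E^*))_{\pi\times\pi}$ via a map $\kappa_E$, then sets $\varpi_E=\kappa_E\circ\overline{S^3\rho}_\pi$; this intermediate $\kappa_E$ is reused in the proof of Theorem~\ref{thm:upper-bound2}, but for the lemma itself your direct factorization is equivalent.
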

\begin{proof}
The $\pi\times \pi$-equivariance of $S^3\rho_\pi$ is induced from that of $\rho_\pi\colon \C\pi\to \End(E)$. Then $S^3\rho_\pi$ induces a linear map $\overline{S^3\rho}_\pi\colon (\Sym^3(\mathrm{Ker}\,\ve))_{\pi\times \pi}\to (\Sym^3(E\boxtimes E^*))_{\pi\times \pi}$. Since $\Tr_{\rho_1,\rho_2}$ factors through $(\Sym^3(E\boxtimes E^*))_{\pi\times \pi}$ by the $\pi$-equivariance of $\rho_1$ and $\rho_2$, there is a linear map $\kappa_E\colon (\Sym^3(E\boxtimes E^*))_{\pi\times \pi}\to \C$ such that $\Tr_{\rho_1,\rho_2}=\kappa_E\circ\pr$, where $\pr\colon \Sym^3(E\boxtimes E^*)\to (\Sym^3(E\boxtimes E^*))_{\pi\times \pi}$ is the projection. The linear map $\varpi_E=\kappa_E\circ \overline{S^3\rho}_\pi$ is as desired.
\end{proof}

\begin{theorem}\label{thm:recover-I}
The following identity holds.
\[ I_{(\Theta,\rho_1,\rho_2)}(f,E)=\varpi_E(\widehat{Z}_\Theta^\odd(f)). \]
Hence $\widehat{Z}_\Theta^\odd$ is universal among the Bott--Cattaneo $\Theta$-invariants $I_{(\Theta,\rho_1,\rho_2)}$ for finite dimensional acyclic local systems $E$ on $N$.
\end{theorem}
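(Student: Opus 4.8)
The plan is to produce, for each finite dimensional acyclic local system $E$ on $N$, an explicit and convenient family of $E\boxtimes E^*$-propagators---namely the images under $\rho_\pi$ of a fixed triple of $\mathrm{Ker}\,\ve$-propagators---and then to deduce the identity by feeding these into the commutative square $(\ref{eq:Tr-Tr})$ of Lemma~\ref{lem:Tr-commute}. The algebraic input is the identity $\rho_\pi(\mathbf{1}_{\mathrm{Ker}\,\ve})=\mathbf{1}_{E\boxtimes E^*}$. To see it, write $\mathbf{1}_{\mathrm{Ker}\,\ve}=1-e_1$, where $1\in\C\pi$ is the unit and $e_1=\frac{1}{|\pi|}\sum_{g\in\pi}g$ is the central idempotent of the trivial representation, so that $\rho_\pi(\mathbf{1}_{\mathrm{Ker}\,\ve})=\mathrm{id}_E-\rho_\pi(e_1)$. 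Since $\rho_\pi(e_1)$ is the averaging projection of $E$ onto its trivial isotypic component and $E$ is acyclic, hence contains no trivial summand, this projection vanishes and $\rho_\pi(\mathbf{1}_{\mathrm{Ker}\,\ve})=\mathrm{id}_E=\mathbf{1}_{E\boxtimes E^*}$.

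Next I would check that $\rho_\pi$ carries $\mathrm{Ker}\,\ve$-propagators to $E\boxtimes E^*$-propagators. As $\rho_\pi\colon\mathrm{Ker}\,\ve\to E\boxtimes E^*$ is $\pi\times\pi$-equivariant, it defines a morphism of the associated flat bundles on $\bConf_2(M)$ and therefore intertwines the twisted differentials $d_{\mathrm{Ker}\,\ve}$ and $d_{E\boxtimes E^*}$. Consequently, for $\mathrm{Ker}\,\ve$-propagators $\omega_1,\omega_2,\omega_3$ for $\tau$ (which exist by Lemma~\ref{lem:W}(2) and Lemma~\ref{lem:propagator}), each $\rho_\pi(\omega_\ell)\in\Omega_\dR^2(\bConf_2(M);E\boxtimes E^*)$ is $d_{E\boxtimes E^*}$-closed, and on $\partial\bConf_2(M)$ it restricts to $\rho_\pi(\phi_\tau^*\mathrm{Vol}_{S^2}\,\mathbf{1}_{\mathrm{Ker}\,\ve})=\phi_\tau^*\mathrm{Vol}_{S^2}\,\mathbf{1}_{E\boxtimes E^*}$ by the previous paragraph. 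Thus $\rho_\pi(\omega_1),\rho_\pi(\omega_2),\rho_\pi(\omega_3)$ are genuine $E\boxtimes E^*$-propagators, and by the invariance of $I_{(\Theta,\rho_1,\rho_2)}$ under the choice of $E\boxtimes E^*$-propagators---which follows from the Stokes argument of Theorem~\ref{thm:inv-framed}---they may be used to evaluate $I_{(\Theta,\rho_1,\rho_2)}(f,E)$.

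The remainder is a diagram chase. Since $(\rho_\pi)^{\otimes 3}$ is compatible with the wedge product, $\rho_\pi(\omega_1)\wedge\rho_\pi(\omega_2)\wedge\rho_\pi(\omega_3)=S^3\rho_\pi(\omega_1\wedge\omega_2\wedge\omega_3)$, where the $6$-form $\omega_1\wedge\omega_2\wedge\omega_3$ is regarded as valued in $\Sym^3(\mathrm{Ker}\,\ve)$ because the $\omega_\ell$ have even degree. Applying $(\ref{eq:Tr-Tr})$ pointwise gives $\Tr_{\rho_1,\rho_2}(\rho_\pi(\omega_1)\wedge\rho_\pi(\omega_2)\wedge\rho_\pi(\omega_3))=\varpi_E(\Tr(\omega_1\wedge\omega_2\wedge\omega_3))$, while the same square together with $\rho_\pi(\mathbf{1}_{\mathrm{Ker}\,\ve})=\mathbf{1}_{E\boxtimes E^*}$ gives $\Tr_{\rho_1,\rho_2}(\mathbf{1}_{E\boxtimes E^*}^{\otimes 3})=\varpi_E(\Tr(\mathbf{1}_{\mathrm{Ker}\,\ve}^{\otimes 3}))$. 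Because $\varpi_E$ is $\C$-linear it commutes with integration over $\bConf_2(M)$ and with multiplication by the scalar $\delta(\tau)$; substituting both identities into the definition of $I_{(\Theta,\rho_1,\rho_2)}(f,E)$ collapses it to $\varpi_E\bigl(Z_\Theta^\odd(f,\mathrm{Ker}\,\ve,\tau)-\frac16\Tr(\mathbf{1}_{\mathrm{Ker}\,\ve}^{\otimes 3})\,\delta(\tau)\bigr)=\varpi_E(\widehat{Z}_\Theta^\odd(f))$, which is the claimed identity; universality is then immediate.

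I expect the main obstacle to be the propagator step of the second paragraph: one must confirm both that applying the flat-bundle morphism $\rho_\pi$ preserves $d_W$-closedness and the prescribed boundary normalization, and that $\rho_\pi(\mathbf{1}_{\mathrm{Ker}\,\ve})=\mathbf{1}_{E\boxtimes E^*}$ so that this normalization matches exactly. Once these are secured, the passage from $\mathrm{Ker}\,\ve$ to $E\boxtimes E^*$ is purely formal and everything else reduces to the linearity of $\varpi_E$ together with the already-established diagram $(\ref{eq:Tr-Tr})$.
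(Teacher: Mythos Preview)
Your proof is correct and follows essentially the same approach as the paper's: use the commutative square of Lemma~\ref{lem:Tr-commute} to identify $\varpi_E\circ\Tr(\omega_1\wedge\omega_2\wedge\omega_3)$ with $\Tr_{\rho_1,\rho_2}(\rho_\pi(\omega_1)\wedge\rho_\pi(\omega_2)\wedge\rho_\pi(\omega_3))$, observe that the $\rho_\pi(\omega_i)$ are $E\boxtimes E^*$-propagators, and integrate. The paper's own argument is considerably terser---it simply asserts that $\rho_\pi(\omega_i)$ is an $E\boxtimes E^*$-propagator without justification---so your explicit verification that $\rho_\pi(\mathbf{1}_{\mathrm{Ker}\,\ve})=\mathbf{1}_{E\boxtimes E^*}$ (via the central idempotent $e_1$ and the vanishing of $E^\pi$ for acyclic $E$) and your treatment of the framing correction term fill in details the paper leaves implicit.
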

\begin{proof}
By the commutative diagram of Lemma~\ref{lem:Tr-commute}, we have
\begin{equation}\label{eq:Tr-rho}
\begin{split}
 \varpi_E\circ\Tr(\omega_1\wedge\omega_2\wedge\omega_3)&=\Tr_{\rho_1,\rho_2}\circ S^3\rho_\pi(\omega_1\wedge\omega_2\wedge\omega_3)\\
&=\Tr_{\rho_1,\rho_2}\bigl(\rho_\pi(\omega_1)\wedge\rho_\pi(\omega_2)\wedge\rho_\pi(\omega_3)\bigr).
\end{split} 
\end{equation}
The integral of the LHS of \eqref{eq:Tr-rho} gives $\varpi_E(\widehat{Z}_\Theta^\odd(f))$. Since $\rho_{\pi}(\omega_i)$ is a $E\boxtimes E^*$-propagator, the integral of the RHS of \eqref{eq:Tr-rho} gives $I_{(\Theta,\rho_1,\rho_2)}(f,E)$.
\end{proof}

\subsection{Antisymmetric propagators and $\Z_2$-invariance}

Let $T\colon \bConf_2(M)\to \bConf_2(M)$ be the smooth involution induced by the flip $M\times M\to M\times M$; $(x,y)\mapsto (y,x)$. We extend this to an involution of the local system $W$ on $\bConf_2(M)$ as in Assumption~\ref{assum:2-acyclic} by defining the involution $T^*\colon W\to W$ by $\phi\mapsto \phi^*$, where $(\,\cdot\,)^*\colon W\to W$ is the $\C$-linear operation induced by the involution $\C\pi\to \C\pi$; $g\mapsto g^{-1}$. On the irreducible factor $A_i\boxtimes A_i^*=\End(A_i)$ of $W$, it corresponds to taking the dual morphism $\phi\mapsto \phi^*$ and induces a $\C$-linear map $\End(A_i)\to \End(A_i^*)$ between the factors of $W$ (see Lemma~\ref{lem:flip}). Then the induced involution $T^*\colon \Omega_{\mathrm{dR}}^*(\bConf_2(M);W)\to \Omega_{\mathrm{dR}}^*(\bConf_2(M);W)$ is defined by $T^*(\alpha\otimes \phi)=T^*\alpha\otimes T^*\phi$. It is clear from the definition that $T^*$ on $\Omega_{\mathrm{dR}}^*(\bConf_2(M);W)$ commutes with taking the dual $(\,\cdot\,)^*$ on the coefficients.
We say that a $W$-propagator $\omega\in\Omega_{\mathrm{dR}}^2(\bConf_2(M);W)$ is {\it antisymmetric} if it satisfies
\[ T^*\omega=-\omega^*. \]

\begin{lemma}
There exists an antisymmetric $W$-propagator in $\Omega_{\mathrm{dR}}^2(\bConf_2(M);W)$. 
\end{lemma}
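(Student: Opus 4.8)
The plan is to antisymmetrize an arbitrary propagator, the key preliminary observation being that the prescribed boundary value is \emph{already} antisymmetric. Write $\beta_\tau:=\phi_\tau^*\mathrm{Vol}_{S^2}\,\mathbf 1_W\in\Omega_\dR^2(\partial\bConf_2(M);W_\Delta)$ and, for $\eta\in\Omega_\dR^2(\bConf_2(M);W)$, set $\Phi(\eta):=-T^*(\eta^*)$. Since $T^*$ commutes with the coefficient dualization $(\,\cdot\,)^*$ and both are involutions, $\Phi$ is an involution and, applying $T^*$ to $\Phi(\omega)=\omega$, one sees that $\Phi(\omega)=\omega$ is exactly the antisymmetry condition $T^*\omega=-\omega^*$. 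I would first check $\Phi(\beta_\tau)=\beta_\tau$: the flip $T$ acts on $\partial\bConf_2(M)\cong M\times S^2$ by the antipodal map $a$ on the normal $S^2$, so $\phi_\tau\circ T=a\circ\phi_\tau$ and $T^*(\phi_\tau^*\mathrm{Vol}_{S^2})=\phi_\tau^*(a^*\mathrm{Vol}_{S^2})=-\phi_\tau^*\mathrm{Vol}_{S^2}$ because $a$ reverses the orientation of $S^2$, while $T^*\mathbf 1_W=\mathbf 1_W^*=\mathbf 1_W$ since dualization merely permutes the identity endomorphisms of the factors $A_i\boxtimes A_i^*$ of $W$ (Assumption~\ref{assum:2-acyclic}(iii)). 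Hence $T^*\beta_\tau=-\beta_\tau=-\beta_\tau^*$, i.e.\ $\Phi(\beta_\tau)=\beta_\tau$.

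Next I would take any $W$-propagator $\omega_0$ for $\tau$, which exists by Lemma~\ref{lem:propagator}, and form the average $\widetilde\omega:=\tfrac12\bigl(\omega_0+\Phi(\omega_0)\bigr)$. By construction $\Phi(\widetilde\omega)=\widetilde\omega$, so $\widetilde\omega$ satisfies $T^*\widetilde\omega=-\widetilde\omega^*$, and its boundary value is $\tfrac12(\beta_\tau+\Phi(\beta_\tau))=\beta_\tau$ by the previous paragraph. Thus $\widetilde\omega$ is an antisymmetric $2$-form with the correct boundary behaviour.

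The step that is not formal, and which I expect to be the main obstacle, is closedness. The flip $T$ is covered by a \emph{flat} bundle involution, so $T^*$ commutes with $d_W$; but the coefficient dualization $(\,\cdot\,)^*$ is \emph{not} parallel for the flat connection — on $\C\pi$ it intertwines the action $(g,h)\cdot x=gxh^{-1}$ with its flip $x^*\mapsto hx^*g^{-1}$ — so $\Phi$ is not a chain map and $d_W\widetilde\omega$ need not vanish. To remedy this I would use acyclicity: since $\widetilde\omega|_{\partial\bConf_2(M)}=\beta_\tau$ is $d_{W_\Delta}$-closed, $d_W\widetilde\omega$ is a relative $3$-cocycle, and
\[
 H^3\bigl(\bConf_2(M),\partial\bConf_2(M);W\bigr)\cong H_3\bigl(\bConf_2(M);W\bigr)=0
\]
by Poincar\'e--Lefschetz duality together with Proposition~\ref{prop:H(Conf)}. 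Hence $d_W\widetilde\omega=d_W\mu$ for some $\mu\in\Omega_\dR^2(\bConf_2(M),\partial\bConf_2(M);W)$ vanishing on the boundary, and $\omega:=\widetilde\omega-\mu$ is a $d_W$-closed extension of $\beta_\tau$.

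The genuinely delicate point is to choose the primitive $\mu$ so that antisymmetry survives the correction: because $(\,\cdot\,)^*$ is not a chain map, the antisymmetric forms do not form a subcomplex, so one cannot simply replace $\mu$ by its $\Phi$-average. Here I would exploit that both obstruction groups $H^2(\bConf_2(M),\partial\bConf_2(M);W)\cong H_4(\bConf_2(M);W)=0$ and $H^3(\bConf_2(M),\partial\bConf_2(M);W)=0$ vanish (Proposition~\ref{prop:H(Conf)}), so the relevant extension problem is unobstructed and can be solved $\Phi$-equivariantly for the order-two involution $\Phi$. This yields a form $\omega$ with $d_W\omega=0$, $\omega|_{\partial\bConf_2(M)}=\beta_\tau$, and $T^*\omega=-\omega^*$, i.e.\ an antisymmetric $W$-propagator, completing the argument.
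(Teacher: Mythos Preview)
Your first two paragraphs contain the entire proof, and they match the paper's argument: take any propagator $\omega_0$ and average to $\widetilde\omega=\tfrac12(\omega_0+\Phi(\omega_0))$. The difficulty you raise in the third paragraph is illusory, and the repair in the last two paragraphs is unnecessary.

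The point you are missing is that the intertwining property you cite --- that $(\cdot)^*$ sends the $\pi\times\pi$-action on $W$ to its flip --- is precisely what makes $\Phi$ a chain map, not what prevents it. The base involution $T$ induces the flip automorphism of $\pi_1(\bConf_2(M))\cong\pi\times\pi$, so the pullback local system $T^*W$ is $W$ equipped with the flipped action. Your intertwining identity then says exactly that $(\cdot)^*\colon W\to T^*W$ is an isomorphism of $\pi\times\pi$-modules, i.e.\ of local systems over $\bConf_2(M)$. Hence the pair $(T,(\cdot)^*)$ defines a \emph{flat} bundle automorphism of $E_W$ covering $T$, and the induced involution on $\Omega^*_\dR(\bConf_2(M);W)$ --- which up to sign is your $\Phi$ --- commutes with $d_W$. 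Equivalently: on the $\pi\times\pi$-cover the twisted differential is simply $d\otimes\mathrm{id}_W$, which commutes separately with the base pullback by a lift of $T$ and with $\mathrm{id}\otimes(\cdot)^*$, hence with their composite. Neither the base pullback alone nor $(\cdot)^*$ alone is a well-defined endomorphism of $\Omega^*_\dR(\bConf_2(M);W)$; only their composite is, and that composite is a chain map.

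Thus $\widetilde\omega$ is already $d_W$-closed and you are done after paragraph two, exactly as in the paper. Your proposed correction would in any case not work as stated: if $\Phi$ genuinely failed to be a chain map, there would be no induced $\Phi$-action on $H^*(\bConf_2(M),\partial\bConf_2(M);W)$, so the phrase ``solve the extension problem $\Phi$-equivariantly'' would have no meaning.
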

\begin{proof}
For a $W$-propagator $\omega\in\Omega_{\mathrm{dR}}^2(\bConf_2(M);W)$, let
\[ \omega':=\frac{1}{2}\bigl(\omega-(T^*\omega)^*\bigr). \]
This is a $W$-propagator since $T^*\mathrm{Vol}_{S^2}=-\mathrm{Vol}_{S^2}$ and $(\mathbf{1}_W)^*=\mathbf{1}_W$.
Then we have
\[ \begin{split}
  T^*\omega'&=\frac{1}{2}\bigl(T^*\omega-T^*(T^*\omega)^*\bigr)
  =\frac{1}{2}\bigl(T^*\omega-\omega^*\bigr)=-(\omega')^*.
\end{split} \]
\end{proof}

\begin{prop}\label{prop:Z2-invariance}
The image of $\widehat{Z}_\Theta^\odd$ is included in $\calA_\Theta^\odd(\mathrm{Ker}\,\ve)$.
\end{prop}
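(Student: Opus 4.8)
The plan is to combine the existence of \emph{antisymmetric} propagators with the independence of $Z_\Theta^\odd$ from the choice of propagator (Theorem~\ref{thm:inv-framed}). By Theorem~\ref{thm:unframed-inv} we already know $\widehat{Z}_\Theta^\odd(f)\in\calS_\Theta^\odd(\mathrm{Ker}\,\ve)$, and since $\calA_\Theta^\odd(\mathrm{Ker}\,\ve)$ is by definition the fixed subspace of the $\Z_2$-involution $\iota$ on $\calS_\Theta^\odd(\mathrm{Ker}\,\ve)$ induced by $x\otimes y\otimes z\mapsto x^{-1}\otimes y^{-1}\otimes z^{-1}$, it suffices to prove $\iota(\widehat{Z}_\Theta^\odd(f))=\widehat{Z}_\Theta^\odd(f)$. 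Since $x^{-1}=x^*$ for the coefficient involution $(\,\cdot\,)^*$ on $\mathrm{Ker}\,\ve$, this $\iota$ is precisely the map induced on $(\Sym^3\mathrm{Ker}\,\ve)_{\pi\times\pi}$ by applying $(\,\cdot\,)^*$ to each tensor factor, i.e. $\iota(\Tr(\phi_1\otimes\phi_2\otimes\phi_3))=\Tr(\phi_1^*\otimes\phi_2^*\otimes\phi_3^*)$.

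First I would dispose of the framing-correction term. Because $(\mathbf{1}_{\mathrm{Ker}\,\ve})^*=\mathbf{1}_{\mathrm{Ker}\,\ve}$ and $\delta(\tau)$ is a rational scalar, the term $\tfrac16\Tr(\mathbf{1}_{\mathrm{Ker}\,\ve}^{\otimes3})\,\delta(\tau)$ is fixed by $\iota$. Hence it remains only to show that $Z_\Theta^\odd(f,\mathrm{Ker}\,\ve,\tau)$ itself is $\iota$-invariant.

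For the main step I would choose an antisymmetric $\mathrm{Ker}\,\ve$-propagator $\omega$ for $\tau$ (this exists by the preceding lemma, as $\mathrm{Ker}\,\ve$ satisfies Assumption~\ref{assum:2-acyclic} by Lemma~\ref{lem:W}), and compute $Z_\Theta^\odd(f,\mathrm{Ker}\,\ve,\tau)=\tfrac16\int_{\bConf_2(M)}\Tr(\omega\wedge\omega\wedge\omega)$ using $\omega$ in all three slots. As $\iota$ is $\C$-linear and the coefficient space is constant along $\bConf_2(M)$, it commutes with the integral, and the identity $\iota\circ\Tr=\Tr\circ(\,\cdot\,)^*$ on factors gives
$$\iota\bigl(Z_\Theta^\odd(f,\mathrm{Ker}\,\ve,\tau)\bigr)=\tfrac16\int_{\bConf_2(M)}\Tr(\omega^*\wedge\omega^*\wedge\omega^*),$$
where $\omega^*$ is the coefficient dual of $\omega$. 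The crux is then to observe that $\omega^*$ is again a $\mathrm{Ker}\,\ve$-propagator for the \emph{same} framing $\tau$: its boundary value is $(\phi_\tau^*\mathrm{Vol}_{S^2}\,\mathbf{1}_{\mathrm{Ker}\,\ve})^*=\phi_\tau^*\mathrm{Vol}_{S^2}\,\mathbf{1}_{\mathrm{Ker}\,\ve}$, while antisymmetry gives $\omega^*=-T^*\omega$, so $\omega^*$ is $d_{\mathrm{Ker}\,\ve}$-closed because $T^*$ commutes with the twisted differential. Consequently the right-hand side equals $Z_\Theta^\odd(\omega^*,\omega^*,\omega^*)$, and Theorem~\ref{thm:inv-framed} identifies it with $Z_\Theta^\odd(\omega,\omega,\omega)=Z_\Theta^\odd(f,\mathrm{Ker}\,\ve,\tau)$. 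This yields $\iota\bigl(Z_\Theta^\odd(f,\mathrm{Ker}\,\ve,\tau)\bigr)=Z_\Theta^\odd(f,\mathrm{Ker}\,\ve,\tau)$, whence $\widehat{Z}_\Theta^\odd(f)\in\calA_\Theta^\odd(\mathrm{Ker}\,\ve)$.

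The step I expect to be the main obstacle, and the place where antisymmetry is genuinely used, is the assertion that $\omega^*$ is a propagator. The naive alternative of substituting $\omega^*=-T^*\omega$ and then changing variables by the orientation-reversing involution $T$ merely reproduces the identity $\iota(J)=\iota(J)$ and yields no information; the real content comes from invoking Theorem~\ref{thm:inv-framed}, which supplies the new input that $Z_\Theta^\odd$ is insensitive to the choice among all propagators for $\tau$. One must also keep in mind that the pure coefficient dual $(\,\cdot\,)^*$ is \emph{not} by itself a chain map for $d_{\mathrm{Ker}\,\ve}$—only the combination $T^*$ is—so the $d_{\mathrm{Ker}\,\ve}$-closedness of $\omega^*$ really does require the antisymmetry relation $\omega^*=-T^*\omega$.
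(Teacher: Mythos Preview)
Your argument is correct and takes a genuinely different route from the paper's. The paper argues by a direct change of variables: from antisymmetry it writes $T^*\Tr(\omega_1\wedge\omega_2\wedge\omega_3)=-\Tr(\omega_1^*\wedge\omega_2^*\wedge\omega_3^*)$, then uses that $T$ reverses orientation to get $\int\Tr(\omega^{\wedge 3})=\int\Tr((\omega^*)^{\wedge 3})$, and finally averages to land in the $\Z_2$-fixed subspace. You instead exhibit $\omega^*=-T^*\omega$ as a bona fide $\mathrm{Ker}\,\ve$-propagator for the same framing (closedness from $T^*d_W=d_WT^*$, boundary value from $T^*\mathrm{Vol}_{S^2}=-\mathrm{Vol}_{S^2}$ and $\mathbf{1}_W^*=\mathbf{1}_W$) and then invoke Theorem~\ref{thm:inv-framed} to identify $\int\Tr((\omega^*)^{\wedge 3})$ with $\int\Tr(\omega^{\wedge 3})$.

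Your warning about the ``naive'' route is well taken and is in fact the substantive difference between the two arguments. The pointwise identity $T^*\Tr(\omega^{\wedge 3})=-\Tr((\omega^*)^{\wedge 3})$ holds for the \emph{full} $T^*$ (form pullback composed with the coefficient involution, hence with $\iota$ on the constant target), whereas the orientation-reversal identity $\int\eta=-\int T^*\eta$ requires the \emph{plain} form pullback on $\calS_\Theta^\odd(\mathrm{Ker}\,\ve)$-valued forms; the discrepancy between these two pullbacks is exactly the $\iota$ one is trying to detect, so a literal change-of-variables chain collapses to a tautology. Theorem~\ref{thm:inv-framed} is the independent input that breaks this circle in your version. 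As a side remark, the propagator-independence step does not actually need $\omega$ itself to be antisymmetric (for any propagator $\omega$, the form $-T^*\omega$ is again a propagator and its $Z_\Theta^\odd$-value computes $\iota(J)$ via change of variables); antisymmetry is what lets you identify $-T^*\omega$ with the coefficient dual $\omega^*$ and hence read off $\iota(J)=\int\Tr((\omega^*)^{\wedge 3})$ in one line.
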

\begin{proof}
If we choose $\omega_1,\omega_2,\omega_3$ in the definition of $Z_\Theta^\odd$ as antisymmetric $\mathrm{Ker}\,\ve$-propagators, then we have
\[ T^*\Tr(\omega_1\wedge\omega_2\wedge\omega_3)=(-1)^3\Tr(\omega_1^*\wedge\omega_2^*\wedge\omega_3^*)=-\Tr(\omega_1^*\wedge\omega_2^*\wedge\omega_3^*). \]
Since $T$ reverses the orientation of the configuration space $\bConf_2(M)$, we have
\[ \int_{\bConf_2(M)}\Tr(\omega_1\wedge\omega_2\wedge\omega_3)=-\int_{\bConf_2(M)}T^*\Tr(\omega_1\wedge\omega_2\wedge\omega_3)=\int_{\bConf_2(M)}\Tr(\omega_1^*\wedge\omega_2^*\wedge\omega_3^*). \]
It follows that
\[ \int_{\bConf_2(M)}\Tr(\omega_1\wedge\omega_2\wedge\omega_3)=\int_{\bConf_2(M)}\frac{1}{2}\Bigl(\Tr(\omega_1\wedge\omega_2\wedge\omega_3)+\Tr(\omega_1^*\wedge\omega_2^*\wedge\omega_3^*)\Bigr), \]
and its value is in $((\Sym^3\,W)_{\pi\times\pi})^{\Z_2}$.
\end{proof}

\begin{theorem}\label{thm:upper-bound2}
Let $\calI_\Theta^{\mathrm{BC}}(N)$ denote the subspace of $\Map(\calH(N),\C)$ spanned by the Bott--Cattaneo $\Theta$-invariants $I_{(\Theta,\rho_1,\rho_2)}(-,A)$ for finite dimensional acyclic local systems $A$ on $N$ and $\pi$-equivariant linear maps $\rho_1\colon \C\to A^{\otimes 3}$, $\rho_2\colon A^{\otimes 3}\to \C$. Then there is a canonical epimorphism 
\[ \Hom_\C(\calA_\Theta^\odd(\mathrm{Ker}\,\ve),\C)\to \calI_\Theta^{\mathrm{BC}}(N). \]
\end{theorem}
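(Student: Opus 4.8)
The plan is to realize the asserted map as the pullback along the universal invariant $\widehat{Z}_\Theta^\odd$. Since Proposition~\ref{prop:Z2-invariance} guarantees that $\widehat{Z}_\Theta^\odd$ takes values in $\calA_\Theta^\odd(\mathrm{Ker}\,\ve)$, precomposition defines a canonical linear map
\[ \Psi\colon \Hom_\C(\calA_\Theta^\odd(\mathrm{Ker}\,\ve),\C)\to \Map(\calH(N),\C),\qquad \Psi(\phi)=\phi\circ\widehat{Z}_\Theta^\odd. \]
By Theorem~\ref{thm:recover-I} each Bott--Cattaneo invariant satisfies $I_{(\Theta,\rho_1,\rho_2)}(-,E)=\varpi_E\circ\widehat{Z}_\Theta^\odd=\Psi\bigl(\varpi_E|_{\calA_\Theta^\odd(\mathrm{Ker}\,\ve)}\bigr)$, so every generator of $\calI_\Theta^{\mathrm{BC}}(N)$ already lies in the image of $\Psi$. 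The entire statement then reduces to the claim that the restricted functionals $\varpi_E|_{\calA_\Theta^\odd(\mathrm{Ker}\,\ve)}$, as $E$ and $(\rho_1,\rho_2)$ vary, span the whole dual $\Hom_\C(\calA_\Theta^\odd(\mathrm{Ker}\,\ve),\C)$. Granting this, every $\phi$ is a finite combination of such functionals, so $\Psi(\phi)$ is a combination of the $I_{(\Theta,\rho_1,\rho_2)}(-,E)$; this forces $\mathrm{Im}\,\Psi=\calI_\Theta^{\mathrm{BC}}(N)$ and exhibits $\Psi$ as the desired epimorphism.

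To prove the spanning claim I would work with a single, maximal local system $E_0=\bigoplus_{i\geq 2}A_i$, the sum of all nontrivial irreducible $\C\pi$-modules, which is acyclic by Lemma~\ref{lem:M-acyclic}. For this $E_0$ the representation $\rho_\pi\colon \mathrm{Ker}\,\ve=\bigoplus_{i\geq 2}\End(A_i)\to \End(E_0)$ is the inclusion of the block-diagonal subalgebra, hence an injective and (by semisimplicity) split morphism of $\pi\times\pi$-modules. Consequently $S^3\rho_\pi$, and the induced map $\overline{S^3\rho}_\pi\colon \calS_\Theta^\odd(\mathrm{Ker}\,\ve)\to (\Sym^3\End(E_0))_{\pi\times\pi}$, are injective, so its transpose $(\overline{S^3\rho}_\pi)^*$ is surjective. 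Since $\varpi_{E_0}=(\overline{S^3\rho}_\pi)^*(\kappa_{E_0})$ by Lemma~\ref{lem:Tr-commute}, it then suffices to show, for this fixed $E_0$, that the functionals $\kappa_{E_0}$ coming from $\Tr_{\rho_1,\rho_2}$ already exhaust $\Hom_\C((\Sym^3\End(E_0))_{\pi\times\pi},\C)$; the surjective transpose propagates this to the full dual of $\calS_\Theta^\odd(\mathrm{Ker}\,\ve)$.

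The heart of the argument — and the step I expect to be the main obstacle — is this last surjectivity for fixed $E$. Writing $\End(E)=E\boxtimes E^*$, one has $\End(E)^{\otimes 3}\cong E^{\otimes 3}\boxtimes (E^*)^{\otimes 3}$ as a $\pi\times\pi$-module, whose invariants are $(E^{\otimes 3})^\pi\otimes ((E^*)^{\otimes 3})^\pi$. Identifying functionals with elements via the nondegenerate, $\pi\times\pi$-invariant trace pairing on each $\End(E)$-factor, a direct unwinding of the definition of $\Tr_{\rho_1,\rho_2}$ shows that it corresponds to the shuffle of $\rho_1(1)\otimes\rho_2\in (E^{\otimes 3})^\pi\otimes((E^*)^{\otimes 3})^\pi$. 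As $\rho_1$ ranges over $\Hom_\pi(\C,E^{\otimes 3})=(E^{\otimes 3})^\pi$ and $\rho_2$ over $\Hom_\pi(E^{\otimes 3},\C)=((E^*)^{\otimes 3})^\pi$, these pure tensors span the entire invariant subspace, i.e. all $\pi\times\pi$-invariant functionals on $\End(E)^{\otimes 3}$. Passing to the symmetric quotient (legitimate in characteristic zero and compatible with the symmetrization remark following the definition of $I_{(\Theta,\rho_1,\rho_2)}$) then yields all of $\Hom_\C((\Sym^3\End(E))_{\pi\times\pi},\C)$.

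Finally, since $\calA_\Theta^\odd(\mathrm{Ker}\,\ve)$ is a subspace of $\calS_\Theta^\odd(\mathrm{Ker}\,\ve)$, the restriction map on dual spaces is surjective, so the restricted functionals $\varpi_E|_{\calA_\Theta^\odd(\mathrm{Ker}\,\ve)}$ span $\Hom_\C(\calA_\Theta^\odd(\mathrm{Ker}\,\ve),\C)$, which closes the reduction in the first paragraph. The only delicate point throughout is the careful bookkeeping of the trace pairing, the shuffle isomorphism $E^{\otimes 3}\boxtimes(E^*)^{\otimes 3}\cong\End(E)^{\otimes 3}$, and the compatibility with symmetrization; no deeper input beyond semisimplicity of $\C\pi$ and the already-established Theorems~\ref{thm:recover-I} and Proposition~\ref{prop:Z2-invariance} is required.
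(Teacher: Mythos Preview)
Your proposal is correct and follows essentially the same route as the paper: define $\Psi$ by precomposition with $\widehat{Z}_\Theta^\odd$, use Theorem~\ref{thm:recover-I} to get $\calI_\Theta^{\mathrm{BC}}(N)\subset\mathrm{Im}\,\Psi$, take the universal acyclic module $E_0=\bigoplus_{i\geq 2}A_i$ so that $\mathrm{Ker}\,\ve\hookrightarrow\End(E_0)$ is a split $\pi\times\pi$-embedding, and then use the factorization $(E^{\otimes 3}\boxtimes(E^*)^{\otimes 3})_{\pi\times\pi}\cong (E^{\otimes 3})_\pi\otimes((E^*)^{\otimes 3})_\pi$ to see that the $\Tr_{\rho_1,\rho_2}$ span all $\pi\times\pi$-invariant functionals. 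The only cosmetic differences are that the paper phrases the spanning step via an explicit Schur-functor decomposition of $\Sym^3(E\boxtimes E^*)$ rather than your trace-pairing/transpose language, and handles the passage to the $\Z_2$-invariant part more implicitly than your final restriction argument.
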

\begin{proof}
We consider the canonical map $\Psi\colon\Hom_\C(\calA_\Theta^\odd(\mathrm{Ker}\,\ve),\C)\to \Map(\calH(N),\C)$ defined by $\varpi\mapsto \varpi\circ \widehat{Z}_\Theta^\odd$. By Theorem~\ref{thm:recover-I}, the image of $\Psi$ includes $\calI_\Theta^{\mathrm{BC}}(N)$. Note that $\varpi_A\colon \bigl(\Sym^3(\mathrm{Ker}\,\ve)\bigr)_{\pi\times \pi}\to \C$ induces a linear map on the $\Z_2$-invariant part $\bigl(\bigl(\Sym^3(\mathrm{Ker}\,\ve)\bigr)_{\pi\times \pi}\bigr)^{\Z_2}$. 

To see that $\Im\Psi=\calI_\Theta^{\mathrm{BC}}(N)$, it suffices to prove that any linear map $\varpi\colon \bigl(\Sym^3(\mathrm{Ker}\,\ve)\bigr)_{\pi\times \pi}\to \C$ is given by the induced map $\varpi_E$ in the diagram \eqref{eq:Tr-Tr} for some $E,\rho_1,\rho_2$. We take the universal choice
\[ E=\bigoplus_i A_i, \]
where the direct sum is over all nontrivial irreducible $\pi$-modules. The representation $\rho_\pi\colon \C\pi\to \End(E)$ induces an embedding $\mathrm{Ker}\,\ve=\bigoplus_i\End(A_i)\to \End(E)=\End(\bigoplus_i A_i)$ of $\pi\times\pi$-modules. Moreover, $S^3\rho_\pi\colon \Sym^3(\mathrm{Ker}\,\ve)\to \Sym^3(\End(E))\cong \Sym^3(E\boxtimes E^*)$ is an embedding of $\pi\times \pi$-modules. More explicitly, we have the following decomposition of $\pi\times\pi$-modules:
\[E\boxtimes E^*=\Bigl(\bigoplus_i A_i\Bigr)\boxtimes \Bigl(\bigoplus_i A_i^*\Bigr)=\bigoplus_i(A_i\boxtimes A_i^*)\oplus V=\mathrm{Ker}\,\ve\oplus V\]
for some $\pi\times \pi$-module $V$. Hence we have the decomposition
\[ \Sym^3(E\boxtimes E^*)
=\Sym^3(\mathrm{Ker}\,\ve)\oplus \Sym^3 V\oplus (\Sym^2(\mathrm{Ker}\,\ve)\otimes V)\oplus (\mathrm{Ker}\,\ve\otimes\Sym^2 V) \]
of $\pi\times\pi$-modules. The image of $\Sym^3(\mathrm{Ker}\,\ve)$ under the map $S^3\rho_\pi$ is the corresponding term in this decomposition. We define $\kappa_E\colon \bigl(\Sym^3(E\boxtimes E^*)\bigr)_{\pi\times \pi}\to \C$ by $\varpi\oplus 0\oplus 0\oplus 0$. Then we have $\varpi\circ\Tr=\kappa_E\circ\pr\circ S^3\rho_\pi$. 

The proof will complete if we prove that $\kappa_E\circ \pr\colon \Sym^3(E\boxtimes E^*)\to \C$ is a linear combination of $\pi\times\pi$-equivariant maps of the forms $\Tr_{\rho_1,\rho_2}$ for some pairs $(\rho_1,\rho_2)$. 
We consider the following decomposition of $\pi\times\pi$-modules (\cite[Excercise~6.5]{FH})
\[ E^{\otimes 3}\boxtimes (E^*)^{\otimes 3}=(E\boxtimes E^*)^{\otimes 3}=\Sym^3(E\boxtimes E^*)\oplus \tbigwedge^3(E\boxtimes E^*)\oplus (\mathbb{S}_{(2,1)}(E\boxtimes E^*))^{\oplus 2}, \]
where $\mathbb{S}_\lambda$ is the Schur functor or Weyl module for a partition $\lambda$ of an integer (\cite[\S{6.1}]{FH}). We recall that a finite dimensional irreducible $\pi\times\pi$-module is of the form $A_i\boxtimes A_j^*$ for irreducible $\pi$-modules $A_i,A_j^*$, and that a trivial $\pi\times \pi$-module is isomorphic to a direct sum $(U\boxtimes U)^{\oplus m}$ for the one dimensional trivial $\pi$-module $U$. It follows that $(E^{\otimes 3}\boxtimes (E^*)^{\otimes 3})_{\pi\times \pi}=(E^{\otimes 3})_\pi\boxtimes ((E^*)^{\otimes 3})_\pi$. Hence we have that $\Hom\bigl(\bigl(\Sym^3(E\boxtimes E^*)\bigr)_{\pi\times \pi},\C\bigr)$ is a direct summand of 
\[ \Hom((E^{\otimes 3})_\pi,\C)\boxtimes \Hom(((E^*)^{\otimes 3})_\pi,\C),\]
an element of which is induced by a linear combination of $\pi\times\pi$-equivariant maps $E^{\otimes 3}\boxtimes (E^*)^{\otimes 3}\to \C$ of the forms $\Tr_{\rho_1,\rho_2}$.
\end{proof}

\begin{remark}
The following formula for $\pi\times\pi$-modules is known (\cite[Exercise~6.11 (b)]{FH}) 
\[ \Sym^d(E\boxtimes E^*)=\bigoplus_\lambda \mathbb{S}_\lambda E\boxtimes \mathbb{S}_\lambda E^*, \]
where the direct sum is over partitions $\lambda$ of $d$ with at most $\dim{E}$ rows. If $\dim{E}\geq 3$, we have
\[ \begin{split}
\Sym^3(E\boxtimes E^*)&=(\mathbb{S}_{(3)}E\boxtimes \mathbb{S}_{(3)}E^*)
\oplus (\mathbb{S}_{(1,1,1)}E\boxtimes \mathbb{S}_{(1,1,1)}E^*)
\oplus (\mathbb{S}_{(2,1)}E\boxtimes \mathbb{S}_{(2,1)}E^*)\\
&=(\Sym^3 E\boxtimes \Sym^3 E^*)
\oplus (\tbigwedge^3 E\boxtimes \tbigwedge^3 E^*)
\oplus (\mathbb{S}_{(2,1)}E\boxtimes \mathbb{S}_{(2,1)}E^*), 
\end{split} \]
and if $\dim{E}< 3$, some terms may be omitted from this formula. In any case, we have
\[ \Hom\bigl(\bigl(\Sym^3(E\boxtimes E^*)\bigr)_{\pi\times \pi},\C\bigr)=\bigoplus_\lambda \Hom((\mathbb{S}_\lambda E)_\pi,\C)\boxtimes \Hom((\mathbb{S}_\lambda E^*)_\pi,\C). \]
The example in \cite{AS,Kon}, \cite[Example~2.2]{BC}, $x\otimes y\otimes z\mapsto B([x,y],z)$ and its dual, which use the Lie bracket and the Killing form $B$ of the Lie algebra of a compact Lie group $G$, corresponds to an element of the component $\Hom((\tbigwedge^3 E)_\pi,\C)\boxtimes \Hom((\tbigwedge^3 E^*)_\pi,\C)$.
\end{remark}

\section{Computation of $\dim\calA_\Theta^\odd(\C\pi)$ and $\dim\calA_\Theta^\odd(\mathrm{Ker}\,\ve)$}\label{s:computation_dim}

This section gives a proof of Theorem \ref{thm:dim-formula}, that is, we completely determine the dimensions of the spaces $\calA_\Theta^\odd(\C\pi)$ and $\calA_\Theta^\odd(\mathrm{Ker}\,\ve)$ associated with the fundamental group $\pi=\pi_1N$ of any spherical 3-manifold $N$ according to the classification of such $\pi$ given in Theorem~\ref{thm:classification}. 

\subsection{Formulas for computation of $\dim \calA_\Theta^\odd(\C\pi)$ and $\dim\calA_\Theta^\odd(\mathrm{Ker}\,\ve)$} \label{section:4.1.1}

We prepare a formula to compute $\dim \calA_\Theta^\odd(\C\pi)$ by using irreducible characters of a finite group $\pi$. 
It generalizes the formula introduced in \cite[\S 4.3]{OW}, where all irreducible representations of a finite group are supposed to have real-valued characters. 
Our formula presented in this section includes irreducible representations with non-real valued characters. 

Suppose that all distinct (up to equivalence) irreducible representations of a finite group $\pi$ over $\C$ consist of $A_1, \ldots, A_{r_1 + 2r_2}$ $(r_1\geq 1, r_2 \geq 0)$ such that $A_i$ ($i=1,2,\ldots, r_1$) has real valued characters and $A_j$ ($j=r_1+1 ,r_1 + 2,\ldots, r_1+2r_2$) has non-real complex valued characters. 
Then, the RHS of isomorphisms \eqref{eq:AW-isom} are further decomposed into the direct sums of irreducible components with real-valued characters and those with non-real-valued characters as follows:
\begin{equation}\label{eq:AW-isom_4.1}
\begin{split}
    &\C\pi\cong \bigoplus_{i=1}^{r_1} (A_i\boxtimes A_i) \oplus \bigoplus_{j=r_1 + 1}^{r_1 + r_2} (A_j\boxtimes A_{j}^{\ast} \oplus A_{j}^{\ast} \boxtimes A_{j}),\\
    & \mathrm{Ker}\,\ve\cong \bigoplus_{i=2}^{r_1} (A_i\boxtimes A_i)\oplus \bigoplus_{j=r_1 + 1}^{r_1 + r_2} (A_j\boxtimes A_{j}^{\ast} \oplus A_{j}^{\ast} \boxtimes A_{j}),
\end{split}
\end{equation}
as $\pi \times \pi$-modules. 
By the isomorphism $\calA_\Theta^\odd(\C \pi) \cong(\Sym^3 \C\pi)^{(\pi\times \pi) \rtimes \Z_2}$ as $(\pi\times \pi) \rtimes \Z_2$-modules (cf. \S 2.1), the dimension $\dim \calA_\Theta^\odd(\C\pi)$ can be computed by applying the formula \eqref{eq:dim_invariant_part} as follows.
\begin{equation}
	\dim \calA_\Theta^\odd(\C\pi)= \frac{1}{2} \left( d_1(\C\pi) + d_2(\C\pi) \right),
\end{equation}
where, denoting $\C\pi$ by $W$, 
\begin{equation}
\label{eq:d1_d2}
    \begin{split}
  d_1(\C\pi) &= \dim\,(\Sym^3 \C\pi )^{\pi\times \pi}\\
  &=\displaystyle\frac{1}{6|\pi|^2}\sum_{g,h\in\pi}(\chi_W(g,h)^3+3 \chi_W(g^2,h^2) \chi_W(g,h)+2 \chi_W(g^3,h^3)),\\
  d_2(\C\pi) &=\displaystyle\frac{1}{6|\pi|^2}\sum_{g,h\in\pi}(\chi_W(\tau\cdot(g,h))^3 +3 \chi_W((\tau\cdot (g,h))^2) \chi_W(\tau\cdot(g,h))+2 \chi_W((\tau\cdot(g,h))^3),\\
    \end{split}
\end{equation}
and $\tau$ denotes the generator of $\Z_2 = \langle \tau\rangle$.
The first term $d_1(\C\pi)$ is computed by using $\chi_W(g,h)=\sum_{i=1}^{r_1+2r_2}\chi_{A_i}(g)\overline{\chi_{A_i}(h)}$, $\chi_W(g^2,h^2)=\sum_{i=1}^{r_1+2r_2}\chi_{A_i}(g^2)\overline{\chi_{A_i}(h^2)}$, and so on. To compute the second term $d_2(\C\pi)$, we need analyze a way to compute $\chi_W(\tau\cdot(g,h))$. 
For this purpose, we introduce the following lemma describing the induced action of $\Z_2$ on RHS of \eqref{eq:AW-isom_4.1}.

\begin{lemma}\label{lem:flip}
Let $\rho\colon \C\pi \rightarrow \bigoplus_{i=1}^{r_1 + 2r_2} \End(A_i)$ denote the isomorphism in \eqref{eq:AW-isom_4.1}. We consider $\bigoplus_{i=1}^{r_1 + 2r_2} \End(A_i)$ as $(\pi \times \pi) \rtimes \Z_2$-module with $\Z_2$-action induced by taking dual $\phi \mapsto \phi^{\ast}$ $(\phi \in \End(A_i))$. Then, $\rho$ is an isomorphism of $(\pi \times \pi)\rtimes \Z_2$-modules. Moreover, the factors
\[
A_i\boxtimes A_i \ (i=1,\ldots, r_1), \quad (A_j\boxtimes A_{j}^{\ast} \oplus A_{j}^{\ast} \boxtimes A_{j})\  (j=r_1+1,\ldots, r_1 + r_2)
\]
in $\bigoplus_{i=1}^{r_1 + 2r_2} \End(A_i)$ are invariant under the $\Z_2$-action and the actions are given by $x \boxtimes y \mapsto y \boxtimes x$ $(x \boxtimes y \in A_i\boxtimes A_i)$ and $x \boxtimes y \oplus z \boxtimes w \mapsto w \boxtimes z \oplus y \boxtimes x$ $(x \boxtimes y \oplus z \boxtimes w \in A_j\boxtimes A_{j}^{\ast} \oplus A_{j}^{\ast} \boxtimes A_{j})$ respectively.
\end{lemma}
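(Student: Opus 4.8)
The plan is to upgrade the $\pi\times\pi$-module isomorphism $\rho$ of \eqref{eq:AW-isom_4.1} (already recorded in \eqref{eq:AW-isom} via \cite[Proposition~3.29]{FH}) to an isomorphism of $(\pi\times\pi)\rtimes\Z_2$-modules, where $\Z_2$ acts on $\pi\times\pi$ by the flip $(g,h)\mapsto(h,g)$. The only genuinely new content is the compatibility of $\rho$ with the two $\Z_2$-actions: the inversion $\iota\colon\C\pi\to\C\pi$, $\iota(g)=g^{-1}$ extended $\C$-linearly, on the source, and blockwise dualization $\phi\mapsto\phi^*$ on the target. Everything rests on one elementary identity. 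Writing $\rho_i\colon\C\pi\to\End(A_i)$ for the $i$-th irreducible representation, for $g\in\pi$ one has $\rho_i(g)^{-1}=\rho_i(g^{-1})$, while the dual representation satisfies $\rho_i^*(g)=\rho_i(g^{-1})^*$; combining these gives $\rho_i(g)^*=\rho_i^*(g^{-1})$, and since $(\,\cdot\,)^*$ is $\C$-linear, extending linearly yields $\rho_i(x)^*=\rho_i^*(\iota x)$ for every $x\in\C\pi$.

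First I would fix, for each $i$, a $\pi$-isomorphism $\theta_i\colon A_{\sigma(i)}\to A_i^*$, where $\sigma$ is the order-two permutation determined by $A_{\sigma(i)}\cong A_i^*$ (so $\sigma$ fixes the indices $1,\dots,r_1$ of the self-dual factors and interchanges the two indices inside each non-real pair). By Schur's lemma $\theta_i$ is unique up to a nonzero scalar, so the induced identification $\End(A_i^*)\cong\End(A_{\sigma(i)})$ by conjugation is canonical. Using $\rho_{\sigma(i)}\cong\rho_i^*$ through $\theta_i$, the identity of the previous paragraph reads $\rho_{\sigma(i)}(\iota x)=\theta_i^{-1}\,\rho_i(x)^*\,\theta_i$, which says precisely that the $\sigma(i)$-block of $\rho(\iota x)$ is the dual of the $i$-block of $\rho(x)$ under the canonical identification. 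Hence $\rho\circ\iota=(\,\cdot\,)^*\circ\rho$, i.e.\ $\rho$ intertwines the two $\Z_2$-actions. Since $\iota$ is an anti-automorphism, $\iota\bigl((g,h)\cdot x\bigr)=\iota(gxh^{-1})=h\,\iota(x)\,g^{-1}=(h,g)\cdot\iota(x)$, which is exactly the semidirect-product relation for the flip action; together with the $\pi\times\pi$-equivariance already furnished by \eqref{eq:AW-isom} this gives that $\rho$ is an isomorphism of $(\pi\times\pi)\rtimes\Z_2$-modules.

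For the explicit description on each factor I would use the standard identification $\End(V)\cong V\otimes V^*$, under which a short computation gives $(v\otimes\xi)^*=\xi\otimes v$ in $V^*\otimes V\cong\End(V^*)$; that is, dualization is literally the swap of the two tensor slots $V\otimes V^*\to V^*\otimes V$. For a self-dual factor $A_i$ with $i\le r_1$ the block is preserved by $\sigma$, and transporting this swap through the chosen self-duality isomorphism $A_i\cong A_i^*$ on both slots turns it into the flip $x\boxtimes y\mapsto y\boxtimes x$ on $A_i\boxtimes A_i$. For a non-real pair, $\sigma$ interchanges the two blocks $A_j\boxtimes A_j^*$ and $A_j^*\boxtimes A_j$, and the swap $(v\otimes\xi)^*=\xi\otimes v$ sends an element of one block to the flipped element of the other, producing $x\boxtimes y\oplus z\boxtimes w\mapsto w\boxtimes z\oplus y\boxtimes x$, as claimed.

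Setting the routine bookkeeping aside, the one point that requires care—and the step I expect to be the main obstacle—is verifying that the transport through the self-duality isomorphism for the real factors produces the clean flip with no stray sign, given that self-dual irreducibles are either orthogonal or symplectic. The resolution I have in mind is that the same isomorphism $\theta_i$ (equivalently, the same invariant bilinear form) is used to identify $A_i^*\cong A_i$ on both tensor slots, so the symmetric/antisymmetric character of the form cancels and only the bare transposition of factors survives; similarly, the scalar ambiguity in $\theta_i$ cancels because it always enters through the conjugation $\theta_i^{-1}(\,\cdot\,)\theta_i$.
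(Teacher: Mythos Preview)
Your argument is correct and follows essentially the same line as the paper's: check $\Z_2$-equivariance on group elements via $\rho_i(g^{-1})=\rho_i(g)^{-1}$ together with $\rho_i(g)^*=\rho_i^*(g^{-1})$, then transport through the self-duality isomorphisms $\theta_i$; the paper's proof is terser but rests on the identical computation, and you are more explicit about the semidirect-product relation $\iota((g,h)\cdot x)=(h,g)\cdot\iota(x)$ and the Schur-lemma bookkeeping.

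One remark on your last paragraph. Your cancellation argument is valid when the dualization is read as a map $A_i\boxtimes A_i^*\to A_i^*\boxtimes A_i$ and you apply $\theta_i^{-1}$ to the starred slot on \emph{each side separately}; that indeed yields the bare flip. But if one instead wants the $\Z_2$-action as a genuine involution of the single summand $\End(A_i)$---i.e.\ composing $\phi\mapsto\phi^*$ with the conjugation $\End(A_i^*)\xrightarrow{\theta_i^{-1}(\cdot)\theta_i}\End(A_i)$ to land back in the same space---then a short calculation shows that under $\End(A_i)\cong A_i\boxtimes A_i$ the action becomes $x\boxtimes y\mapsto \nu_i\,(y\boxtimes x)$, where $\nu_i\in\{\pm1\}$ is the Frobenius--Schur indicator (so a sign does appear for symplectic self-dual factors such as the $2$-dimensional irreducible of $Q_8=D_8^*$). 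The paper's proof is silent on this point as well; for the intended application one may either adopt your reading or keep track of $\nu_i$ in the trace formulas.
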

\begin{proof}
    First, note that a direct computation shows that $\bigoplus_{i=1}^{r_1 + 2r_2} \End(A_i)$ is indeed endowed with a $(\pi \times \pi) \rtimes \Z_2$-module structure by recalling that the semidirect product structure is given by the homomorphism $\psi: \Z_2 = \{1, \tau\} 
    \rightarrow \Aut(\pi \times  \pi); (\tau  \mapsto ((g,h) \mapsto (h,g)))$.  For any $g \in \pi$, we have $\rho(g^{-1}) = (\rho_{A_i}(g^{-1}))_i = (\rho_{A_i}(g)^{-1})_i = (\rho_{A_i^{\ast}}(g)^{\ast})_i$ where note that we use the notations introduced in \S 2.1. For $i=1,\ldots, r_1$, there is an isomorphism $A_i \cong A_i^{\ast}$ as $\pi$-modules since $A_i$ has real-valued character (cf. \cite[Theorem~3.37]{FH}). Therefore, $\rho_{A_i^{\ast}}(g)^{\ast}\simeq \rho_{A_i}(g)^{\ast}$. For $j=r_1 +1,\ldots, r_1 + r_2$, the factor $(\rho_{A_j}(g), \rho_{A_j^{\ast}}(g))$ in $\rho(g)$ is mapped to $(\rho_{A_j^{\ast}}(g)^{\ast}, \rho_{A_j} (g)^{\ast})$ in $\rho(g^{-1})$ under the $\Z_2$-action on $\C\pi$. Hence, we conclude that $\rho$ is an isomorphism of $(\pi \times \pi) \rtimes \Z_2$-modules. 
    The concrete descriptions of $\Z_2$-actions on $A_i \boxtimes A_i$ and $A_j\boxtimes A_{j}^{\ast} \oplus A_{j}^{\ast} \boxtimes A_{j}$ also follow from the above arguments.
\end{proof}

By Lemma~\ref{lem:flip}, while $\tau$ acts on $A_i \boxtimes A_i$ ($i=1,2,\ldots, r_1$) by the flip $x \boxtimes y \mapsto y \boxtimes x$ as in \cite[\S 4.3]{OW}, it acts on $A_j\boxtimes A_{j}^{\ast} \oplus A_{j}^{\ast}\boxtimes A_{j}$ ($j=r_1+1,r_1 + 2,\ldots, r_1 + r_2$) by $x \boxtimes y \oplus z \boxtimes w \mapsto w \boxtimes z \oplus y \boxtimes x$. 
It means that $\tau$ induces a non trivial permutation on $A_j\boxtimes A_{j}^{\ast} \oplus A_{j}^{\ast}\boxtimes A_{j}$  ($j=r_1+1,r_1 + 2,\ldots, r_1 + r_2$) and hence the trace of the action of $\tau \cdot (g,h)$ on $A_j\boxtimes A_{j}^{\ast} \oplus A_{j}^{\ast}\boxtimes A_{j}$  is zero. Therefore, we conclude that only  irreducible representations with real-valued characters contribute to $\chi_W(\tau\cdot(g,h))$, that is,
\begin{equation}
    \chi_W(\tau\cdot(g,h))=\sum_{i=1}^{r_1} \chi_{A_i \boxtimes A_i}(\tau\cdot (g,h)).
\end{equation}
Combining this with the arguments in \cite[\S 4.3]{OW}, we obtain
\begin{equation}\label{eq:ow_formula_tau}
    \begin{split}
 d_2(\C \pi) = \frac{1}{6|\pi|}\sum_g\Bigl\{\Bigl(
\sum_{i=1}^{r_1}\chi_{A_i}(g)\Bigr)^3+3\Bigl(\sum_{i=1}^{r_1 + 2r_2}|\chi_{A_i}(g)|^2\Bigr)\Bigl(\sum_{j=1}^{r_1}\chi_{A_j}(g)\Bigr)+2\sum_{i=1}^{r_1}\chi_{A_i}(g^3)\Bigr\}.\\
\end{split}
\end{equation} 
If all the distinct irreducible representations of $\pi$ have real-valued characters, then the above formula clearly reduces to the original formula given in \cite[\S4.3]{OW}.

To simplify these formulas further, we recall the orthogonality of irreducible characters (cf. \cite[Proposition~7]{Se77}, \cite[Exercise 2.21]{FH}).

\begin{lemma} \label{lem:OIC}
Notations being as above, we have the following.
\begin{equation}
 \chi_W(g, h) =  \sum_{i} \chi_{A_i}(g) \overline{\chi_{A_i}(h)} = \begin{cases}
 \frac{|\pi|}{|C(g)|} & (C(g)=C(h) ), \\
 0 & (C(g) \neq C(h) ),
 \end{cases}
\end{equation}
where $C(g)$ denotes the conjugacy class of $\pi$ represented by $g$.
\end{lemma}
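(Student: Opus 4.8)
The plan is to prove both equalities by computing the $\pi\times\pi$-character $\chi_W(g,h)$ of $W=\C\pi$ in two independent ways and matching them; the first equality records the abstract decomposition \eqref{eq:AW-isom}, while the closed form on the right---which is exactly the column orthogonality relation---comes from the concrete permutation description of the action. For the first equality, \eqref{eq:AW-isom} gives $\C\pi\cong\bigoplus_i (A_i\boxtimes A_i^*)$ as $\pi\times\pi$-modules, so $\chi_W=\sum_i \chi_{A_i\boxtimes A_i^*}$. The character of an external tensor product factorizes as $\chi_{A_i\boxtimes A_i^*}(g,h)=\chi_{A_i}(g)\chi_{A_i^*}(h)$, and since $\pi$ is finite each $\rho_{A_i}(h)$ has finite order, hence eigenvalues that are roots of unity; therefore $\chi_{A_i^*}(h)=\chi_{A_i}(h^{-1})=\overline{\chi_{A_i}(h)}$. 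Summing over $i$ yields $\chi_W(g,h)=\sum_i\chi_{A_i}(g)\overline{\chi_{A_i}(h)}$.

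For the closed form I would instead compute $\chi_W(g,h)$ directly from the action on the group basis $\{x\}_{x\in\pi}$ of $\C\pi$. The pair $(g,h)$ sends the basis vector $x$ to $gxh^{-1}$, again a basis vector, so the action permutes the basis and its trace equals the number of fixed basis vectors: $\chi_W(g,h)=\#\{x\in\pi\mid gxh^{-1}=x\}=\#\{x\in\pi\mid x^{-1}gx=h\}$. This set is nonempty exactly when $h$ is conjugate to $g$, i.e. when $C(g)=C(h)$, and the trace is $0$ otherwise. When it is nonempty, fixing one solution $x_0$, any other solution $x$ satisfies $xx_0^{-1}\in Z_\pi(g)$, the centralizer of $g$; hence the solution set is a single left coset of $Z_\pi(g)$, of cardinality $|Z_\pi(g)|=|\pi|/|C(g)|$ by orbit--stabilizer applied to the conjugation action of $\pi$ on itself.

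Comparing the two computations then proves the lemma, and in particular reproves the orthogonality relation cited from \cite{Se77,FH}. I do not expect a genuine obstacle here: the only steps requiring care are the identity $\chi_{A_i^*}=\overline{\chi_{A_i}}$ (which uses the finiteness of $\pi$) and the coset count for $\{x\mid x^{-1}gx=h\}$, both of which are routine.
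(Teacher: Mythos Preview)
Your proof is correct. The paper does not actually prove this lemma: it simply records it as the second orthogonality relation for irreducible characters, with citations to \cite[Proposition~7]{Se77} and \cite[Exercise~2.21]{FH}. Your argument supplies exactly the standard proof of that relation via the regular representation---deriving the first equality from the decomposition \eqref{eq:AW-isom} and the closed form from the permutation description of $(g,h)\cdot x = gxh^{-1}$---so there is nothing to compare beyond noting that you have written out what the paper leaves to the references.
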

By applying Lemma~\ref{lem:OIC} to the formulas presented above, the dimension formulas are summarized as follows.
\begin{prop}\label{prop:dim_formulas}
Let $\hat{\pi}$ denote the set of conjugacy classes of $\pi$. We set $\Delta_{\hat{\pi}}^{(3)}\coloneqq \{(C(g),C(h))\in \hat{\pi}\times \hat{\pi}\mid C(g^3)=C(h^3)\}$. Then, the value of $\dim\,\calA_\Theta^\odd(\C\pi)=\frac{1}{2}(d_1(\C\pi)+d_2(\C\pi))$ is computed as follows: 
\[ 
d_1(\C\pi)=\frac{1}{6|\pi|}  \left(\sum_{C(g) \in \hat{\pi}} \left(\frac{1}{ |C(g)|} |\pi|^2+3 \frac{|C(g)|}{|C(g^2)|} |\pi|\right) + \sum_{(C(g),C(h)) \in \Delta_{\hat{\pi}}^{(3)}} 2 \frac{|C(g)| |C(h)|}{|C(g^3)|}\right) ,
\]
\[ 
d_2(\C\pi)= \frac{1}{6 |\pi|} \sum_{C(g) \in \hat{\pi}} |C(g)| \Bigl\{\Bigl(
\sum_{i=1}^{r_1}\chi_{A_i}(g)\Bigr)^3+3\frac{|\pi|}{|C(g)|} \Bigl(\sum_{j=1}^{r_1}\chi_{A_j}(g)\Bigr)+2\sum_{i=1}^{r_1}\chi_{A_i}(g^3)\Bigr\}.
\]
\end{prop}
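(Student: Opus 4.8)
The plan is to derive both formulas directly from the expressions for $d_1(\C\pi)$ in \eqref{eq:d1_d2} and $d_2(\C\pi)$ in \eqref{eq:ow_formula_tau} by applying Lemma~\ref{lem:OIC} and then regrouping the sums over group elements according to conjugacy classes. The guiding observation is that every summand appearing in these expressions is a class function in each of its $\pi$-arguments (since $\chi_W$ and the $\chi_{A_i}$ are class functions and $g\mapsto g^k$ carries conjugate elements to conjugate elements), so each sum over $\pi$ or $\pi\times\pi$ can be replaced by a sum over $\hat\pi$ (resp.\ $\hat\pi\times\hat\pi$) weighted by the sizes of the relevant classes. Lemma~\ref{lem:OIC} then evaluates the ``diagonal'' character sums $\chi_W$ explicitly.

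For $d_1(\C\pi)$ I would treat the three terms of \eqref{eq:d1_d2} separately. In the first term $\chi_W(g,h)^3$, Lemma~\ref{lem:OIC} makes the factor vanish unless $C(g)=C(h)$, in which case it equals $(|\pi|/|C(g)|)^3$; summing over the $|C(g)|^2$ pairs within each class gives $\sum_{C(g)\in\hat\pi}|\pi|^3/|C(g)|$. In the second term, $\chi_W(g,h)\neq 0$ again forces $C(g)=C(h)$, and this in turn forces $C(g^2)=C(h^2)$, so the remaining factor is automatically $\chi_W(g^2,h^2)=|\pi|/|C(g^2)|$; regrouping yields $\sum_{C(g)\in\hat\pi}|C(g)|\,|\pi|^2/|C(g^2)|$. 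The third term is the only one requiring care: here $\chi_W(g^3,h^3)\neq 0$ demands only $C(g^3)=C(h^3)$, which is exactly the defining condition of $\Delta_{\hat\pi}^{(3)}$ and does \emph{not} reduce to $C(g)=C(h)$; since $C(g^3)$ depends only on $C(g)$, the condition is well defined on pairs of classes, and summing over the $|C(g)|\,|C(h)|$ element-pairs in each admissible pair of classes gives $|\pi|\sum_{(C(g),C(h))\in\Delta_{\hat\pi}^{(3)}}|C(g)|\,|C(h)|/|C(g^3)|$. Collecting the three contributions with the prefactor $1/6|\pi|^2$ produces the claimed formula for $d_1(\C\pi)$.

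For $d_2(\C\pi)$ the simplification is shorter. The key point is that the middle factor can be identified via Lemma~\ref{lem:OIC} with a diagonal value of $\chi_W$: since $\sum_{i=1}^{r_1+2r_2}|\chi_{A_i}(g)|^2=\sum_i\chi_{A_i}(g)\overline{\chi_{A_i}(g)}=\chi_W(g,g)$ and $C(g)=C(g)$ trivially, this equals $|\pi|/|C(g)|$. The two remaining sums $\sum_{i=1}^{r_1}\chi_{A_i}(g)$ and $\sum_{i=1}^{r_1}\chi_{A_i}(g^3)$ are class functions of $g$, so replacing the sum over $g\in\pi$ by the weighted sum over $\hat\pi$ (each class contributing its representative value multiplied by $|C(g)|$) immediately yields the stated expression for $d_2(\C\pi)$.

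The computation is essentially bookkeeping once Lemma~\ref{lem:OIC} is in hand, and I expect the only genuine pitfall to be the third term of $d_1(\C\pi)$: it is tempting to impose $C(g)=C(h)$ there as in the first two terms, but the correct constraint is the weaker $C(g^3)=C(h^3)$, which is precisely why the set $\Delta_{\hat\pi}^{(3)}$ (rather than the diagonal of $\hat\pi\times\hat\pi$) appears in the final formula.
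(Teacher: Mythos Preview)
Your proposal is correct and follows essentially the same approach as the paper: apply Lemma~\ref{lem:OIC} to the expressions \eqref{eq:d1_d2} and \eqref{eq:ow_formula_tau}, use that all summands are class functions to regroup over $\hat\pi$, and observe that the third term of $d_1$ imposes only $C(g^3)=C(h^3)$ rather than $C(g)=C(h)$. Your write-up is in fact more detailed than the paper's own proof, which carries out the $d_1$ computation in the same way and dismisses $d_2$ as ``straightforward by \eqref{eq:ow_formula_tau} and Lemma~\ref{lem:OIC}.''
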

\begin{proof}
Since the orthogonality of irreducible characters imply that $\chi_W(g,h) =0$ for $C(g) \neq C(h)$ and the characters are class functions,  we have
\begin{equation}
    \begin{split}
         &d_1(\C \pi) = \dim\,(\Sym^3 \C\pi)^{\pi\times \pi}\\
  &=\displaystyle\frac{1}{6|\pi|^2}\sum_{g,h\in\pi}(\chi_W(g,h)^3+3 \chi_W(g^2,h^2) \chi_W(g,h)+2 \chi_W(g^3,h^3))\\
  &= \displaystyle\frac{1}{6|\pi|^2}\sum_{C(g) \in \hat{\pi}} \left(|C(g)|^2 \left(\chi_W(g,g)^3+3 \chi_W(g^2,g^2) \chi_W(g,g)\right) +\sum_{C(h) \in \hat{\pi}} 2 |C(g)| |C(h)| \chi_W(g^3,h^3)\right)\\
  &= \frac{1}{6|\pi|} \sum_{C(g) \in \hat{\pi}} \left(\frac{1}{ |C(g)|} |\pi|^2+3 \frac{|C(g)|}{|C(g^2)|} |\pi| + \sum_{C(h) \in \hat{\pi}; C(h^3)=C(g^3) }2 \frac{|C(g)| |C(h)|}{|C(g^3)|}\right).
    \end{split}
\end{equation}
where we set $W = \C \pi$. The formula to $d_2(\C \pi)$ is straightforward by \eqref{eq:ow_formula_tau} and Lemma~\ref{lem:OIC}.
\end{proof}
By Proposition~\ref{prop:dim_formulas}, to compute $\dim\,\calA_\Theta^\odd(\C\pi)=\frac{1}{2}(d_1(\C\pi)+d_2(\C\pi))$, in principle, it suffices to know the following data: the order $|\pi|$ of $\pi$, the character table of $\pi$, the number of elements $|C(g)|$ of each conjugacy class $C(g) \in \hat{\pi}$, and the conjugacy classes $C(g^2)$ and $C(g^3)$ for any $C(g) \in \hat{\pi}$. In this paper, as we will see below, we treat finite groups parametrized by integers such as $\pi= \Z_n, D_{4p}^{\ast}, D_{2^{k+2}p}, T_{8\cdot 3^k}$. Notice that these groups require a case-by-case argument when computing 
\begin{equation}
    \sum_{C(g) \in \hat{\pi}} \frac{|C(g)|}{|C(g^2)|},  \quad \sum_{(C(g),C(h)) \in \Delta_{\hat{\pi}}^{(3)}} \frac{|C(g)| |C(h)|}{|C(g^3)|}, \quad \sum_{i=1}^{r_1}\chi_{A_i}(g^3),
\end{equation}
in the formulas of Proposition~\ref{prop:dim_formulas} since the transformation rule of $C(g^2)$ and $C(g^3)$ changes depending on whether the parameters $m, p$ are divisible by $2$ and $3$ respectively. 

Finally, we recall the Dirichlet kernel and its formula (e.g. \cite[\S{8.13}]{Ru}) which will be used in Lemmas \ref{lem:chi_W_g_d4n_even}, \ref{lem:chi_W_g_d4n_odd}, and \ref{lem:chi_W_g_d2k+2p}. For any nonnegative integer $n$, the Dirichlet kernel $D_n(x)$ is defined by
\begin{equation}
    D_n(x) = \sum_{\lambda=-n}^n e^{i\lambda x}  = \frac{\sin ((n+1/2)x)}{\sin(x/2)}.
\end{equation}
It is known that the Dirichlet kernel $D_n(x)$ is even periodic function with period $2\pi$, $D_n(x+2\pi) = D_n(x)=D_n(-x)$, and $D_n(2\pi k) = 2n+1$ for any $k \in \Z$. Then, the following is straightforward. 
\begin{lemma}\label{lem:sum_rt_unit}
Let $k$ be an integer. Then, the following identity holds.
\begin{equation}\label{eq:sum_rt_unit}
   D_{n-1}\left(\frac{2\pi k}{2n}\right) -1 =   \sum_{\lambda =1}^{n-1}2 \cos\left(\frac{2\pi k\lambda}{2n} \right)  =\begin{cases}
        2n -2 & (k \equiv 0 \bmod 2n ),\\
        -2 & (k\equiv 0 \bmod 2,  k\not \equiv  0 \bmod 2n),\\
        0 & (\text{otherwise}).
    \end{cases}
\end{equation}
\end{lemma}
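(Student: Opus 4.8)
The plan is to prove the two asserted equalities in turn. The first, $D_{n-1}\!\left(\frac{2\pi k}{2n}\right)-1=\sum_{\lambda=1}^{n-1}2\cos\!\left(\frac{2\pi k\lambda}{2n}\right)$, is purely formal. By definition $D_{n-1}(x)=\sum_{\lambda=-(n-1)}^{n-1}e^{i\lambda x}$; pairing the indices $\lambda$ and $-\lambda$ via $e^{i\lambda x}+e^{-i\lambda x}=2\cos(\lambda x)$ separates off the central term $\lambda=0$, which contributes $1$, so that $D_{n-1}(x)-1=\sum_{\lambda=1}^{n-1}2\cos(\lambda x)$. Setting $x=\frac{2\pi k}{2n}$ gives the first equality.

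For the closed-form evaluation I would use $D_{n-1}(x)=\frac{\sin((n-\frac12)x)}{\sin(x/2)}$ recalled above, at $x=\frac{\pi k}{n}$, splitting into two cases according to whether $2n\mid k$. When $2n\nmid k$ the denominator $\sin(\pi k/(2n))$ is nonzero, so the quotient formula applies; the numerator is $\sin\!\left(\pi k-\frac{\pi k}{2n}\right)$, and expanding by the addition formula with $\sin(\pi k)=0$ and $\cos(\pi k)=(-1)^k$ reduces it to $-(-1)^k\sin(\pi k/(2n))$. Cancelling the common factor yields $D_{n-1}(\frac{\pi k}{n})=-(-1)^k$, hence $D_{n-1}-1=-(-1)^k-1$, which equals $-2$ for $k$ even and $0$ for $k$ odd. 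When $2n\mid k$, write $k=2nm$ so that $x=2\pi m$, and the recalled value $D_{n-1}(2\pi m)=2(n-1)+1=2n-1$ gives $D_{n-1}-1=2n-2$ at once.

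Collecting the three outcomes reproduces exactly the three branches of the claimed formula: $2n-2$ when $k\equiv0\bmod 2n$; $-2$ when $k$ is even but $k\not\equiv0\bmod 2n$; and $0$ when $k$ is odd (noting that odd $k$ automatically satisfies $k\not\equiv0\bmod 2n$ since $2n$ is even). As an alternative to the closed form one could argue entirely by a root-of-unity filter: with $\zeta=e^{\pi i/n}$ a primitive $2n$-th root of unity, the index set $\{-(n-1),\dots,n\}$ is a complete residue system modulo $2n$, so $\sum_{\lambda=-(n-1)}^{n}\zeta^{k\lambda}=\sum_{\lambda=0}^{2n-1}\zeta^{k\lambda}$ equals $2n$ if $2n\mid k$ and $0$ otherwise; subtracting the extra $\lambda=n$ term $\zeta^{kn}=(-1)^k$ recovers $D_{n-1}(\frac{\pi k}{n})$, and the same case analysis follows.

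The only real subtlety, and the step I would flag as the main obstacle, is that the quotient formula for $D_{n-1}$ is valid only where $\sin(x/2)\neq0$; precisely when $2n\mid k$ both numerator and denominator vanish, so this case must be handled separately through the evaluation property $D_{n-1}(2\pi m)=2n-1$ rather than by the quotient. Everything else is a routine trigonometric or geometric-series computation.
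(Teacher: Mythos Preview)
Your proof is correct and follows exactly the route the paper intends: the paper merely states that the lemma ``is straightforward'' from the closed form $D_{n}(x)=\sin((n+\tfrac12)x)/\sin(x/2)$ and the evaluation $D_n(2\pi m)=2n+1$ it has just recalled, and you have simply spelled out this computation carefully, including the necessary split into the cases $2n\mid k$ and $2n\nmid k$. There is nothing to add.
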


\subsection{Cyclic group $\Z_n$}

To begin with, we recall the dimension formula for $\pi=\Z_n$ given in \cite{OW} and then provide an alternative computation to it by using the formula presented in the last section. We will find that this reformulation gives an explicit formula to $p_3(n)$, the number of partitions of an integer $n$ into at most $3$. The computations in this section also serve as a preliminary warm-up for those in the subsequent sections.

Let $n$ be a positive integer. The \textit{cyclic group} $\Z_n$ of order $n$ is given by the following finitely presented group
\begin{equation}
	\Z_n = \langle \gamma \mid \gamma^n = 1\rangle.
\end{equation}
When $\pi = \Z_n$, the second author and Ohta give the following dimension formula.
\begin{prop}[{\cite[Proposition~1.3]{OW}}] \label{prop:OW_dim_cyclic}
	When $\pi = \Z_n$, we have the following.
	\begin{equation}
		\begin{split}
		\dim \calA_\Theta^\odd(\C\pi)	 = p_3(n), \quad \dim \calA_\Theta^\odd(\mathrm{Ker}\,\ve) = p_3(n-3).
		\end{split}
	\end{equation}
	Here, for an integer $m \geq 0$, $p_3(m)$ denotes the number of partitions of $n$ into at most three parts, namely, the number of integer solutions of the equation $x + y +z = m$ $(0 \leq x \leq y \leq z )$, and we set $p_3(m) =0$ for $m<0$.
\end{prop}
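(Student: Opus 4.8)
The plan is to specialize the general dimension formulas of Proposition~\ref{prop:dim_formulas} to the abelian group $\pi=\Z_n$, carry out the resulting elementary case analysis, and then recognize the outcome as the classical closed form for $p_3$. Since $\Z_n$ is abelian, every conjugacy class is a singleton, so $|C(g)|=|C(g^2)|=|C(g^3)|=1$ for all $g$ and $\hat\pi=\Z_n$ has $n$ elements. The only group-theoretic input that is not completely trivial is the cardinality of $\Delta_{\hat\pi}^{(3)}=\{(C(g),C(h)):C(g^3)=C(h^3)\}$: writing the elements additively, $g^3=h^3$ means $3(g-h)\equiv 0 \bmod n$, which has exactly $\gcd(3,n)$ solutions $h$ for each $g$, so $|\Delta_{\hat\pi}^{(3)}|=n\gcd(3,n)$.

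With these inputs the formula for $d_1(\C\pi)$ collapses to $\tfrac{1}{6n}\bigl(n^3+3n^2+2n\gcd(3,n)\bigr)=\tfrac{1}{6}\bigl(n^2+3n+2\gcd(3,n)\bigr)$. For $d_2(\C\pi)$ I would first record which irreducible characters are real-valued: the one-dimensional characters $\gamma\mapsto \zeta^a$ with $\zeta=e^{2\pi i/n}$ are real exactly when $2a\equiv 0 \bmod n$, i.e. only the trivial one if $n$ is odd ($r_1=1$), and the trivial one together with the sign character $\gamma\mapsto-1$ if $n$ is even ($r_1=2$). Substituting $\sum_{i=1}^{r_1}\chi_{A_i}(\gamma^k)$ (which is $1$ for all $k$ when $n$ is odd, and $1+(-1)^k$ when $n$ is even) into the $d_2$-formula and using $\sum_{k}(-1)^k=0$ for $n$ even gives $d_2=\tfrac{n+1}{2}$ when $n$ is odd and $d_2=\tfrac{n+2}{2}$ when $n$ is even. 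Forming $\tfrac12(d_1+d_2)$ in the four cases (the parity of $n$ times whether or not $3\mid n$) produces the explicit quadratics $\tfrac{1}{12}n^2+\tfrac12 n+c$ with $c\in\{1,\tfrac23,\tfrac34,\tfrac{5}{12}\}$.

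It then remains to identify these four quadratics with $p_3(n)$. The cleanest route is the classical fact that the number of partitions of $n$ into at most three parts is the nearest integer to $(n+3)^2/12=\tfrac{1}{12}n^2+\tfrac12 n+\tfrac34$; expanding this nearest-integer function according to $n\bmod 6$ reproduces exactly the four constants $c$ above, whence $\dim\calA_\Theta^\odd(\C\pi)=p_3(n)$. Conceptually this equality is explained by a direct orbit count, which also serves as an independent check of the character computation: the coinvariant-then-invariant description realizes $\dim\calA_\Theta^\odd(\C\pi)$ as the number of orbits of simultaneous translation together with global inversion on size-$3$ multisets of $\Z_n$, and sending such a multiset to its (unordered) triple of cyclic gaps $\{d_1,d_2,d_3\}$ with $d_i\ge 0$ and $d_1+d_2+d_3=n$ is a bijection onto partitions of $n$ into at most three parts.

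For $\calA_\Theta^\odd(\mathrm{Ker}\,\ve)$ I would invoke the splitting $\calA_\Theta^\odd(\C\pi)=\calA_\Theta^\odd(\mathrm{Ker}\,\ve)\oplus(\C\hat\pi)_{\Z_2}$ of Proposition~\ref{prop:graph-inv}(2). For $\pi=\Z_n$ the $\Z_2$-action on $\C\hat\pi=\C[\Z_n]$ is $g\mapsto -g$, whose number of orbits is $\tfrac{n+f}{2}$ with $f$ the number of solutions of $2g=0$, that is $f=2$ ($n$ even) or $f=1$ ($n$ odd); hence $\dim(\C\hat\pi)_{\Z_2}=\floor{n/2}+1$. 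Subtracting this from $p_3(n)$ and using the elementary identity $p_3(n)-p_3(n-3)=\floor{n/2}+1$ (the number of partitions of $n$ into at most two parts) yields $\dim\calA_\Theta^\odd(\mathrm{Ker}\,\ve)=p_3(n-3)$. I expect the main obstacle here to be organizational rather than conceptual, namely keeping the four residue cases of the $d_2$-computation aligned with the piecewise closed form of $p_3$: for the abelian group $\Z_n$ all the genuinely representation-theoretic difficulties (non-real characters, nontrivial class sizes, and the $C(g^2),C(g^3)$ bookkeeping) degenerate, and they will only reappear in earnest for the nonabelian groups treated in the later sections.
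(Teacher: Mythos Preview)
Your proposal is correct and follows essentially the same route as the paper's character-theoretic reproof: specialize Proposition~\ref{prop:dim_formulas} to $\Z_n$, compute $|\Delta_{\hat\pi}^{(3)}|=n\gcd(3,n)$ (the paper's Lemma~\ref{lem:zn_dim1_chiw_cubic}), obtain $d_1$ and $d_2$ exactly as in Lemmas~\ref{lem:dim1_zn} and~\ref{lem:dim2_zn}, and subtract $\dim(\C\hat\pi)_{\Z_2}=\lfloor n/2\rfloor+1$ via Proposition~\ref{prop:graph-inv}(2). The only addition beyond the paper is your cyclic-gaps bijection and the nearest-integer description of $p_3(n)$, which give a pleasant conceptual check but do not change the argument.
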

The proof given in \cite{OW} is based on a combination of a direct examination of $(\Sym^3 \C\pi)^{(\pi \times \pi) \rtimes \Z_2}$ and a geometric argument. In the following, we revisit the computation from the viewpoint of representation theory via the formula in Section \ref{section:4.1.1}. 

As is well known, the unique irreducible representations $\rho_{V_{\lambda}}$ of $\Z_n$ are given by one dimensional representations
\begin{equation}
	\rho_{V_{\lambda}}(\gamma) = \zeta_n^{\lambda} \quad (\lambda=0,1,2,\ldots, n-1),
\end{equation}
where we set $\zeta_n = \exp\left(\frac{2\pi \sqrt{-1}}{n}\right) \in \C$. Since $\Z_n$ is abelian, the set of conjugacy classes of $\pi = \Z_n$, denoted by $\hat{\pi}$,  consists only of singletons $[\gamma^m]=\{\gamma^m\}$ $(m=0,1,\ldots, n-1)$ and is equal to $\Z_n$ as sets. The character table of $\Z_n$ is given as Table~\ref{tab:ch_zn}. 

\par\medskip
\begin{table}[h]
\centering
\renewcommand{\arraystretch}{1.2}
  \begin{tabular}{|c|c|}  \hline
    $\hat{\pi}$ & $\gamma^m$\\ 
    size & $1$ \\ \hline
    $V_{\lambda}$ & $\zeta_n^{\lambda m}$  \\ \hline
  \end{tabular}
\par\medskip
\caption{The characters $\chi_{V_{\lambda}}(g)$ for $\Z_n$. Here, $\lambda=0,1,\ldots, n-1$. }\label{tab:ch_zn}
\end{table}

By examining $\Z_2$ action on $\hat{\pi}$, one obtains the following lemma immediately. The dimension $\dim(\C\hat{\pi})_{\Z_2}$ is necessary to compute $\dim \calA_\Theta^\odd(\ker\ve)$ by using Proposition~\ref{prop:graph-inv}-2.
\begin{lemma}\label{lem:conj_z2_zn}
Let $\pi = \Z_n$. When $n$ is odd we have $[x^{-1}]=[x]$ only for $x=e$, and when $n$ is even we have $[x^{-1}]=[x]$ only for $x=e, \gamma^{n/2}$. In particular, the following holds.
\begin{equation}
	\dim(\C\hat{\pi})_{\Z_2}= 1 + \floor{\frac{n}{2}} =  \begin{cases}
 	1 + \frac{n}{2} & ( n \equiv 0 \bmod 2),\\
 	1 + \frac{n-1}{2} & (n \not \equiv 0 \bmod 2),
 \end{cases}
\end{equation}
where $\floor{\frac{n}{2}}$ denotes the greatest integer less than or equal to $\frac{n}{2}$.
\end{lemma}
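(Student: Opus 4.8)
The plan is to reduce everything to an orbit count for the inversion action on the set of conjugacy classes. Since $\Z_n$ is abelian, every conjugacy class is a singleton, so $\hat{\pi}=\{[\gamma^m]\mid 0\le m\le n-1\}$ is identified with $\Z_n$ as a set, and the $\Z_2$-action induced by $g\mapsto g^{-1}$ is simply $[\gamma^m]\mapsto[\gamma^{-m}]=[\gamma^{n-m}]$ (indices mod $n$). First I would determine the fixed points of this action: a class $[\gamma^m]$ is fixed if and only if $\gamma^m=\gamma^{-m}$, i.e. $\gamma^{2m}=1$, i.e. $n\mid 2m$. When $n$ is odd, $\gcd(2,n)=1$ forces $m\equiv 0\pmod n$, so the only fixed class is $[e]$; when $n$ is even, the condition becomes $m\equiv 0\pmod{n/2}$, giving exactly $[e]$ and $[\gamma^{n/2}]$. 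This establishes the first two assertions of the lemma directly.

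Next I would compute $\dim(\C\hat{\pi})_{\Z_2}$ as the number of $\Z_2$-orbits on $\hat{\pi}$. The key observation is that for a finite group acting on a finite set, the invariants of the permutation module over $\C$ have the orbit sums as a basis, so their dimension equals the number of orbits; combined with the invariants--coinvariants isomorphism recorded in Section~\ref{ss:rep} (the averaging map $v\mapsto \tfrac{1}{|\pi|}\sum_g gv$), this gives $\dim(\C\hat{\pi})_{\Z_2}=\#(\hat{\pi}/\Z_2)$. Writing $F$ for the number of fixed classes found above, the remaining $n-F$ classes are permuted freely in pairs, yielding $(n-F)/2$ two-element orbits, so the total number of orbits is
\[
F+\frac{n-F}{2}=\frac{n+F}{2}.
\]

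Finally I would substitute the two values of $F$. For $n$ odd, $F=1$ gives $(n+1)/2=1+\floor{n/2}$; for $n$ even, $F=2$ gives $n/2+1=1+\floor{n/2}$. In both cases the count agrees with $1+\floor{n/2}$, which completes the proof. There is no real obstacle here: the only conceptual point is the identification of $\dim(\C\hat{\pi})_{\Z_2}$ with the orbit count, which is the standard orbit-sum description of permutation-module invariants together with the isomorphism already available in Section~\ref{ss:rep}; the rest is the elementary parity analysis of the congruence $n\mid 2m$.
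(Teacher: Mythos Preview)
Your proof is correct and follows essentially the same approach the paper indicates: the paper simply states that the lemma is obtained immediately by examining the $\Z_2$-action on $\hat{\pi}$, and your argument is precisely the natural elaboration of that---identifying the fixed classes via $n\mid 2m$ and counting orbits as $F+(n-F)/2$.
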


\begin{remark}
Note that  $\dim(\C\hat{\pi})_{\Z_2} = 1 + \floor{\frac{n}{2}}$ is the number of partitions of $n$ into at most $2$, we denote it by $p_2(n)$ (cf. the sequence \href{https://oeis.org/A008619}{A008619} in OEIS (\cite{oeis})). Therefore, Lemma~\ref{lem:conj_z2_zn} is just a restatement, in a more explicit manner, of the same statement given in the proof of \cite[Proposition~5.3]{OW}.
\end{remark}

We prepare the following lemma for computation of $d_1(\C\pi) =\dim\,(\Sym^3 \C\pi)^{\pi\times \pi}$. For the proof, see Appendix \ref{appendix:A}.

\begin{lemma}\label{lem:zn_dim1_chiw_cubic}
	When $\pi = \Z_n$, the following holds.
    \begin{equation}
       \sum_{(C(g),C(h)) \in \Delta_{\hat{\pi}}^{(3)}} \frac{|C(g)| |C(h)|}{|C(g^3)|} 
        = \begin{cases}
            3n & (n \equiv 0 \bmod 3),\\
            n & (n \not \equiv 0 \bmod 3).
        \end{cases}
    \end{equation}
\end{lemma}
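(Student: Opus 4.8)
The plan is to exploit the fact that $\pi=\Z_n$ is abelian, which trivializes all the cardinality factors in the sum. Since every conjugacy class of $\Z_n$ is a singleton, we have $|C(g)|=|C(h)|=|C(g^3)|=1$ for all $g,h\in\pi$, so each summand equals $1$. Hence the sum reduces to counting the number of elements of $\Delta_{\hat\pi}^{(3)}$, and the whole problem becomes a purely combinatorial counting problem in $\Z_n$.

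Next I would identify $\hat\pi$ with $\Z_n$ via the correspondence $C(\gamma^a)\leftrightarrow a$, as recorded in the description of the conjugacy classes of $\Z_n$. Writing $g=\gamma^a$ and $h=\gamma^b$, the defining condition $C(g^3)=C(h^3)$ of $\Delta_{\hat\pi}^{(3)}$ becomes the congruence $3a\equiv 3b\pmod n$, i.e. $3(a-b)\equiv 0\pmod n$. So the task is to count the pairs $(a,b)\in\Z_n\times\Z_n$ satisfying this congruence.

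To do the count, I would fix $b$ and set $c=a-b$; then for each of the $n$ choices of $b$ the admissible values of $a$ correspond bijectively to the solutions $c\in\Z_n$ of $3c\equiv 0\pmod n$. The solution set of $3c\equiv 0\pmod n$ is the cyclic subgroup $\tfrac{n}{\gcd(3,n)}\Z_n$ of $\Z_n$, which has exactly $\gcd(3,n)$ elements. Thus the number of valid $a$ for each $b$ is $\gcd(3,n)$, equal to $3$ when $3\mid n$ and to $1$ otherwise. Multiplying by the $n$ choices of $b$ gives a total of $n\cdot\gcd(3,n)$, namely $3n$ when $n\equiv 0\bmod 3$ and $n$ when $n\not\equiv 0\bmod 3$, which is the claimed value.

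The argument is elementary, so I do not anticipate a serious obstacle; the only point requiring a little care is the correct enumeration $\gcd(3,n)$ of the roots of $3c\equiv 0\pmod n$ in $\Z_n$, which is handled by the subgroup description above. This same mechanism, with the parameter $2$ or $3$ governing how $C(g^2)$ and $C(g^3)$ behave, is what will later need a more delicate case-by-case treatment for the non-abelian groups, where the cardinality factors $|C(g)|$ no longer all equal $1$.
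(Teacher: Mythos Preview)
Your proposal is correct and follows essentially the same approach as the paper: both observe that abelianness makes every conjugacy class a singleton, reducing the sum to the count of pairs $(a,b)\in\Z_n^2$ with $3a\equiv 3b\pmod n$. The only cosmetic difference is that you phrase the count uniformly as $n\cdot\gcd(3,n)$ via the subgroup of solutions to $3c\equiv 0$, whereas the paper splits into the two cases $3\mid n$ and $3\nmid n$ directly.
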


By applying Lemma~\ref{lem:zn_dim1_chiw_cubic} and Table~\ref{tab:ch_zn} to the formula in Proposition~\ref{prop:dim_formulas}, one can compute the dimension $d_1(\C \pi) =\dim\,(\Sym^3 \C\pi)^{\pi\times \pi}$ as follows.

\begin{lemma}\label{lem:dim1_zn}
For $\pi = \Z_n$,
\begin{equation}
    d_1(\C\pi) =\dim\,(\Sym^3 \C\pi )^{\pi\times \pi} = \begin{cases}
        \frac{1}{6} n^2 + \frac{1}{2}n + 1 & (n \equiv 0 \bmod 3),\\
        \frac{1}{6} n^2 + \frac{1}{2}n + \frac{1}{3} & (n \not \equiv 0 \bmod 3).
    \end{cases}
\end{equation}
\end{lemma}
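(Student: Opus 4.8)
The plan is to specialize the general formula for $d_1(\C\pi)$ in Proposition~\ref{prop:dim_formulas} to the abelian group $\pi=\Z_n$ and evaluate the three resulting sums. The crucial simplification is that $\Z_n$ is abelian, so every conjugacy class is a singleton: one has $|C(g)|=1$ for all $g$, the set $\hat{\pi}$ has exactly $n$ elements, and likewise $|C(g^2)|=|C(g^3)|=1$. This collapses all the combinatorial weights $|C(g)|$ appearing in the formula and reduces the computation to counting conjugacy classes.

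With these values substituted, the first summand $\sum_{C(g)\in\hat{\pi}}|\pi|^2/|C(g)|$ contributes $n\cdot n^2=n^3$, and the second summand $\sum_{C(g)\in\hat{\pi}}3\,|C(g)|\,|\pi|/|C(g^2)|$ contributes $n\cdot 3n=3n^2$. Both evaluations are immediate and, importantly, require no case distinction on the divisibility of $n$.

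The only term carrying genuine content is the cubic sum $\sum_{(C(g),C(h))\in\Delta_{\hat{\pi}}^{(3)}}|C(g)|\,|C(h)|/|C(g^3)|$, whose value does depend on whether $3\mid n$. I would invoke Lemma~\ref{lem:zn_dim1_chiw_cubic} directly here: it equals $3n$ when $3\mid n$ and $n$ otherwise. Heuristically, the extra factor of $3$ in the divisible case reflects that the cubing map $\gamma^k\mapsto\gamma^{3k}$ on $\Z_n$ fails to be injective precisely when $3\mid n$, which enlarges the set $\Delta_{\hat{\pi}}^{(3)}$ of pairs satisfying $C(g^3)=C(h^3)$; the precise count is established in the appendix.

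Assembling the pieces gives $d_1(\C\pi)=\frac{1}{6n}\bigl(n^3+3n^2+2\cdot(\text{cubic term})\bigr)$, which yields $\frac{1}{6n}(n^3+3n^2+6n)=\frac{1}{6}n^2+\frac{1}{2}n+1$ when $3\mid n$ and $\frac{1}{6n}(n^3+3n^2+2n)=\frac{1}{6}n^2+\frac{1}{2}n+\frac{1}{3}$ otherwise, matching the claimed formula. I do not expect any real obstacle in this lemma itself: the entire analytic difficulty has been isolated into Lemma~\ref{lem:zn_dim1_chiw_cubic}, so what remains is a short bookkeeping verification of the arithmetic above.
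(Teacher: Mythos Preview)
Your proof is correct and follows exactly the approach the paper indicates: specialize Proposition~\ref{prop:dim_formulas} to the abelian group $\Z_n$, use that all conjugacy classes are singletons to trivialize the first two sums, and then invoke Lemma~\ref{lem:zn_dim1_chiw_cubic} for the cubic term. The arithmetic you wrote out is precisely what the paper leaves implicit when it says the result is obtained ``by applying Lemma~\ref{lem:zn_dim1_chiw_cubic} and Table~\ref{tab:ch_zn} to the formula in Proposition~\ref{prop:dim_formulas}.''
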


Next, we compute the dimension $d_2(\C\pi)$. For this, it may be useful to consider the real character table for $\Z_n$ as Table~\ref{tab:ch_real_zn_odd} and Table~\ref{tab:ch_real_zn_even}, together with the following lemma.

\begin{table}[h]
\centering
  \begin{minipage}[t]{.35\textwidth}
    \begin{center}
      \begin{tabular}{|c|c|}  \hline
    $\hat{\pi}$ & $\gamma^m$\\ 
    size & $1$ \\ \hline
    $V_0$ & $1$  \\ \hline
  \end{tabular}
    \end{center}
    \caption{The real characters $\chi_{V_0}(g)$ for $\Z_n$ with odd $n$.}
    \label{tab:ch_real_zn_odd}
  \end{minipage}
\hspace{1cm} 
  \begin{minipage}[t]{.35\textwidth}
    \begin{center}
      \begin{tabular}{|c|c|}  \hline
    $\hat{\pi}$ & $\gamma^m$\\ 
    size & $1$ \\ \hline
    $V_0$ & $1$  \\ 
    $V_{n/2}$ & $(-1)^m$ \\ \hline
  \end{tabular}
    \end{center}
    \caption{The real characters $\chi_{V_0}(g)$  and $\chi_{V_{n/2}}$ for $\Z_n$ with even $n$.}
    \label{tab:ch_real_zn_even}
  \end{minipage}
\end{table}

\begin{lemma}\label{lem:chi_W_g_zn}
\renewcommand{\labelenumi}{$(\arabic{enumi})$}
Let $\pi = \Z_n$. For  $\gamma^m \in \Z_n$, we have
        \begin{equation}
                \sum_{\chi_{A_i} : \text{real}} \chi_{A_i}(\gamma^m) = \sum_{\chi_{A_i}: \text{real}}\chi_{A_i}(\gamma^{3m})=  \begin{cases}
                    1 & (n \not \equiv 0 \bmod 2),\\
                    2 & (n \equiv 0 \bmod 2, m \equiv 0 \bmod 2),\\
                    0 & (n \equiv 0 \bmod 2, m \not \equiv 0 \bmod 2).
                \end{cases}
        \end{equation}
\end{lemma}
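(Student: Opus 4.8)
The plan is to determine exactly which irreducible representations $V_\lambda$ of $\Z_n$ have real-valued characters, and then to evaluate the sum directly from the character values recorded in Table~\ref{tab:ch_zn}. Since each $V_\lambda$ is one-dimensional with $\chi_{V_\lambda}(\gamma^m)=\zeta_n^{\lambda m}$, the character $\chi_{V_\lambda}$ is real-valued precisely when $\zeta_n^\lambda\in\R$, equivalently when $\zeta_n^{2\lambda}=1$, i.e. $2\lambda\equiv 0 \bmod n$. For $n$ odd this forces $\lambda\equiv 0\bmod n$, so $V_0$ is the only representation with real-valued character, which is exactly the content of Table~\ref{tab:ch_real_zn_odd}. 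For $n$ even the congruence has the two solutions $\lambda=0$ and $\lambda=n/2$ in the range $0\leq\lambda\leq n-1$, giving the two rows of Table~\ref{tab:ch_real_zn_even}.

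Next I would simply add up the contributions. In the odd case only $V_0$ appears, contributing $\chi_{V_0}(\gamma^m)=1$, so the sum equals $1$ for every $m$. In the even case $V_0$ contributes $1$ and $V_{n/2}$ contributes $\chi_{V_{n/2}}(\gamma^m)=(\zeta_n^{n/2})^m=(-1)^m$, so the sum equals $1+(-1)^m$, which is $2$ when $m$ is even and $0$ when $m$ is odd. This establishes the displayed formula for the sum of $\chi_{A_i}(\gamma^m)$ over the real $A_i$.

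Finally, for the claim that the sum over $\gamma^{3m}$ gives the identical values, the only point to check is the even case: here $\chi_{V_{n/2}}(\gamma^{3m})=(-1)^{3m}=(-1)^m$ since $3m\equiv m\bmod 2$, while $\chi_{V_0}(\gamma^{3m})=1$ is unchanged; in the odd case every term is again $1$. Hence the two sums coincide term by term, and the two displayed expressions are equal. I expect essentially no obstacle beyond this parity observation: the whole argument is a short direct computation once the real representations have been identified, and it serves mainly to package the trivial and sign characters in the form needed for the evaluation of $d_2(\C\pi)$ via \eqref{eq:ow_formula_tau}.
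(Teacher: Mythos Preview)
Your proof is correct and follows essentially the same approach as the paper: both arguments reduce to reading off the real character tables (Tables~\ref{tab:ch_real_zn_odd} and~\ref{tab:ch_real_zn_even}) and summing the rows, and your explicit justification of why $V_0$ (and $V_{n/2}$ when $n$ is even) are precisely the real-character representations, together with the parity observation $(-1)^{3m}=(-1)^m$, merely spells out what the paper's one-line proof leaves implicit.
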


\begin{proof}
    The assertion follows straightforwardly from Table~\ref{tab:ch_real_zn_odd} and Table~\ref{tab:ch_real_zn_even}.
\end{proof}

By applying Lemma~\ref{lem:chi_W_g_zn} to the formula in Proposition~\ref{prop:dim_formulas}, we obtain the following. 
\begin{lemma}\label{lem:dim2_zn}
For $\pi = \Z_n$,
\begin{equation}
    d_2(\C\pi) = \begin{cases}
    \frac{1}{2} n + 1 & (n \equiv 0 \bmod 2),\\
         \frac{1}{2}n + \frac{1}{2} & (n \not \equiv 0 \bmod 2).
    \end{cases}
\end{equation}
\end{lemma}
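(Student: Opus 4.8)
The plan is to obtain the claimed values of $d_2(\C\pi)$ by substituting the character data of $\Z_n$ directly into the formula for $d_2(\C\pi)$ recorded in Proposition~\ref{prop:dim_formulas}. First I would specialize that formula to the cyclic case. Since $\Z_n$ is abelian, every conjugacy class is a singleton, so $|C(g)|=1$ for all $g$, the set $\hat{\pi}$ has exactly $n$ elements $\{\gamma^m\}_{m=0}^{n-1}$, and $|\pi|=n$. Writing $S_m:=\sum_{i=1}^{r_1}\chi_{A_i}(\gamma^m)$ for the sum over the real-valued irreducible characters evaluated at $\gamma^m$, the formula collapses to
\[ d_2(\C\pi)=\frac{1}{6n}\sum_{m=0}^{n-1}\Bigl(S_m^3+3n\,S_m+2\sum_{i=1}^{r_1}\chi_{A_i}(\gamma^{3m})\Bigr). \]

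The key input is Lemma~\ref{lem:chi_W_g_zn}, which both computes $S_m$ explicitly and, crucially, asserts that $\sum_{i=1}^{r_1}\chi_{A_i}(\gamma^{3m})$ equals the same value $S_m$. Using this, the third summand becomes $2S_m$, so each bracket simplifies to $S_m^3+(3n+2)S_m$, and it only remains to insert the values of $S_m$ and count. For $n$ odd, $S_m=1$ for every $m$, so each bracket equals $3n+3$; summing over the $n$ values of $m$ and dividing by $6n$ gives $d_2(\C\pi)=\tfrac{1}{2}n+\tfrac{1}{2}$. For $n$ even, $S_m=2$ when $m$ is even and $S_m=0$ when $m$ is odd, so only the $n/2$ even exponents contribute, each with bracket $2^3+(3n+2)\cdot 2=6n+12$; dividing $\tfrac{n}{2}(6n+12)$ by $6n$ yields $d_2(\C\pi)=\tfrac{1}{2}n+1$. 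This reproduces the two cases in the statement.

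Because all the character-theoretic content has already been isolated in Lemma~\ref{lem:chi_W_g_zn}, there is no genuine obstacle here; the argument is a bookkeeping computation. The only points requiring care are the correct counting of even versus odd exponents $m$ in the range $0\le m\le n-1$ for even $n$ (there are exactly $n/2$ of each), and the use of the identity $\sum_{i=1}^{r_1}\chi_{A_i}(\gamma^{3m})=S_m$, which lets the cubic term, the linear term, and the $g^3$-term all be expressed through the single quantity $S_m$. As a consistency check, one can verify that $\tfrac{1}{2}\bigl(d_1(\C\pi)+d_2(\C\pi)\bigr)$, combined with Lemma~\ref{lem:dim1_zn}, recovers $p_3(n)$ as in Proposition~\ref{prop:OW_dim_cyclic}.
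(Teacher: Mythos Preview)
Your proposal is correct and follows essentially the same approach as the paper: the paper's proof is simply the one-line statement that Lemma~\ref{lem:chi_W_g_zn} is applied to the formula in Proposition~\ref{prop:dim_formulas}, and you have spelled out exactly that computation in detail.
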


Combining Lemma~\ref{lem:dim1_zn} and Lemma~\ref{lem:dim2_zn}, we obtain the following explicit dimension formula for $\pi = \Z_n$. 

\begin{prop}\label{prop:dim_zn}
    For $\pi = \Z_n$, we have
    \begin{equation}\label{eq:p_3(n)}
	\dim \calA_\Theta^\odd(\C\pi)= \begin{cases}
        \frac{1}{12} n^2 + \frac{1}{2}n + 1 & (n \equiv 0 \bmod 2, n \equiv 0 \bmod 3),\\
        \frac{1}{12} n^2 + \frac{1}{2}n + \frac{2}{3} & (n \equiv 0 \bmod 2, n \not \equiv 0 \bmod 3),\\
         \frac{1}{12} n^2 + \frac{1}{2}n + \frac{3}{4} & (n \not \equiv 0 \bmod 2, n \equiv 0 \bmod 3),\\
          \frac{1}{12} n^2 + \frac{1}{2}n + \frac{5}{12} & (n \not \equiv 0 \bmod 2, n \not \equiv 0 \bmod 3),
    \end{cases}
\end{equation}
\begin{equation}\label{eq:p_3(n-3)}
	\dim \calA_\Theta^\odd(\ker\ve)= \begin{cases}
        \frac{1}{12} n^2  & (n \equiv 0 \bmod 2, n \equiv 0 \bmod 3),\\
        \frac{1}{12} n^2 - \frac{1}{3} & (n \equiv 0 \bmod 2, n \not \equiv 0 \bmod 3),\\
         \frac{1}{12} n^2 + \frac{1}{4} & (n \not \equiv 0 \bmod 2, n \equiv 0 \bmod 3),\\
          \frac{1}{12} n^2 - \frac{1}{12} & (n \not \equiv 0 \bmod 2, n \not \equiv 0 \bmod 3).
    \end{cases}
\end{equation}
\end{prop}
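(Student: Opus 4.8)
The plan is to assemble the first formula from the two dimension computations already established and then deduce the second by a rank count, so the argument is essentially a combination of the preceding lemmas. For $\dim\calA_\Theta^\odd(\C\pi)$ I would start from the identity $\dim\calA_\Theta^\odd(\C\pi)=\frac12\bigl(d_1(\C\pi)+d_2(\C\pi)\bigr)$ recorded in \S\ref{section:4.1.1}, and substitute the value of $d_1(\C\pi)$ from Lemma~\ref{lem:dim1_zn} and of $d_2(\C\pi)$ from Lemma~\ref{lem:dim2_zn}. Since $d_1$ splits according to $n\bmod 3$ while $d_2$ splits according to $n\bmod 2$, their average splits into the four cases $(n\bmod 2,\,n\bmod 3)$ appearing in \eqref{eq:p_3(n)}, and in each case the work is just the addition of two polynomials in $n$ followed by division by $2$. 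For instance, when $n\equiv 0\bmod 2$ and $n\equiv 0\bmod 3$ one obtains $\frac12\bigl((\frac16 n^2+\frac12 n+1)+(\frac12 n+1)\bigr)=\frac1{12}n^2+\frac12 n+1$, and the remaining three cases are identical in spirit.

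For $\dim\calA_\Theta^\odd(\mathrm{Ker}\,\ve)$ the plan is to invoke the direct-sum decomposition of Proposition~\ref{prop:graph-inv}-2, which yields $\dim\calA_\Theta^\odd(\mathrm{Ker}\,\ve)=\dim\calA_\Theta^\odd(\C\pi)-\dim(\C\hat\pi)_{\Z_2}$. The subtracted term equals $1+\floor{n/2}$ by Lemma~\ref{lem:conj_z2_zn}, so I would subtract $1+\frac n2$ when $n$ is even and $\frac{n+1}{2}$ when $n$ is odd from each of the four values just computed, reproducing \eqref{eq:p_3(n-3)}. As a check of the bookkeeping, in the case $n\equiv 0\bmod 2,\ n\equiv 0\bmod 3$ one finds $\frac1{12}n^2+\frac12 n+1-(1+\frac n2)=\frac1{12}n^2$, matching the first line of \eqref{eq:p_3(n-3)}.

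Finally, to reconcile this with the statement of Proposition~\ref{prop:OW_dim_cyclic}, I would observe that the four-case piecewise polynomial in \eqref{eq:p_3(n)} is precisely the quasi-polynomial counting partitions of $n$ into at most three parts, namely the nearest integer to $\tfrac{(n+3)^2}{12}$, so it agrees with $p_3(n)$; likewise \eqref{eq:p_3(n-3)} equals $p_3(n-3)$, and this identification is what ties the representation-theoretic computation back to the combinatorial formula of \cite{OW}. There is no genuine conceptual obstacle here, since every ingredient is already in place. The only point demanding care is the correct interleaving of the mod-$3$ split coming from $d_1$ with the parity split coming from $d_2$ (and from $\dim(\C\hat\pi)_{\Z_2}$), effectively organizing the residue of $n$ modulo $6$; in particular the odd-$n$ subtraction uses $\floor{n/2}=\frac{n-1}{2}$ rather than $\frac n2$, which is exactly what produces the distinct constant terms $\frac14$ and $-\frac1{12}$ in the two odd cases of \eqref{eq:p_3(n-3)}.
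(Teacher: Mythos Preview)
Your proposal is correct and follows essentially the same approach as the paper's proof: combining $d_1(\C\pi)$ and $d_2(\C\pi)$ from Lemmas~\ref{lem:dim1_zn} and~\ref{lem:dim2_zn} via the identity $\dim\calA_\Theta^\odd(\C\pi)=\tfrac12(d_1+d_2)$, and then obtaining $\dim\calA_\Theta^\odd(\mathrm{Ker}\,\ve)$ by subtracting $\dim(\C\hat\pi)_{\Z_2}$ using Proposition~\ref{prop:graph-inv}-2 and Lemma~\ref{lem:conj_z2_zn}. Your additional remark identifying the result with the quasi-polynomial for $p_3(n)$ is exactly the content of the paper's subsequent Remark.
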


\begin{proof}
   The first assertion is a consequence of the formula in Proposition~\ref{prop:dim_formulas}, Lemma~\ref{lem:dim1_zn}, and Lemma~\ref{lem:dim2_zn}. For $\dim \calA_\Theta^\odd(\ker\ve)$, we use  Proposition~\ref{prop:graph-inv}-2  and  Lemma~\ref{lem:conj_z2_zn}.
\end{proof}

\begin{remark}
   The formulas \eqref{eq:p_3(n)} and \eqref{eq:p_3(n-3)} obtained in Proposition~\ref{prop:dim_zn} are known as those for $p_3(n)$ and $p_3(n-3)$ respectively (for example, see the sequence \href{https://oeis.org/A001399}{A001399} in OEIS (\cite{oeis})). Thus, we have given an alternative proof to Proposition~\ref{prop:OW_dim_cyclic} (\cite[Proposition~1.3]{OW}) from the viewpoint of character theory.
\end{remark}

\subsection{Binary dihedral group $D_{4p}^{\ast}$} \label{section:D4n}
Let $p$ be a positive integer. The \textit{binary dihedral group} $D_{4p}^{\ast}$ of order $4p$ admits the following finite presentation
\begin{equation}
    D_{4p}^{\ast} = \langle a, x \mid a^{2p}=1, x^2 = a^p, x^{-1} ax = a^{-1} \rangle.
\end{equation}
The group $D_{4p}^{\ast}$ gives rise to a \textit{prism manifold} $S^3/D_{4p}^{\ast}$.
Any element $g \in D_{4p}^{\ast}$ is uniquely written as $g = a^{k} x^{l}$ for some $0 \leq k \leq 2p-1$ and $0 \leq l \leq 1$. In the case that $\pi =  D_{4p}^{\ast}$, depending on whether $p$ is even or odd, there is a little difference in the computation of dimensions of $\calA_\Theta^\odd(\C\pi)$ and $\calA_\Theta^\odd(\mathrm{Ker}\,\ve)$. Thus, in what follows, we treat the cases of $p$ being even and odd separately.

\subsubsection{$D_{4p}^{\ast}$ with even $p$}\label{section:D4n_even}
For any even positive integer $p$, the distinct irreducible representations of $D_{4p}^{\ast}$ are given as follows.
\begin{prop}
    Let $p$ be an even positive integer. The distinct irreducible representations of $D_{4p}^{\ast}$ consist of $4$ one-dimensional representations
\begin{equation}
\begin{split}
    &\rho_{V^{(1)}_1} (a) = 1,  \rho_{V^{(1)}_1} (x) = 1, \quad  \rho_{V^{(1)}_2}(a)  = 1,  \rho_{V^{(1)}_2} (x) = -1,\\
    &\rho_{V^{(1)}_3} (a) = -1,  \rho_{V^{(1)}_3} (x) = 1,\quad  \rho_{V^{(1)}_4} (a) = -1,  \rho_{V^{(1)}_4} (x) = -1,
    \end{split}
\end{equation}
and $p-1$ two-dimensional representations
\begin{equation}
    \rho_{V^{(2)}_{\lambda}}(a) = \begin{pmatrix}
        \zeta_{2p}^{\lambda} & 0 \\
        0 & \zeta_{2p}^{-\lambda}
    \end{pmatrix},
    \quad 
    \rho_{V^{(2)}_{\lambda}}(x) = \begin{pmatrix}
        0 & (\sqrt{-1})^{\lambda} \\
       (\sqrt{-1})^{\lambda} & 0
    \end{pmatrix}
    \quad ({\lambda}=1, \ldots, p-1).
\end{equation}
\end{prop}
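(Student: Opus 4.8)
The plan is to produce the complete list by first determining the one-dimensional representations via the abelianization, then writing down the two-dimensional ones explicitly and checking that they are well-defined, irreducible and pairwise inequivalent, and finally invoking the sum-of-squares identity to confirm that no other irreducibles exist. To get the one-dimensional representations I would compute the commutator subgroup. From $x^{-1}ax=a^{-1}$ one obtains $[a,x]=a^{-1}x^{-1}ax=a^{-2}$, so the cyclic group $\langle a^2\rangle$ (of order $p$) lies in the commutator subgroup; as the quotient by $\langle a^2\rangle$ is abelian, the two coincide and $|(D_{4p}^{\ast})^{\mathrm{ab}}|=4$. In the abelianization $a^2=1$, and because $p$ is \emph{even} the relation $x^2=a^p$ becomes $x^2=1$; hence $(D_{4p}^{\ast})^{\mathrm{ab}}\cong\Z_2\times\Z_2$, whose four characters send $a$ and $x$ independently to $\pm 1$. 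This is precisely the list $V^{(1)}_1,\dots,V^{(1)}_4$, and this is exactly where the parity of $p$ enters (for odd $p$ one gets $\Z_4$ instead, explaining the separate treatment).

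Next I would verify the three defining relations for the stated matrices $\rho_{V^{(2)}_\lambda}$, $\lambda=1,\dots,p-1$. Writing $\zeta=\zeta_{2p}$ and $i=\sqrt{-1}$, one sees at once that $\rho(a)^{2p}=I$ and $\rho(x)^2=(i^\lambda)^2 I=(-1)^\lambda I$. The essential observation is that $\rho(a)^p=\mathrm{diag}(\zeta^{p\lambda},\zeta^{-p\lambda})=(-1)^\lambda I$ as well, so the factor $(\sqrt{-1})^\lambda$ in $\rho(x)$ is chosen precisely to force $\rho(x)^2=\rho(a)^p$; a direct $2\times 2$ multiplication then gives $\rho(a)\rho(x)=\rho(x)\rho(a)^{-1}$, which is equivalent to $x^{-1}ax=a^{-1}$.

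I would then establish irreducibility and distinctness. Since $\lambda\in\{1,\dots,p-1\}$ forces $\zeta^{2\lambda}\neq 1$, the diagonal matrix $\rho(a)$ has two \emph{distinct} eigenvalues $\zeta^{\pm\lambda}$, so any invariant line would have to be a coordinate axis; but $\rho(x)$ interchanges the axes, so no invariant line exists and $V^{(2)}_\lambda$ is irreducible. The character value $\chi_{V^{(2)}_\lambda}(a)=\zeta^{\lambda}+\zeta^{-\lambda}=2\cos(\pi\lambda/p)$ separates these $\lambda$, because $\cos(\pi\lambda/p)=\cos(\pi\lambda'/p)$ would require $\lambda\equiv\pm\lambda'\pmod{2p}$, which is impossible for distinct $\lambda,\lambda'\in\{1,\dots,p-1\}$; hence the $p-1$ representations are mutually inequivalent and are distinct from the one-dimensional ones.

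To finish I would apply $\sum_i(\dim A_i)^2=|\pi|$: the candidates account for $4\cdot 1^2+(p-1)\cdot 2^2=4p=|D_{4p}^{\ast}|$, so no further irreducibles can exist and the list is complete (equivalently, one checks directly that there are exactly $p+3$ conjugacy classes). The argument is entirely elementary, and the only places demanding care are the bookkeeping of the sign $(-1)^\lambda$ that reconciles $\rho(x)^2$ with $\rho(a)^p$, and the use of $p$ even in the abelianization step—both being exactly the points at which the even and odd cases diverge.
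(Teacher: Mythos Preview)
Your proof is correct and follows essentially the same approach as the paper: verify the relations by direct computation, establish irreducibility of the two-dimensional representations via the coordinate-axis argument (the diagonal matrix $\rho(a)$ has distinct eigenvalues so only the axes could be stable, but $\rho(x)$ swaps them), check pairwise inequivalence, and invoke the sum-of-squares identity $4\cdot 1^2+(p-1)\cdot 2^2=4p$ for completeness. Your treatment is in fact more explicit than the paper's, which merely asserts these steps; in particular, your abelianization computation nicely explains \emph{why} the one-dimensional list is what it is and pinpoints the role of $p$ being even.
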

By direct computation, we can show that these are indeed representations of $D_{4p}^{\ast}$. The irreducibility of two-dimensional representations can be checked directly as in \cite[\S 5.3]{Se77}. Namely, since $\rho_{V_{{\lambda}}^{(2)}}(a)$ is a diagonal matrix with distinct diagonal entries, one dimensional subspaces which can be stable are coordinate axes of $\C^2$ but they are not stable under $\rho_{V_{{\lambda}}^{(2)}}(x)$. Direct computation shows that these representations are pairwise non-isomorphic. These irreducible representations given above are the only irreducible ones since $4\cdot 1^2 + (p-1)\cdot 2^2 = 4p=|D_{4p}^{\ast}|$ (cf. \cite[\S 2.4. Remarks (1)]{Se77}). The corresponding character table is given in Table~\ref{tab:ch_d4n_even}. 

\par\medskip
\begin{table}[h]
\centering
\renewcommand{\arraystretch}{1.2}
  \begin{tabular}{|c|c|c|c|c|c|}  \hline
    $\hat{\pi}$ & $C_{0,0}$ & $C_{k, 0}$ & $C_{p, 0}$ & $C_{\mathrm{even}, 1}$ & $C_{\mathrm{odd}, 1}$\\ 
    size & $1$ & $2$ & $1$ & $p$ &$p$\\ \hline
    $V^{(1)}_1$ & $1$ & $1$ & $1$ & $1$ & $1$ \\
    $V^{(1)}_2$ & $1$ & $1$ & $1$ & $-1$ & $-1$ \\
    $V^{(1)}_3$ & $1$ & $(-1)^k$ & $1$ & $1$ & $-1$ \\
     $V^{(1)}_4$ & $1$ & $(-1)^k$ & $1$ & $-1$ & $1$ \\
     $V_{\lambda}^{(2)}$ & $2$ & $2 \cos \left(\frac{2\pi k \lambda}{2p} \right)$ & $(-1)^{\lambda} \cdot 2$ & $0$ & $0$ \\ \hline
  \end{tabular}
\par\medskip
\caption{The characters $\chi_{V^{(i)}_{\lambda}}(g)$ for $D_{4p}^{\ast}$ with even $p$. Here, $k=1,\ldots, p-1$, and  $\lambda=1,2,3,4$ when $i=1$ and $\lambda=1,\ldots, p-1$ when $i=2$. }\label{tab:ch_d4n_even}
\end{table}

For $\pi = D_{4p}^{\ast}$, we denote by $\hat{\pi}$ the set of conjugacy classes of $\pi$. Then, $|\hat{\pi}| = p+3$ since the number of equivalence classes of irreducible representations and one of the conjugacy classes of $\pi$ are the same. More concretely, $\hat{\pi}$ consists of the following conjugacy classes:
\begin{itemize}
    \item $C_{0,0} = [e] = \{e\}$,
    \item $C_{k, 0} = C_{2p-k,0} = [a^{k}] = \{ a^{k}, a^{2p-k}\} \quad (1 \leq k \leq p-1)$,
    \item $C_{p, 0} = [a^{p}] = \{ a^{p}\}$,
    \item $C_{\mathrm{even}, 1} = [x]=\{x, a^2x, a^4x, \ldots, a^{2p-2}x\}$,
    \item $C_{\mathrm{odd}, 1} = [ax] = \{ax, a^3x, \ldots, a^{2p-1}x\}$.
\end{itemize}
Here, the conjugacy classes of $\pi$ can be directly obtained by examining the values of irreducible characters.

\begin{lemma}\label{lem:conj_inv_D_4n_even}
Let $\pi = D_{4p}^{\ast}$ with $p$ even. For each class $[x] \in \hat{\pi}$, we have $[x^{-1}] = [x] \in \hat{\pi}$. In particular, $\dim(\C\hat{\pi})_{\Z_2}=\dim \C\hat{\pi}=p+3$.
\end{lemma}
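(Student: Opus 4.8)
The plan is to verify the first assertion by a direct element-by-element computation of inverses against the explicit list of conjugacy classes displayed just above, and then to read off the dimension statement. The only ingredients I need are the commutation rule and the value of $x^{-1}$, both extracted from the presentation. From $x^{-1}ax=a^{-1}$ I obtain $ax=xa^{-1}$, hence the general rule $xa^j=a^{-j}x$ for all $j$; and from $x^2=a^p$ together with $a^{-p}=a^p$ (valid since $a^{2p}=1$) I obtain $x^{-1}=a^px$, which one checks by $x\cdot a^px=a^{-p}x\cdot x=a^{-p}a^p=1$.

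For the powers of $a$ the claim is immediate: $(a^k)^{-1}=a^{2p-k}$, and since $C_{k,0}=\{a^k,a^{2p-k}\}$ this lies in $[a^k]$; the classes $C_{0,0}=[e]$ and $C_{p,0}=[a^p]$ are singletons consisting of self-inverse elements. The substantive case is the $x$-type elements, where I compute, using the two relations above,
\[ (a^kx)^{-1}=x^{-1}a^{-k}=a^pxa^{-k}=a^p\cdot a^kx=a^{p+k}x. \]
Since the two classes $C_{\mathrm{even},1}$ and $C_{\mathrm{odd},1}$ are distinguished precisely by the parity of the exponent of $a$, and since $p$ even forces $p+k\equiv k \bmod 2$, the element $(a^kx)^{-1}=a^{p+k}x$ lies in the same class as $a^kx$. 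This is the single step where the hypothesis that $p$ is even is essential: for odd $p$ the parity flips and the two classes are interchanged, which is exactly the phenomenon that will distinguish the next subsection.

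Having shown $[g^{-1}]=[g]$ for every $g$, the $\Z_2$-action on $\C\hat\pi$ induced by $g\mapsto g^{-1}$ is trivial, so its coinvariants coincide with the whole space; using the identification of coinvariants with invariants from \S\ref{ss:rep}, I conclude $\dim(\C\hat\pi)_{\Z_2}=\dim\C\hat\pi=|\hat\pi|=p+3$.

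I do not anticipate any genuine obstacle here: the computation is short once the rule $xa^j=a^{-j}x$ and the value $x^{-1}=a^px$ are in hand, and the parity bookkeeping is the only delicate point. In fact the whole statement can alternatively be read off at a glance from Table~\ref{tab:ch_d4n_even}, since every irreducible character value listed there is real, and a finite group is ambivalent (every element conjugate to its inverse) if and only if all its irreducible characters are real-valued. I nevertheless prefer the direct argument, as it makes transparent exactly where the parity of $p$ enters and thereby motivates the separate treatment of the odd case.
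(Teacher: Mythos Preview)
Your proof is correct and follows essentially the same approach as the paper: compute $(a^kx)^{-1}=a^{k+p}x$ and then verify this lies in the same conjugacy class, the only minor difference being that you invoke the explicit parity description of $C_{\mathrm{even},1}$ and $C_{\mathrm{odd},1}$ while the paper instead conjugates by $a$ to walk from $a^kx$ to $a^{k+p}x$. Your additional remark that real-valuedness of all characters in Table~\ref{tab:ch_d4n_even} already forces ambivalence is a nice bonus observation not present in the paper.
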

\begin{proof}
	For $a^k$ and $a^k x$, their inverse elements are $a^{-k}$ and $x^{-1} a^{-k} = x^{-1} a^{2p-k} = a^{k -2p} x^{-1} = a^{k-p} x= a^{k+p} x$ respectively. As $x a^k x^{-1} = a^{-k}$, $[a^k] =[a^{-k}] \in \hat{\pi}$.  Since $p$ is even and positive,   $a (a^k x) a^{-1} = a^{k+2} x$ implies $[a^k x] =[a^{k+2} x] =\cdots =[a^{k+p}x] = [(a^kx)^{-1}] \in \hat{\pi}$.
\end{proof}

\begin{lemma}\label{lem:sq_cub_conj_cls_d4n_even}
    Let $\pi = D_{4p}^{\ast}$ with even $p$. For any conjugacy class $[g] = C_{i, j} \in \hat{\pi}$ we denote by $C^l_{i,j} \in \hat{\pi}$ the conjugacy class represented by $g^l$ for any integer $l$. Then, we have 
    \begin{equation}
    \begin{split}
            & C_{0,0}^2 = C_{0,0}, \quad C_{k,0}^2 = C_{2k,0},\quad C_{p,0}^2 = C_{0,0}, \quad C_{\mathrm{even},1}^2 = C_{p, 0}, \quad C_{\mathrm{odd},1}^2 = C_{p, 0},\\
        &C_{0,0}^3 = C_{0,0}, \quad C_{k,0}^3 = C_{3k,0},\quad C_{p,0}^3 = C_{p,0}, \quad C_{\mathrm{even},1}^3 = C_{\mathrm{even},1}, \quad C_{\mathrm{odd},1}^3 = C_{\mathrm{odd}, 1}.
        \end{split}
    \end{equation}
Here, $2k$ and $3k$ in the subscripts of $C_{2k,0}$ and $C_{3k,0}$ respectively are understood as the least non-negative reminders modulo $2p$.
\end{lemma}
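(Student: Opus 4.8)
The plan is to compute the relevant powers of a representative of each conjugacy class directly from the presentation, and then identify which class the result lands in using the explicit description of $\hat{\pi}$ recorded just above the statement. Since every element of $D_{4p}^{\ast}$ is uniquely of the form $a^k$ or $a^k x$, it suffices to treat these two families.

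For the rotation classes the computation is immediate: $(a^k)^2 = a^{2k}$ and $(a^k)^3 = a^{3k}$, where exponents are read modulo $2p$ because $a^{2p}=1$. This yields $C_{k,0}^2 = C_{2k,0}$ and $C_{k,0}^3 = C_{3k,0}$ for $1\le k\le p-1$, while the boundary cases follow from $e^2=e$, $(a^p)^2 = a^{2p}=e$, and $(a^p)^3 = a^{3p}=a^p$. Here I would also invoke the identification $C_{k,0}=C_{2p-k,0}$ so that the subscripts $2k$ and $3k$, reduced to their least non-negative residues modulo $2p$, always name a well-defined class.

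For the reflection classes I would first extract the commutation rule $x a^k = a^{-k} x$ from $x^{-1}ax = a^{-1}$. Then, using $x^2 = a^p$,
\[
(a^k x)^2 = a^k (x a^k) x = a^k a^{-k} x^2 = x^2 = a^p ,
\]
which is independent of $k$ and shows $C_{\mathrm{even},1}^2 = C_{\mathrm{odd},1}^2 = C_{p,0}$. Cubing once more gives
\[
(a^k x)^3 = a^p\cdot a^k x = a^{p+k}x .
\]

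The crux of the lemma---and the only place where the hypothesis that $p$ is even enters---is reading off the class of $a^{p+k}x$. Since $C_{\mathrm{even},1}$ and $C_{\mathrm{odd},1}$ are distinguished precisely by the parity of the exponent of $a$, and $p$ even forces $p+k\equiv k \pmod 2$, the element $a^{p+k}x$ lies in the same reflection class as $a^k x$; hence $C_{\mathrm{even},1}^3 = C_{\mathrm{even},1}$ and $C_{\mathrm{odd},1}^3 = C_{\mathrm{odd},1}$. I expect this parity bookkeeping to be the main (and essentially only) subtlety: were $p$ odd, the identical computation would instead \emph{swap} the two reflection classes under cubing, which is exactly why the odd-$p$ case must be treated separately in the following subsection.
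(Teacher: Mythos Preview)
Your argument is correct and follows essentially the same route as the paper: both compute the squares and cubes of representatives directly from the relations $a^{2p}=1$, $x^2=a^p$, and $x^{-1}ax=a^{-1}$, and both isolate the parity of $p$ as the sole reason the reflection classes are fixed under cubing. The only cosmetic difference is that the paper picks the specific representatives $x$ and $ax$ for $C_{\mathrm{even},1}$ and $C_{\mathrm{odd},1}$, whereas you handle $a^kx$ uniformly and then read off the parity; the content is identical.
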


\begin{proof}
The assertions follow from the finite presentation of $D_{4p}^{\ast}$ as we will see below. The equalities $C_{0,0}^2 = C_{0,0}$ and $C_{k,0}^2 = C_{2k,0}$ are clear. Since $(a^p)^2 = a^{2p}=e$, we have $C_{p,0}^2 = [(a^p)^2] =[e] = C_{0,0}$. Similarly, since $x^2 = a^p$, $C_{\mathrm{even}, 1}^2 = [x^2] = [a^p] = C_{p,0}$. From the relation $x^{-1} ax = a^{-1}$, we have $ax = x a^{-1}$. Thus, $(ax)^2 = (ax) \cdot ax = (xa^{-1}) \cdot ax = x^2 = a^p$ and so $C_{\mathrm{odd}, 1}^2 = [(ax)^2] = [a^p] = C_{p,0}$.

As in the above case, $C_{0,0}^3 = C_{0,0}$ and $C_{k,0}^3 = C_{3k,0}$ are clear by definition. Since $a^{2p} = e$, we have $C_{p,0}^3 = [a^{3p}] = [a^p \cdot a^{2p}] = [a^p] = C_{p,0}$. The relation $x^2 = a^p$ implies $C_{\mathrm{even},1}^3 = [x^3]= [x^2 \cdot x] = [a^p \cdot x] = C_{\mathrm{even}, 1}$ since $p$ is assumed to be even. Using the relation $(ax)^2 = a^p$, we obtain $C_{\mathrm{odd}, 1}^3 = [(ax)^3]=[(ax)^2 \cdot ax]=[a^p \cdot ax] = [a^{p+1} x] = C_{\mathrm{odd},1}$ since $p$ is even and hence $p+1$ is odd.
\end{proof}

\begin{remark}\label{rem:d4n_even_c^2}
Let $k$ be an integer with $1\leq k\leq p-1$. Since $p$ is even, when $k=p/2$, we have $C_{2k,0} = C_{p,0}$ but otherwise we have $C_{2k,0}=C_{k'0}$ for some $k'$ with $1\leq k' \leq p-1$. Similarly, when $p \equiv 0 \bmod 3$, $C_{3k,0} = C_{0,0}$ or $C_{3k,0} = C_{p,0}$ hold when $k=2p/3$ or $k=p/3$ but otherwise $C_{3k,0} = C_{k',0}$ holds for some $k'$ with $1\leq k' \leq p-1$.
\end{remark}

To compute the dimension $\dim \calA_\Theta^\odd(\C\pi)$ we apply the formulas in Section \ref{section:4.1.1}. Related to Remark \ref{rem:d4n_even_c^2}, it is useful to prepare the following lemma, which will be used in the computation of $d_1(\C\pi)= \dim\,(\Sym^3 \C\pi)^{\pi\times \pi}$.
\begin{lemma}\label{lem:d4n_even_dim1_chiw_cubic}
Let $\pi = D_{4p}^{\ast}$ with even $p$. Then, we have the following.
\begin{equation}
    \sum_{(C(g),C(h)) \in \Delta_{\hat{\pi}}^{(3)}} \frac{|C(g)| |C(h)|}{|C(g^3)|} =  \begin{cases}
        8 p & (p \equiv 0 \bmod 3),\\
        4 p & (p \not \equiv 0 \bmod 3).
    \end{cases}
\end{equation}
\end{lemma}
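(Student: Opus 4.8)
The plan is to reorganize the sum by grouping index pairs according to their common cube class. For a conjugacy class $C=C(g)$ write $C^3:=C(g^3)$ for the class of cubes (well defined since conjugation commutes with cubing). Partitioning $\Delta_{\hat\pi}^{(3)}$ by the common value $D=C(g^3)=C(h^3)$ and using $|C(g^3)|=|D|$, the sum becomes
\[
\sum_{(C(g),C(h)) \in \Delta_{\hat\pi}^{(3)}} \frac{|C(g)||C(h)|}{|C(g^3)|} = \sum_{D \in \hat\pi} \frac{1}{|D|}\Bigl(\sum_{\substack{C \in \hat\pi \\ C^3 = D}}|C|\Bigr)^2 = \sum_{D \in \hat\pi}\frac{N(D)^2}{|D|},
\]
where $N(D):=\sum_{C^3=D}|C|=|\{g\in\pi:g^3\in D\}|$. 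Thus it suffices to compute $N(D)$ for every class $D$, and this is where I would invoke Lemma~\ref{lem:sq_cub_conj_cls_d4n_even}.

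First I would observe that cubing carries rotations $a^j$ to rotations and reflections $a^jx$ to reflections (an easy check gives $(a^jx)^3=a^{j+p}x$), so the two families of target classes contribute independently. Since $C_{\mathrm{even},1}$ and $C_{\mathrm{odd},1}$ each cube to themselves and no other class cubes to a reflection class, one gets $N(C_{\mathrm{even},1})=N(C_{\mathrm{odd},1})=p$, so these two targets contribute $p^2/p+p^2/p=2p$ to the sum, independently of $p\bmod 3$.

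For the rotation targets I would analyze the map $j\mapsto 3j$ on $\Z/2p$, keeping track of the conjugacy identification $C_{k,0}=C_{-k,0}$. When $3\nmid p$ we have $\gcd(3,2p)=1$, so cubing is a size-preserving bijection on the rotation classes (fixing $C_{0,0}$ and $C_{p,0}$ and permuting the size-$2$ classes); hence each rotation target $D$ has $N(D)=|D|$ and contributes $|D|$, and the rotation targets together contribute $\sum_{D\text{ rot.}}|D|=|\langle a\rangle|=2p$, giving total $4p$. When $3\mid p$ (so $6\mid p$; set $p=3s$) we have $\gcd(3,2p)=3$, and I would count solutions of $3j\equiv m\bmod 2p$ directly: for $D=C_{0,0}$ and $D=C_{p,0}$ there are three rotations with $g^3\in D$, so $N=3$; for $D=C_{m,0}$ with $1\le m\le p-1$ the congruences $3j\equiv \pm m$ are solvable iff $3\mid m$, in which case the two residues $\pm m$ give two disjoint triples of solutions so $N(D)=6$, and otherwise $N(D)=0$. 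Counting the multiples of $3$ in $[1,p-1]$ (there are $s-1$ of them), the rotation contribution is $\tfrac{9}{1}+\tfrac{9}{1}+(s-1)\tfrac{6^2}{2}=18+18(s-1)=6p$, and adding $2p$ yields $8p$.

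The solution counts for $3j\equiv m\bmod 2p$ are routine; the one point requiring care is that in the $3\mid p$ case the six solutions of $3j\equiv\pm m$ assemble into three genuine size-$2$ conjugacy classes without collapsing onto $C_{0,0}$ or $C_{p,0}$ (guaranteed by $1\le m\le p-1$ together with $3\mid m$), so that both $N(D)=6$ and the count $s-1$ of contributing targets are correct. I expect the main (though still elementary) obstacle to be bookkeeping this $\pm$ identification consistently, which is precisely the subtlety isolated in Remark~\ref{rem:d4n_even_c^2}.
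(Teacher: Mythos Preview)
Your proof is correct and follows essentially the same underlying strategy as the paper's: split $\Delta_{\hat\pi}^{(3)}$ into a rotation part and a reflection part, note the reflection part contributes $2p$, and analyze the rotation part via the map $j\mapsto 3j$ on $\Z/2p$. The organizational difference is that you first rewrite the whole sum as $\sum_{D\in\hat\pi} N(D)^2/|D|$ with $N(D)=|\{g:g^3\in D\}|$ and then compute the preimage sizes $N(D)$, whereas the paper enumerates the pairs $(C_{k_1,0},C_{k_2,0})$ directly and, in the $3\mid p$ case, splits them further into subsets $A$ (cubes landing in singleton classes) and $B$ (cubes landing in size-$2$ classes). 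Your repackaging is a bit cleaner and avoids that secondary case split; the paper's enumeration is more hands-on but makes the contributing pairs explicit. One small remark: in your final paragraph the only thing you actually need is that the six solutions of $3j\equiv\pm m\bmod 2p$ are distinct residues (which follows from $1\le m\le p-1$), not that they form three size-$2$ conjugacy classes---$N(D)$ is a count of group elements, so the conjugacy-class phrasing is unnecessary.
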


\begin{proof}
  The proof is given in Appendix \ref{appendix:A}.
\end{proof}

Notice that  Table~\ref{tab:ch_d4n_even} implies that all distinct irreducible representations of $D_{4p}^{\ast}$ have real-valued character. Thus, we can apply the same formula as in \cite[\S 4.3]{OW} to compute $\dim \calA_\Theta^\odd(\C\pi)$. To begin with, we focus on the computation of $d_1(\C\pi)= \dim\,(\Sym^3 \C\pi)^{\pi\times \pi}$. 

\begin{lemma}\label{lem:dim1_D4n_even}
    When $\pi = D_{4p}^{\ast}$ with even $p$, we have
    \begin{equation}
    d_1(\C \pi) = \dim\,(\Sym^3 \C\pi)^{\pi\times \pi} = \begin{cases}
        \frac{1}{3} p^2 + \frac{5}{2}p + 3 & (p \equiv 0 \bmod 3),\\
        \frac{1}{3} p^2 + \frac{5}{2}p + \frac{8}{3} & (p \not \equiv 0 \bmod 3).
    \end{cases}
\end{equation}
\end{lemma}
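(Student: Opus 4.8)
The plan is to evaluate the closed formula for $d_1(\C\pi)=\dim(\Sym^3\C\pi)^{\pi\times\pi}$ from Proposition~\ref{prop:dim_formulas} directly, feeding in the conjugacy-class data for $\pi=D_{4p}^{\ast}$ (even $p$) that is recorded in the list preceding Lemma~\ref{lem:conj_inv_D_4n_even} and in Table~\ref{tab:ch_d4n_even}. Since $|\pi|=4p$, the formula reduces $d_1(\C\pi)$ to a combination of three purely combinatorial sums:
\[ \Sigma_1:=\sum_{C(g)\in\hat\pi}\frac{1}{|C(g)|},\qquad \Sigma_2:=\sum_{C(g)\in\hat\pi}\frac{|C(g)|}{|C(g^2)|},\qquad \Sigma_3:=\sum_{(C(g),C(h))\in\Delta_{\hat\pi}^{(3)}}\frac{|C(g)||C(h)|}{|C(g^3)|}, \]
namely $d_1(\C\pi)=\frac{2p}{3}\Sigma_1+\frac12\Sigma_2+\frac{1}{12p}\Sigma_3$. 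The third sum $\Sigma_3$ is already supplied by Lemma~\ref{lem:d4n_even_dim1_chiw_cubic}, which carries the entire $p\bmod 3$ dichotomy ($\Sigma_3=8p$ when $3\mid p$ and $\Sigma_3=4p$ otherwise), so I only need to compute $\Sigma_1$ and $\Sigma_2$, and these are insensitive to divisibility by $3$.

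For $\Sigma_1$ I would simply read off the class sizes: the two singleton classes $C_{0,0},C_{p,0}$ each contribute $1$, the $p-1$ classes $C_{k,0}$ of size $2$ contribute $\tfrac{p-1}{2}$, and the two classes $C_{\mathrm{even},1},C_{\mathrm{odd},1}$ of size $p$ contribute $\tfrac{2}{p}$, giving $\Sigma_1=2+\tfrac{p-1}{2}+\tfrac{2}{p}$ and hence $\frac{2p}{3}\Sigma_1=\frac{p^2+3p+4}{3}$. For $\Sigma_2$ I would use the squaring rule of Lemma~\ref{lem:sq_cub_conj_cls_d4n_even} together with Remark~\ref{rem:d4n_even_c^2} to track the size of each $C(g^2)$. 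The contributions from $C_{0,0}$, $C_{p,0}$, $C_{\mathrm{even},1}$, $C_{\mathrm{odd},1}$ are $1,1,p,p$ respectively; the delicate part is the block $\sum_{k=1}^{p-1}\frac{|C_{k,0}|}{|C_{2k,0}|}$, where the doubling $k\mapsto 2k\bmod 2p$ sends a size-$2$ class to another size-$2$ class except when $k=p/2$, in which case $C_{2k,0}=C_{p,0}$ has size $1$. This yields $(p-2)\cdot 1+1\cdot 2=p$ for that block and therefore $\Sigma_2=3p+2$, so $\frac12\Sigma_2=\frac{3p}{2}+1$.

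Finally I would assemble the pieces: $\frac{p^2+3p+4}{3}+\frac{3p}{2}+1=\frac13 p^2+\frac52 p+\frac73$, to which I add $\frac{1}{12p}\Sigma_3$, equal to $\frac23$ when $3\mid p$ and $\frac13$ otherwise; this produces exactly the two stated values $\frac13 p^2+\frac52 p+3$ and $\frac13 p^2+\frac52 p+\frac83$. I expect the one genuine obstacle to be the bookkeeping in $\Sigma_2$: correctly recognizing that the only collapse of a size-$2$ class onto a size-$1$ class under squaring occurs at $k=p/2$, which is available precisely because $p$ is even, and that $C_{p,0}$ squares to $C_{0,0}$ while both reflection classes square to $C_{p,0}$. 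Everything else is routine arithmetic once these size changes are pinned down, and the case split in the final answer is inherited solely from Lemma~\ref{lem:d4n_even_dim1_chiw_cubic}.
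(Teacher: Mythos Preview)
Your proof is correct and follows essentially the same approach as the paper's own argument: both apply Proposition~\ref{prop:dim_formulas} directly, feed in the class sizes from Table~\ref{tab:ch_d4n_even} and the squaring rules from Lemma~\ref{lem:sq_cub_conj_cls_d4n_even} and Remark~\ref{rem:d4n_even_c^2} (in particular the special role of $k=p/2$), and import $\Sigma_3$ from Lemma~\ref{lem:d4n_even_dim1_chiw_cubic}. The only cosmetic difference is that you split the first two terms into separate sums $\Sigma_1,\Sigma_2$ whereas the paper groups them together into the single intermediate value $\tfrac13 p^2+\tfrac52 p+\tfrac73$.
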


\begin{proof}

By Remark \ref{rem:d4n_even_c^2}, we know that if $k=0,p/2$ then $|C_{2k,0}|=1$; otherwise $|C_{2k,0}|=2$. Taking care of this fact, one obtains the following from Lemma~\ref{lem:sq_cub_conj_cls_d4n_even} and Table~\ref{tab:ch_d4n_even}:
\begin{equation}
	\frac{1}{6} \sum_{C(g) \in \hat{\pi}} \left( \frac{1}{|C(g)|} |\pi| + 3 \frac{|C (g) |}{|C(g^2)|} \right) = \frac{1}{3}p^2 + \frac{5}{2}p + \frac{7}{3}.
\end{equation}
Lemma~\ref{lem:d4n_even_dim1_chiw_cubic} implies
\begin{equation}
	\frac{1}{6|\pi|}  \sum_{(C(g), C(h)) \in \Delta_{\hat{\pi}}^{(3)}} 2 \frac{|C(g)| |C(h)|}{|C(g^3)|} = \begin{cases}
 	\frac{2}{3} & (p \equiv 0 \bmod 3),\\
 	\frac{1}{3}& (p \not \equiv 0 \bmod 3).
 \end{cases}
\end{equation}
Substituting these two identities into the formula in Proposition~\ref{prop:dim_formulas} proves the assertion.
\end{proof}

Next we compute  $d_2(\C\pi)$. For this, we prepare the following lemma.

\begin{lemma}\label{lem:chi_W_g_d4n_even}
Let $\pi = D_{4p}^{\ast}$ with even $p$.
    For  $g \in D_{4p}^{\ast}$, we have the following.
        \begin{equation}
        \begin{split}
                &1.\quad  \chi_W(g) =  \sum_i \chi_{A_i}(g) = \begin{cases}
                    2p + 2 & ([g]=C_{0,0}),\\
                    2  & ([g] = C_{k,0} \quad (k=2,4,\ldots, p)),\\
                    0 & (\text{otherwise}).
                \end{cases}\\
           &2. \quad \chi_W(g^3)=  \sum_i \chi_{A_i}(g^3)  = \begin{cases}
                    2p +2 & ([g] = C_{0,0}),\\
                    2p + 2 & ([g] = C_{2 p/3}, p \equiv 0 \bmod 3),\\
                    2 & ([g] = C_{k,0}, p \equiv 0 \bmod 3\  (k=2,4,\ldots, p, k\neq 2p/3)),\\
                    2 & ([g] = C_{k,0}, p \not \equiv 0\bmod 3\  (k=2,4,\ldots, p)),\\
                    0 & (\text{otherwise}),
                \end{cases}
            \end{split}
        \end{equation}
        where we set $W= \C \pi$.
\end{lemma}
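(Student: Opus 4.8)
The plan is to prove both parts by direct evaluation against the character table (Table~\ref{tab:ch_d4n_even}), with Part~2 reducing to Part~1 through the cube-class identities of Lemma~\ref{lem:sq_cub_conj_cls_d4n_even}. Throughout, recall that every irreducible character of $D_{4p}^\ast$ is real-valued, so the sum $\sum_i\chi_{A_i}$ runs over all $p+3$ irreducibles.

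For Part~1, I would fix a conjugacy class and sum its column over all irreducibles, separating the four one-dimensional contributions from the $p-1$ two-dimensional ones. The one-dimensional sum evaluates at once: on $C_{k,0}$ it is $1+1+(-1)^k+(-1)^k=2+2(-1)^k$ (hence $4$ for even $k$, $0$ for odd $k$), and it is $1-1+1-1=0$ on $C_{\even,1}$ and $1-1-1+1=0$ on $C_{\odd,1}$. On $C_{k,0}$ the two-dimensional part is $\sum_{\lambda=1}^{p-1}2\cos\!\left(\frac{2\pi k\lambda}{2p}\right)$, which I evaluate by the Dirichlet-kernel identity of Lemma~\ref{lem:sum_rt_unit} with $n=p$: it equals $2p-2$ when $k\equiv 0\bmod 2p$ (the class $C_{0,0}$), equals $-2$ for nonzero even $k$, and $0$ for odd $k$. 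Adding the two contributions gives $4+(2p-2)=2p+2$ on $C_{0,0}$, $4+(-2)=2$ on even $C_{k,0}$, and $0$ on odd $C_{k,0}$. The class $C_{p,0}$ falls outside the $2\cos$ formula and is handled by hand: its two-dimensional entries are $(-1)^\lambda\cdot 2$, and since $p-1$ is odd we get $\sum_{\lambda=1}^{p-1}(-1)^\lambda=-1$, so the total is again $4+(-2)=2$. This yields the stated trichotomy: $2p+2$ on $C_{0,0}$, $2$ on every even-index class $C_{k,0}$ (including $k=p$), and $0$ otherwise.

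For Part~2, I would substitute the cube-class identities $C_{0,0}^3=C_{0,0}$, $C_{k,0}^3=C_{3k,0}$, $C_{p,0}^3=C_{p,0}$, $C_{\even,1}^3=C_{\even,1}$, $C_{\odd,1}^3=C_{\odd,1}$ from Lemma~\ref{lem:sq_cub_conj_cls_d4n_even}, and then read off $\chi_W(g^3)$ from Part~1 applied to the target class. Since $3$ is odd and $2p$ is even, the reduced index $3k\bmod 2p$ has the same parity as $k$; hence odd classes map to odd classes (value $0$) and the two type-$1$ classes are fixed (value $0$), which accounts for the ``otherwise'' line. For even $k$, as well as for $k=p$ via the direct identity $C_{p,0}^3=C_{p,0}$, the target $C_{3k,0}$ is again an even-index class of value $2$, except when $3k\equiv 0\bmod 2p$, where the target is $C_{0,0}$ of value $2p+2$.

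The only genuine work is locating this exceptional collapse. The condition $2p\mid 3k$ with $1\le k\le p$ is controlled by $\gcd(3,2p)$: when $p\not\equiv 0\bmod 3$ we have $\gcd(3,2p)=1$, forcing $2p\mid k$, which is impossible in range, so every even class gives $2$; when $p\equiv 0\bmod 3$ we have $\gcd(3,2p)=3$, and the condition becomes $k\equiv 0\bmod 2p/3$, whose unique solution in the range is $k=2p/3$ (which is even, since $p$ even together with $3\mid p$ forces $6\mid p$ and hence $2p/3=4\cdot(p/6)$ even), giving $\chi_W(g^3)=2p+2$ there and $2$ at all other even indices. This matches the claimed case split exactly, and I expect this divisibility bookkeeping --- correctly distinguishing when $3k$ reduces to $0$ versus to a generic even index modulo $2p$, while confirming parity is preserved --- to be the main obstacle.
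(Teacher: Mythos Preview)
Your proof is correct and follows essentially the same route as the paper: sum the column of the character table, separate the four one-dimensional characters from the $p-1$ two-dimensional ones, and evaluate the latter via the Dirichlet-kernel identity of Lemma~\ref{lem:sum_rt_unit}; then reduce Part~2 to Part~1 via Lemma~\ref{lem:sq_cub_conj_cls_d4n_even}. Your explicit case analysis for the collapse $3k\equiv 0\bmod 2p$ is more detailed than the paper's one-line reference, but the argument is the same; one minor redundancy is your separate treatment of $C_{p,0}$, since $2\cos(2\pi p\lambda/2p)=2(-1)^\lambda$ already falls under the general formula and is handled by Lemma~\ref{lem:sum_rt_unit} directly.
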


\begin{proof}
    1. The character table of $D_{4p}^{\ast}$ (Table~\ref{tab:ch_d4n_even}) implies that, for $0\leq k \leq p$ and $0\leq l \leq 1$, 
    \begin{equation}
        \sum_{i=1}^{p+3} \chi_{A_i}(a^k x^l) = 1 +(-1)^l + (-1)^k + (-1)^{k+l} + \delta_{l,0} \sum_{\lambda=1}^{p-1} 2 \cos \left(\frac{2\pi k\lambda}{2p} \right),
    \end{equation}
    where $\delta_{l,0}$ denotes the Kronecker delta.
    Then, the assertion 1  follows from these equations and Lemma~\ref{lem:sum_rt_unit}. 
    
    2. It follows from the assertion 1 and Lemma~\ref{lem:sq_cub_conj_cls_d4n_even}.
\end{proof}

 By applying Lemma~\ref{lem:chi_W_g_d4n_even} to the formula in Proposition~\ref{prop:dim_formulas}, one directly gets the following.

\begin{lemma}\label{lem:dim2_D4n_even}
For $\pi = D_{4p}^{\ast}$ with even $p$,
\begin{equation}
    d_2(\C\pi) = \begin{cases}
        \frac{1}{3} p^2 + \frac{5}{2}p + 3 & (p \equiv 0 \bmod 3),\\
        \frac{1}{3} p^2 + \frac{5}{2}p + \frac{8}{3} & (p \not \equiv 0 \bmod 3).
    \end{cases}
\end{equation}
\end{lemma}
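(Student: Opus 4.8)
The plan is to substitute the character data of Lemma~\ref{lem:chi_W_g_d4n_even} directly into the formula for $d_2(\C\pi)$ from Proposition~\ref{prop:dim_formulas}. The crucial simplification is that, by Table~\ref{tab:ch_d4n_even}, every irreducible representation of $D_{4p}^{\ast}$ with $p$ even has real-valued character, so $r_1=p+3$ and $r_2=0$. Hence $\sum_{i=1}^{r_1}\chi_{A_i}(g)=\chi_W(g)$ and $\sum_{i=1}^{r_1}\chi_{A_i}(g^3)=\chi_W(g^3)$ are exactly the quantities evaluated in Lemma~\ref{lem:chi_W_g_d4n_even}, and with $|\pi|=4p$ the formula collapses to
\[
d_2(\C\pi)=\frac{1}{24p}\sum_{C(g)\in\hat\pi}\Bigl(|C(g)|\,\chi_W(g)^3+12p\,\chi_W(g)+2|C(g)|\,\chi_W(g^3)\Bigr).
\]

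First I would recall the conjugacy-class data: $\hat\pi$ consists of $C_{0,0}$ (size $1$), the classes $C_{k,0}$ for $1\le k\le p-1$ (size $2$), $C_{p,0}$ (size $1$), and $C_{\even,1},C_{\odd,1}$ (each of size $p$). Since $\chi_W$ vanishes on the two classes involving $x$ and on every $C_{k,0}$ with $k$ odd, only $C_{0,0}$ and the even-index classes $C_{k,0}$ with $k\in\{2,4,\dots,p\}$ contribute to the first two summands; among the latter, $C_{p,0}$ has size $1$ while the remaining $p/2-1$ classes have size $2$. I would then evaluate the three pieces separately. The cubic piece gives $(2p+2)^3+8\bigl(2(p/2-1)+1\bigr)$, and the linear piece gives $12p\sum_{C(g)}\chi_W(g)=12p(3p+2)$; both are independent of $p\bmod 3$.

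The only place where the residue of $p$ modulo $3$ enters is the $\chi_W(g^3)$ piece, because Lemma~\ref{lem:chi_W_g_d4n_even} shows that $\chi_W(g^3)$ takes the large value $2p+2$ on the extra class $C_{2p/3,0}$ precisely when $3\mid p$. I would therefore split into two cases. For $p\not\equiv 0\bmod 3$ one obtains $\sum_{C(g)}|C(g)|\,\chi_W(g^3)=4p$, whereas for $p\equiv 0\bmod 3$ the class $C_{2p/3,0}$ (which is even-indexed and of size $2$, since $6\mid p$ forces $2p/3$ to be even and to lie in $\{2,\dots,p-1\}$) replaces a value $2$ by $2p+2$, giving $8p$. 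Adding the three pieces and dividing by $24p$ then yields $\tfrac13 p^2+\tfrac52 p+\tfrac83$ and $\tfrac13 p^2+\tfrac52 p+3$ respectively, as claimed.

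The computation is entirely routine; the only real care needed — the main obstacle, such as it is — is the bookkeeping of conjugacy-class sizes (remembering that $C_{p,0}$ is a singleton rather than a size-$2$ class) and correctly isolating the single exceptional class $C_{2p/3,0}$ in the divisible case, verifying that $2p/3$ is genuinely an even index in the admissible range $2\le 2p/3\le p-1$ so that it actually appears among the contributing classes.
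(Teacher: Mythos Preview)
Your proof is correct and follows precisely the approach the paper takes: it simply substitutes the values from Lemma~\ref{lem:chi_W_g_d4n_even} into the formula of Proposition~\ref{prop:dim_formulas}, and the paper says nothing more than this. Your write-up merely spells out the routine bookkeeping (class sizes, the even-index restriction, and the exceptional class $C_{2p/3,0}$ when $3\mid p$) that the paper leaves implicit.
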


By Proposition~\ref{prop:graph-inv}-2, Lemma~\ref{lem:dim1_D4n_even}, Lemma~\ref{lem:dim2_D4n_even}, and Proposition~\ref{prop:dim_formulas}, the values of $\calA_\Theta^\odd(\C\pi)$ and $\calA_\Theta^\odd(\mathrm{Ker}\,\ve)$ for $\pi=D_{4p}^{\ast}$ with even $p$ is determined as follows.
\begin{prop}
    For $\pi = D_{4p}^{\ast}$ with even $p$, we have
    \begin{equation}
	\dim \calA_\Theta^\odd(\C\pi)= \begin{cases}
        \frac{1}{3} p^2 + \frac{5}{2}p + 3 & (p \equiv 0 \bmod 3),\\
        \frac{1}{3} p^2 + \frac{5}{2}p + \frac{8}{3} & (p \not \equiv 0 \bmod 3),
    \end{cases}
\end{equation}
\begin{equation}
	\dim \calA_\Theta^\odd(\ker \ve)= \begin{cases}
        \frac{1}{3} p^2 + \frac{3}{2}p  & (p \equiv 0 \bmod 3),\\
        \frac{1}{3} p^2 + \frac{3}{2}p - \frac{1}{3} & (p \not \equiv 0 \bmod 3).
    \end{cases}
\end{equation}
\end{prop}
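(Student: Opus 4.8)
The plan is to obtain both dimension formulas by combining the two immediately preceding computations with the splitting of Proposition~\ref{prop:graph-inv}. First I would apply Proposition~\ref{prop:dim_formulas}, which gives $\dim\calA_\Theta^\odd(\C\pi)=\tfrac{1}{2}\bigl(d_1(\C\pi)+d_2(\C\pi)\bigr)$, and then substitute the values of $d_1(\C\pi)$ and $d_2(\C\pi)$ furnished by Lemma~\ref{lem:dim1_D4n_even} and Lemma~\ref{lem:dim2_D4n_even}. The decisive observation is that, for $\pi=D_{4p}^\ast$ with even $p$, these two quantities agree in each residue class of $p$ modulo $3$; hence $\dim\calA_\Theta^\odd(\C\pi)=d_1(\C\pi)$ and the first formula drops out with no further computation.

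For the kernel I would use Proposition~\ref{prop:graph-inv}-2, i.e. the decomposition $\calA_\Theta^\odd(\C\pi)\cong\calA_\Theta^\odd(\mathrm{Ker}\,\ve)\oplus(\C\hat\pi)_{\Z_2}$, so that $\dim\calA_\Theta^\odd(\mathrm{Ker}\,\ve)=\dim\calA_\Theta^\odd(\C\pi)-\dim(\C\hat\pi)_{\Z_2}$. Since every conjugacy class of $D_{4p}^\ast$ with even $p$ is inversion-invariant, Lemma~\ref{lem:conj_inv_D_4n_even} gives $\dim(\C\hat\pi)_{\Z_2}=|\hat\pi|=p+3$. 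Subtracting $p+3$ from each case of the first formula and simplifying yields $\tfrac{1}{3}p^2+\tfrac{3}{2}p$ when $p\equiv 0\bmod 3$ and $\tfrac{1}{3}p^2+\tfrac{3}{2}p-\tfrac{1}{3}$ otherwise, as claimed.

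I do not expect a genuine obstacle at this final stage: once $d_1(\C\pi)$, $d_2(\C\pi)$, and $\dim(\C\hat\pi)_{\Z_2}$ are known, the statement follows by elementary arithmetic. The only point I would flag for verification is the numerical coincidence $d_1(\C\pi)=d_2(\C\pi)$, which is what collapses the average into a single value; it ultimately stems from the fact that all irreducible representations of $D_{4p}^\ast$ with even $p$ have real-valued characters, so that the $\Z_2$-twisted count $d_2$ ranges over the full set of irreducibles rather than a proper subset (contrast the cyclic case, where $d_1$ and $d_2$ differ already in their leading terms). The substantive difficulty of the whole computation lies in the earlier lemmas --- notably the case analysis of $C(g^2)$ and $C(g^3)$ in Lemma~\ref{lem:sq_cub_conj_cls_d4n_even} and the character-sum evaluations via the Dirichlet-kernel identity of Lemma~\ref{lem:sum_rt_unit} --- and not in the assembly carried out here.
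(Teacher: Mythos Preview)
Your proposal is correct and follows exactly the same approach as the paper: combine Lemma~\ref{lem:dim1_D4n_even} and Lemma~\ref{lem:dim2_D4n_even} via the formula $\dim\calA_\Theta^\odd(\C\pi)=\tfrac{1}{2}(d_1+d_2)$ from Proposition~\ref{prop:dim_formulas}, then subtract $\dim(\C\hat\pi)_{\Z_2}=p+3$ from Lemma~\ref{lem:conj_inv_D_4n_even} using Proposition~\ref{prop:graph-inv}-2. Your observation that $d_1=d_2$ here (because all irreducible characters are real-valued) is a nice way to see why the average collapses; the paper does not make this explicit but the values in the two lemmas make it evident.
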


\subsubsection{$D_{4p}^{\ast}$ with odd $p$} \label{section:D4n_odd}
For any odd positive integer $p$, the irreducible representations of $D_{4p}^{\ast}$ are given as follows.
\begin{prop}
Let $p$ be an odd positive integer. The distinct irreducible representations of $D_{4p}^{\ast}$ consist of $4$ one-dimensional representations
\begin{equation}
\begin{split}
    &\rho_{V^{(1)}_1} (a) = 1, \rho_{V^{(1)}_1}(x) = 1, \quad \rho_{V^{(1)}_2}(a) = 1, \rho_{V^{(1)}_2}(x) = -1,\\
    &\rho_{V^{(1)}_3}(a) = -1, \rho_{V^{(1)}_3}(x) = \sqrt{-1},\quad \rho_{V^{(1)}_4}(a) = -1, \rho_{V^{(1)}_4}(x) = -\sqrt{-1},
    \end{split}
\end{equation}
and $p-1$ two-dimensional representations
\begin{equation}
    \rho_{V^{(2)}_{\lambda}}(a) = \begin{pmatrix}
        \zeta_{2p}^{\lambda} & 0 \\
        0 & \zeta_{2p}^{-\lambda}
    \end{pmatrix},
    \quad 
    \rho_{V^{(2)}_{\lambda}}(x) = \begin{pmatrix}
        0 & (\sqrt{-1})^{\lambda} \\
       (\sqrt{-1})^{\lambda} & 0
    \end{pmatrix}
    \quad (\lambda=1, \ldots, p-1).
\end{equation}
\end{prop}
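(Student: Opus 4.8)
The plan is to follow the template already used for the even-$p$ case: first confirm that the listed assignments define genuine representations by checking the three defining relations of $D_{4p}^{\ast}$, then establish irreducibility of the two-dimensional ones, verify pairwise inequivalence via characters, and finally invoke the sum-of-squares identity to conclude that the list is exhaustive.

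First I would verify the relations $a^{2p}=1$, $x^2=a^p$, and $x^{-1}ax=a^{-1}$ for each representation. The one-dimensional reps are the only place where the oddness of $p$ genuinely intervenes: for $\rho_{V^{(1)}_3}$ and $\rho_{V^{(1)}_4}$ we have $\rho(a)=-1$, so $\rho(a)^p=(-1)^p=-1$ using that $p$ is odd, and the relation $x^2=a^p$ then forces $\rho(x)^2=-1$, i.e. $\rho(x)=\pm\sqrt{-1}$. This is precisely why the scalars $\pm\sqrt{-1}$ replace the $\pm1$ of the even case. The relation $x^{-1}ax=a^{-1}$ is automatic in dimension one since $\rho(a)\in\{\pm1\}$ satisfies $\rho(a)=\rho(a)^{-1}$. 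For the two-dimensional $\rho_{V^{(2)}_\lambda}$ I would compute $\rho(a)^{2p}=\mathrm{diag}(\zeta_{2p}^{2p\lambda},\zeta_{2p}^{-2p\lambda})=I$, observe that $\zeta_{2p}^{p}=-1$ so that $\rho(a)^p=(-1)^\lambda I$, and note that $\rho(x)^2=(\sqrt{-1})^{2\lambda}I=(-1)^\lambda I$, which matches; conjugation by the anti-diagonal matrix $\rho(x)$ swaps the diagonal entries of $\rho(a)$ and hence yields $\rho(a)^{-1}$, giving the last relation.

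Next, irreducibility of $\rho_{V^{(2)}_\lambda}$ for $1\le\lambda\le p-1$ follows by the same argument as in the even case and in \cite[\S 5.3]{Se77}: the diagonal entries $\zeta_{2p}^{\lambda},\zeta_{2p}^{-\lambda}$ of $\rho(a)$ are distinct exactly when $2\lambda\not\equiv0\pmod{2p}$, i.e. $\lambda\not\equiv0\pmod p$, which holds throughout this range, so the only $\rho(a)$-stable lines are the two coordinate axes, neither of which is preserved by the swap $\rho(x)$. For pairwise inequivalence I would compare characters: the four one-dimensional reps are separated by their values on $a$ and $x$, while $\chi_{V^{(2)}_\lambda}(a^k)=2\cos\!\big(\tfrac{2\pi k\lambda}{2p}\big)$, and taking $k=1$ shows that cosine is injective on the relevant range, so distinct $\lambda\in\{1,\dots,p-1\}$ give distinct characters; restricting to this range also avoids the identification $V^{(2)}_\lambda\cong V^{(2)}_{2p-\lambda}$.

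Finally I would count dimensions: $4\cdot1^2+(p-1)\cdot2^2=4p=|D_{4p}^{\ast}|$, so by the standard identity that the squared dimensions of all distinct irreducibles sum to the group order \cite[\S 2.4]{Se77} the list is complete. The computation is entirely routine; the only substantive difference from the even-$p$ case, and the one point requiring care, is the bookkeeping of $(-1)^p=-1$ and $\zeta_{2p}^p=-1$, which produces the fourth roots of unity $\pm\sqrt{-1}$ in the one-dimensional characters and reconciles $\rho(x)^2$ with $\rho(a)^p$ throughout the two-dimensional family.
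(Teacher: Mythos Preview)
Your proof is correct and follows essentially the same approach as the paper, which simply refers back to the even-$p$ argument: direct verification of the relations, irreducibility of the two-dimensional representations via the diagonal-matrix/coordinate-axis observation from \cite[\S 5.3]{Se77}, pairwise inequivalence by characters, and completeness from $4\cdot 1^2+(p-1)\cdot 2^2=4p$. You have supplied more detail than the paper (particularly the careful tracking of $(-1)^p=-1$ in the one-dimensional representations), but the strategy is identical.
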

As in Section \ref{section:D4n_even}, we can directly check that these representations are indeed representations of irreducible, pairwise non-isomorphic, and the only irreducible representations of $D_{4p}^{\ast}$ with odd $p$.
The corresponding character table is given in Table~\ref{tab:ch_d4n_odd}. Notice that, different from the case of $p$ even, some irreducible representations have non-real valued characters when $p$ is odd. 

\par\medskip
\begin{table}[h]
\centering
\renewcommand{\arraystretch}{1.2}
  \begin{tabular}{|c|c|c|c|c|c|}  \hline
    $\hat{\pi}$ & $C_{0,0}$ & $C_{k, 0}$ & $C_{p, 0}$ & $C_{\mathrm{even}, 1}$ & $C_{\mathrm{odd}, 1}$\\ 
    size & $1$ & $2$ & $1$ & $p$ &$p$\\ \hline
    $V^{(1)}_1$ & $1$ & $1$ & $1$ & $1$ & $1$ \\
    $V^{(1)}_2$ & $1$ & $1$ & $1$ & $-1$ & $-1$ \\
    $V^{(1)}_3$ & $1$ & $(-1)^k$ & $-1$ & $\sqrt{-1}$ & $-\sqrt{-1}$ \\
     $V^{(1)}_4$ & $1$ & $(-1)^k$ & $-1$ & $-\sqrt{-1}$ & $\sqrt{-1}$ \\
     $V_{\lambda}^{(2)}$ & $2$ & $2 \cos \left(\frac{2\pi k \lambda}{2p} \right)$ & $(-1)^{\lambda} \cdot 2$ & $0$ & $0$ \\ \hline
  \end{tabular}
\par\medskip
\caption{The characters $\chi_{V^{(i)}_{\lambda}}(g)$ for $D_{4p}^{\ast}$ with odd $p$. Here, $k=1,\ldots, p-1$, and  $\lambda=1,2,3,4$ when $i=1$ and $\lambda=1,\ldots, p-1$ when $i=2$. }\label{tab:ch_d4n_odd}
\end{table}

For $\pi = D_{4p}^{\ast}$, we denote by $\hat{\pi}$ the set of conjugacy classes of $\pi$. Then, $|\hat{\pi}| = p+3$ since the number of equivalence classes of irreducible representations and one of the conjugacy classes of $\pi$ are the same. More concretely, $\hat{\pi}$ consists of the following conjugacy classes:
\begin{itemize}
    \item $C_{0,0} = [e] = \{e\}$,
    \item $C_{k, 0} = C_{2p-k,0}= [a^{k}] = \{ a^{k}, a^{2p-k}\} \quad (1 \leq k \leq p-1)$,
    \item $C_{p, 0} = [a^{p}] = \{ a^{p}\}$,
    \item $C_{\mathrm{even}, 1} = [x]=\{x, a^2x, a^4x, \ldots, a^{2p-2}x\}$,
    \item $C_{\mathrm{odd}, 1} = [ax] = \{ax, a^3x, \ldots, a^{2p-1}x\}$.
\end{itemize}

\begin{lemma}\label{lem:conj_z2_D4n_odd}
Let $\pi = D_{4p}^{\ast}$ with odd $p$. Then, $[x^{-1}]=[x] \in \hat{\pi}$ for $x = a^k$ but $[x^{-1}] \neq [x] \in \hat{\pi}$ for $x = a^k x$. In particular, $\dim(\C\hat{\pi})_{\Z_2}=p+2$
\end{lemma}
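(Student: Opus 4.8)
The plan is to analyze the inversion involution $[g]\mapsto[g^{-1}]$ on the set $\hat\pi$ of conjugacy classes directly, class by class, and then to read off $\dim(\C\hat\pi)_{\Z_2}$ as the number of orbits of this involution. Indeed, since the $\Z_2$-action on $\C\hat\pi$ of Proposition~\ref{prop:graph-inv}-2 merely permutes the basis $\hat\pi$, its coinvariants over $\C$ have dimension equal to the number of $\Z_2$-orbits, i.e. the number of self-inverse classes plus the number of swapped pairs $\{[g],[g^{-1}]\}$ with $[g]\neq[g^{-1}]$. So it suffices to decide, for each of the $p+3$ classes listed above, whether it is fixed or swapped.

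First I would treat the classes of powers of $a$. From the relation $x^{-1}ax=a^{-1}$ one gets $xax^{-1}=a^{-1}$ (multiply the relation on the left by $x$ and on the right by $x^{-1}$ to obtain $xa^{-1}x^{-1}=a$, then invert), so conjugation by $x$ inverts every power of $a$. Hence $[a^k]=[a^{-k}]=[a^{2p-k}]=[a^k]$, and in particular $C_{0,0}$, each $C_{k,0}$ $(1\le k\le p-1)$, and $C_{p,0}$ (where $a^{-p}=a^{p}$) are all self-inverse. This is exactly the computation of the even case (Lemma~\ref{lem:conj_inv_D_4n_even}) and accounts for $p+1$ fixed classes.

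The essential point, and the only place where the hypothesis that $p$ is odd enters, is the behavior of $C_{\mathrm{even},1}$ and $C_{\mathrm{odd},1}$. Using $x^2=a^p$ I would write $x^{-1}=a^{-p}x=a^{p}x$, and using $xa^{-k}=a^kx$ (again from $xax^{-1}=a^{-1}$) I would compute $(a^kx)^{-1}=x^{-1}a^{-k}=a^{p}xa^{-k}=a^{k+p}x$. Because $p$ is odd, adding $p$ reverses the parity of $k$: the inverse of an even element $a^{\mathrm{even}}x$ is an odd element $a^{\mathrm{odd}}x$ and conversely. Therefore inversion sends $C_{\mathrm{even},1}$ to $C_{\mathrm{odd},1}$, and these two distinct classes form a single swapped pair. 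This is precisely where the odd case diverges from the even case, in which $k+p$ has the same parity as $k$ and every class is self-inverse.

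Finally I would collect the count: there are $p+1$ self-inverse classes and exactly one nontrivial swapped pair $\{C_{\mathrm{even},1},C_{\mathrm{odd},1}\}$, giving $(p+1)+1=p+2$ orbits of the inversion involution on $\hat\pi$; hence $\dim(\C\hat\pi)_{\Z_2}=p+2$. The computation is elementary, so the main thing to get right is the parity bookkeeping in $(a^kx)^{-1}=a^{k+p}x$, which is the sole source of the difference between the statement for odd $p$ and the all-self-inverse conclusion of Lemma~\ref{lem:conj_inv_D_4n_even} for even $p$.
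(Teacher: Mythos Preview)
Your proof is correct and follows essentially the same approach as the paper's own proof: both compute $(a^kx)^{-1}=a^{k+p}x$, observe that adding the odd $p$ flips the parity so that $C_{\mathrm{even},1}$ and $C_{\mathrm{odd},1}$ form a single swapped pair while the $p+1$ classes $C_{k,0}$ are self-inverse (via conjugation by $x$), and conclude $\dim(\C\hat\pi)_{\Z_2}=(p+1)+1=p+2$. The only cosmetic difference is the order of steps in deriving $(a^kx)^{-1}$.
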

\begin{proof}
	For $a^k$ and $a^k x$, their inverse elements are $a^{-k}$ and $x^{-1} a^{-k} = x^{-1} a^{2p-k} = a^{k -2p} x^{-1} = a^{k-p} x= a^{k+p} x$ respectively. Since $x a^k x^{-1} = a^{-k}$, $[a^k] =[a^{-k}] \in \hat{\pi}$. On the other hand, $a^k x$ and $a^{k+p}x$ have different parities in the power of $a$ since $p$ is odd. Therefore, $[(a^k x)^{-1}] \neq [a^k x] \in \hat{\pi}$. In other words, $\Z_2$ acts trivially on $C_{0,0}$, $C_{k,0}$ and $C_{p,0}$, whereas $\Z_2$ acts non trivially on $C_{\mathrm{even},1}, C_{\mathrm{odd},1}$ so that the $\Z_2$ orbit of them is given by $\{C_{\mathrm{even},1}, C_{\mathrm{odd},1}\}$. Hence, the dimension of the quotient space $(\C\hat{\pi})_{\Z_2}$ is calculated as $p+1 + 2\cdot \frac{1}{2} = p +2$.
\end{proof}

\begin{lemma}\label{lem:sq_cub_conj_cls_d4n_odd}
    Let $\pi = D_{4p}^{\ast}$ with odd $p$. For any conjugacy class $[g] = C_{i, j} \in \hat{\pi}$ we denote by $C^l_{i,j} \in \hat{\pi}$ the conjugacy class represented by $g^l$ for any integer $l$. Then, we have 
    \begin{equation}
    \begin{split}
            & C_{0,0}^2 = C_{0,0}, \quad C_{k,0}^2 = C_{2k,0},\quad C_{p,0}^2 = C_{0,0}, \quad C_{\mathrm{even},1}^2 = C_{p, 0}, \quad C_{\mathrm{odd},1}^2 = C_{p, 0},\\
            & C_{0,0}^3 = C_{0,0}, \quad C_{k,0}^3 = C_{3k,0},\quad C_{p,0}^3 = C_{p,0}, \quad C_{\mathrm{even},1}^3 = C_{\mathrm{odd},1}, \quad C_{\mathrm{odd},1}^3 = C_{\mathrm{even}, 1}.
    \end{split}
    \end{equation}
Here, $2k$ and $3k$ in the subscripts of $C_{2k,0}$ and $C_{3k,0}$ respectively are understood as the least non-negative reminders modulo $2p$.
\end{lemma}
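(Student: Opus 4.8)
The plan is to mirror the proof of Lemma~\ref{lem:sq_cub_conj_cls_d4n_even}, computing the squares and cubes of chosen representatives directly from the defining relations $a^{2p}=1$, $x^2=a^p$, and $x^{-1}ax=a^{-1}$, and then reading off which conjugacy class the result lands in using the explicit description of $\hat\pi$ listed above. Since the presentation of $D_{4p}^\ast$ does not reference the parity of $p$, every identity except possibly those involving $C_{\mathrm{even},1}$ and $C_{\mathrm{odd},1}$ is established by exactly the same computation as in the even case, so I would reuse that argument verbatim. At the outset I would fix the bookkeeping conventions once, to avoid repetition: subscripts are read modulo $2p$ and then folded into the range $1\le k'\le p-1$ via $C_{k',0}=C_{2p-k',0}$, and an element $a^jx$ lies in $C_{\mathrm{even},1}$ or $C_{\mathrm{odd},1}$ according as $j$ is even or odd.

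First I would dispose of the classes of the form $C_{\ast,0}$. The equalities $C_{0,0}^l=C_{0,0}$ and $C_{k,0}^l=C_{lk,0}$ for $l=2,3$ are immediate from the definition. For $C_{p,0}$, from $(a^p)^2=a^{2p}=e$ and $a^{3p}=a^{2p}\cdot a^p=a^p$ I obtain $C_{p,0}^2=C_{0,0}$ and $C_{p,0}^3=C_{p,0}$. Next I treat the squares of the two classes carrying an $x$-factor. Using $x^2=a^p$ directly gives $C_{\mathrm{even},1}^2=[x^2]=[a^p]=C_{p,0}$. For $C_{\mathrm{odd},1}$, the relation $x^{-1}ax=a^{-1}$ rewrites as $ax=xa^{-1}$, whence $(ax)^2=(xa^{-1})(ax)=x^2=a^p$, so $C_{\mathrm{odd},1}^2=C_{p,0}$. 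These coincide with the even case, since the squares do not feel the parity of $p$.

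The only genuinely parity-sensitive step---and the place where the statement differs from Lemma~\ref{lem:sq_cub_conj_cls_d4n_even}---is the computation of the cubes of $C_{\mathrm{even},1}$ and $C_{\mathrm{odd},1}$. I compute $x^3=x^2\cdot x=a^px$; since $p$ is odd the exponent $p$ is odd, so $x^3\in C_{\mathrm{odd},1}$ and hence $C_{\mathrm{even},1}^3=C_{\mathrm{odd},1}$. Similarly $(ax)^3=(ax)^2(ax)=a^p\cdot ax=a^{p+1}x$; since $p$ is odd, $p+1$ is even, so $(ax)^3\in C_{\mathrm{even},1}$ and $C_{\mathrm{odd},1}^3=C_{\mathrm{even},1}$. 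This is precisely the swap that replaces the fixed behaviour seen in the even case.

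I expect no serious obstacle here: the argument is a short, direct manipulation in the group. The only thing requiring care is the parity bookkeeping for exponents of $a$ together with the modular reduction of subscripts, and having isolated these conventions at the start, the remaining verifications are routine and parallel the even case line by line.
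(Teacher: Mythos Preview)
Your proposal is correct and follows essentially the same approach as the paper: reuse the even-$p$ computations verbatim for all classes except the cubes of $C_{\mathrm{even},1}$ and $C_{\mathrm{odd},1}$, and for those compute $x^3=a^px$ and $(ax)^3=a^{p+1}x$ and read off the parity swap. The paper's proof is in fact even more terse than yours, simply pointing to Lemma~\ref{lem:sq_cub_conj_cls_d4n_even} and writing out only those two cube computations.
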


\begin{proof}
Most part follows the same way as Lemma~\ref{lem:sq_cub_conj_cls_d4n_even}. It is enough to only show that $C_{\mathrm{even},1}^3 = C_{\mathrm{odd},1}$ and $C_{\mathrm{odd},1}^3 = C_{\mathrm{even}, 1}$. The relation $x^2 = a^p$ implies $C_{\mathrm{even},1}^3 = [x^3]= [x^2 \cdot x] = [a^p \cdot x] = C_{\mathrm{odd}, 1}$ since $p$ is assumed to be odd. Using the relation $(ax)^2 = a^p$, we obtain $C_{\mathrm{odd}, 1}^3 = [(ax)^3]=[(ax)^2 \cdot ax]=[a^p \cdot ax] = [a^{p+1} x] = C_{\mathrm{even},1}$ since $p$ is odd and hence $p+1$ is even.
\end{proof}

\begin{remark}\label{rem:d4n_odd_c^2}
Let $k$ be an integer with $1\leq k\leq p-1$. Since $p$ is odd, we have $C_{2k,0}=C_{k'0}$ for some $k'$ with $1\leq k' \leq p-1$. It differs from the case of $D_{4p}^{\ast}$ with even $p$ (cf. Remark \ref{rem:d4n_even_c^2}). When $p \equiv 0 \bmod 3$, $C_{3k,0} = C_{0,0}$ or $C_{3k,0} = C_{p,0}$ hold when $k=2p/3$ or $k=p/3$ respectively, but otherwise $C_{3k,0} = C_{k',0}$ holds for some $k'$ with $1\leq k' \leq p-1$.
\end{remark}

From Table~\ref{tab:ch_d4n_odd}, we know that some irreducible representations of $D_{4p}^{\ast}$ have non-real valued characters. Thus, we need the generalized formula considered in Proposition~\ref{prop:dim_formulas} to compute $\dim \calA_\Theta^\odd(\C\pi)$ instead of the original one given in \cite{OW}. To begin with, let us compute $d_1(\C\pi)= \dim\,(\Sym^3 \C\pi)^{\pi\times \pi}$. 

\begin{lemma}\label{lem:dim1_D4n_odd}
When $\pi=D_{4p}^*$ with odd $p$, we have
\begin{equation}
    d_1(\C \pi) = \dim\,(\Sym^3 \C\pi)^{\pi\times \pi} = \begin{cases}
        \frac{1}{3} p^2 + \frac{5}{2} p + \frac{5}{2} & (p \equiv 0 \bmod 3),\\
        \frac{1}{3}p^2 + \frac{5}{2}p + \frac{13}{6} & (p \not \equiv 0 \bmod 3).
    \end{cases}
\end{equation}
\end{lemma}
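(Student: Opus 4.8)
The plan is to evaluate the $d_1$-formula of Proposition~\ref{prop:dim_formulas} directly, mirroring the even case (Lemma~\ref{lem:dim1_D4n_even}), but feeding in the squaring and cubing data for odd $p$ recorded in Lemma~\ref{lem:sq_cub_conj_cls_d4n_odd} and Remark~\ref{rem:d4n_odd_c^2}. Here $|\pi|=4p$ and $\hat\pi=\{C_{0,0},\,C_{k,0}\ (1\le k\le p-1),\,C_{p,0},\,C_{\mathrm{even},1},\,C_{\mathrm{odd},1}\}$ with sizes $1,2,1,p,p$. I would split the formula into a ``quadratic'' contribution $\tfrac16\sum_{C(g)}\bigl(\tfrac{|\pi|}{|C(g)|}+3\tfrac{|C(g)|}{|C(g^2)|}\bigr)$ and a ``cubic'' contribution coming from the sum over $\Delta_{\hat\pi}^{(3)}$, and handle them separately.

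For the quadratic part I read off $C(g^2)$ from Lemma~\ref{lem:sq_cub_conj_cls_d4n_odd}. The one genuinely new input relative to the even case is Remark~\ref{rem:d4n_odd_c^2}: since $p$ is odd, $C_{2k,0}=C_{k',0}$ with $1\le k'\le p-1$ for every $1\le k\le p-1$, so $|C_{k,0}^2|=2$ throughout and no class $C_{k,0}$ squares into the singleton $C_{p,0}$ (in contrast with $k=p/2$ in the even case). Plugging the sizes $1,2,1,p,p$ and the squares $C_{0,0},C_{2k,0},C_{0,0},C_{p,0},C_{p,0}$ into $\tfrac16\sum\bigl(\tfrac{|\pi|}{|C|}+3\tfrac{|C|}{|C^2|}\bigr)$ is a short finite sum, and I expect the value $\tfrac13p^2+\tfrac52p+\tfrac{11}{6}$, which is independent of $p\bmod 3$ (the dependence on $3\mid p$ enters only through the cubic term).

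The cubic part is the heart of the argument, and I would establish an analogue of Lemma~\ref{lem:d4n_even_dim1_chiw_cubic}, namely that $\sum_{(C(g),C(h))\in\Delta_{\hat\pi}^{(3)}}\tfrac{|C(g)||C(h)|}{|C(g^3)|}$ equals $8p$ when $3\mid p$ and $4p$ otherwise. To compute it I would rewrite the sum as $\sum_{D}\tfrac{1}{|D|}\bigl(\sum_{C(g^3)=D}|C(g)|\bigr)^2$, grouping classes by their common cube $D$. By Lemma~\ref{lem:sq_cub_conj_cls_d4n_odd} the classes $C_{\mathrm{even},1},C_{\mathrm{odd},1}$ are interchanged by cubing — the one place the odd case differs qualitatively from the even case, where they are fixed — and they never share a cube with a $C_{*,0}$ class; their combined contribution is $\tfrac{p^2}{p}+\tfrac{p^2}{p}=2p$ in either parity. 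For the classes $C_{*,0}$ the cube map is multiplication by $3$ on $\Z/2p\Z$ descended modulo negation: when $3\nmid p$ this is a bijection of the $C_{*,0}$ classes, so each cube value has a single preimage and the $C_{*,0}$-contribution is again $2p$; when $3\mid p$ it is $3$-to-$1$ onto the classes indexed by multiples of $3$, and counting preimages via the number of solutions of $3i\equiv\pm d\pmod{2p}$ gives contribution $6p$. Adding the $C_{*,1}$ term yields $4p$ and $8p$ respectively. The bookkeeping of these preimage counts, with the reductions modulo $2p$ and the $3\mid p$ case split, is the main obstacle — exactly why the even analogue was deferred to Appendix~\ref{appendix:A} — and I would likewise carry out the details there.

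Finally, for the assembly I substitute $|\pi|=4p$, so that scaling the cubic sum by $\tfrac{2}{6|\pi|}=\tfrac{1}{12p}$ produces $\tfrac23$ when $3\mid p$ and $\tfrac13$ otherwise. Adding this to the quadratic part $\tfrac13p^2+\tfrac52p+\tfrac{11}{6}$ gives $\tfrac13p^2+\tfrac52p+\tfrac52$ when $3\mid p$ and $\tfrac13p^2+\tfrac52p+\tfrac{13}{6}$ otherwise, which is the claimed value of $d_1(\C\pi)=\dim(\Sym^3\C\pi)^{\pi\times\pi}$.
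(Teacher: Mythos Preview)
Your proposal is correct and follows essentially the same approach as the paper: split the $d_1$-formula of Proposition~\ref{prop:dim_formulas} into the quadratic and cubic contributions, invoke Remark~\ref{rem:d4n_odd_c^2} to see that every $C_{k,0}$ squares into a size-$2$ class (the one simplification over the even case), and show that the cubic sum equals $8p$ or $4p$ exactly as in Lemma~\ref{lem:d4n_even_dim1_chiw_cubic}. The paper's proof simply asserts that Lemma~\ref{lem:d4n_even_dim1_chiw_cubic} remains valid for odd $p$ and omits the arithmetic; your write-up fills in those details, and your reorganization of the cubic sum as $\sum_D |D|^{-1}\bigl(\sum_{C^3=D}|C|\bigr)^2$ is a clean way to carry out the preimage count that the paper's Appendix~\ref{appendix:A} does by explicit enumeration of index sets.
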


\begin{proof}
Notice that the statements in Lemma~\ref{lem:d4n_even_dim1_chiw_cubic} for $D_{4p}^{\ast}$ with even $p$ are also valid for odd $p$, although there is a little difference in the irreducible representations between them. Therefore, the computation itself goes along the same way as in the proof of Lemma~\ref{lem:dim1_D4n_even}. In fact, by Remark \ref{rem:d4n_odd_c^2}, for any $k$ with $1 \leq k \leq p-1$ we have $C_{k,0}^2 = C_{k'0}$ for some $k'$ with $1 \leq k' \leq p-1$, which makes the computation much simpler than the previous case. The computation is straightforward and so we omit the details.
\end{proof}

Next we compute the dimension $d_2(\C\pi)$. For this, it is useful to use the real character table for $D_{4p}^{\ast}$ with odd $p$ as Table~\ref{tab:ch_real_d4n_odd}.

\par\medskip
\begin{table}[h]
\centering
\renewcommand{\arraystretch}{1.2}
  \begin{tabular}{|c|c|c|c|c|c|}  \hline
    $\hat{\pi}$ & $C_{0,0}$ & $C_{k, 0}$ & $C_{p, 0}$ & $C_{\mathrm{even}, 1}$ & $C_{\mathrm{odd}, 1}$\\ 
    size & $1$ & $2$ & $1$ & $p$ &$p$\\ \hline
    $V^{(1)}_1$ & $1$ & $1$ & $1$ & $1$ & $1$ \\
    $V^{(1)}_2$ & $1$ & $1$ & $1$ & $-1$ & $-1$ \\
     $V_{\lambda}^{(2)}$ & $2$ & $2 \cos \left(\frac{2\pi k \lambda}{2p} \right)$ & $(-1)^{\lambda} \cdot 2$ & $0$ & $0$ \\ \hline
  \end{tabular}
\par\medskip
\caption{The real characters $\chi_{V^{(i)}_{\lambda}}(g)$ for $D_{4p}^{\ast}$ with odd $p$. Here, $k=1,\ldots, p-1$, and  $\lambda=1,2$ when $i=1$ and $\lambda=1,\ldots, p-1$ when $i=2$. }\label{tab:ch_real_d4n_odd}
\end{table}

Aimed at the computation of $d_2(\C\pi)$, we prepare a lemma that corresponds to Lemma~\ref{lem:chi_W_g_d4n_even}.

\begin{lemma}\label{lem:chi_W_g_d4n_odd}
Let $\pi = D_{4p}^{\ast}$ with odd $p$. For  $g \in D_{4p}^{\ast}$, we have the following.
        \begin{equation}
        \begin{split}
              &1. \quad  \sum_{\chi_{A_i}: \text{real}} \chi_{A_i}(g) = \begin{cases}
                    2p & ([g]=C_{0,0}),\\
                    2  & ([g] = C_{k,0}\  (k=1,3,5,\ldots, p)),\\
                    0 & (\text{otherwise}).
                    \end{cases}\\
            &2. \quad \sum_{\chi_{A_i}: \text{real}}\chi_{A_i}(g^3)  = \begin{cases}
                    2p & ([g] = C_{0,0}),\\
                    2p & ([g] = C_{2 p/3,0}, p \equiv 0 \bmod 3),\\
                    2 & ([g] = C_{k,0} (k=1,3,5\ldots, p)),\\
                    0 & (\text{otherwise}).
                \end{cases}
            \end{split}
        \end{equation}
\end{lemma}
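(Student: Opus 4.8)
The plan is to compute both sums directly from the real character table (Table~\ref{tab:ch_real_d4n_odd}), following exactly the pattern of the even-$p$ analogue Lemma~\ref{lem:chi_W_g_d4n_even}. The only structural difference from that case is that for odd $p$ the one-dimensional representations $V_3^{(1)}$ and $V_4^{(1)}$ have non-real characters (their values on $x$ are $\pm\sqrt{-1}$) and are therefore omitted, leaving $V_1^{(1)}$, $V_2^{(1)}$, and the two-dimensional family $V_\lambda^{(2)}$ ($\lambda = 1,\ldots,p-1$) as the real-valued irreducibles.

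For assertion $1$, I would split $\sum_{\chi_{A_i}:\,\mathrm{real}}\chi_{A_i}(g)$ according to the conjugacy class of $g$. On the classes $C_{0,0}$, $C_{k,0}$ $(1\le k\le p-1)$, and $C_{p,0}$ the one-dimensional parts contribute $1+1=2$, and the two-dimensional part contributes $\sum_{\lambda=1}^{p-1}\chi_{V_\lambda^{(2)}}(g)$. For $g=a^k$ this is $\sum_{\lambda=1}^{p-1}2\cos\!\bigl(2\pi k\lambda/(2p)\bigr)$, which by Lemma~\ref{lem:sum_rt_unit} (taking $n=p$, and using $1\le k\le p-1$ so that $k\not\equiv 0\bmod 2p$) equals $2p-2$ at $k=0$, equals $-2$ when $k$ is even, and equals $0$ when $k$ is odd; for $g=a^p$ one may likewise read $k=p$ (odd), giving $0$, consistent with the column $\sum_{\lambda=1}^{p-1}(-1)^\lambda\cdot 2=0$ since $p-1$ is even. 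Adding the one-dimensional contribution $2$ produces $2p$ on $C_{0,0}$, the value $2$ on $C_{k,0}$ with $k$ odd (including $C_{p,0}$, as $p$ is odd), and $0$ on $C_{k,0}$ with $k$ even. On $C_{\mathrm{even},1}$ and $C_{\mathrm{odd},1}$ the one-dimensional characters cancel ($1+(-1)=0$) and every $\chi_{V_\lambda^{(2)}}$ vanishes, giving $0$. This is precisely assertion $1$.

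For assertion $2$, I would note that $\sum_{\chi_{A_i}:\,\mathrm{real}}\chi_{A_i}(g^3)$ is just assertion $1$ evaluated at $g^3$, so it suffices to read off the class of $g^3$ from Lemma~\ref{lem:sq_cub_conj_cls_d4n_odd}. Since $C_{\mathrm{even},1}^3=C_{\mathrm{odd},1}$ and $C_{\mathrm{odd},1}^3=C_{\mathrm{even},1}$ both lie where assertion $1$ vanishes, these contribute $0$; and $C_{0,0}^3=C_{0,0}$, $C_{p,0}^3=C_{p,0}$ give $2p$ and $2$. The delicate case is $C_{k,0}^3=C_{3k,0}$: because reducing the exponent modulo $2p$ and applying $C_{m,0}=C_{2p-m,0}$ both preserve parity of the exponent, the value of assertion $1$ at $C_{3k,0}$ is $2$ when $3k$ (equivalently $k$) is odd and $0$ when $k$ is even — matching the value at $C_{k,0}$ — unless $a^{3k}=e$, i.e. $3k\equiv 0\bmod 2p$. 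A short divisibility argument shows that $3k\equiv 0\bmod 2p$ with $1\le k\le p-1$ is possible exactly when $3\mid p$, and then forces $k=2p/3$ (which is even, since $p$ odd makes $p/3$ odd); in that case $a^{3k}=e\in C_{0,0}$ and the value jumps to $2p$ instead of the $0$ predicted by parity. This is the exceptional term $C_{2p/3,0}$ recorded in the statement, matching Remark~\ref{rem:d4n_odd_c^2}.

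The argument is fundamentally a bookkeeping computation with no conceptual obstacle. The one place demanding genuine care is assertion $2$: correctly tracking the parity of the exponent $3k$ under the reductions, and cleanly isolating the single exceptional class $C_{2p/3,0}$ that arises only when $p\equiv 0\bmod 3$. Everything else reduces to substituting Table~\ref{tab:ch_real_d4n_odd} into the Dirichlet-kernel evaluation of Lemma~\ref{lem:sum_rt_unit}.
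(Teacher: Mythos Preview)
Your proposal is correct and follows essentially the same approach as the paper: assertion~1 is obtained by reading off Table~\ref{tab:ch_real_d4n_odd} and evaluating the cosine sum via Lemma~\ref{lem:sum_rt_unit}, and assertion~2 is derived by feeding the cube-transformation rules of Lemma~\ref{lem:sq_cub_conj_cls_d4n_odd} back into assertion~1. The paper's own proof is terser and leaves the parity-tracking and the isolation of the exceptional class $C_{2p/3,0}$ implicit, whereas you have spelled out those details carefully; but the method is the same.
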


\begin{proof}
    1. For $0\leq k \leq p$ and $0\leq l \leq 1$, 
    \begin{equation}
        \sum_{\chi_{A_i}: \text{real}} \chi_{A_i}(a^k x^l) = 1 +(-1)^l + \delta_{l,0} \sum_{\lambda=1}^{p-1} 2 \cos \left(\frac{2\pi k \lambda}{2p} \right).
    \end{equation}
    Then the assertion 1 follows from Lemma~\ref{lem:sum_rt_unit}. 
    
    2. By applying the transformation rules of $C(g^3)$ given in Lemma~\ref{lem:sq_cub_conj_cls_d4n_odd} to the assertion 1, we obtain the desired formula.
\end{proof}
By using Lemma~\ref{lem:chi_W_g_d4n_odd} and Proposition~\ref{prop:dim_formulas}, we obtain the following.

\begin{lemma}\label{lem:dim2_D4n_odd}
When $\pi=D_{4p}^*$ with odd $p$, we have
\begin{equation}
    d_2(\C\pi) = \begin{cases}
        \frac{1}{3} p^2 + \frac{3}{2} p + \frac{3}{2}  & (p \equiv 0 \bmod 3),\\
        \frac{1}{3} p^2 + \frac{3}{2}p + \frac{7}{6} & (p \not \equiv 0 \bmod 3).
    \end{cases}
\end{equation}
\end{lemma}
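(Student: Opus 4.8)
The plan is to substitute the character sums computed in Lemma~\ref{lem:chi_W_g_d4n_odd} directly into the formula for $d_2(\C\pi)$ in Proposition~\ref{prop:dim_formulas}, namely
\[ d_2(\C\pi)= \frac{1}{6|\pi|}\sum_{C(g)\in\hat{\pi}} |C(g)|\Bigl\{S(g)^3+3\tfrac{|\pi|}{|C(g)|}S(g)+2\,T(g)\Bigr\}, \]
where I abbreviate $S(g)=\sum_{\chi_{A_i}\,\text{real}}\chi_{A_i}(g)$ and $T(g)=\sum_{\chi_{A_i}\,\text{real}}\chi_{A_i}(g^3)$, and $|\pi|=4p$. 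The three summands are treated separately and then assembled. The whole computation is a bookkeeping exercise over the five types of conjugacy classes listed above, and the only genuine content is tracking which classes carry nonzero values of $S$ and $T$ and with what multiplicity.

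For the cubic term $\sum_{C(g)}|C(g)|\,S(g)^3$, Lemma~\ref{lem:chi_W_g_d4n_odd}-1 shows that $S$ is supported on $C_{0,0}$ (value $2p$, size $1$), on the classes $C_{k,0}$ with $k$ odd and $1\le k\le p-2$ (value $2$, size $2$, and there are $(p-1)/2$ of them since $p$ is odd), and on $C_{p,0}$ (value $2$, size $1$); this yields $8p^3+16\cdot\tfrac{p-1}{2}+8=8p^3+8p$. The linear term is cleaner because $|C(g)|\cdot 3\tfrac{|\pi|}{|C(g)|}S(g)=3|\pi|\,S(g)$, so it collapses to $3|\pi|\sum_{C(g)}S(g)=12p(3p+1)=36p^2+12p$, using $\sum_{C(g)}S(g)=2p+(p-1)+2=3p+1$.

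The only place where the two cases of the statement diverge is the term $2\sum_{C(g)}|C(g)|\,T(g)$, which I would compute from Lemma~\ref{lem:chi_W_g_d4n_odd}-2. The step deserving the most care — and the main obstacle to getting the constant right — is that when $p\equiv 0\bmod 3$ there is an extra class $C_{2p/3,0}$ on which $T=2p$; since $p$ is odd and divisible by $3$ the index $2p/3$ is \emph{even}, so this class is genuinely distinct from the odd-indexed classes $C_{k,0}$ already counted and must not be conflated with them (nor double-counted). Tracking this, one obtains $\sum_{C(g)}|C(g)|\,T(g)=4p$ when $p\not\equiv 0\bmod 3$ and $=8p$ when $p\equiv 0\bmod 3$, the difference $4p$ being exactly the contribution $2\cdot 2p$ of $C_{2p/3,0}$.

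Finally, dividing the total by $6|\pi|=24p$ gives
\[ d_2(\C\pi)=\frac{8p^3+36p^2+28p}{24p}=\tfrac13 p^2+\tfrac32 p+\tfrac76 \]
in the case $p\not\equiv 0\bmod 3$, and
\[ d_2(\C\pi)=\frac{8p^3+36p^2+36p}{24p}=\tfrac13 p^2+\tfrac32 p+\tfrac32 \]
when $3\mid p$, which is the claimed formula.
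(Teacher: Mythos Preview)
Your proof is correct and follows exactly the approach the paper indicates: the paper's own proof simply states that the result follows by applying Lemma~\ref{lem:chi_W_g_d4n_odd} to the formula in Proposition~\ref{prop:dim_formulas}, without spelling out the arithmetic. Your write-up fills in precisely those details, including the correct handling of the extra class $C_{2p/3,0}$ (of size $2$, with even index since $p$ is odd) that produces the $\tfrac{1}{3}$ shift when $3\mid p$.
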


By  Lemma~\ref{lem:dim1_D4n_odd}, Lemma~\ref{lem:dim2_D4n_odd}, Proposition~\ref{prop:dim_formulas}, and  Proposition~\ref{prop:graph-inv}-2, a direct computation leads the following dimension formula for $\pi=D_{4p}^{\ast}$ where $p$ is odd.
\begin{prop}\label{lem:dim_D4n_odd}
    For $\pi = D_{4p}^{\ast}$ with $p$ odd, we have
    \begin{equation}
	\dim \calA_\Theta^\odd(\C\pi)= \begin{cases}
        \frac{1}{3} p^2 + 2p + 2 & (p \equiv 0 \bmod 3),\\
        \frac{1}{3} p^2 + 2p + \frac{5}{3} & (p \not \equiv 0 \bmod 3),
    \end{cases}
\end{equation}
\begin{equation}
	\dim \calA_\Theta^\odd(\ker \ve)= \begin{cases}
        \frac{1}{3} p^2 + p  & (p \equiv 0 \bmod 3),\\
        \frac{1}{3} p^2 + p - \frac{1}{3} & (p \not \equiv 0 \bmod 3).
    \end{cases}
\end{equation}
\end{prop}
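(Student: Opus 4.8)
The plan is to obtain both formulas by pure bookkeeping from the quantities already computed for $\pi = D_{4p}^{\ast}$ with $p$ odd; no further representation theory is required. First I would invoke the master identity $\dim\calA_\Theta^\odd(\C\pi) = \tfrac12\bigl(d_1(\C\pi) + d_2(\C\pi)\bigr)$ recorded in Proposition~\ref{prop:dim_formulas}, and substitute the two values supplied by Lemma~\ref{lem:dim1_D4n_odd} and Lemma~\ref{lem:dim2_D4n_odd}. Since each of $d_1(\C\pi)$ and $d_2(\C\pi)$ is governed by the single dichotomy $p \equiv 0 \bmod 3$ versus $p \not\equiv 0 \bmod 3$, their sum respects the same split, and adding termwise then halving produces $\dim\calA_\Theta^\odd(\C\pi)$ directly.

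Concretely, in the case $p \equiv 0 \bmod 3$ the two inputs sum to $\tfrac23 p^2 + 4p + 4$, whose half is $\tfrac13 p^2 + 2p + 2$; in the case $p \not\equiv 0 \bmod 3$ they sum to $\tfrac23 p^2 + 4p + \tfrac{10}{3}$, whose half is $\tfrac13 p^2 + 2p + \tfrac53$. This establishes the first displayed formula of the proposition.

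For $\dim\calA_\Theta^\odd(\mathrm{Ker}\,\ve)$ I would use the vector-space decomposition of Proposition~\ref{prop:graph-inv}-2, which yields $\dim\calA_\Theta^\odd(\mathrm{Ker}\,\ve) = \dim\calA_\Theta^\odd(\C\pi) - \dim(\C\hat\pi)_{\Z_2}$. The correction term is $\dim(\C\hat\pi)_{\Z_2} = p+2$ by Lemma~\ref{lem:conj_z2_D4n_odd}; this is precisely where the oddness of $p$ enters, because the inversion $g \mapsto g^{-1}$ fuses the two reflection classes $C_{\mathrm{even},1}$ and $C_{\mathrm{odd},1}$ into a single $\Z_2$-orbit, contributing $1$ rather than $2$ (contrast the even case, where the analogous count is $p+3$). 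Subtracting $p+2$ from each of the two cases above gives $\tfrac13 p^2 + p$ for $p \equiv 0 \bmod 3$ and $\tfrac13 p^2 + p - \tfrac13$ for $p \not\equiv 0 \bmod 3$, which is the second displayed formula.

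I expect no genuine obstacle here: the substantive work has been discharged by the preceding lemmas, and what remains is arithmetic. The one point demanding care is the alignment of case distinctions across the three inputs. I would verify that $d_1(\C\pi)$, $d_2(\C\pi)$, and the correction $p+2$ are each constant on the residue classes of $p$ modulo $3$ (the first two by the cited lemmas, the last because $p+2$ is manifestly independent of $p \bmod 3$), so that no finer subdivision is forced and the case structure of $\dim\calA_\Theta^\odd(\mathrm{Ker}\,\ve)$ matches that of $\dim\calA_\Theta^\odd(\C\pi)$ exactly.
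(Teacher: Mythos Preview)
Your proposal is correct and follows essentially the same approach as the paper's own proof, which simply cites Lemma~\ref{lem:dim1_D4n_odd}, Lemma~\ref{lem:dim2_D4n_odd}, Proposition~\ref{prop:dim_formulas}, Proposition~\ref{prop:graph-inv}-2, and Lemma~\ref{lem:conj_z2_D4n_odd} and declares the result a direct computation. Your version is in fact more explicit than the paper's, spelling out the arithmetic in each case and noting how the oddness of $p$ enters through the $\Z_2$-action on $\hat\pi$.
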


By using the results in Section \ref{section:D4n_even} and \ref{section:D4n_odd}, we obtain a list of the values of the dimensions $\dim \calA_\Theta^\odd(\C\pi)$ and $\dim \calA_\Theta^\odd(\ker \ve)$ for $D_{4p}^{\ast}$ up to $p \leq 15$ as follows.

\par\medskip
\begin{table}[h]
\centering
\begin{tabular}{|c|c|c|c|c|c|c|c|c|c|c|c|c|c|c|c|} \hline
$p$ & $1$ & $2$ & $3$ & $4$ & $5$ & $6$ & $7$ & $8$ & $9$ & $10$ & $11$ & $12$ & $13$ & $14$ & $15$ \\ \hline
$\dim \calA_\Theta^\odd(\mathbb{C}[\pi])$ & $4$ & $9$ & $11$ & $18$ & $20$ & $30$ & $32$ & $44$ & $47$ & $61$ & $64$ & $81$ & $84$ & $103$ & $107$ \\
$\dim \calA_\Theta^\odd(\mathrm{Ker}\,\ve)$ & $1$ & $4$ & $6$ & $11$ & $13$ & $21$ & $23$ & $33$ & $36$ & $48$ & $51$ & $66$ & $69$ & $86$ & $90$ \\ \hline
\end{tabular}
\caption{The values of the dimensions of $\calA_\Theta^\odd(\C\pi)$ and $\calA_\Theta^\odd(\mathrm{Ker}\,\ve)$ for $\pi = D_{4p}^{\ast}$ and $p \leq 15$. }\label{tab:val_dims_d4p}
\end{table}

\subsection{Group $D_{2^{k+2}p}'$}\label{section:D_2k+2p}

The group $D_{2^{k+2}p}'$ of order $2^{k+2}p$, where $k \geq 0$ and $p \geq 3$ odd, admits the following finite presentation:
\begin{equation}
    D_{2^{k+2}p}' = \langle x, y \mid x^{2^{k+2}}=1, y^p =1, xy^{-1} = yx \rangle.
\end{equation}
Any element $g \in D_{2^{k+2}p}'$ is uniquely written as $g = x^{n} y^{l}$ for some $0 \leq n \leq 2^{k+2}-1$ and $0 \leq l \leq p-1$. 

\begin{prop}
Let $k \geq 0$ and $p \geq 3$ be odd. The distinct irreducible representations of $D_{2^{k+2}p}'$ consist of $2^{k+2}$ one-dimensional irreducible representations
\begin{equation}
     \rho_{V^{(1)}_j} (x) = \zeta_{2^{k+2}}^j, \quad \rho_{V^{(1)}_j}(y) = 1 \quad (j=0,1, \ldots, 2^{k+2}-1),
\end{equation}
and $(p-1)/2 \cdot 2^{k+1}$ two-dimensional irreducible representations 
\begin{equation}
    \rho_{V^{(2)}_{s,t}}(x) = \begin{pmatrix}
        0 & \zeta_{2^{k+2}}^t\\
        \zeta_{2^{k+2}}^t & 0 
    \end{pmatrix},
    \quad 
    \rho_{V^{(2)}_{s,t}}(y) = \begin{pmatrix}
        \zeta_{p}^s & 0 \\
        0 & \zeta_{p}^{-s}  
    \end{pmatrix} \quad (0<s\leq  \frac{p-1}{2}, 0\leq t < 2^{k+1}).
\end{equation}
\end{prop}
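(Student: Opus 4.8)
The plan is to follow the same scheme used for $D_{4p}^{\ast}$ in Section~\ref{section:D4n}: first confirm that the displayed assignments define genuine representations, then establish their irreducibility and pairwise non-isomorphism, and finally invoke the dimension-sum identity $\sum_i(\dim A_i)^2=|\pi|$ (\cite[\S 2.4, Remarks~(1)]{Se77}) to conclude that the list is exhaustive. The conceptual backbone is that the relation $xy^{-1}=yx$ is equivalent to $xyx^{-1}=y^{-1}$, so $\langle y\rangle\cong\Z_p$ is normal and $D_{2^{k+2}p}'\cong\Z_p\rtimes\Z_{2^{k+2}}$ with the generator $x$ acting on $\Z_p$ by inversion; the two families above are precisely the representations produced by the method of little groups (\cite[\S 8.2]{Se77}) for this semidirect product.

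First I would verify the homomorphism property by checking the three defining relations on generators. For the one-dimensional family this is immediate. For $\rho_{V^{(2)}_{s,t}}$ the only nontrivial points are that $\rho(x)^2=\zeta_{2^{k+2}}^{2t}\,\mathrm{Id}$, whence $\rho(x)^{2^{k+2}}=\mathrm{Id}$; that $\rho(y)^p=\mathrm{Id}$, since $\rho(y)$ is diagonal with entries $\zeta_p^{\pm s}$; and that $\rho(x)\rho(y)^{-1}=\rho(y)\rho(x)$, both sides being the antidiagonal matrix with entries $\zeta_{2^{k+2}}^{t}\zeta_p^{\pm s}$.

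Next, irreducibility of $\rho_{V^{(2)}_{s,t}}$ follows exactly as in \cite[\S 5.3]{Se77}: since $0<s\leq(p-1)/2$ and $p$ is odd, the diagonal entries $\zeta_p^{s},\zeta_p^{-s}$ of $\rho(y)$ are distinct, so the only $\rho(y)$-stable lines are the two coordinate axes, and these are interchanged by the antidiagonal matrix $\rho(x)$; hence no line is $\pi$-invariant. For non-isomorphism I would compute characters: $\rho_{V^{(2)}_{s,t}}(x^n y^l)$ has trace $0$ when $n$ is odd (the matrix is then antidiagonal) and trace $\zeta_{2^{k+2}}^{tn}(\zeta_p^{sl}+\zeta_p^{-sl})$ when $n$ is even. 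Evaluating on $y$ recovers the unordered pair $\{s,-s\}\bmod p$, while evaluating on $x^2$ recovers $t\bmod 2^{k+1}$ (using $\zeta_{2^{k+2}}^{2}=\zeta_{2^{k+1}}$); thus the $\tfrac{p-1}{2}\cdot 2^{k+1}$ characters are pairwise distinct, and they differ from the one-dimensional ones by dimension. The one-dimensional representations are separated by their values $\zeta_{2^{k+2}}^{j}$ on $x$.

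Finally, completeness is the dimension count
\[ 2^{k+2}\cdot 1^2 + \frac{p-1}{2}\cdot 2^{k+1}\cdot 2^2 = 2^{k+2} + (p-1)\,2^{k+2} = p\,2^{k+2} = |D_{2^{k+2}p}'|, \]
so by \cite[\S 2.4]{Se77} no further irreducibles exist. The main point requiring care is the bookkeeping of the two-dimensional family: one must check that the ranges $0<s\leq(p-1)/2$ and $0\leq t<2^{k+1}$ parametrize the non-isomorphic representations without redundancy, i.e.\ that $\{s,-s\}$ ranges over the $(p-1)/2$ nontrivial $x$-orbits on the characters of $\langle y\rangle$ and that the stabilizer of each such character is $\langle x^2\rangle\cong\Z_{2^{k+1}}$, whose $2^{k+1}$ characters are indexed by $t$. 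This orbit--stabilizer verification is the only step where the interaction between the $2$-power part and the odd part $p$ must be tracked carefully; the remaining relation checks and the irreducibility argument are routine.
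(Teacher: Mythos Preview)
Your proposal is correct and follows essentially the same approach as the paper: verify the defining relations directly, argue irreducibility of the two-dimensional representations via the eigenspace argument of \cite[\S 5.3]{Se77} (exactly as done for $D_{4p}^{\ast}$), separate the representations by character values, and finish with the dimension-sum count $2^{k+2}\cdot 1^2+\tfrac{p-1}{2}\cdot 2^{k+1}\cdot 2^2=2^{k+2}p$. The Mackey/little-groups framing you add is a helpful conceptual gloss, but the actual verification you carry out is the same direct computation the paper sketches.
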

One can directly check that these are representations of $D_{2^{k+2}p}'$. As in Section \ref{section:D4n_even} and \ref{section:D4n_odd}, direct computation proves that these representations are irreducible and pairwise nonisomorphic. Since $2^{k+2}\cdot 1^2 + (\frac{p-1}{2}\cdot 2^{k+1})\cdot 2^2= 2^{k+2}p=|D_{2^{k+2}p}'|$, these irreducible representations are the only irreducible ones. The corresponding character table is given as Table~\ref{tab:ch_d2k+2p}.
\par\medskip
\begin{table}[h]
\centering
  \begin{tabular}{|c|c|c|c|}  \hline
    $\hat{\pi}$ & $C_{2m,0}$ & $C_{2m, l}$ & $C_{2m+1, 0}$\\ 
    size & $1$ & $2$ & $p$  \\ \hline
$V^{(1)}_{j}$ & $\zeta^{2m j}_{2^{k+2}}$ & $\zeta^{2mj}_{2^{k+2}}$ & $\zeta^{(2m+1) j}_{2^{k+2}}$ \\ 
$V^{(2)}_{s,t}$ & $2 \zeta_{2^{k+2}}^{2mt}$ &  $2 \zeta_{2^{k+2}}^{2mt} \cos \left(\frac{2\pi sl}{p} \right) $ & $0$ \\ \hline
  \end{tabular}
\par\medskip
\caption{The characters $\chi_{A_i}(g)$ for $D_{2^{k+2}p}'$. Here, $m=0,1, \ldots, 2^{k+1}-1$, $l=1,\ldots, (p-1)/2$, $j=0,1,\ldots, 2^{k+2}-1$, $s=0, 1,\ldots,  \frac{p-1}{2}$, and $t=0,1,\ldots, 2^{k+1}-1$.}\label{tab:ch_d2k+2p}
\end{table}

For $\pi = D_{2^{k+2}p}'$, the set of conjugacy classes $\hat{\pi}$ consists of the following conjugacy classes:
\begin{itemize}
    \item $C_{2m,0} = [x^{2m}] = \{x^{2m}\} \quad (0 \leq m < 2^{k+1})$,
    \item $C_{2m, l} = C_{2m, p-l} = [x^{2m}y^l] = \{x^{2m}y^l, x^{2m}y^{p-l}\} \quad (0 \leq m < 2^{k+1}, 1\leq l \leq \frac{p-1}{2})$,
    \item $C_{2m+1, 0} = [x^{2m+1}] = \{x^{2m+1}, x^{2m+1}y, x^{2m+1}y^2, \ldots, x^{2m+1}y^{p-1}\} \quad (0 \leq m < 2^{k+1})$.
\end{itemize}
These classes are obtained directly by computing the values of irreducible characters at every element of $D_{2^{k+2}p}'$.

\begin{lemma}\label{lem:conj_z2_D_2k+2p}
Let $\pi = D_{2^{k+2}p}'$ where $k \geq 0$ and $p \geq 3$ odd. Then, $[x^{-1}] \neq [x] \in \hat{\pi}$ for any $x$ but $x = e, x^{2^{k+1}}, y^l, x^{2^{k+1}}y^l$ $(1 \leq l \leq p-1)$. In particular, $\dim(\C\hat{\pi})_{\Z_2}=(2^k+1)\cdot (p+1)/2 + 2^k$.
\end{lemma}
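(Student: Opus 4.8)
The plan is to identify precisely which conjugacy classes of $\pi = D_{2^{k+2}p}'$ are fixed by the inversion involution $[g]\mapsto[g^{-1}]$ on $\hat\pi$, and then to invoke the fact that, since $\Z_2$ permutes the basis $\hat\pi$ of $\C\hat\pi$, the dimension $\dim(\C\hat\pi)_{\Z_2}$ equals the number of $\Z_2$-orbits on $\hat\pi$, i.e. $F + (|\hat\pi|-F)/2 = (|\hat\pi|+F)/2$, where $F$ is the number of self-inverse classes. The computational input is the consequence $xyx^{-1}=y^{-1}$ of the relation $xy^{-1}=yx$: it shows that every even power $x^{2m}$ is central (it commutes with $y$), while conjugation by $x$ inverts $y$. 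This is exactly what underlies the three shapes of conjugacy classes recorded above, and it lets me compute inverses class by class.

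I would then run through the three families. For the central singletons $C_{2m,0}=\{x^{2m}\}$, the inverse is $x^{-2m}$, so the class is self-inverse iff $x^{4m}=1$, i.e. $m\equiv 0\pmod{2^k}$; within $0\le m<2^{k+1}$ this yields exactly $m=0$ and $m=2^k$, that is $e$ and $x^{2^{k+1}}$. For the size-$2$ classes $C_{2m,l}$, centrality of $x^{2m}$ gives $(x^{2m}y^l)^{-1}=x^{-2m}y^{p-l}$, which lies in $C_{2m',l}$ with $m'\equiv -m\pmod{2^{k+1}}$ (the $l$-index is preserved because $y^l$ and $y^{p-l}$ already share a class); this is self-inverse iff $2m\equiv 0\pmod{2^{k+1}}$, again $m\in\{0,2^k\}$, giving the classes of $y^l$ and $x^{2^{k+1}}y^l$. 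For the size-$p$ classes $C_{2m+1,0}$, the inverse of $x^{2m+1}$ is an odd power of $x$, lying in $C_{2m'+1,0}$ with $2m'+1\equiv -(2m+1)\pmod{2^{k+2}}$; self-inverseness would force $2m\equiv -1\pmod{2^{k+1}}$, which is impossible by parity since $2^{k+1}$ is even. Hence no class in this family is fixed. Collecting, the self-inverse classes are $C_{0,0}$, $C_{2^{k+1},0}$, and $C_{0,l},\,C_{2^{k+1},l}$ for $1\le l\le (p-1)/2$; unwinding the representatives $y^l\sim y^{p-l}$ and $x^{2^{k+1}}y^l\sim x^{2^{k+1}}y^{p-l}$ recovers exactly the list of elements in the statement, and $F = 2 + 2\cdot\frac{p-1}{2} = p+1$.

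To finish, I would record that $|\hat\pi| = 2^{k+2} + \frac{p-1}{2}\cdot 2^{k+1} = 2^k(p+3)$ (consistent with the count of irreducibles), so the number of $\Z_2$-orbits is $(2^k(p+3)+(p+1))/2$, which rearranges to $(2^k+1)(p+1)/2 + 2^k$, as claimed. The main obstacle is purely bookkeeping: tracking the inversion action on the class labels $(2m,l)$ and $(2m{+}1,0)$ modulo $2^{k+1}$ and $2^{k+2}$ without double counting, and in particular carrying out the parity argument that eliminates all odd-power classes from the fixed set.
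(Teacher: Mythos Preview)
Your proof is correct and follows essentially the same approach as the paper: both compute the inverse of a general element and use the relation $xyx^{-1}=y^{-1}$ (equivalently, that even powers of $x$ are central) to determine which conjugacy classes are self-inverse, then count $\Z_2$-orbits. The paper does this in one stroke via $(x^ny^l)^{-1}=x^{2^{k+2}-n}y^{l}$ (as a class identity) and the condition $n\equiv -n\pmod{2^{k+2}}$, whereas you split into the three class types; the content is the same.
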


\begin{proof}
    Since $(x^n y^l)^{-1}= y^{-l} x^{-n} = x^{2^{k+2} -n}y^{l}$ for any $l =0, \ldots, p-1$, $[(x^n y^l)^{-1}]=[x^{2^{k+2} -n}y^{l}]=[x^{n}y^{l}]$ holds only when $n=0$ or $n=2^{k+1}$. Therefore, $\Z_2$ acts trivially on $C_{0,0}, C_{2\cdot 2^k,0}, C_{0,l}, C_{2\cdot 2^k, l}$ $(1 \leq l \leq (p-1)/2)$ and non trivially on other conjugacy classes in $\hat{\pi}$ and hence one obtains the desired dimension formula for $\dim(\C\hat{\pi})_{\Z_2}$.
\end{proof}

\begin{lemma}\label{lem:sq_cub_conj_cls_d2k+2p}
    Let $\pi = D_{2^{k+2}p}'$  where $k \geq 0$ and $p \geq 3$ odd. For any conjugacy class $[g] = C_{i, j} \in \hat{\pi}$ we denote by $C^l_{i,j} \in \hat{\pi}$ the conjugacy class represented by $g^l$ for an integer $l$. Then, we have 
    \begin{equation}
    \begin{split}
    &C_{2m,0}^2 = C_{4m,0}, \quad C_{2m,l}^2 = C_{4m,2l},\quad C_{2m+1,0}^2 = C_{4m+2,0},\\
            &C_{2m,0}^3 = C_{6m,0}, \quad C_{2m,l}^3 = C_{6m,3l},\quad C_{2m+1,0}^3 = C_{6m+3,0}.
        \end{split}
    \end{equation}
Here, $4m, 4m+2, 6m, 6m+3$ and $2l, 3l$ in the subscripts of $C_{4m,0}, C_{4m,2l}, C_{4m+2,0}, C_{6m,0}, C_{6m,3l}, C_{6m+3, 0}$ are understood as the least non-negative reminders modulo $2^{k+2}$ and $p$ respectively.
\end{lemma}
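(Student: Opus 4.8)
The plan is to reduce everything to the single commutation relation that the presentation encodes, together with the observation that the even powers of $x$ commute with $y$. First I would rewrite the defining relation $xy^{-1}=yx$: multiplying on the right by $x^{-1}$ gives $xy^{-1}x^{-1}=y$, and taking inverses yields $xyx^{-1}=y^{-1}$. Iterating this, one obtains $x^a y^b x^{-a}=y^{(-1)^a b}$ for all integers $a,b$, so that $x^a y^b = y^{(-1)^a b} x^a$. In particular, for every even exponent $a=2m$ the element $x^{2m}$ commutes with every power of $y$.

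With this in hand, each of the three families of conjugacy classes is handled by a direct computation on the chosen representative. For $C_{2m,0}$ and $C_{2m,l}$, whose representatives $x^{2m}$ and $x^{2m}y^l$ have even $x$-exponent, the commutativity of $x^{2m}$ with $y$ lets me collect the $x$- and $y$-parts separately:
\[
(x^{2m}y^l)^2 = x^{2m}y^l x^{2m}y^l = x^{4m}y^{2l},\qquad (x^{2m}y^l)^3 = x^{6m}y^{3l},
\]
and setting $l=0$ gives the formulas for $C_{2m,0}$. For $C_{2m+1,0}$ I use the representative $x^{2m+1}$ (with trivial $y$-part); since powers of a single element present no difficulty, $(x^{2m+1})^2=x^{4m+2}$ and $(x^{2m+1})^3=x^{6m+3}$, giving the even class $C_{4m+2,0}$ and the odd class $C_{6m+3,0}$, respectively.

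The only point requiring care — and the nearest thing to an obstacle — is the bookkeeping of the exponents modulo $2^{k+2}$ (for the $x$-part) and modulo $p$ (for the $y$-part), together with checking that the resulting element genuinely lies in the class named on the right-hand side under the identifications $C_{i,j}=C_{i,p-j}$ and the relations $x^{2^{k+2}}=y^p=1$. For the even-$x$ classes one must confirm that $x^{4m}y^{2l}$ and $x^{6m}y^{3l}$ remain of even $x$-type (which is automatic, as $4m$ and $6m$ are even) and reduce the $y$-exponent into the range $\{1,\dots,(p-1)/2\}$; for the odd class one checks that $4m+2$ is even while $6m+3$ is odd, so that squaring moves $C_{2m+1,0}$ into an even class and cubing keeps it odd. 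All of these reductions are immediate from the presentation, so no genuine difficulty arises.
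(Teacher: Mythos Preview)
The proposal is correct and takes essentially the same approach as the paper: both derive the commutation rule $x^a y^b = y^{(-1)^a b} x^a$ from the defining relation (the paper phrases this as $y^l x^{2m} = x^{2m} y^l$ and $y^l x^{2m+1} = x^{2m+1} y^{-l}$), and then read off the powers of the representatives directly. Your write-up is somewhat more explicit about the bookkeeping, but the argument is the same.
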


\begin{proof}
The equalities $C_{2m,0}^2 = C_{4m,0}$ and $C_{2m, 0}^3 = C_{6m,0}$ are clear by definition. The other cases follow from the relations $y^l x^{2m} = x^{2m} y^l$ and $y^l x^{2m+1} = x^{2m+1} y^{-l}$. These relations are consequences of $yx = x y^{-1}$ and $y^{-1} x = y^{p-1} x = x y^{1-p} = xy$.	
\end{proof}

\begin{remark}\label{rem:d2k+2p_c^2}
Let $l$ be an integer with $1\leq l \leq (p-1)/2$. Since $p$ is odd, we have $C_{4m,2l}=C_{4m, l'}$ for some $l'$ with $1\leq l' \leq (p-1)/2$. When $p \equiv 0 \bmod 3$, $C_{6m,3l} = C_{6m,0}$ or hold when $l=p/3$ and otherwise $C_{6m,3l} = C_{6m,l'}$ holds for some $l'$ with $1\leq l' \leq (p-1)/2$.
\end{remark}

With respect to Remark \ref{rem:d2k+2p_c^2}, we prepare the following lemma for the computation of $\dim\,(\Sym^3 W)^{\pi\times \pi}$.
\begin{lemma}\label{lem:d2k+2p_dim1_chiw_cubic}
Let $\pi = D_{2^{k+2}p}'$  where $k \geq 0$ and $p \geq 3$ odd. Then,  the following equation holds.
\begin{equation}
    \sum_{(C(g),C(h)) \in \Delta_{\hat{\pi}}^{(3)}} \frac{|C(g)| |C(h)|}{|C(g^3)|} =  \begin{cases}
     2^{k+3} p & (p \equiv 0 \bmod 3),\\
     2^{k+2} p & (p \not \equiv 0 \bmod 3).
    \end{cases}
\end{equation}
\begin{proof}
    For the proof, see Appendix \ref{appendix:A}.
\end{proof}
\end{lemma}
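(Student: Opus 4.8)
The plan is to reorganize the double sum by grouping the pairs $(C(g),C(h))$ according to the value of their common cube class. Write $\kappa\colon\hat\pi\to\hat\pi$ for the map sending a conjugacy class to the class of its cube; this is well defined and is computed explicitly in Lemma~\ref{lem:sq_cub_conj_cls_d2k+2p}. Setting $\sigma(D)=\sum_{C\colon \kappa(C)=D}|C|$ for the total size of the $\kappa$-fiber over $D$, one has
\[
 \sum_{(C(g),C(h))\in\Delta_{\hat\pi}^{(3)}}\frac{|C(g)|\,|C(h)|}{|C(g^3)|}
 =\sum_{D\in\mathrm{Im}\,\kappa}\frac{\sigma(D)^2}{|D|},
\]
since the condition $C(g^3)=C(h^3)=D$ lets one factor the inner double sum over ordered pairs as $\bigl(\sum_{\kappa(C)=D}|C|\bigr)^2$. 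Thus the task reduces to understanding the fibers of $\kappa$.

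Next I would analyze $\kappa$ one index at a time. By Lemma~\ref{lem:sq_cub_conj_cls_d2k+2p} cubing acts on the first index by multiplication by $3$ modulo $2^{k+2}$ and on the second index by multiplication by $3$ modulo $p$. Since $\gcd(3,2^{k+2})=1$, multiplication by $3$ is a bijection of $\Z/2^{k+2}$ that preserves the parity of the index; hence it permutes the size-$1$ classes $C_{2m,0}$ among themselves, permutes the size-$p$ classes $C_{2m+1,0}$ among themselves, and acts bijectively on the even first index of the size-$2$ classes. The entire nontrivial behavior of $\kappa$ is therefore governed by the second-index map $l\mapsto 3l\bmod p$ on $\{1,\dots,(p-1)/2\}$, bearing in mind the identification $l\sim p-l$ coming from $C_{2m,l}=C_{2m,p-l}$.

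For $p\not\equiv 0\bmod 3$ I would observe that $\gcd(3,p)=1$, so $l\mapsto 3l$ is a bijection of $\Z/p$ commuting with negation, hence a bijection of $\{1,\dots,(p-1)/2\}$; moreover no size-$2$ class cubes to a size-$1$ class, because $3l\equiv0$ has no nonzero solution. Thus $\kappa$ is a permutation of $\hat\pi$ preserving class sizes, every fiber is a single class with $\sigma(D)=|D|$, and the sum collapses to $\sum_{D\in\hat\pi}|D|=|\pi|=2^{k+2}p$, as claimed.

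The substantive case is $p\equiv 0\bmod 3$, which I expect to be the main obstacle because of the bookkeeping forced by the $\pm$-identification. Writing $p=3q$ (with $q$ odd), multiplication by $3$ on $\Z/p$ is $3$-to-$1$ onto the index-$3$ subgroup of multiples of $3$. I would track three families of image classes: (i) the size-$1$ classes $C_{6m,0}$, each receiving the size-$1$ preimage $C_{2m,0}$ together with the unique size-$2$ class $C_{2m,p/3}$ (the only $l\in\{1,\dots,(p-1)/2\}$ with $3l\equiv0$), so $\sigma=3$; (ii) the size-$2$ classes whose second index is a nonzero multiple of $3$, each having exactly three size-$2$ preimages (the set $\mu_3^{-1}(\{b,-b\})$ consists of $6$ nonzero elements closed under negation, giving $3$ classes up to sign), so $\sigma=6$; and (iii) the size-$p$ classes $C_{6m+3,0}$, each with a single size-$p$ preimage, so $\sigma=p$. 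Counting $2^{k+1}$ classes of type (i), $2^{k+1}\cdot(q-1)/2$ of type (ii), and $2^{k+1}$ of type (iii), the sum becomes
\[
 2^{k+1}\Bigl(\frac{3^2}{1}+\frac{q-1}{2}\cdot\frac{6^2}{2}+\frac{p^2}{p}\Bigr)
 =2^{k+1}\bigl(9q+p\bigr)=2^{k+1}\cdot 4p=2^{k+3}p,
\]
using $9q=3p$. The delicate points I would verify carefully are that $l=p/3$ is the only second index collapsing to $0$, that none of the three preimages in type (ii) coincides with this excluded value, and that the $\pm$-identification correctly halves the count of $\Z/p$-solutions (which needs $q$ odd, guaranteed by $p$ odd).
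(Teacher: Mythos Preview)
Your proof is correct, and it takes a genuinely different organizational route from the paper's. The paper (Appendix~A.3) splits $\Delta_{\hat\pi}^{(3)}$ into two pieces $D$ and $E$ according to the parity of the first index, and in the case $p\equiv 0\bmod 3$ further decomposes $D$ into subsets $A$ and $B$ according to whether $3l_1\equiv 3l_2\equiv 0\bmod p$ or not; the pairs $(l_1,l_2)$ are then enumerated explicitly. Your approach instead regroups the double sum by the common image class $D=\kappa(C(g))=\kappa(C(h))$, yielding the closed form $\sum_{D}\sigma(D)^2/|D|$, and then analyzes the fibers of the cubing map $\kappa$. This buys you a one-line proof in the case $p\not\equiv 0\bmod 3$ (where $\kappa$ is a class-size-preserving bijection, so the sum is simply $|\pi|$), and in the case $p\equiv 0\bmod 3$ it organizes the computation by image class rather than by ordered pairs of preimages, which makes the structure of the answer more transparent. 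The paper's direct enumeration is more elementary and perhaps easier to cross-check term by term; your fiber-based argument is more conceptual and explains structurally why the sum doubles when $3\mid p$. The delicate points you flag (uniqueness of $l=p/3$ as the second index collapsing to $0$, disjointness of the three preimage classes in type~(ii), and the correct halving under $l\sim p-l$ using $q$ odd) all check out.
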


As in previous sections, we first compute $d_1(\C\pi)= \dim\,(\Sym^3 \C\pi)^{\pi\times \pi}$, which is done as in the case of $D_{4p}^{\ast}$ with odd $p$ by applying Lemma~\ref{lem:d2k+2p_dim1_chiw_cubic} and Table~\ref{tab:ch_d2k+2p} to the formula in Proposition~\ref{prop:dim_formulas}.

\begin{lemma}\label{lem:dim1_D2k+2p}
Let $\pi = D_{2^{k+2}p}'$  where $k \geq 0$ and $p \geq 3$ odd. Then, we have the following.
\begin{equation}
   d_1(\C \pi) = \dim\,(\Sym^3 \C\pi)^{\pi\times \pi} = \begin{cases}
           \frac{1}{3}2^{2k}p^2 + \frac{1}{2}2^{k}(2^{k+1}+3)p + \frac{1}{6}(2^{2k+3} + 3\cdot 2^k +4) & (p \equiv 0 \bmod 3),\\
           \frac{1}{3}2^{2k}p^2 + \frac{1}{2}2^{k}(2^{k+1}+3)p + \frac{1}{6}(2^{2k+3} + 3\cdot 2^k +2)& ( p \not \equiv 0 \bmod 3).
         \end{cases}
\end{equation}
\end{lemma}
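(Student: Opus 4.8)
The plan is to substitute the explicit conjugacy-class data of $D_{2^{k+2}p}'$ into the formula for $d_1(\C\pi)$ of Proposition~\ref{prop:dim_formulas}. That formula reduces the computation to three sums over $\hat\pi$: the reciprocal-size sum $\sum_{C(g)\in\hat\pi}\frac{1}{|C(g)|}$, the squaring sum $\sum_{C(g)\in\hat\pi}\frac{|C(g)|}{|C(g^2)|}$, and the cubing sum $\sum_{(C(g),C(h))\in\Delta_{\hat\pi}^{(3)}}\frac{|C(g)||C(h)|}{|C(g^3)|}$. The last of these is precisely the content of Lemma~\ref{lem:d2k+2p_dim1_chiw_cubic}, so only the first two remain, and both can be evaluated by grouping the classes into the three families $C_{2m,0}$, $C_{2m,l}$, $C_{2m+1,0}$.

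First I would record the class sizes together with their multiplicities: there are $2^{k+1}$ singleton classes $C_{2m,0}$, there are $2^{k+1}\cdot\frac{p-1}{2}$ classes $C_{2m,l}$ of size $2$, and there are $2^{k+1}$ classes $C_{2m+1,0}$ of size $p$. Summing $1/|C(g)|$ family by family gives $\sum_{C(g)}\frac{1}{|C(g)|}=2^{k+1}\bigl(1+\frac{p-1}{4}+\frac{1}{p}\bigr)=\frac{2^{k+1}(p^2+3p+4)}{4p}$; multiplying by $|\pi|^2=2^{2k+4}p^2$ and dividing by $6|\pi|$ then produces the term $\frac{2^{2k}}{3}(p^2+3p+4)$, which supplies the $p^2$-coefficient and part of the lower-order terms of the stated formula.

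Next I would evaluate the squaring sum using Lemma~\ref{lem:sq_cub_conj_cls_d2k+2p}. The key point is that squaring alters class sizes uniformly across the divisibility cases: $C_{2m,0}^2=C_{4m,0}$ stays a singleton, $C_{2m,l}^2=C_{4m,2l}$ stays of size $2$ — here one uses that $p$ is odd, so $2l\not\equiv 0\pmod p$ for $1\le l\le\frac{p-1}{2}$ and the image again has even first index with nonzero second index (Remark~\ref{rem:d2k+2p_c^2}) — while $C_{2m+1,0}^2=C_{4m+2,0}$ drops from size $p$ to size $1$. Hence $\sum_{C(g)}\frac{|C(g)|}{|C(g^2)|}=2^{k+1}\bigl(1+\frac{p-1}{2}+p\bigr)=2^k(3p+1)$, contributing $\frac{1}{2}\cdot 2^k(3p+1)$ to $d_1$ after normalization. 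Feeding in the two values of the cubing sum from Lemma~\ref{lem:d2k+2p_dim1_chiw_cubic}, which become $\frac{2}{3}$ and $\frac{1}{3}$ after division by $3|\pi|$ according to whether $3\mid p$, and collecting the three contributions yields the claimed closed form.

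I do not expect a serious obstacle: the argument is essentially bookkeeping once the conjugacy structure and Lemmas~\ref{lem:sq_cub_conj_cls_d2k+2p} and~\ref{lem:d2k+2p_dim1_chiw_cubic} are available. The one place demanding attention is the squaring sum, where I must check that the size-$2$ classes $C_{2m,l}$ do not collapse under squaring; this is exactly where the hypothesis $p$ odd enters, and it explains why the squaring sum — unlike the cubing sum — is insensitive to $p\bmod 3$. Consequently the sole case distinction in the final answer originates entirely from Lemma~\ref{lem:d2k+2p_dim1_chiw_cubic}, matching the two-line shape of the stated formula.
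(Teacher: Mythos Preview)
Your proposal is correct and follows essentially the same approach as the paper: substitute the conjugacy-class data of $D_{2^{k+2}p}'$ (from Table~\ref{tab:ch_d2k+2p} and Lemma~\ref{lem:sq_cub_conj_cls_d2k+2p}) together with the cubing sum of Lemma~\ref{lem:d2k+2p_dim1_chiw_cubic} into the formula of Proposition~\ref{prop:dim_formulas}. The paper states this tersely, but your explicit evaluation of the reciprocal-size and squaring sums, including the observation that $p$ odd prevents the size-$2$ classes from collapsing under squaring, is exactly the computation it has in mind.
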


Next we compute  $d_2(\C\pi)$. For this computation, we prepare the real character table for $D_{2^{k+2}p}'$ as Table~\ref{tab:ch_real_d2k+2p}.

\par\medskip
\begin{table}[h]
\centering
  \begin{tabular}{|c|c|c|c|}  \hline
    $\hat{\pi}$ & $C_{2m,0}$ & $C_{2m, l}$ & $C_{2m+1, 0}$\\ 
    size & $1$ & $2$ & $p$  \\ \hline
$V^{(1)}_{0}$ & $1$ & $1$ & $1$ \\ 
$V^{(1)}_{2^{k+1}}$ & $1$ & $1$ & $-1$ \\ 
$V^{(2)}_{s,0}$ & $2$ &  $2 \cos \left(\frac{2\pi sl}{p} \right) $ & $0$ \\ 
$V^{(2)}_{s,2^k}$ & $ (-1)^m 2$ &  $(-1)^m2 \cos \left(\frac{2\pi sl}{p} \right) $ & $0$ \\  \hline
  \end{tabular}
\par\medskip
\caption{The real characters $\chi_{A_i}(g)$ for $D_{2^{k+2}p}'$. Here, $m=0,1,\ldots, 2^{k+1}-1$ and $s=1,2,\ldots, \frac{p-1}{2}$.}\label{tab:ch_real_d2k+2p}
\end{table}

The following lemma corresponds to Lemma~\ref{lem:chi_W_g_d4n_even} and Lemma~\ref{lem:chi_W_g_d4n_odd} by which we compute the dimension $d_2(\C\pi)$.

\begin{lemma}\label{lem:chi_W_g_d2k+2p}
Let $\pi = D_{2^{k+2}p}'$  where $k \geq 0$ and $p \geq 3$ odd. Then, for $g \in D_{2^{k+2}p}'$, we have the following.
        \begin{equation}
        \begin{split}
               &1. \quad \sum_{\chi_{A_i}: \text{real}} \chi_{A_i}(g) = \begin{cases}
                    2p & ([g]=C_{2m,0} \quad (2m\equiv 0 \bmod 4 )),\\
                    2  & ([g] = C_{2m,l} \quad (2m\not \equiv 0 \bmod 4, 0\leq l \leq (p-1)/2 )),\\
                    0 & (\text{otherwise}).
                \end{cases}\\
           &2. \quad \sum_{\chi_{A_i}: \text{real}} \chi_{A_i}(g^3) = \begin{cases}
                    2p & ([g]=C_{2m,0} \quad (2m\equiv 0 \bmod 4)),\\
                    2p & ([g]=C_{2m,p/3},\ p \equiv 0 \bmod 3 \quad (2m\equiv 0 \bmod 4 )),\\
                    2  & ([g] = C_{2m,l} \quad (2m\not \equiv 0 \bmod 4, 0\leq l \leq (p-1)/2 )),\\
                    0 & (\text{otherwise}).
                \end{cases}
                \end{split}
        \end{equation}
\end{lemma}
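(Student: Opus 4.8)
The plan is to read off the contributions of the real irreducible characters directly from the real character table (Table~\ref{tab:ch_real_d2k+2p}), sum them over each of the three families of conjugacy classes $C_{2m,0}$, $C_{2m,l}$ $(1\le l\le (p-1)/2)$, and $C_{2m+1,0}$, and then obtain part~2 from part~1 purely by substituting the conjugacy classes of cubes. The only analytic input is the evaluation of $\sum_{s=1}^{(p-1)/2} 2\cos(2\pi s l/p)$: this equals $p-1$ when $p\mid l$ and equals $-1$ otherwise, which follows from the Dirichlet kernel identity $D_{(p-1)/2}(2\pi l/p)-1=\sum_{s=1}^{(p-1)/2}2\cos(2\pi s l/p)$ with $D_{(p-1)/2}(2\pi l/p)=\sin(\pi l)/\sin(\pi l/p)=0$ for $l\not\equiv 0\bmod p$, exactly as in Lemma~\ref{lem:sum_rt_unit}. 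I also record that the real character $V^{(2)}_{s,2^k}$ carries the factor $\zeta_{2^{k+2}}^{2m\cdot 2^k}=e^{\pi i m}=(-1)^m$, which is the $(-1)^m$ appearing in the table.

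For part~1 I would evaluate $\sum_{\chi_{A_i}\,\text{real}}\chi_{A_i}(g)$ on each family. On $C_{2m,l}$, allowing $l=0$, the two one-dimensional real characters $V^{(1)}_0, V^{(1)}_{2^{k+1}}$ contribute $1+1=2$, while the two families of real two-dimensional characters $V^{(2)}_{s,0}$ and $V^{(2)}_{s,2^k}$ contribute $(1+(-1)^m)\sum_{s=1}^{(p-1)/2}2\cos(2\pi s l/p)$. When $l=0$ (so the root-of-unity sum is $p-1$) this gives $2+(1+(-1)^m)(p-1)$, equal to $2p$ for $m$ even and $2$ for $m$ odd; when $1\le l\le(p-1)/2$ (so the sum is $-1$) it gives $2-(1+(-1)^m)$, equal to $0$ for $m$ even and $2$ for $m$ odd. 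Since $2m\equiv 0\bmod 4$ is equivalent to $m$ even, these are precisely the claimed values, the two $m$-odd cases $l=0$ and $l\ge 1$ uniting into the single case $C_{2m,l}$ with $2m\not\equiv 0\bmod 4$. Finally, on $C_{2m+1,0}$ the two-dimensional characters vanish and the one-dimensional ones give $1+(-1)=0$, accounting for the ``otherwise'' value.

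For part~2 I would invoke the cube rules of Lemma~\ref{lem:sq_cub_conj_cls_d2k+2p}, namely $C_{2m,0}^3=C_{6m,0}$, $C_{2m,l}^3=C_{6m,3l}$, and $C_{2m+1,0}^3=C_{6m+3,0}$, and simply read off the part~1 value at the cube class. The key simplification is the congruence $6m\equiv 2m\bmod 4$, valid after reduction modulo $2^{k+2}$ since $4\mid 2^{k+2}$ for $k\ge 0$, so the parity condition $2m\equiv 0\bmod 4$ is preserved by cubing. Thus $C_{2m,0}$ with $m$ even feeds into $C_{6m,0}$ with $6m\equiv 0\bmod 4$, giving $2p$; $C_{2m,l}$ with $m$ odd feeds into $C_{6m,3l}$ with $6m\not\equiv0\bmod4$, giving $2$; and the odd-power class $C_{2m+1,0}$ cubes to another odd-power class, giving $0$. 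The genuinely new phenomenon is the collapse $3l\equiv 0\bmod p$, which occurs exactly when $p\equiv 0\bmod 3$ and $l=p/3$; then $C_{2m,p/3}^3=C_{6m,0}$, and for $m$ even the value jumps from $0$ to $2p$, producing the second case of the statement.

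The conceptual content here is light, so I expect the main obstacle to be purely the modular bookkeeping: keeping the reductions of $6m$ modulo $2^{k+2}$ and of $3l$ modulo $p$ consistent, and ensuring the exceptional class $l=p/3$ is counted once and only in the divisible-by-three case. The parity identity $6m\equiv 2m\bmod 4$ is what makes this manageable, so the write-up should be a short, careful verification rather than a hard computation.
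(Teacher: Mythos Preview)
Your proposal is correct and follows essentially the same approach as the paper's own proof: both read off the real character table, evaluate the cosine sum via the Dirichlet kernel identity (Lemma~\ref{lem:sum_rt_unit}), and then derive part~2 from part~1 by substituting the cube rules of Lemma~\ref{lem:sq_cub_conj_cls_d2k+2p}. Your write-up is in fact more explicit than the paper's, particularly in isolating the parity-preservation observation $6m\equiv 2m\bmod 4$ and in tracking the exceptional collapse at $l=p/3$; the paper simply declares part~2 ``immediate'' from part~1 and the cube rules.
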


\begin{proof}
    1. For $0\leq n \leq 2^{k+1}-1$ and $0\leq l \leq p-1$, by using Table~\ref{tab:ch_real_d2k+2p}, one obtains directly
    \begin{equation}
    \begin{split}
        \sum_{\chi_{A_i}: \text{real}} \chi_{A_i}(x^n y^l)& = 1 +(-1)^n + \delta_{[n],[0]} \sum_{s=1}^{\frac{p-1}{2}} 2\left( \cos \left(\frac{2\pi sl}{p}\right) +  (-1)^{n/2} \cos \left(\frac{2\pi sl}{p} \right)\right),
        \end{split}
    \end{equation}
    where $\delta_{[n],[0]}$ denotes the Kronecker delta on $\Z/2\Z=\{[0], [1]\}$. 
  By applying Lemma~\ref{lem:sum_rt_unit} to this equation,  we obtain the desired formula. 
  
  2. It is immediate from the assertion 1 and Lemma~\ref{lem:sq_cub_conj_cls_d2k+2p}.
\end{proof}

\begin{lemma}\label{lem:dim2_D2k+2p}
Let $\pi = D_{2^{k+2}p}'$  where $k \geq 0$ and $p \geq 3$ odd. Then, the following holds.
\begin{equation}
    d_2(\C\pi) = \begin{cases}
        \frac{1}{3} \, p^{2} + \frac{3}{2}  2^{k} p+ \frac{1}{2}2^{k} + 1 & (p \equiv 0 \bmod 3),\\
        \frac{1}{3} p^2 + \frac{3}{2}2^k p + \frac{1}{2}2^k + \frac{2}{3} & (p \not \equiv 0 \bmod 3).
    \end{cases}
\end{equation}
\end{lemma}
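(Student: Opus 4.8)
The plan is to substitute the values computed in Lemma~\ref{lem:chi_W_g_d2k+2p} into the formula for $d_2(\C\pi)$ from Proposition~\ref{prop:dim_formulas}. Write $\Sigma(g):=\sum_{i=1}^{r_1}\chi_{A_i}(g)$ and $\Sigma_3(g):=\sum_{i=1}^{r_1}\chi_{A_i}(g^3)$; these are exactly the sums over irreducible representations with real-valued characters appearing in that lemma, since the indices $i=1,\dots,r_1$ label precisely those representations. I would then split the class sum into three contributions,
\[
d_2(\C\pi)=\frac{1}{6|\pi|}\bigl(S_A+S_B+S_C\bigr),
\]
where $S_A=\sum_{C(g)}|C(g)|\,\Sigma(g)^3$, $S_B=3|\pi|\sum_{C(g)}\Sigma(g)$ (the weight $|C(g)|$ cancelling the denominator in $3|\pi|/|C(g)|$, so the remaining sum runs over classes unweighted), and $S_C=2\sum_{C(g)}|C(g)|\,\Sigma_3(g)$. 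Throughout, $|\pi|=2^{k+2}p$.

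The core of the argument is to read off, from Lemma~\ref{lem:chi_W_g_d2k+2p} together with the explicit description of $\hat\pi$, which conjugacy classes carry each nonzero value of $\Sigma$ and $\Sigma_3$, and with what size and multiplicity. Concretely, $\Sigma(g)=2p$ precisely on the $2^k$ singleton classes $C_{2m,0}$ with $m$ even, while $\Sigma(g)=2$ precisely on the classes $C_{2m,l}$ with $m$ odd and $0\le l\le (p-1)/2$; for each of the $2^k$ odd values of $m$ this is one singleton $C_{2m,0}$ together with $(p-1)/2$ classes of size $2$. Summing $|C(g)|\,\Sigma(g)^3$ and $\Sigma(g)$ over these classes yields closed forms for $S_A$ and $S_B$ that are \emph{independent} of the residue of $p$ modulo $3$; in particular $\sum_{C(g)}\Sigma(g)=3\cdot 2^k p+2^k$.

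The only place where the case distinction $p\equiv 0\bmod 3$ enters is the term $S_C$. By the second part of Lemma~\ref{lem:chi_W_g_d2k+2p}, when $p\equiv 0\bmod 3$ there is an additional family of size-$2$ classes $C_{2m,p/3}$ with $m$ even (valid since $1\le p/3\le (p-1)/2$ for $p\ge 3$) on which $\Sigma_3=2p$; this raises $\sum_{C(g)}|C(g)|\,\Sigma_3(g)$ from $2^{k+2}p$ to $2^{k+3}p$. I would carry the two residue classes of $p$ in parallel, then combine $S_A+S_B+S_C$, divide by $6|\pi|=6\cdot 2^{k+2}p$, and factor $2^{k+2}p$ out of the numerator to land on the two stated polynomials in $p$ and $2^k$.

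The step I expect to be the main obstacle is purely bookkeeping rather than conceptual: the accurate enumeration in $S_C$, where one must avoid double-counting the classes $C_{2m,0}$, keep straight the condition $2m\equiv 0\bmod 4$ versus $m$ odd, correctly weight size-$1$ against size-$2$ classes, and track the extra $C_{2m,p/3}$ family only in the divisible case. Beyond this careful accounting and the final arithmetic simplification, no difficulty arises.
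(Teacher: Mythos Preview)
Your proposal is correct and follows exactly the approach of the paper, which simply states that the proof is given by applying Lemma~\ref{lem:chi_W_g_d2k+2p} to the formula in Proposition~\ref{prop:dim_formulas}. Your detailed bookkeeping of the class enumeration and the case split on $p\bmod 3$ via the extra family $C_{2m,p/3}$ is precisely what that application amounts to.
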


\begin{proof}
    The proof is given by applying Lemma~\ref{lem:chi_W_g_d2k+2p} to the formula in Proposition~\ref{prop:dim_formulas}.
\end{proof}

By Lemma~\ref{lem:dim1_D2k+2p}, Lemma~\ref{lem:dim2_D2k+2p}, Lemma~\ref{lem:conj_z2_D_2k+2p}, Proposition~\ref{prop:dim_formulas}, and Proposition~\ref{prop:graph-inv}-2, one obtains the following dimension formula for $\pi = D_{2^{k+2}p}'$ where $k \geq 0$ and $p \geq 3$ odd. 
\begin{prop}\label{prop:dim_d2k+2p}
    For $\pi = D_{2^{k+2}p}'$ where $k \geq 0$ and $p \geq 3$ odd, we have
    \begin{equation}
    \begin{split}
    &\dim \calA_\Theta^\odd(\C\pi)= \begin{cases}
        \frac{1}{6} \cdot 2^{2k} p^{2} + \frac{1}{2} \cdot 2^{2k} p + \frac{3}{2} \cdot 2^{k} p + \frac{1}{6} \, p^{2} + \frac{2}{3} \cdot 2^{2k} + \frac{1}{2} \cdot 2^{k} + \frac{5}{6} & (p \equiv 0 \bmod 3),\\
        \frac{1}{6} \cdot 2^{2k} p^{2} + \frac{1}{2} \cdot 2^{2k} p + \frac{3}{2} \cdot 2^{k} p + \frac{1}{6} \, p^{2} + \frac{2}{3} \cdot 2^{2k} + \frac{1}{2} \cdot 2^{k} + \frac{1}{2} & (p \not \equiv 0 \bmod 3),
    \end{cases}\\
	&\dim \calA_\Theta^\odd(\ker \ve)= \begin{cases}
       \frac{1}{6} \cdot 2^{2k} p^{2} + \frac{1}{2} \cdot 2^{2k} p +  2^{k} p + \frac{1}{6} \, p^{2} + \frac{2}{3} \cdot 2^{2k} -  2^{k} - \frac{1}{2} p + \frac{1}{3} & (p \equiv 0 \bmod 3),\\
        \frac{1}{6} \cdot 2^{2k} p^{2} + \frac{1}{2} \cdot 2^{2k} p +  2^{k} p + \frac{1}{6} \, p^{2} + \frac{2}{3} \cdot 2^{2k} -  2^{k} - \frac{1}{2} p& (p \not \equiv 0 \bmod 3).
    \end{cases}
    \end{split}
\end{equation}
\end{prop}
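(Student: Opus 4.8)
The plan is to treat Proposition~\ref{prop:dim_d2k+2p} as a bookkeeping step that assembles the component computations already carried out for $\pi=D_{2^{k+2}p}'$. Recall from Proposition~\ref{prop:dim_formulas} that $\dim\calA_\Theta^\odd(\C\pi)=\tfrac12\bigl(d_1(\C\pi)+d_2(\C\pi)\bigr)$, while Proposition~\ref{prop:graph-inv}-2 furnishes the vector-space splitting $\calA_\Theta^\odd(\C\pi)\cong\calA_\Theta^\odd(\mathrm{Ker}\,\ve)\oplus(\C\hat\pi)_{\Z_2}$, so that
\[
\dim\calA_\Theta^\odd(\mathrm{Ker}\,\ve)=\dim\calA_\Theta^\odd(\C\pi)-\dim(\C\hat\pi)_{\Z_2}.
\]
Thus everything reduces to substituting the three quantities $d_1(\C\pi)$, $d_2(\C\pi)$, and $\dim(\C\hat\pi)_{\Z_2}$ supplied respectively by Lemmas~\ref{lem:dim1_D2k+2p}, \ref{lem:dim2_D2k+2p}, and \ref{lem:conj_z2_D_2k+2p}, and then simplifying.

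For the group-algebra case I would first add the two polynomials $d_1(\C\pi)$ and $d_2(\C\pi)$ case by case in $p\bmod 3$. Rewriting $\tfrac12\,2^k(2^{k+1}+3)p=2^{2k}p+\tfrac32\,2^kp$ and $\tfrac16(2^{2k+3}+3\cdot2^k+c)=\tfrac43\,2^{2k}+\tfrac12\,2^k+\tfrac{c}{6}$ (with $c=4$ when $p\equiv0\bmod3$ and $c=2$ otherwise) separates the $2^{2k}p^2$, $2^{2k}p$, $2^kp$ and constant contributions cleanly; the $\tfrac32\,2^kp$ term from $d_2$ combines with the $\tfrac32\,2^kp$ hidden in $d_1$ to give $3\cdot2^kp$, and after dividing by $2$ one recovers exactly the two stated expressions for $\dim\calA_\Theta^\odd(\C\pi)$, the only difference between the cases being the constant ($\tfrac56$ versus $\tfrac12$).

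For the kernel case I would expand $\dim(\C\hat\pi)_{\Z_2}=(2^k+1)(p+1)/2+2^k=\tfrac12\,2^kp+\tfrac32\,2^k+\tfrac12 p+\tfrac12$ and subtract it from the expression just obtained. The $\tfrac32\,2^kp$ coefficient drops to $2^kp$, the $2^k$ coefficient goes from $+\tfrac12$ to $-1$, a $-\tfrac12 p$ term appears, and the constant becomes $\tfrac13$ (resp.\ $0$) in the $p\equiv0\bmod3$ (resp.\ $p\not\equiv0\bmod3$) case, which reproduces the claimed formulas for $\dim\calA_\Theta^\odd(\mathrm{Ker}\,\ve)$.

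The proposition itself involves no new idea—it is pure polynomial arithmetic—so the hard part is not in this final step but in ensuring that the case distinctions imported from the supporting lemmas are mutually consistent. Concretely, the only genuine care required is that the single dichotomy $p\equiv0\bmod3$ versus $p\not\equiv0\bmod3$ governs all three inputs simultaneously: the parity $2m\bmod4$ and the reductions modulo $p$ appearing in the squaring and cubing maps of Lemma~\ref{lem:sq_cub_conj_cls_d2k+2p} do not produce any further split once summed over conjugacy classes, as already recorded in Lemmas~\ref{lem:d2k+2p_dim1_chiw_cubic} and \ref{lem:chi_W_g_d2k+2p}. Hence the two cases of the proposition are obtained by substituting the matching cases of the three lemmas, with no cross terms overlooked; the substantive labor is entirely front-loaded into those lemmas.
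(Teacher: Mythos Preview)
Your proposal is correct and follows exactly the paper's approach: the paper's proof is the single sentence that the result follows from Lemma~\ref{lem:dim1_D2k+2p}, Lemma~\ref{lem:dim2_D2k+2p}, Lemma~\ref{lem:conj_z2_D_2k+2p}, Proposition~\ref{prop:dim_formulas}, and Proposition~\ref{prop:graph-inv}-2, and you have simply written out the arithmetic of that substitution explicitly. No different idea is involved.
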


\subsection{Binary tetrahedral group $T^{\ast}$}\label{section:binary_tetra}

The \textit{binary tetrahedral group} of order $24$ is given as the following finitely presented group.

\begin{align*}
    T^{\ast}& = \langle a, b \mid (ab)^2 = a^3 = b^3 \rangle\\
    &= \langle x, y, z \mid x^2 = (xy)^2 = y^2, zxz^{-1} =y, zyz^{-1}=xy, z^3=1 \rangle.
\end{align*}
This case can be computed by using a mathematics software system as in the case of the binary icosahedral group $I^{\ast}$ (\cite{Oh}, \cite{OW}). The set of conjugacy classes of $T^{\ast}$ consists of the following $7$ classes represented by the following elements
\begin{equation}
    e,\quad z, \quad x^2y, \quad x^2,\quad z^2, \quad xz, \quad x^3 z^2,
\end{equation}
and the character table of $T^{\ast}$ is given as Table~\ref{tab:ch_tetra}. To apply the formula in Section \ref{section:4.1.1}, we need to know the conjugacy classes of $g^2$ and $g^3$ in $T^{\ast}$. The table is given as Table~\ref{tab:conj_T}. We obtain the lists of the conjugacy classes and the character tables by using {\tt Sage} via {\tt CharacterTable} and {\tt ConjugacyClass} functions in the Sage-GAP interface (\cite{sagemath}).

\par\medskip
\begin{table}[h]
\centering
  \begin{tabular}{|c|ccccccc|}  \hline
    $\hat{\pi}$ & $e$ & $z$ & $x^2y$ & $x^2$ & $z^2$ & $x^2 z$ & $x^3 z^2$  \\ 
    size &  1 & $4$ & $6$ & $1$ & $4$ & $4$ & $4$    \\ \hline
    $V_1$ & 1 & 1 & 1 & 1 & 1 & 1 & 1  \\ 
    $V_2$ & 1 & $\zeta_{3}^2$ & 1 & 1 & $\zeta_{3}$ & $\zeta_3^2$ & $\zeta_{3}$ \\ 
    $V_3$ & 1 & $\zeta_{3}$ & 1 & 1 & $\zeta_3^2$ & $\zeta_{3}$ & $\zeta_3^2$ \\
    $V_4$ & 2 & $-1$ & 0 & $-2$ & $-1$ & 1 & 1 \\ 
    $V_5$ & 2 & $-\zeta_{3}$ & 0 & $-2$ & $-\zeta_{3}^2$ & $\zeta_{3}$ & $\zeta_{3}^2$  \\ 
    $V_6$ & 2 & $-\zeta_{3}^2$ & 0 & $-2$ & $-\zeta_{3}$ & $\zeta_{3}^2$ & $\zeta_{3}$  \\ 
    $V_7$ & 3 & 0 & $-1$ & 3 & 0 & 0 & 0 \\ 
    \hline
  \end{tabular}
\par\medskip
\caption{The characters $\rho_{V_i}(g)$ for $T^{\ast}$ where $\zeta_3 = \frac{-1 + \sqrt{-3}}{2}$. }\label{tab:ch_tetra}
\end{table}

\par\medskip
\begin{table}[h]
\begin{center}
\begin{tabular}{|c||c|c|c|c|c|c|c|c|}\hline
$g$ & $e$ & $z$ & $x^2y$ & $x^2$ & $z^2$ & $x^2 z$ & $x^3 z^2$ \\\hline
$g^2$ & $e$ & $z^2$ & $x^{2}$ & $e$ & $z$ & $z^2$ & $z$ \\ \hline
$g^3$ & $e$ & $e$ & $x^2 y$ & $x^{2}$ & $e$ & $x^{2}$ & $x^{2}$\\ \hline
\end{tabular}
\end{center}
\caption{The conjugacy classes of $g^2$ and $g^3$ in $T^{\ast}$.}\label{tab:conj_T}
\end{table}

\begin{lemma}\label{lem:z2_act_hat_T}
Let $\pi = T^{\ast}$. We have $[g^{-1}] = [g] \in \hat{\pi}$ for $g=e, x^2, x^2y$ and $[g^{-1}] \neq  [g] \in \hat{\pi}$ for $g=z, z^2, xz, x^3 z^2$. In particular, $\dim(\C\hat{\pi})_{\Z_2} = 5$.
\end{lemma}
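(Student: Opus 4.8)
The plan is to read the answer directly off the character table (Table~\ref{tab:ch_tetra}), using the standard criterion that a conjugacy class is closed under inversion exactly when all irreducible characters take real values on it. Concretely, for any $g$ one has $\chi(g^{-1})=\overline{\chi(g)}$ for every irreducible character $\chi$; since the irreducible characters separate conjugacy classes, $[g^{-1}]=[g]$ holds if and only if $\chi(g)\in\R$ for every irreducible $\chi$. I would state this criterion first and then reduce everything to inspecting columns of the table.

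First I would inspect those columns. The columns indexed by $e$, $x^2$, and $x^2y$ consist entirely of integers, hence of real numbers, so by the criterion $[g^{-1}]=[g]$ for these three classes. By contrast, the columns indexed by $z$, $z^2$, $xz$ (recorded in the table as $x^2z$), and $x^3z^2$ each contain a primitive cube root of unity $\zeta_3$ or $\zeta_3^2$ already in the row $V_2$, which is not real; hence $[g^{-1}]\neq[g]$ for these four classes. This establishes the two displayed assertions of the lemma.

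For the dimension count I would use that $(\C\hat\pi)_{\Z_2}$ is the coinvariant space of the permutation module $\C\hat\pi$ under the involution induced by inversion, so its dimension equals the number of $\Z_2$-orbits on $\hat\pi$. The involution fixes the three classes $[e]$, $[x^2]$, $[x^2y]$, giving three orbits of size one. Since $z^{-1}=z^2$ (as $z^3=1$), inversion interchanges $[z]$ and $[z^2]$, contributing one orbit of size two; the two remaining non-fixed classes $[xz]$ and $[x^3z^2]$ are then forced to pair with each other, since inversion is a fixed-point-free involution on the four non-real classes. This yields $3+2=5$ orbits, so $\dim(\C\hat\pi)_{\Z_2}=5$.

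There is no serious obstacle here: the argument is a finite inspection of the character table together with the elementary real-class criterion and the orbit-counting description of the coinvariants. The only point requiring a little care is confirming that inversion pairs the two remaining classes correctly, but this is automatic once the three fixed classes are known, since an involution of a seven-element set with three fixed points must split the remaining four into two transpositions, and $\{[z],[z^2]\}$ is visibly one of them.
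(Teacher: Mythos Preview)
Your proof is correct. The paper's own proof simply appeals to a computer check via {\tt Sage}'s {\tt ConjugacyClass} functions, whereas you give a self-contained argument using the standard real-class criterion $[g^{-1}]=[g]\iff \chi(g)\in\R$ for all irreducible $\chi$, read off directly from Table~\ref{tab:ch_tetra}. Your approach is more illuminating and avoids the need for machine verification; it also meshes naturally with the rest of the paper, where the real-vs.-nonreal distinction among irreducible characters already plays a central role in the $d_2$ formula. The only (harmless) subtlety is the label discrepancy you flag: the class you call $[xz]$ is listed as $x^2z$ in Table~\ref{tab:ch_tetra}, but these are the same conjugacy class (cf.\ the explicit description of $C_{5,1}$ in the $T'_{8\cdot 3^k}$ section), so your parenthetical is accurate. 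Your pigeonhole argument for pairing $[xz]$ with $[x^3z^2]$ is valid and saves an explicit computation.
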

\begin{proof}
    The assertion can be directly checked by using {\tt ConjugacyClass} functions via {\tt Sage} with an elementary {\tt Python} programming implementation.
\end{proof}

\begin{prop}\label{prop:dim_t*}
When $\pi=T^{\ast}$, we have the following.
\begin{equation}
	 \dim \calA_\Theta^\odd(\C\pi)=15, \quad \dim \calA_\Theta^\odd(\mathrm{Ker}\,\ve)=10.
\end{equation}
\end{prop}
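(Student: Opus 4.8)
The plan is to apply the character-theoretic formula of Proposition~\ref{prop:dim_formulas} directly, since $T^{\ast}$ is a single fixed finite group for which all the required data are already collected in Tables~\ref{tab:ch_tetra} and \ref{tab:conj_T}. The computation of $\dim\calA_\Theta^\odd(\C\pi)=\frac12\bigl(d_1(\C\pi)+d_2(\C\pi)\bigr)$ splits into the two pieces $d_1$ and $d_2$, after which $\dim\calA_\Theta^\odd(\mathrm{Ker}\,\ve)$ follows formally from the direct-sum decomposition of Proposition~\ref{prop:graph-inv}-2.

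First I would compute $d_1(\C\pi)=\dim(\Sym^3\C\pi)^{\pi\times\pi}$. Its three ingredients are $\sum_{C(g)}\frac1{|C(g)|}$, $\sum_{C(g)}\frac{|C(g)|}{|C(g^2)|}$, and the sum over $\Delta_{\hat\pi}^{(3)}$; the first is read off from the conjugacy-class sizes in Table~\ref{tab:ch_tetra}, while the other two require the squaring and cubing maps on $\hat\pi$ recorded in Table~\ref{tab:conj_T}. The only step needing care is the enumeration of $\Delta_{\hat\pi}^{(3)}=\{(C(g),C(h))\mid C(g^3)=C(h^3)\}$: grouping the seven classes by the value of $C(g^3)$, one sees that $\{e,z,z^2\}$ all cube into $[e]$, that $\{x^2,x^2z,x^3z^2\}$ all cube into $[x^2]$, and that $[x^2y]$ cubes to itself, so the relevant sum factors as $\sum \bigl(\sum|C(g)|\bigr)^2/|C(g^3)|$ over these three fibers.

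Next I would compute $d_2(\C\pi)$ from the second formula in Proposition~\ref{prop:dim_formulas}, in which only the irreducible representations with real-valued characters contribute. Reading off Table~\ref{tab:ch_tetra}, the characters of $V_2,V_3$ and of $V_5,V_6$ are non-real (they involve $\zeta_3$) and form two conjugate pairs, so the real summands are exactly $V_1,V_4,V_7$, i.e. $r_1=3$; hence $\sum_{i=1}^{r_1}\chi_{A_i}(g)=\chi_{V_1}(g)+\chi_{V_4}(g)+\chi_{V_7}(g)$, and its value at $g^3$ is obtained by composing with the cubing column of Table~\ref{tab:conj_T}. Substituting the seven class values into the formula for $d_2$ is then purely mechanical.

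Combining the two computations yields $d_1(\C\pi)=21$ and $d_2(\C\pi)=9$, so $\dim\calA_\Theta^\odd(\C\pi)=\frac12(21+9)=15$. Finally, Proposition~\ref{prop:graph-inv}-2 gives $\dim\calA_\Theta^\odd(\mathrm{Ker}\,\ve)=\dim\calA_\Theta^\odd(\C\pi)-\dim(\C\hat\pi)_{\Z_2}$, and Lemma~\ref{lem:z2_act_hat_T} supplies $\dim(\C\hat\pi)_{\Z_2}=5$, whence $\dim\calA_\Theta^\odd(\mathrm{Ker}\,\ve)=15-5=10$. I expect the main obstacle to be purely bookkeeping: correctly selecting the three real-character irreducibles and correctly reading the squaring and cubing maps off Table~\ref{tab:conj_T}, since an error in either propagates through both $d_1$ and $d_2$. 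There is no structural difficulty here, because $T^{\ast}$ is a single finite group rather than an infinite family requiring a case analysis modulo $2$ and $3$.
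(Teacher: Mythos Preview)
Your proposal is correct and follows essentially the same approach as the paper: apply the formula of Proposition~\ref{prop:dim_formulas} with the data from Tables~\ref{tab:ch_tetra} and \ref{tab:conj_T}, then use Proposition~\ref{prop:graph-inv}-2 together with Lemma~\ref{lem:z2_act_hat_T} for $\dim\calA_\Theta^\odd(\mathrm{Ker}\,\ve)$. The paper simply defers the arithmetic to a computer algebra system, whereas you carry it out by hand (yielding $d_1=21$, $d_2=9$), but the method is identical.
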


\begin{proof}
    The former can be checked directly via any mathematics software system such as {\tt Maxima} and {\tt Sage}, once we have the formula in Proposition~\ref{prop:dim_formulas}, Table  \ref{tab:ch_tetra} and Table~\ref{tab:conj_T} at hand. The latter follows from Proposition~\ref{prop:graph-inv}-2 and Lemma~\ref{lem:z2_act_hat_T}.
\end{proof}

\subsection{Group $T'_{8\cdot 3^k}$}

The group $T'_{8\cdot 3^k}$ with $k \geq 1$ is given as the finitely presented group
\begin{align*}
    T_{8 \cdot 3^k}' &= \langle x, y, z \mid x^2 = (xy)^2 = y^2, zxz^{-1} =y, zyz^{-1}=xy, z^{3^k}=1 \rangle.
\end{align*}
Note that, when $k=1$, $T'_{8\cdot 3}=T^{\ast}$. Any element $g \in  T_{8 \cdot 3^k}'$ has the following unique expression as
\begin{equation}
w z^l \quad (w \in \{e, x, y, x^2=y^2=(xy)^2, xy, yx, x^3, y^3\}, l=0,1,\ldots, 3^{k}-1).
\end{equation}

\begin{prop}
Let $k$ be a positive integer. The distinct irreducible representations of $T'_{8\cdot 3^k}$ consist of $3^k$ one-dimensional representations
\begin{equation}
    \rho_{V^{(1)}_{\lambda}}(x) = \rho_{V^{(1)}_{\lambda}}(y) =1, \quad \rho_{V^{(1)}_{\lambda}}(z) = \zeta_{3^k}^{\lambda} \quad (\lambda \in \{0,1,\ldots, 3^k-1\}),
\end{equation}
$3^k$ two-dimensional representations 
\begin{align*}
    & \rho_{V^{(2)}_{\lambda}}(x) = \left(\begin{array}{rr}
\sqrt{-1} & 0 \\
0 & -\sqrt{-1}
\end{array}\right), \quad 
\rho_{V^{(2)}_{\lambda}}(y) = \left(\begin{array}{rr}
0 & 1 \\
-1 & 0
\end{array}\right),\\
& \rho_{V^{(2)}_{\lambda}}(z) = \left(\begin{array}{rr}
-\frac{1}{2} \sqrt{-1} - \frac{1}{2} & -\frac{1}{2} \sqrt{-1} - \frac{1}{2} \\
-\frac{1}{2}\sqrt{-1} + \frac{1}{2} & \frac{1}{2} \sqrt{-1} - \frac{1}{2}
\end{array}\right)\otimes \zeta_{3^k}^{\lambda} \quad (\lambda \in \{0,1,\ldots, 3^k -1\}),
\end{align*}
and $3^{k-1}$ three-dimensional representations 
\begin{equation}
    \rho_{V^{(3)}_{\lambda}}(x) = \left(\begin{array}{rrr}
1 & 0 & 0 \\
0 & -1 & 0 \\
0 & 0 & -1
\end{array}\right), \quad
\rho_{V^{(3)}_{\lambda}}(y) = \left(\begin{array}{rrr}
-1 & 0 & 0 \\
0 & 1 & 0 \\
0 & 0 & -1
\end{array}\right), \quad
\rho_{V^{(3)}_{\lambda}}(z) = \left(\begin{array}{rrr}
0 & 0 & \zeta_{3^k}^{\lambda} \\
\zeta_{3^k}^{\lambda} & 0 & 0 \\
0 & \zeta_{3^k}^{\lambda} & 0
\end{array}\right),
\end{equation}
where $\lambda \in \{0,1, \ldots, 3^{k-1}-1\}$. 
\end{prop}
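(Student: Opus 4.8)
The plan is to follow the same four-step scheme used for $D_{4p}^\ast$ and $D_{2^{k+2}p}'$ in Sections~\ref{section:D4n} and~\ref{section:D_2k+2p}: first check that each of the three families defines a genuine representation, then establish irreducibility and pairwise non-isomorphism, and finally invoke the sum-of-squares dimension count to conclude that the list is complete. For well-definedness I would verify the defining relations $x^2=(xy)^2=y^2$, $zxz^{-1}=y$, $zyz^{-1}=xy$, $z^{3^k}=1$ on each family. The one-dimensional family is immediate, since $x,y\mapsto 1$ kills the relations among $x,y$ and $z\mapsto\zeta_{3^k}^\lambda$ respects $z^{3^k}=1$. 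For the two- and three-dimensional families, note that $\rho(z)$ is in each case $\zeta_{3^k}^\lambda$ times a fixed matrix: a matrix $M$ with characteristic polynomial $t^2+t+1$ (so $M^3=I$) in the $2$-dimensional case, and the cyclic permutation matrix $P$ (so $P^3=I$) in the $3$-dimensional case. Since $3\mid 3^k$ for $k\geq 1$, this gives $\rho(z)^{3^k}=\zeta_{3^k}^{3^k\lambda}M^{3^k}=\mathrm{id}$ (resp. with $P$); moreover the central scalar $\zeta_{3^k}^\lambda$ cancels in the conjugation relations $zxz^{-1}=y$ and $zyz^{-1}=xy$, reducing them to $\lambda$-independent matrix identities, while $x^2=(xy)^2=y^2$ involves only the diagonal and anti-diagonal matrices $\rho(x),\rho(y)$ and is checked directly.

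For irreducibility I would argue directly, as in \cite[\S5.3]{Se77}. In the two-dimensional case $\rho_{V^{(2)}_\lambda}(x)=\mathrm{diag}(\sqrt{-1},-\sqrt{-1})$ has distinct eigenvalues, so its only invariant lines are the two coordinate axes, and these are interchanged by the anti-diagonal matrix $\rho_{V^{(2)}_\lambda}(y)$; hence there is no common invariant line. In the three-dimensional case $\rho_{V^{(3)}_\lambda}(x)$ and $\rho_{V^{(3)}_\lambda}(y)$ are simultaneously diagonal with the three distinct joint eigenvalue patterns $(1,-1)$, $(-1,1)$, $(-1,-1)$ on the coordinate axes, so every $\langle x,y\rangle$-invariant subspace is a sum of coordinate axes; since $\rho_{V^{(3)}_\lambda}(z)$ cyclically permutes these three axes transitively, no proper subspace is invariant under the whole group.

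For pairwise non-isomorphism, and for pinning down the correct ranges of $\lambda$, I would compare characters at carefully chosen elements. Across the three families the dimension $\chi(e)\in\{1,2,3\}$ separates them. Within the one- and two-dimensional families the values $\chi(z)=\zeta_{3^k}^\lambda$ and $\chi(z)=\Tr(\zeta_{3^k}^\lambda M)=-\zeta_{3^k}^\lambda$ are distinct for $\lambda=0,\ldots,3^k-1$. The delicate point, which I expect to be the main obstacle, is the three-dimensional family: here $\chi(z)=0$ because $\rho_{V^{(3)}_\lambda}(z)$ is off-diagonal, so $z$ cannot distinguish these representations, and in fact conjugation by $\mathrm{diag}(1,\zeta_3,\zeta_3^2)$ sends $P$ to $\zeta_3 P$ and fixes $\rho(x),\rho(y)$, showing $V^{(3)}_\lambda\cong V^{(3)}_{\lambda+3^{k-1}}$; this is precisely why $\lambda$ must be restricted to $\{0,\ldots,3^{k-1}-1\}$. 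To separate the surviving representations one instead evaluates at $z^3$: since $\rho_{V^{(3)}_\lambda}(z)^3=\zeta_{3^k}^{3\lambda}\,\mathrm{id}$, one gets $\chi_{V^{(3)}_\lambda}(z^3)=3\zeta_{3^{k-1}}^{\lambda}$, which takes distinct values for $\lambda=0,\ldots,3^{k-1}-1$. Finally, completeness follows from the identity
\[
 3^k\cdot 1^2+3^k\cdot 2^2+3^{k-1}\cdot 3^2=5\cdot 3^k+3\cdot 3^k=8\cdot 3^k=|T'_{8\cdot 3^k}|,
\]
so by \cite[\S2.4]{Se77} the listed representations exhaust the irreducible representations of $T'_{8\cdot 3^k}$.
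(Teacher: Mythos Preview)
Your proposal is correct and follows essentially the same four-step scheme as the paper, which merely sketches the argument by referring back to the treatment of $D_{4p}^\ast$ and $D_{2^{k+2}p}'$ and invoking the dimension count $3^k\cdot 1+3^k\cdot 2^2+3^{k-1}\cdot 3^2=8\cdot 3^k$. Your write-up is in fact more informative than the paper's: the observation that conjugation by $\mathrm{diag}(1,\zeta_3,\zeta_3^2)$ gives $V^{(3)}_\lambda\cong V^{(3)}_{\lambda+3^{k-1}}$, explaining the restriction $\lambda\in\{0,\ldots,3^{k-1}-1\}$, and the use of $\chi(z^3)=3\zeta_{3^{k-1}}^{\lambda}$ to separate the surviving three-dimensional representations, are details the paper leaves to ``direct computation''.
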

The irreducibility of these representations can be checked in the same way as in Section \ref{section:D4n_even}, \ref{section:D4n_odd}, and \ref{section:D_2k+2p}. Direct computation confirms that none of these irreducible representations are isomorphic. The above representations are the only irreducible representations of $T'_{8\cdot 3^k}$ (up to isomorphism) since $3^k \cdot 1 + 3^k \cdot 2^2 + 3^{k-1} \cdot 3^2 = 8 \cdot 3^k = |T_{8 \cdot 3^k}'|$.

From the explicit matrix expressions of the irreducible representations of $T_{8\cdot 3^k}'$, we can directly determine the set of conjugacy classes and the character table of $T_{8\cdot 3^k}'$. The character table is given as Table~\ref{tab:ch_t83k}. 
The set of conjugacy classes of $T'_{8\cdot 3^k}$ consists of the following $7\cdot 3^{k-1}$ classes:
\begin{itemize}
\item $C_{1, 3m} = [e\cdot z^{3m}] = \{e \cdot z^{3m}\}$,
\item $C_{2, 3m} = [x^2 \cdot z^{3m}] =\left\{x^{2} \cdot z^{3m} \right\}$,
\item $C_{3, 3m} = [x^2y \cdot z^{3m}] = \left\{x\cdot z^{3m},y\cdot z^{3m},(xy)\cdot z^{3m}, (yx)\cdot z^{3m}, (x^3)\cdot z^{3m}, (y^3=x^2y)\cdot z^{3m}\right\}$,
\item $C_{4, 3m+1} = [z\cdot z^{3m}]=\left\{z\cdot z^{3m}, (yxz)\cdot z^{3m}, (x^3 z)\cdot z^{3m}, (y^3 z)\cdot z^{3m}\right\}$,
\item $C_{5, 3m+1} = [(xz)\cdot z^{3m}] = \left\{(xz)\cdot z^{3m}, (yz)\cdot z^{3m}, (x^2z)\cdot z^{3m}, (xyz)\cdot z^{3m}\right\}$,
\item $C_{6, 3m+2} = [z^2\cdot z^{3m}] =\left\{z^2\cdot z^{3m}, (xz^2)\cdot z^{3m}, (yz^2)\cdot z^{3m}, (xyz^2)\cdot z^{3m}\right\}$,
\item $C_{7, 3m+2} = [(x^3z^2)\cdot z^{3m}] =\left\{(x^2z^2)\cdot z^{3m}, (yxz^2)\cdot z^{3m}, (x^3z^2)\cdot z^{3m}, (y^3z^2)\cdot z^{3m}\right\}$,
\end{itemize}
where $m = 0, 1, \ldots, 3^{k-1}-1$.

\par\medskip
\begin{table}[h]
\centering
  \begin{tabular}{|c|c|c|c|c|c|c|c|}  \hline
    $\hat{\pi}$ & $C_{1,3m}$ & $C_{2, 3m}$ & $C_{3, 3m}$ & $C_{4, 3m+1}$ & $C_{5, 3m+1}$ & $C_{6, 3m+2}$ & $C_{7, 3m+2}$\\ 
    size & $1$ & $1$ & $6$ & $4$ & $4$ & $4$ &$4$ \\ \hline
$V^{(1)}_{\lambda}$ & \multicolumn{3}{|c|}{$\zeta_{3^k}^{3m\lambda}$} & \multicolumn{2}{|c|}{$\zeta_{3^k}^{(3m+1)\lambda}$}  & \multicolumn{2}{|c|}{$\zeta_{3^k}^{(3m+2)\lambda}$} \\  \hline 
$V^{(2)}_{\lambda}$ & $2 \zeta_{3^k}^{3m\lambda}$ & $-2 \zeta_{3^k}^{3m\lambda}$ & $0$ & $- \zeta_{3^k}^{(3m+1)\lambda}$ & $\zeta_{3^k}^{(3m+1)\lambda}$ & $-\zeta_{3^k}^{(3m+2)\lambda}$ & $\zeta_{3^k}^{(3m+2)\lambda}$ \\ \hline
$V^{(3)}_{\lambda}$ & \multicolumn{2}{|c|}{$3 \zeta_{3^k}^{3m\lambda}$} & $-\zeta_{3^k}^{3m\lambda}$ & \multicolumn{4}{|c|}{$0$} \\ \hline
  \end{tabular}
\par\medskip
\caption{The characters $\chi_{V^{(i)}_{\lambda}}(g)$ for $T_{8 \cdot 3^k}'$ where $m = 0,1,\ldots, 3^{k-1}-1$ and, $\lambda=0, 1,\ldots, 3^k -1$ when $i=1,2$ and $\lambda=0,1,\ldots, 3^{k-1}-1$ when $i=3$. }\label{tab:ch_t83k}
\end{table}

\begin{lemma}\label{lem:sq_cub_conj_cls_t83k}
    Let $\pi = T_{8\cdot 3^k}'$. For any conjugacy class $C_{i, j}=[g] \in \hat{\pi}$ represented by $g \in T_{8 \cdot 3^k}'$, we denote by $C^l_{i,j} \in \hat{\pi}$ the conjugacy class represented by $g^l$ for any integer $l$. Then, we have 
    \begin{equation}
        \begin{split}
            &C_{1,3m}^2 = C_{1,6m}, \quad C_{2,3m}^2 = C_{1,6m},\quad C_{3,3m}^2 = C_{2,6m}, \\
            &C_{4,3m+1}^2 = C_{6, 6m+5}, \quad C_{5,3m+1}^2 = C_{6, 6m+5},\quad C_{6,3m+2}^2 = C_{4, 6m+4},\quad C_{7,3m+2}^2 = C_{4, 6m+4},
        \end{split}
    \end{equation}
    and 
    \begin{equation}
        \begin{split}
            &C_{1,3m}^3 = C_{1,9m}, \quad C_{2,3m}^3 = C_{2,9m}, \quad C_{3,3m}^3 = C_{3,9m}, \\
            &C_{4,3m+1}^3 = C_{1, 9m+3}, \quad C_{5,3m+1}^3 = C_{2, 9m+3},\quad C_{6,3m+2}^3 = C_{1, 9m+6},\quad C_{7,3m+2}^3 = C_{2, 9m+6}.
        \end{split}
    \end{equation}
\end{lemma}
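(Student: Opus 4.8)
The plan is to exploit the semidirect-product structure of $\pi=T'_{8\cdot 3^k}$. The relations $x^2=(xy)^2=y^2$ force the subgroup $\langle x,y\rangle$ to be the quaternion group $Q_8=\{\pm 1,\pm i,\pm j,\pm k\}$, identifying $x=i$, $y=j$, $xy=k$, so that $x^2=y^2=(xy)^2=-1$; this $Q_8$ is normal, while $z$ has order $3^k$ and conjugation by $z$ realizes the order-$3$ cyclic automorphism $i\mapsto j\mapsto k\mapsto i$. In particular $z^3$ centralizes $Q_8$ and is central in $\pi$. Every element has the normal form $w z^l$ with $w\in Q_8$ and $0\le l<3^k$, and the seven families of conjugacy classes are already indexed so that the second subscript records the exponent of $z$ modulo $3^k$ (with residues $0$ for types $1,2,3$, residue $1$ for types $4,5$, and residue $2$ for types $6,7$), while the first subscript records the $Q_8$-part up to conjugacy.

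First I would record the single computational rule that drives everything: for $w,w'\in Q_8$,
\[ (w z^l)(w' z^{l'})=w\,(z^l w' z^{-l})\,z^{l+l'}, \]
where $z^l w' z^{-l}$ depends only on $l\bmod 3$ and is obtained by applying the cyclic permutation $i\mapsto j\mapsto k$ to $w'$. Hence squaring $wz^l$ gives $w\,(z^l w z^{-l})\,z^{2l}$ and cubing gives $w\,(z^l w z^{-l})\,(z^{2l} w z^{-2l})\,z^{3l}$, after which the $Q_8$-part is simplified by $i^2=j^2=k^2=-1$ and the $z$-exponent is reduced modulo $3^k$. Note that squaring sends the residue of $l$ modulo $3$ by $0,1,2\mapsto 0,2,1$, while cubing sends every residue to $0$; this already predicts the residue, and hence the admissible target types, of each resulting class.

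Then I would run this computation on each of the seven listed representatives. For instance, for type $3$ I take $g=x^2y\cdot z^{3m}$; since $z^{3m}$ is central and $x^2y=-j$, I get $g^2=(-j)^2 z^{6m}=x^2 z^{6m}$ and $g^3=(-j)^3 z^{9m}=y\,z^{9m}$, which land in $C_{2,6m}$ and $C_{3,9m}$ respectively. For type $7$, $g=x^3z^{3m+2}$, the twisted multiplication yields $g^2=y^3 z^{6m+4}$ and $g^3=x^2 z^{9m+6}$, giving $C_{4,6m+4}$ and $C_{2,9m+6}$; the remaining cases are entirely analogous. The decisive final step in each case is to locate the computed normal form among the explicit membership lists of the conjugacy classes: one matches the reduced $z$-exponent and then uses the $Q_8$-part to disambiguate between the types sharing that residue, distinguishing a pure power of $z$ (type $1/4/6$), an $x^2$-multiple (type $2$), and a ``pure imaginary'' quaternion multiple (type $3$).

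The main obstacle is precisely this disambiguation: the $z$-exponent alone does not pin down the class, so the bookkeeping of the $Q_8$-part through the $z$-twisted products — tracking how conjugation by $z^l$ and $z^{2l}$ cyclically permutes $i,j,k$ and how the resulting products collapse via $i^2=j^2=k^2=-1$ — must be carried out carefully for each of the seven representatives. Everything else is a mechanical reduction of exponents modulo $3^k$ and modulo $3$.
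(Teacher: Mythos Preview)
Your proposal is correct and follows essentially the same route as the paper: the paper's proof is the single sentence ``check directly using the relations $x^2z=zx^2$, $zx=yz$, $zy=xyz$, $zxy=xz$'', and what you have written is precisely an organized way of carrying out that check by packaging those relations as the semidirect-product structure $Q_8\rtimes\Z_{3^k}$ with $z$ acting by the $3$-cycle $i\mapsto j\mapsto k$. Your final disambiguation remark is slightly loose (for residues $1$ and $2$ the two classes are distinguished by whether the $Q_8$-part lies in $\{1,-i,-j,-k\}$ or in $\{-1,i,j,k\}$, not simply by ``pure power of $z$'' versus ``$x^2$-multiple''), but since you are matching explicit normal forms against the explicit membership lists this causes no error.
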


\begin{proof}
    The assertion can be checked directly using the relations $x^2 z = z x^2$, $zx = yz$, $zy = xy z$, and $z xy = x z$.
\end{proof}

\begin{lemma}\label{lem:conj_z2_t83k}
Let $\pi = T_{8\cdot 3^k}'$. We have
\begin{equation}
	\dim(\C\hat{\pi})_{\Z_2} = \frac{1}{2}\cdot 3^{k} + 2 \cdot 3^{k-1} + \frac{3}{2}.
\end{equation}
\end{lemma}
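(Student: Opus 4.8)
The plan is to reduce the computation of $\dim(\C\hat{\pi})_{\Z_2}$ to an orbit count. Since $\C\hat{\pi}$ is the permutation module for $\Z_2=\langle\tau\rangle$ acting on the finite set $\hat{\pi}$ by $\tau\cdot[g]=[g^{-1}]$, the coinvariants $(\C\hat{\pi})_{\Z_2}$ have dimension equal to the number of $\Z_2$-orbits on $\hat{\pi}$. Writing $F$ for the number of self-inverse classes (those with $[g^{-1}]=[g]$), the remaining classes pair off, so
\[
\dim(\C\hat{\pi})_{\Z_2}=\tfrac{1}{2}\bigl(|\hat{\pi}|+F\bigr).
\]
From the enumeration of conjugacy classes preceding the statement we already have $|\hat{\pi}|=7\cdot 3^{k-1}$, so everything comes down to showing $F=3$.

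To count the self-inverse classes I would exploit the normal quaternion subgroup $Q=\langle x,y\rangle$ of order $8$ in $G=T_{8\cdot 3^k}'$ (normal because the relations $zxz^{-1}=y$ and $zyz^{-1}=xy$ show that $z$ normalizes it), together with the quotient homomorphism $\epsilon\colon G\to G/Q\cong\Z_{3^k}$ recording the exponent of $z$ modulo $3^k$. Since $\Z_{3^k}$ is abelian, $\epsilon$ is a class function and $\epsilon(g^{-1})=-\epsilon(g)$; hence a self-inverse class forces $2\epsilon(g)\equiv 0\pmod{3^k}$, and as $2$ is invertible modulo $3^k$ this gives $\epsilon(g)=0$, i.e. $g\in Q$. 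In the notation of the class list this eliminates all classes of types $4,5,6,7$ (whose $z$-exponents are $\equiv 1,2\pmod 3$) in one stroke, and among types $1,2,3$ it leaves only $C_{1,0}=[e]$, $C_{2,0}=[x^2]$, and $C_{3,0}=[x^2y]$, the three $G$-conjugacy classes lying inside $Q$. Each of these is indeed self-inverse: $e$ and $x^2$ are central involutions, while $C_{3,0}=\{\pm x,\pm y,\pm xy\}$ is closed under $q\mapsto q^{-1}=-q$. Thus $F=3$, consistent with the $k=1$ case recorded in Lemma~\ref{lem:z2_act_hat_T}.

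Substituting $F=3$ and $|\hat{\pi}|=7\cdot 3^{k-1}$ into the orbit-count formula yields
\[
\dim(\C\hat{\pi})_{\Z_2}=\tfrac{1}{2}\bigl(7\cdot 3^{k-1}+3\bigr)=\tfrac{1}{2}\cdot 3^{k}+2\cdot 3^{k-1}+\tfrac{3}{2},
\]
which is the asserted value. The only genuinely delicate point is the identification of the self-inverse classes, and the homomorphism $\epsilon$ is what makes it clean: it simultaneously rules out the four families of nonzero $z$-exponent modulo $3$ and confines the survivors to $Q$, where a direct check on the three classes finishes the count. A more pedestrian alternative would be to invert each family representative explicitly using the relations in the spirit of Lemma~\ref{lem:sq_cub_conj_cls_t83k}, but that route is longer and obscures why exactly three classes survive.
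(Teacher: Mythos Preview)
Your proof is correct but takes a genuinely different route from the paper. The paper computes the $\Z_2$-orbits explicitly: it verifies by direct inversion of representatives (using relations such as $x^3y=(xy)^3=yx$) that $C_{1,3m}\leftrightarrow C_{1,3^k-3m}$, $C_{2,3m}\leftrightarrow C_{2,3^k-3m}$, $C_{3,3m}\leftrightarrow C_{3,3^k-3m}$ for $m\neq 0$, and $C_{4,3m+1}\leftrightarrow C_{6,(3^k-3m-3)+2}$, $C_{5,3m+1}\leftrightarrow C_{7,(3^k-3m-3)+2}$, leaving only $C_{1,0},C_{2,0},C_{3,0}$ fixed. You instead use the orbit-count formula $\dim(\C\hat{\pi})_{\Z_2}=\tfrac{1}{2}(|\hat{\pi}|+F)$ and the quotient map $\epsilon\colon G\to G/Q\cong\Z_{3^k}$ to force any self-inverse class into $Q$, then check the three classes there directly. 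Your argument is cleaner and more conceptual---the single observation that $2$ is invertible in $\Z_{3^k}$ eliminates all classes with nonzero $z$-exponent at once---whereas the paper's explicit pairing, though longer, records the full orbit structure (e.g.\ exactly which class each $C_{i,j}$ maps to under inversion), information not needed for the dimension count itself.
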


\begin{proof}
    It suffices to know how $\Z_2$ acts on the set $\hat{\pi}$ of conjugacy classes in $\pi$. By Lemma~\ref{lem:z2_act_hat_T}, the group  $\Z_2$ acts on the classes $C_{1,0}=C(e), C_{2,0}=C(x^2), C_{3,0}=C(x^2 y)$ trivially. For other classes, one obtains directly that, for non-zero integer $m=1,\ldots, 3^{k-1}-1$, 
    \begin{equation}
        \Z_2 \cdot C_{1,3m} = \{C_{1,3m}, C_{1, 3^k - 3m}\}, \quad \Z_2 \cdot C_{2,3m} = \{C_{2,3m}, C_{2, 3^k - 3m}\}, \quad \Z_2 \cdot C_{3,3m} = \{C_{3,3m}, C_{3, 3^k - 3m}\},
    \end{equation}
    and, for any integer $m =0,1,\ldots, 3^{k-1}-1$, 
    \begin{equation}
        \Z_2 \cdot C_{4,3m+1} = \{C_{4,3m+1}, C_{6, (3^k - 3m-3) +2}\}, \quad  \Z_2 \cdot C_{5,3m+1} = \{C_{5,3m+1}, C_{7, (3^k - 3m -3)+2}\}.
    \end{equation}
    To check the last equality, it may be useful to use the relation $x^3 y= (xy)^3 = yx$.
\end{proof}

\begin{lemma} \label{lem:t83k_dim1_chiw_cubic}
    Let $\pi = T_{8\cdot 3^k}$ with $k \geq 2$. The following holds.
    \begin{equation}
        \sum_{(C(g),C(h)) \in \Delta_{\hat{\pi}}^{(3)}} \frac{|C(g)||C(h)|}{|C(g^3)|} = 2^3 \cdot 3^{k+2}.
    \end{equation}
\end{lemma}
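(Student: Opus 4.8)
The plan is to group the ordered pairs in $\Delta_{\hat{\pi}}^{(3)}$ according to their common cube class. For a conjugacy class $D\in\hat{\pi}$ lying in the image of the cubing map $C\mapsto C^3$ on $\hat{\pi}$, set $\sigma(D):=\sum_{C\in\hat{\pi},\,C^3=D}|C|$, the total size of the fiber over $D$. Since a pair contributes to the sum exactly when $C(g^3)=C(h^3)=D$ for some $D$, and the summand then depends only on $|C(g)|,|C(h)|,|D|$, the sum factorizes as
\[
\sum_{(C(g),C(h))\in\Delta_{\hat{\pi}}^{(3)}}\frac{|C(g)|\,|C(h)|}{|C(g^3)|}=\sum_{D}\frac{\sigma(D)^2}{|D|},
\]
where $D$ runs over the image of cubing. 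Thus the problem reduces to understanding the fibers of the cubing map and their total sizes.

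Next I would read the cubing map off Lemma~\ref{lem:sq_cub_conj_cls_t83k}. Its image lands entirely in the classes of types $1,2,3$, and on the $z$-exponent parameter $m\in\{0,\dots,3^{k-1}-1\}$ the map is, in every case, essentially multiplication by $3$: from $9m\equiv 3a\pmod{3^k}$ one gets $3m\equiv a\pmod{3^{k-1}}$, and likewise $9m+3$, $9m+6$ give $3m+1\equiv a$, $3m+2\equiv a\pmod{3^{k-1}}$. For $k\ge 2$ the map $m\mapsto 3m$ on $\Z_{3^{k-1}}$ has image the multiples of $3$ and every nonempty fiber has size $3$. I would then organize the count by image type. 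For type $3$ the only sources are again type-$3$ classes, so each image class $C_{3,3c}$ ($c\equiv 0\bmod 3$, of which there are $3^{k-2}$) has a fiber of $3$ classes each of size $6$, giving $\sigma=18$ and contribution $18^2/6=54$.

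For types $1$ and $2$ the decisive observation is that although three source types feed each image type ($1,4,6\to 1$ and $2,5,7\to 2$), the congruence conditions $3m\equiv a$, $3m+1\equiv a$, $3m+2\equiv a\pmod{3^{k-1}}$ are mutually exclusive modulo $3$. Hence for a fixed image class $C_{1,3a}$ exactly one source type contributes, selected by $a\bmod 3$, and its fiber consists of $3$ classes. This gives $\sigma=3$ when $a\equiv 0$ (preimages of size $1$, from type $1$) and $\sigma=12$ when $a\equiv 1,2$ (preimages of size $4$, from types $4$ or $6$), with image-class size $1$ throughout; summing over the $3^{k-2}$ classes in each residue yields $3^{k-2}(9+144+144)=297\cdot 3^{k-2}$ for type $1$, and the identical count for type $2$. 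Adding the three contributions gives
\[
297\cdot 3^{k-2}+297\cdot 3^{k-2}+54\cdot 3^{k-2}=648\cdot 3^{k-2}=2^3\cdot 3^{k+2}.
\]

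The main obstacle is the bookkeeping of the fibers: one must track simultaneously the type of a class and its $z$-exponent, and verify that the residues modulo $3$ of the parameter partition the type-$1$ (and type-$2$) images so that precisely one source type contributes to each. This is exactly where the hypothesis $k\ge 2$ enters, ensuring that reduction modulo $3$ on $\Z_{3^{k-1}}$ behaves as expected and that multiplication by $3$ has uniform fibers of size $3$; the degenerate case $k=1$, where $T_{8\cdot 3}'=T^{\ast}$, is handled separately.
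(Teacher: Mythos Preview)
Your proof is correct. Both you and the paper hinge on the same underlying fact---that the residue of the $m$-parameter modulo $3$ separates the source types $1,4,6$ (respectively $2,5,7$) so that different types never share a cube---but you organize the count differently. The paper first observes that $(C(g),C(h))\in\Delta^{(3)}_{\hat\pi}$ forces $g,h$ to have the same type $i$, reduces to pairs $(m,m')$ with $m\equiv m'\pmod{3^{k-2}}$, and then splits the resulting sum into a diagonal part ($m=m'$) and an off-diagonal part ($|m-m'|\in\{3^{k-2},2\cdot 3^{k-2}\}$), evaluating each directly. You instead begin with the general factorization $\sum_{D}\sigma(D)^2/|D|$ over image classes and compute $\sigma(D)$ fiber by fiber. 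Your approach has the advantage that the factorization is a clean, reusable identity valid for any finite group, and it makes transparent why the answer decomposes as $3^{k-2}(9+144+144)+3^{k-2}(9+144+144)+3^{k-2}\cdot 54$; the paper's diagonal/off-diagonal split is more ad hoc but avoids having to track the image of cubing explicitly.
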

\begin{proof}
    The proof is given in Appendix \ref{appendix:A}.
\end{proof}

By substituting the result in Lemma~\ref{lem:t83k_dim1_chiw_cubic} to the formula in Proposition~\ref{prop:dim_formulas} and applying Table~\ref{tab:ch_t83k}, we get the following.
\begin{lemma}\label{dim1_t83k}
    When $\pi = T_{8 \cdot 3^k}$ with $k \geq 2$, 
    \begin{equation}
     d_1(\C \pi) = \dim\,(\Sym^3 \C\pi)^{\pi\times \pi} = 38 \cdot 3^{2k-3} + 2 \cdot 3^k + 3.
     \end{equation}
\end{lemma}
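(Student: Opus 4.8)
The plan is to apply the formula for $d_1(\C\pi)$ from Proposition~\ref{prop:dim_formulas} directly, feeding in the combinatorial data specific to $\pi = T_{8\cdot 3^k}'$ that has already been assembled in this subsection. Recall that
\[
d_1(\C\pi)=\frac{1}{6|\pi|}\left(\sum_{C(g)\in\hat\pi}\Bigl(\frac{|\pi|^2}{|C(g)|}+3\frac{|C(g)|}{|C(g^2)|}|\pi|\Bigr)+\sum_{(C(g),C(h))\in\Delta^{(3)}_{\hat\pi}}2\frac{|C(g)||C(h)|}{|C(g^3)|}\right),
\]
and here $|\pi|=8\cdot 3^k$. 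Three ingredients are needed: the reciprocal sum $\sum_{C(g)}1/|C(g)|$, the squaring sum $\sum_{C(g)}|C(g)|/|C(g^2)|$, and the cubing sum; the last of these is exactly the content of Lemma~\ref{lem:t83k_dim1_chiw_cubic}, which supplies the value $2^3\cdot 3^{k+2}$.

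First I would evaluate the reciprocal sum. By the classification of conjugacy classes preceding Table~\ref{tab:ch_t83k}, the seven families $C_{1,\bullet},\dots,C_{7,\bullet}$ each contain exactly $3^{k-1}$ classes (indexed by $m=0,\dots,3^{k-1}-1$), of sizes $1,1,6,4,4,4,4$ respectively. Hence $\sum_{C(g)}1/|C(g)|=3^{k-1}\bigl(1+1+\tfrac16+4\cdot\tfrac14\bigr)=\tfrac{19}{6}\,3^{k-1}$, contributing $\tfrac{19}{6}\,3^{k-1}|\pi|^2$ to the bracket. Next I would evaluate the squaring sum, where the key observation is that each ratio depends only on the \emph{sizes} of $C(g)$ and $C(g^2)$, so it suffices to read off from Lemma~\ref{lem:sq_cub_conj_cls_t83k} the family index of each square and record the size of that family. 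One finds that the squares of the $C_1,C_2,C_4,C_5,C_6,C_7$ families each land in a class of the same size (ratio $1$), while $C_{3,3m}^2=C_{2,6m}$ sends a size-$6$ class to a size-$1$ class (ratio $6$). Summing over all $3^{k-1}$ copies of each family gives $\sum_{C(g)}|C(g)|/|C(g^2)|=3^{k-1}(1+1+6+1+1+1+1)=12\cdot 3^{k-1}$.

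Finally I would substitute the three sums into the displayed formula and simplify with $|\pi|=8\cdot 3^k$. After dividing the bracket by $6|\pi|=48\cdot 3^k$, the reciprocal, squaring, and cubing contributions collapse to $38\cdot 3^{2k-3}$, $2\cdot 3^k$, and $3$ respectively, yielding the asserted value $38\cdot 3^{2k-3}+2\cdot 3^k+3$. I do not expect any genuinely hard step here, since all the inputs are in place: the only real subtlety is interpreting the squaring map of Lemma~\ref{lem:sq_cub_conj_cls_t83k} purely at the level of class sizes (the second subscript, an exponent of $z$ reduced modulo $3^k$, is irrelevant to the ratio $|C(g)|/|C(g^2)|$). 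The heaviest combinatorial work — the cubic sum, which requires tracking the fibers of the map $C(g)\mapsto C(g^3)$ and the condition $C(g^3)=C(h^3)$ — has deliberately been isolated into Lemma~\ref{lem:t83k_dim1_chiw_cubic}, whose proof is deferred to Appendix~\ref{appendix:A}, so it may be invoked here as a black box.
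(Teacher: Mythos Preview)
Your proposal is correct and follows essentially the same approach as the paper: the paper's proof is a single sentence instructing the reader to substitute Lemma~\ref{lem:t83k_dim1_chiw_cubic} and Table~\ref{tab:ch_t83k} into the formula of Proposition~\ref{prop:dim_formulas}, and you have simply spelled out those substitutions (the reciprocal sum, the squaring sum via Lemma~\ref{lem:sq_cub_conj_cls_t83k}, and the cubing sum) and carried out the arithmetic explicitly. Your computations check out, including the observation that only the family index, not the power of $z$, matters for the ratio $|C(g)|/|C(g^2)|$.
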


Next, we move on to the computation of $d_2(\C\pi)$. 
As in the previous sections, it is useful to consider the real character table for $T_{8\cdot 3^k}'$ given as Table~\ref{tab:ch_real_t83k}.

\par\medskip
\begin{table}[h]
\centering
  \begin{tabular}{|c|c|c|c|c|c|c|c|}  \hline
    $\hat{\pi}$ & $C_{1,3m}$ & $C_{2, 3m}$ & $C_{3, 3m}$ & $C_{4, 3m+1}$ & $C_{5, 3m+1}$ & $C_{6, 3m+2}$ & $C_{7, 3m+2}$\\ 
    size & $1$ & $1$ & $6$ & $4$ & $4$ & $4$ &$4$ \\ \hline
$V^{(1)}_{0}$ & \multicolumn{3}{|c|}{$1$} & \multicolumn{2}{|c|}{$1$}  & \multicolumn{2}{|c|}{$1$} \\  \hline 
$V^{(2)}_{0}$ & $2 $ & $-2 $ & $0$ & $- 1$ & $1$ & $-1$ & $1$ \\ \hline
$V^{(3)}_0$ & \multicolumn{2}{|c|}{$3$} & $-1$ & \multicolumn{4}{|c|}{$0$} \\ \hline
  \end{tabular}
\par\medskip
\caption{The real characters $\chi_{V^{(i)}_0}(g)$ for $T_{8 \cdot 3^k}$ where $m = 0,1,\ldots, 3^{k-1}-1$. }\label{tab:ch_real_t83k}
\end{table}

\begin{lemma}\label{lem:sum_real_char_t83k}
Let $\pi = T_{8\cdot 3^k}'$. Then, for any integer $m$  with $0 \leq m \leq 3^{k-1}-1$, we have the following.
        \begin{equation}
        \begin{split}
            &1. \quad  \sum_{\chi_{A_i}: \text{real}} \chi_{A_i}(g) = \begin{cases}
                6 & (g \in C_{1,3m}),\\
                2 & (g \in C_{2, 3m}, C_{5,3m+1}, C_{7, 3m+2}),\\
                0 & (g \in  C_{3, 3m}, C_{4,3m+1}, C_{6, 3m+2}).
                \end{cases}\\
       &2. \quad \sum_{\chi_{A_i}: \text{real}} \chi_{A_i}(g^3) = \begin{cases}
                6 & (g \in C_{1,3m}, C_{4, 3m+1}, C_{6, 3m+2}),\\
                2 & (g \in C_{2, 3m}, C_{5,3m+1}, C_{7, 3m+2}),\\
                0 & (g \in  C_{3, 3m}).
                \end{cases}
            \end{split}
        \end{equation}
\end{lemma}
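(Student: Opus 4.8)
The plan is to first pin down exactly which irreducible characters of $T'_{8\cdot 3^k}$ are real-valued, and then to read off both assertions directly from the resulting real character table (Table~\ref{tab:ch_real_t83k}), invoking the cube rule of Lemma~\ref{lem:sq_cub_conj_cls_t83k} for the second assertion. The only genuinely load-bearing step is the identification of the real characters; everything after that is a column-by-column tabulation.

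To identify the real-valued characters, note that every irreducible representation lies in one of the three families $V^{(1)}_\lambda, V^{(2)}_\lambda, V^{(3)}_\lambda$, and that in each family (Table~\ref{tab:ch_t83k}) every character value is an integer times a power $\zeta_{3^k}^{\nu}$ with $\nu$ a multiple of $\lambda$. Since $3^k$ is odd, the only real power of $\zeta_{3^k}$ is $\zeta_{3^k}^0 = 1$, so a character in any family can be real-valued only for $\lambda = 0$. Concretely, for $V^{(1)}_\lambda$ and $V^{(2)}_\lambda$, evaluating the character on $C_{4,1}$ forces $\zeta_{3^k}^{\lambda}\in\R$, hence $\lambda = 0$; for $V^{(3)}_\lambda$, evaluating on $C_{1,3}$ forces $\zeta_{3^k}^{3\lambda}\in\R$, hence $3\lambda\equiv 0 \pmod{3^k}$ and thus $\lambda = 0$ in the range $0\le\lambda<3^{k-1}$. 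Conversely the three $\lambda=0$ characters are manifestly integer-valued. Therefore the real-valued irreducible characters are precisely $\chi_{V^{(1)}_0}, \chi_{V^{(2)}_0}, \chi_{V^{(3)}_0}$, the three rows of Table~\ref{tab:ch_real_t83k}.

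For assertion 1, I would write $F(g) := \sum_{\chi_{A_i}\text{ real}} \chi_{A_i}(g) = \chi_{V^{(1)}_0}(g) + \chi_{V^{(2)}_0}(g) + \chi_{V^{(3)}_0}(g)$ and add the three rows of Table~\ref{tab:ch_real_t83k} column by column: this yields $1+2+3=6$ on $C_{1,3m}$; $1-2+3=2$ on $C_{2,3m}$; $1+0-1=0$ on $C_{3,3m}$; $1-1+0=0$ on $C_{4,3m+1}$; $1+1+0=2$ on $C_{5,3m+1}$; $1-1+0=0$ on $C_{6,3m+2}$; and $1+1+0=2$ on $C_{7,3m+2}$, which is exactly the claimed list. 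The key structural observation is that $F$ depends only on the first subscript (the ``type'') of the class: $F=6$ on type $1$, $F=2$ on types $2,5,7$, and $F=0$ on types $3,4,6$.

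For assertion 2, since $F$ is a class function we have $F(g^3) = F(C(g)^3)$, so it suffices to compose the cube rule of Lemma~\ref{lem:sq_cub_conj_cls_t83k} with the type-dependent values of $F$ found above. That rule keeps types $1,2,3$ within their own type, sends types $4$ and $6$ to type $1$, and sends types $5$ and $7$ to type $2$. Feeding these images into $F$ gives $F(g^3)=6$ for $g$ in $C_{1,3m}, C_{4,3m+1}, C_{6,3m+2}$, $F(g^3)=2$ for $g$ in $C_{2,3m}, C_{5,3m+1}, C_{7,3m+2}$, and $F(g^3)=0$ for $g$ in $C_{3,3m}$, which is assertion 2. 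The main (and modest) obstacle is the real-character identification in the second paragraph: one must be sure no nonzero $\lambda$ yields a real character, and this rests essentially on $3^k$ being odd so that $\zeta_{3^k}$ has no nontrivial real power; the remainder is bookkeeping against Table~\ref{tab:ch_real_t83k} and Lemma~\ref{lem:sq_cub_conj_cls_t83k}.
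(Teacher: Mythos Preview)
Your proof is correct and follows essentially the same approach as the paper: assertion~1 is read off from the real character table (Table~\ref{tab:ch_real_t83k}), and assertion~2 follows by composing with the cube rule of Lemma~\ref{lem:sq_cub_conj_cls_t83k}. The only difference is that you supply an explicit argument identifying which irreducible characters are real-valued, whereas the paper simply takes Table~\ref{tab:ch_real_t83k} as already established; your extra verification is sound (the key point being that $3^k$ is odd, so $\zeta_{3^k}$ has no nontrivial real power).
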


\begin{proof}
    1. It is clear from the real character table Table~\ref{tab:ch_real_t83k}. 
    
    2. By Lemma~\ref{lem:sq_cub_conj_cls_t83k},  we know that $C_{1,3m}^3, C_{4, 3m+1}^3$, and $C_{6, 3m+2}^3$ are of the form $C_{1, 0}$, etc. Thus, the assertion follows from this fact and assertion 1.
\end{proof}

\begin{lemma}\label{lem:dim2_t83k}
     When $\pi = T_{8 \cdot 3^k}$ with $k \geq 2$, 
    \begin{equation}
     d_2(\C\pi) = 2 \cdot 3^k + 3.
     \end{equation}
\end{lemma}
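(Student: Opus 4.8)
The plan is to substitute the real-character data from Lemma~\ref{lem:sum_real_char_t83k} directly into the formula for $d_2(\C\pi)$ provided by Proposition~\ref{prop:dim_formulas}. Writing $S_1(g)=\sum_{\chi_{A_i}:\,\mathrm{real}}\chi_{A_i}(g)$ and $S_3(g)=\sum_{\chi_{A_i}:\,\mathrm{real}}\chi_{A_i}(g^3)$, the formula reads
\[ d_2(\C\pi)=\frac{1}{6|\pi|}\sum_{C(g)\in\hat\pi}|C(g)|\Bigl(S_1(g)^3+3\frac{|\pi|}{|C(g)|}S_1(g)+2S_3(g)\Bigr), \]
and I would split this into three sums, which after cancelling $|C(g)|$ in the middle term and using $|\pi|=8\cdot 3^k$ become
\[ d_2(\C\pi)=\frac{1}{6|\pi|}\sum_C |C|\,S_1^3+\frac{1}{2}\sum_C S_1+\frac{1}{3|\pi|}\sum_C |C|\,S_3. \]

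The crucial observation that makes the computation routine is that, by Lemma~\ref{lem:sum_real_char_t83k}, both $S_1(g)$ and $S_3(g)$ depend only on the \emph{type} $i\in\{1,\dots,7\}$ of the conjugacy class $C_{i,\ast}$ and not on the parameter $m$. Since there are exactly $3^{k-1}$ conjugacy classes of each type (one for each $m=0,1,\dots,3^{k-1}-1$), every one of the three sums factors as $3^{k-1}$ times a fixed contribution read off from the seven types, using the class sizes $|C_{1,3m}|=|C_{2,3m}|=1$, $|C_{3,3m}|=6$, and $|C_{4,\ast}|=|C_{5,\ast}|=|C_{6,\ast}|=|C_{7,\ast}|=4$ from Table~\ref{tab:ch_t83k}.

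Carrying this out: the first sum collects $6^3$ from type $1$, $2^3$ from type $2$, and $4\cdot 2^3$ each from types $5$ and $7$ (types $3,4,6$ vanish since $S_1=0$), giving $\sum_C|C|S_1^3=288\cdot 3^{k-1}$; the second gives $\sum_C S_1=12\cdot 3^{k-1}$; and the third, using the $S_3$-values—where now types $4$ and $6$ contribute $6$ rather than $0$—gives $\sum_C|C|S_3=72\cdot 3^{k-1}$. Substituting these into the three displayed terms yields $2$, $2\cdot 3^k$, and $1$ respectively, whose sum is $2\cdot 3^k+3$.

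Since nothing here is genuinely hard, there is no single obstacle; the only point requiring care is the bookkeeping—correctly pairing each conjugacy-class type with its size and with the right value of $S_1$ versus $S_3$. Note in particular that the set of types with nonzero real-character sum differs for $g$ and for $g^3$, a discrepancy explained by the cube map on conjugacy classes recorded in Lemma~\ref{lem:sq_cub_conj_cls_t83k}. The hypothesis $k\geq 2$ is used only to ensure the character table and class structure of Table~\ref{tab:ch_t83k} are in force, the case $k=1$ being $T^{\ast}$, which is already treated in Proposition~\ref{prop:dim_t*}.
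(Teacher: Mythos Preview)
Your proof is correct and takes essentially the same approach as the paper, which simply states that the result follows by direct computation from Lemma~\ref{lem:sum_real_char_t83k}, Tables~\ref{tab:ch_t83k} and~\ref{tab:ch_real_t83k}, and Proposition~\ref{prop:dim_formulas}. You have carried out that computation explicitly and accurately; the numerical bookkeeping ($288\cdot 3^{k-1}$, $12\cdot 3^{k-1}$, $72\cdot 3^{k-1}$ leading to $2+2\cdot 3^k+1$) checks out.
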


\begin{proof}

By Lemma~\ref{lem:sum_real_char_t83k} and character tables Table~\ref{tab:ch_t83k} and Table~\ref{tab:ch_real_t83k}, the proof reduces to direct computation using the formula in Proposition~\ref{prop:dim_formulas}.
\end{proof}

\begin{prop}\label{prop:dim_t83k}
When $\pi = T_{8\cdot 3^k}'$ with $k \geq 2$, we have the following.
\begin{equation}
	\dim \calA_\Theta^\odd(\C\pi) = 19 \cdot 3^{2k-3} + 2\cdot 3^k + 3, \quad \dim \calA_\Theta^\odd(\mathrm{Ker}\,\ve) = 19 \cdot 3^{2k - 3} + \frac{5}{2} \cdot 3^{k-1} + \frac{3}{2}.
\end{equation}
\end{prop}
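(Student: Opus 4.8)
The plan is to assemble the dimension counts already obtained for this group through the master formula of Proposition~\ref{prop:dim_formulas}, which asserts $\dim\calA_\Theta^\odd(\C\pi)=\tfrac12\bigl(d_1(\C\pi)+d_2(\C\pi)\bigr)$. First I would substitute $d_1(\C\pi)=38\cdot 3^{2k-3}+2\cdot 3^k+3$ from Lemma~\ref{dim1_t83k} and $d_2(\C\pi)=2\cdot 3^k+3$ from Lemma~\ref{lem:dim2_t83k}. This gives
\[
\dim\calA_\Theta^\odd(\C\pi)=\tfrac12\bigl(38\cdot 3^{2k-3}+4\cdot 3^k+6\bigr)=19\cdot 3^{2k-3}+2\cdot 3^k+3,
\]
which is the first claimed value and requires nothing beyond this arithmetic.

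For the second equality I would invoke the direct-sum decomposition $\calA_\Theta^\odd(\C\pi)=\calA_\Theta^\odd(\mathrm{Ker}\,\ve)\oplus(\C\hat\pi)_{\Z_2}$ of Proposition~\ref{prop:graph-inv}-2, which reduces the task to subtracting $\dim(\C\hat\pi)_{\Z_2}$. By Lemma~\ref{lem:conj_z2_t83k} this dimension equals $\tfrac12\cdot 3^k+2\cdot 3^{k-1}+\tfrac32$, so
\[
\dim\calA_\Theta^\odd(\mathrm{Ker}\,\ve)=19\cdot 3^{2k-3}+2\cdot 3^k+3-\Bigl(\tfrac12\cdot 3^k+2\cdot 3^{k-1}+\tfrac32\Bigr).
\]
Rewriting $2\cdot 3^k=6\cdot 3^{k-1}$ and $\tfrac12\cdot 3^k=\tfrac32\cdot 3^{k-1}$, the coefficient of $3^{k-1}$ becomes $6-\tfrac32-2=\tfrac52$ and the constant becomes $3-\tfrac32=\tfrac32$, yielding $19\cdot 3^{2k-3}+\tfrac52\cdot 3^{k-1}+\tfrac32$, the second claimed value.

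Thus the proposition itself is purely a matter of combining inputs, and I expect the genuine difficulty to lie upstream in establishing those inputs. The hard part is Lemma~\ref{dim1_t83k}, which depends on the cubic conjugacy-class sum of Lemma~\ref{lem:t83k_dim1_chiw_cubic}: establishing that sum requires determining, via the cubing rules of Lemma~\ref{lem:sq_cub_conj_cls_t83k}, exactly which of the $7\cdot 3^{k-1}$ conjugacy classes pair up in $\Delta_{\hat\pi}^{(3)}$ and evaluating $|C(g)||C(h)|/|C(g^3)|$ over them, the combinatorial bookkeeping being delicate because cubing acts on the order-$3^k$ generator $z$ by $z\mapsto z^3$, collapsing the $z$-exponent and merging the seven families of classes. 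By contrast, the computation of $d_2(\C\pi)$ in Lemma~\ref{lem:dim2_t83k} is comparatively routine once one has isolated, through Lemma~\ref{lem:flip} and the real character table (Table~\ref{tab:ch_real_t83k}), the contribution of precisely the real-valued irreducibles $V^{(i)}_0$ to $\chi_W(\tau\cdot(g,h))$, as summarized in Lemma~\ref{lem:sum_real_char_t83k}.
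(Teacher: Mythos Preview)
Your proof is correct and follows essentially the same approach as the paper: combine $d_1(\C\pi)$ and $d_2(\C\pi)$ via Proposition~\ref{prop:dim_formulas}, then subtract $\dim(\C\hat\pi)_{\Z_2}$ using Proposition~\ref{prop:graph-inv}-2. Your citation of Lemma~\ref{lem:conj_z2_t83k} for the value of $\dim(\C\hat\pi)_{\Z_2}$ is in fact the appropriate one here (the paper's own proof cites Lemma~\ref{lem:z2_act_hat_T}, which pertains to $T^\ast$ rather than $T'_{8\cdot 3^k}$).
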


\begin{proof}
    These are direct consequences of  Lemma~\ref{dim1_t83k}, Lemma~\ref{lem:dim2_t83k}, Lemma~\ref{lem:z2_act_hat_T}, the formula in Proposition~\ref{prop:dim_formulas}, and Proposition~\ref{prop:graph-inv}-2.
\end{proof}

By Proposition~\ref{prop:dim_t*} and Proposition~\ref{prop:dim_t83k}, we obtain a table of the values of the dimensions of $\calA_\Theta^\odd(\C\pi)$ and $\calA_\Theta^\odd(\mathrm{Ker}\,\ve)$ for $\pi = T_{8\cdot 3^k}$ and $k \leq 9$. 

\par\medskip
\begin{table}[h]
\centering
\begin{tabular}{|c|c|c|c|c|c|c|c|c|c|} \hline
$k$ & $1$ & $2$ & $3$ & $4$ & $5$ & $6$ & $7$ & $8$ & $9$ \\ \hline
$\dim \calA_\Theta^\odd(\C\pi)$ & $15$ & $78$ & $570$ & $4782$ & $42042$ & $375438$ & $3370170$ & $30305262$ & $272668602$ \\
$\dim \calA_\Theta^\odd(\mathrm{Ker}\,\ve)$ & $10$ & $66$ & $537$ & $4686$ & $41757$ & $374586$ & $3367617$ &  $30297606$ & $272645637$ \\ \hline
\end{tabular}
\caption{The values of the dimensions of $\calA_\Theta^\odd(\C\pi)$ and $\calA_\Theta^\odd(\mathrm{Ker}\,\ve)$ for $\pi = T_{8\cdot 3^k}$ and $k \leq 9$. }\label{tab:val_dims_t83k}
\end{table}

\subsection{Binary octahedral group $O^{\ast}$} \label{section:O_ast}
The \textit{binary octahedral group} $O^{\ast}$ of order $48$ admits the finite presentation as $O^{\ast}= \langle a,b \mid (ab)^2=a^3=b^4 \rangle$.

By using {\tt ConjugacyClass} functions in the Sage-GAP interface (\cite{sagemath}), one can compute that the set of conjugacy classes of $O^{\ast}$ consists of $8$ classes represented respectively by the following elements:
\begin{equation}
    e,\quad ab, \quad a^2, \quad b^2, \quad a^3, \quad b, \quad a,\quad a^2b.
\end{equation}
The character table of $O^{\ast}$ is given as Table~\ref{tab:ch_octa}. As in Section \ref{section:binary_tetra}, the character table is also computed by using {\tt Sage} via {\tt CharacterTable} functions in the Sage-GAP interface. 
\par\medskip
\begin{table}[h]
\centering
  \begin{tabular}{|c|cccccccc|}  \hline
    $\hat{\pi}$ & $e$ & $ab$ & $a^2$ & $b^2$ & $a^3$ & $b$ & $a$ & $a^2b$  \\ 
    size &  1 & $12$ & $8$ & $6$ & $1$ & $6$ & $8$  & $6$  \\ \hline
    $A_1$ & 1 & 1 & 1 & 1 & 1 & 1 & 1 & 1 \\ 
    $A_2$ & 1 & $-1$ & 1 & 1 & 1 & $-1$ & 1 & $-1$ \\ 
    $A_3$ & 2 & 0 & $-1$ & 2 & 2 & 0 & $-1$ & 0  \\ 
    $A_4$ & 2 & 0 & $-1$ & 0 & $-2$ & $-\sqrt{2}$ & 1 & $\sqrt{2}$ \\ 
    $A_5$ & 2 & 0 & $-1$ & 0 & $-2$ & $\sqrt{2}$ & 1 & $-\sqrt{2}$  \\ 
    $A_6$ & 3 & 1 & 0 & $-1$ & 3 & $-1$ & 0 & $-1$ \\ 
    $A_7$ & 3 & $-1$ & 0 & $-1$ & 3 & 1 & 0 & 1\\ 
    $A_8$ & 4 & 0 & 1 & 0 & $-4$ & 0 & $-1$ & 0 \\ \hline
  \end{tabular}
\par\medskip
\caption{The characters $\rho_{A_i}(g)$ for $O^{\ast}$. }\label{tab:ch_octa}
\end{table}
To compute the dimensions $\dim \calA_\Theta^\odd(\C\pi)$ and $\dim \calA_\Theta^\odd(\mathrm{Ker}\,\ve)$ via the formula in Section \ref{section:4.1.1}, we must determine the conjugacy classes of $g^2$ and $g^3$ in $O^{\ast}$ and $\dim(\C\hat{\pi})_{\Z_2}$. The former can be computed directly as Table~\ref{tab:conj_O}, and the latter is given in the following lemma, which can be shown by {\tt ConjugacyClass} functions combined with an elementary {\tt Sage} code implementation. 

\begin{lemma}\label{lem:involution-inv-O*}
Let $\pi = O^{\ast}$. For each class $[x] \in \hat{\pi}$, we have $[x^{-1}] = [x] \in \hat{\pi}$.	In particular, $\dim(\C\hat{\pi})_{\Z_2}=\dim \C\hat{\pi}=8$.
\end{lemma}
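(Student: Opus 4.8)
The plan is to avoid any explicit conjugation computation in $O^*$ and instead to deduce $[x^{-1}]=[x]$ directly from the character values already recorded in Table~\ref{tab:ch_octa}, using only two standard facts from the character theory of finite groups. First I would invoke that for any irreducible character $\chi$ of a finite group and any element $x$ one has $\chi(x^{-1})=\overline{\chi(x)}$ (see e.g. \cite{Se77}). Second I would use that the irreducible characters form a basis of the space of class functions, so they separate conjugacy classes: two elements $x,y\in\pi$ are conjugate if and only if $\chi(x)=\chi(y)$ for every irreducible character $\chi$ (equivalently, the columns of the character table are pairwise distinct).

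The key observation I would make is that every entry of the character table of $O^*$ in Table~\ref{tab:ch_octa} is a real number; the only irrational entries are the values $\pm\sqrt{2}$ appearing in the rows $A_4$ and $A_5$, and these are of course real. Consequently $\overline{\chi(x)}=\chi(x)$ for every irreducible character $\chi$ and every $x\in\pi$. Combining this with the identity $\chi(x^{-1})=\overline{\chi(x)}$ yields $\chi(x^{-1})=\chi(x)$ for all irreducible $\chi$, and then the separation property forces $x^{-1}$ to be conjugate to $x$. Hence $[x^{-1}]=[x]\in\hat{\pi}$ for every conjugacy class, which is exactly the first assertion.

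For the ``in particular'' clause, recall that the $\Z_2$-action on $\C\hat{\pi}$ is the one induced by the inversion $[g]\mapsto[g^{-1}]$. Since we have just shown this action fixes every class, it is the trivial action, so the coinvariants satisfy $(\C\hat{\pi})_{\Z_2}=\C\hat{\pi}$, whose dimension is $|\hat{\pi}|=8$.

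I expect essentially no serious obstacle once the character table (Table~\ref{tab:ch_octa}) is taken as given: the argument reduces to a one-line application of the reality of all irreducible characters. The only point worth a moment's care is confirming that the irrational entries $\pm\sqrt{2}$ are genuinely real values rather than abbreviations for complex quantities, which is clear by inspection. If one instead wanted a presentation-based proof, the harder route would be to exhibit, for a representative $g$ of each of the eight classes, an explicit $w\in O^*$ with $wgw^{-1}=g^{-1}$ using the relations $(ab)^2=a^3=b^4$; this is elementary but tedious, and the character-theoretic argument bypasses it entirely.
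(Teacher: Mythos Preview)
Your proof is correct and in fact cleaner than the paper's own argument. The paper simply reports that the statement ``can be directly checked by using {\tt ConjugacyClass} functions combined with an elementary {\tt Sage} code implementation,'' i.e.\ it verifies $[x^{-1}]=[x]$ class by class on the computer. Your route instead extracts the result in one stroke from the reality of all irreducible characters in Table~\ref{tab:ch_octa}, via $\chi(x^{-1})=\overline{\chi(x)}=\chi(x)$ and the fact that the irreducible characters separate conjugacy classes. This is a genuinely more conceptual argument: it explains \emph{why} inversion fixes every class rather than merely certifying it, and it would apply uniformly to any finite group whose character table happens to be real (as the paper itself exploits elsewhere, e.g.\ for $D_{4p}^*$ with even $p$ and for $I^*$). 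The computational approach, on the other hand, requires no appeal to the character table and works even when some characters are complex-valued, but here that generality is not needed.
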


\begin{table}
\begin{center}
\begin{tabular}{|c||c|c|c|c|c|c|c|c|c|}\hline
$g$ & $e$ & $ab$ & $a^2$ & $b^2$ & $a^3$ & $b$ & $a$ & $a^2 b $ \\\hline
$g^2$ & $e$ & $a^{3}$ & $a^{2}$ & $a^{3}$ & $e$ & $b^2$ & $a^{2}$ & $b^{2}$ \\ \hline
$g^3$ & $e$ & $ab$ & $e$ & $b^{2}$ & $a^{3}$ & $a^{2} b$ & $a^{3}$ & $b$ \\ \hline
\end{tabular}
\end{center}
\caption{The conjugacy classes of $g^2$ and $g^3$ in $O^{\ast}$.}\label{tab:conj_O}
\end{table}

\begin{prop}\label{prop:ex2}
When $\pi=O^{\ast}$, we have the following.
\begin{equation}
	 \dim \calA_\Theta^\odd(\C\pi)=35, \quad \dim \calA_\Theta^\odd(\mathrm{Ker}\,\ve)=27.
\end{equation}
\end{prop}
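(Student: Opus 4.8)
The plan is to treat $O^{\ast}$ as a single fixed finite group and to compute $\dim\calA_\Theta^\odd(\C\pi)=\frac{1}{2}(d_1(\C\pi)+d_2(\C\pi))$ directly from the dimension formula of Proposition~\ref{prop:dim_formulas}, exactly in the spirit of the $T^{\ast}$ computation in Proposition~\ref{prop:dim_t*}. The first thing I would record is that, reading off Table~\ref{tab:ch_octa}, every one of the eight irreducible characters of $O^{\ast}$ is real-valued (the only non-integer entries are $\pm\sqrt{2}$). Hence, in the notation of \S\ref{section:4.1.1}, we have $r_2=0$ and $r_1=8$, so the general formula for $d_2(\C\pi)$ collapses to the simpler real-character formula, with $\sum_{i=1}^{r_1}\chi_{A_i}(g)=\sum_i\chi_{A_i}(g)$ equal to the full column sum of the character table.

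For $d_1(\C\pi)$ I would assemble the class sizes $|C(g)|$ from the size row of Table~\ref{tab:ch_octa} and the values of $|C(g^2)|$, $|C(g^3)|$ from Table~\ref{tab:conj_O}, and then evaluate the two sums appearing in Proposition~\ref{prop:dim_formulas}. The only piece requiring a little care is the set $\Delta_{\hat\pi}^{(3)}=\{(C(g),C(h))\mid C(g^3)=C(h^3)\}$: from the $g^3$ row of Table~\ref{tab:conj_O} I would group the eight classes according to the conjugacy class of their cube, after which the sum $\sum_{\Delta_{\hat\pi}^{(3)}}2\,|C(g)|\,|C(h)|/|C(g^3)|$ becomes a finite, explicit sum over these groups. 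Together with the term $\sum_{C(g)}\bigl(|\pi|^2/|C(g)|+3|\pi|\,|C(g)|/|C(g^2)|\bigr)$ this yields $d_1(\C\pi)$.

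For $d_2(\C\pi)$ I would substitute the column sums $\sum_i\chi_{A_i}(g)$ and $\sum_i\chi_{A_i}(g^3)$ (again using the $g^3$ row of Table~\ref{tab:conj_O} to identify $C(g^3)$) into the $d_2$ formula. Averaging then gives $\dim\calA_\Theta^\odd(\C\pi)=35$. The value for $\mathrm{Ker}\,\ve$ follows without any further integration: by Proposition~\ref{prop:graph-inv}-2 we have $\dim\calA_\Theta^\odd(\C\pi)=\dim\calA_\Theta^\odd(\mathrm{Ker}\,\ve)+\dim(\C\hat\pi)_{\Z_2}$, and Lemma~\ref{lem:involution-inv-O*} gives $\dim(\C\hat\pi)_{\Z_2}=8$, whence $\dim\calA_\Theta^\odd(\mathrm{Ker}\,\ve)=35-8=27$.

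The main difference from the infinite families $\Z_n$, $D_{4p}^{\ast}$, $D_{2^{k+2}p}'$, $T_{8\cdot 3^k}'$ treated earlier is that here there is no parameter, and hence no case analysis by divisibility modulo $2$ or $3$; consequently there is no genuine conceptual obstacle, and the only real risk is arithmetic bookkeeping in the finite sums, in particular the correct identification of the partition of $\hat\pi$ induced by cubing. For this reason I would also double-check the two sums with {\tt Sage} using the character table and the squaring/cubing data, exactly as was done for $T^{\ast}$.
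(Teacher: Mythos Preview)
Your proposal is correct and follows essentially the same approach as the paper: apply the dimension formula of Proposition~\ref{prop:dim_formulas} to the explicit character table and the $g^2,g^3$ data for $O^{\ast}$ (checked with {\tt Sage}), and then deduce the value for $\mathrm{Ker}\,\ve$ from Proposition~\ref{prop:graph-inv}-2 together with Lemma~\ref{lem:involution-inv-O*}. Your additional observation that all irreducible characters of $O^{\ast}$ are real (so $r_2=0$) is correct and slightly streamlines the $d_2$ computation, but otherwise the argument is the same.
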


\begin{proof}
The former can be checked directly via any mathematics software system such as {\tt Maxima} and {\tt Sage} combined with the formula in Section \ref{section:4.1.1} and Table  \ref{tab:ch_octa} and Table~\ref{tab:conj_O}. The latter follows from Proposition~\ref{prop:graph-inv}-2 and Lemma~\ref{lem:involution-inv-O*}.
\end{proof}

\subsection{Binary icosahedral group $I^{\ast}$}
The \textit{binary icosahedral group} $I^{\ast}$ of order $120$ admits the following finite presentation
\begin{equation}
	I^{\ast} = \langle a, b \mid (ab)^2 = a^3=b^5 \rangle.
\end{equation}
It is known that $I^{\ast}$ is isomorphic to $\mathrm{SL}_2(\F_5)$.
Let $\hat{\pi}$ denote the set of conjugacy classes of $\pi=I^{\ast}$. As in Section \ref{section:binary_tetra} and \ref{section:O_ast}, by using {\tt ConjugacyClass} functions in {\tt Sage},  we know that  $\hat{\pi}$ has 9 elements, represented respectively by the following elements:
\[
e,\quad
a^{3},\quad
(a^{2}b^{2})^{2} a,\quad
a b a^{2}b, \quad
a, \quad
(a^{2}b^{2})^{2}, \quad
a^{2} b^{2}, \quad
a^{2} b^{2}a, \quad
b.
\]
\begin{lemma}[{\cite[Lemma~4.1]{OW}}]\label{lem:involution-inv}
For $\pi=I^{\ast}=\mathrm{SL}_2(\F_5)$, $\hat{\pi}$ is invariant under taking the inverse. Namely, for each class $[x]\in\hat{\pi}$, we have $[x^{-1}]=[x]$. In particular, $\dim(\C\hat{\pi})_{\Z_2}=\dim \C\hat{\pi}=9$.
\end{lemma}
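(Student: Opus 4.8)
The plan is to prove the stronger statement that $I^\ast\cong\mathrm{SL}_2(\F_5)$ is \emph{ambivalent}, i.e.\ every element is conjugate to its inverse, directly from the structure of $\mathrm{SL}_2$ over a finite field; the displayed dimension equality then falls out immediately. First I would record the elementary fact that for $g\in\mathrm{SL}_2(\F_5)$ the two eigenvalues are $\lambda,\lambda^{-1}$ (their product is $\det g=1$), so that $\mathrm{tr}(g^{-1})=\lambda^{-1}+\lambda=\mathrm{tr}(g)$ and $g,g^{-1}$ share the same characteristic polynomial. The proof then proceeds by the standard division of the conjugacy classes of $\mathrm{SL}_2(\F_5)$ into central, split regular semisimple, non-split regular semisimple, and non-semisimple (unipotent and negative-unipotent) classes; for $q=5$ these number $2+1+2+(2+2)=9$, matching $|\hat\pi|=9$.

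For the central elements $\pm I$ there is nothing to prove, since $(\pm I)^{-1}=\pm I$. For a regular semisimple $g$, whether its eigenvalues lie in $\F_5$ or in $\F_{25}\setminus\F_5$, the $\mathrm{SL}_2(\F_5)$-conjugacy class is determined by the unordered eigenvalue pair $\{\lambda,\lambda^{-1}\}$; as $g^{-1}$ realizes the same pair, one gets $g\sim g^{-1}$. In the split case this is realized explicitly by the Weyl element $w=\bigl(\begin{smallmatrix}0&1\\-1&0\end{smallmatrix}\bigr)\in\mathrm{SL}_2(\F_5)$, for which a direct computation gives $w\,\mathrm{diag}(\lambda,\lambda^{-1})\,w^{-1}=\mathrm{diag}(\lambda^{-1},\lambda)=g^{-1}$; the non-split case follows from the same eigenvalue-pair criterion (the Weyl group of the non-split torus again swaps $\lambda\leftrightarrow\lambda^{q}=\lambda^{-1}$).

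The only subtle case is the non-semisimple one, and this is where I expect the real work to lie. The unipotent elements are represented by $u_a=\bigl(\begin{smallmatrix}1&a\\0&1\end{smallmatrix}\bigr)$ with $a\in\F_5^\times$, and conjugation by $\mathrm{diag}(t,t^{-1})$ rescales $a\mapsto t^2a$; hence the $\mathrm{SL}_2(\F_5)$-class of $u_a$ depends only on the image of $a$ in $\F_5^\times/(\F_5^\times)^2$, giving two unipotent classes. Since $u_a^{-1}=u_{-a}$, self-inverseness of these classes is equivalent to $-1$ being a square in $\F_5$; and indeed $-1=4=2^2$ in $\F_5$ (equivalently $5\equiv1\bmod 4$), so each unipotent class is its own inverse. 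The identical argument applied to $-u_a$ disposes of the two negative-unipotent classes. It is precisely here that $q=5$, rather than e.g.\ $q=3$, is essential, so I would flag this quadratic-residue step as the crux.

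Combining the four cases shows $[x^{-1}]=[x]$ for every $x\in\hat\pi$, so the involution $g\mapsto g^{-1}$ acts trivially on the $9$-element set $\hat\pi$. Consequently the induced $\Z_2$-action on $\C\hat\pi$ is trivial, the coinvariants coincide with the whole space, and $\dim(\C\hat\pi)_{\Z_2}=\dim\C\hat\pi=|\hat\pi|=9$, as asserted. As a cross-check, one may instead simply inspect the character table of $\mathrm{SL}_2(\F_5)$ and observe that all of its entries are real, a property equivalent to ambivalence; this is in effect the route taken by the cited computation in \cite{OW}.
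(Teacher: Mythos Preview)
Your proof is correct and considerably more informative than the paper's, which simply records that the claim can be verified using Sage's \texttt{ConjugacyClass} function. You instead give a structural argument via the standard classification of conjugacy classes in $\mathrm{SL}_2(\F_q)$: for central and regular semisimple elements ambivalence follows because the $\mathrm{SL}_2$-conjugacy class is determined by the characteristic polynomial (and $\mathrm{tr}(g^{-1})=\mathrm{tr}(g)$ when $\det g=1$), while for the (negative\nobreakdash-)unipotent classes it reduces to $-1\in(\F_5^\times)^2$, which holds since $5\equiv 1\bmod 4$. This has the virtue of isolating exactly where the specific prime matters---the same argument fails for $\mathrm{SL}_2(\F_3)$---and of being human-checkable. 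The cross-check you mention at the end, reading ambivalence off the reality of the character table, is in effect what the paper's computer-based approach amounts to, since Table~\ref{tab:ch} is produced by Sage and its entries $\phi,\phi^*=\tfrac{1\pm\sqrt{5}}{2}$ are visibly real.
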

\begin{proof}
    As in Section \ref{section:binary_tetra} and \ref{section:O_ast},  we can also check the assertion by using computer program such as {\tt ConjugacyClass} functions provided in {\tt Sage}.
\end{proof}

There are 9 distinct irreducible representations $A_i$ ($i=1,2,\ldots,9$) of the group $\pi$ whose character is given as in Table~\ref{tab:ch}, and any irreducible representation of $\pi$ over $\C$ is isomorphic to one of them. The  tables Table~\ref{tab:ch} and \ref{tab:conj} are computed by using {\tt Sage} with {\tt CharacterTable} functions therein.

\par\medskip
\begin{table}[h]
\centering
  \begin{tabular}{|c|ccccccccc|}  \hline
    $\hat{\pi}$ & $e$ & $a^{3}$ & $(a^{2}b^{2})^{2} a$ &  $a b a^{2}b$ &  $a$ & $(a^{2}b^{2})^{2}$ &  $a^{2} b^{2}$ &  $a^{2} b^{2}a$ & $b$ \\ 
    size & 1 & 1 & 30 & 20 & 20 & 12 & 12 & 12 & 12 \\ \hline
    $A_1$ & 1 & 1 & 1 & 1 & 1 & 1 & 1 & 1 & 1 \\ 
    $A_2$ & 2 & $-2$ & 0 & $-1$ & 1 & $-\phi^*$ & $-\phi$ & $\phi^*$ & $\phi$ \\ 
    $A_3$ & 2 & $-2$ & 0 & $-1$ & 1 & $-\phi$ & $-\phi^*$ & $\phi$ & $\phi^*$ \\ 
    $A_4$ & 3 & 3 & $-1$ & 0 & 0 & $\phi$ & $\phi^*$ & $\phi$ & $\phi^*$ \\ 
    $A_5$ & 3 & 3 & $-1$ & 0 & 0 & $\phi^*$ & $\phi$ & $\phi^*$ & $\phi$ \\ 
    $A_6$ & 4 & 4 & 0 & 1 & 1 & $-1$ & $-1$ & $-1$ & $-1$ \\ 
    $A_7$ & 4 & $-4$ & 0 & 1 & $-1$ & $-1$ & $-1$ & 1 & 1 \\ 
    $A_8$ & 5 & 5 & 1 & $-1$ & $-1$ & 0 & 0 & 0 & 0 \\ 
    $A_9$ & 6 & $-6$ & 0 & 0 & 0 & 1 & 1 & $-1$ & $-1$ \\ \hline
  \end{tabular}
\par\medskip
\caption{The characters $\rho_{A_i}(g)$ for $I^{\ast}$ where we set $\phi=\cfrac{1+\sqrt{5}}{2}$ and  $\phi^*=\cfrac{1-\sqrt{5}}{2}$. }\label{tab:ch}
\end{table}

\par\medskip
\begin{table}[h]
\begin{center}
\begin{tabular}{|c||c|c|c|c|c|c|c|c|c|}\hline
$g$ & $e$ & $a^{3}$ & $(a^{2}b^{2})^{2} a$ &  $a b a^{2}b$ &  $a$ & $(a^{2}b^{2})^{2}$ &  $a^{2} b^{2}$ &  $a^{2} b^{2}a$ & $b$\\\hline
$g^2$ & $e$ & $e$ & $a^{3}$ & $a b a^{2} b$ & $aba^{2} b$ & $a^{2} b^{2}$ & $(a^{2} b^{2})^{2}$ & $a^{2} b^{2}$ & $(a^{2} b^{2})^{2}$ \\ \hline
$g^3$ & $e$ & $a^{3}$ & $(a^{2}b^{2})^{2} a$ & $e$ & $a^{3}$ & $a^{2} b^{2}$ & $(a^{2} b^{2})^{2}$ & $b$ & $a^{2} b^{2} a$\\ \hline
\end{tabular}
\end{center}
\caption{The conjugacy classes of $g^2$ and $g^3$ in $I^{\ast}$.}\label{tab:conj}
\end{table}

Using the formula as in Section \ref{section:4.1.1} together with Table~\ref{tab:ch} and Table~\ref{tab:conj}, the second author and Ohta establish the following dimension formula, which can also be checked directly via {\tt Sage} combined with the formula in Section \ref{section:4.1.1} and Table  \ref{tab:ch}.
\begin{prop}[{\cite{OW}, \cite{Oh}}]\label{prop:dim_I*}
When $\pi=I^{\ast} = \mathrm{SL}_2(\F_5)$, we have the following.
\begin{equation}
	 \dim \calA_\Theta^\odd(\C\pi)=65, \quad \dim \calA_\Theta^\odd(\mathrm{Ker}\,\ve)=56.
\end{equation}
\end{prop}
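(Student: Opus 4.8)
The plan is to apply the general dimension formula of Proposition~\ref{prop:dim_formulas} directly, exactly as in the cases of $T^\ast$ and $O^\ast$, since all the data needed for $I^\ast$ are already assembled in Table~\ref{tab:ch} and Table~\ref{tab:conj}. First I would observe that every entry of the character table of $I^\ast$ is real: the only irrational values are $\phi=\frac{1+\sqrt 5}{2}$ and $\phi^\ast=\frac{1-\sqrt 5}{2}$, both of which lie in $\R$. Hence all nine irreducible representations $A_1,\dots,A_9$ have real-valued characters, so $r_2=0$ and in the formula for $d_2(\C\pi)$ the sum $\sum_{i=1}^{r_1}\chi_{A_i}(g)$ runs over \emph{all} irreducibles; equivalently, the computation reduces to the original formula of \cite{OW}. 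This is consistent with Lemma~\ref{lem:involution-inv}, which states that $\hat\pi$ is closed under inversion.

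Next I would compute $d_1(\C\pi)=\dim(\Sym^3\C\pi)^{\pi\times\pi}$ from the first formula of Proposition~\ref{prop:dim_formulas} with $|\pi|=120$, reading the conjugacy-class sizes off Table~\ref{tab:ch} and the assignment $C(g)\mapsto C(g^2)$ off Table~\ref{tab:conj} to evaluate $\sum_{C(g)}\bigl(\tfrac{|\pi|^2}{|C(g)|}+3\tfrac{|C(g)|}{|C(g^2)|}|\pi|\bigr)$. The only mildly delicate ingredient is the cubic term $\sum_{(C(g),C(h))\in\Delta_{\hat\pi}^{(3)}}\tfrac{|C(g)||C(h)|}{|C(g^3)|}$: I would first read the map $C(g)\mapsto C(g^3)$ from the third row of Table~\ref{tab:conj}, then partition $\hat\pi$ into the fibers of this cubing map, and sum $|C(g)||C(h)|/|C(g^3)|$ over all pairs $(C(g),C(h))$ whose cubes coincide. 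In parallel I would compute $d_2(\C\pi)$ from the second formula of Proposition~\ref{prop:dim_formulas}, now evaluating $\sum_i\chi_{A_i}(g)$ and $\sum_i\chi_{A_i}(g^3)$ column by column using Table~\ref{tab:ch} together with Table~\ref{tab:conj}. Adding and halving then yields $\dim\calA_\Theta^\odd(\C\pi)=\tfrac{1}{2}(d_1(\C\pi)+d_2(\C\pi))=65$.

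For the augmentation-ideal value I would not rerun the integral formula but instead invoke Proposition~\ref{prop:graph-inv}-2, which supplies the splitting expressed by $\dim\calA_\Theta^\odd(\C\pi)=\dim\calA_\Theta^\odd(\mathrm{Ker}\,\ve)+\dim(\C\hat\pi)_{\Z_2}$. By Lemma~\ref{lem:involution-inv} the inversion acts trivially on $\hat\pi$, so $\dim(\C\hat\pi)_{\Z_2}=|\hat\pi|=9$, whence $\dim\calA_\Theta^\odd(\mathrm{Ker}\,\ve)=65-9=56$.

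Since every quantity is a finite sum over the nine conjugacy classes with all inputs tabulated, there is no structural obstacle; the argument is a bookkeeping computation that can and should be cross-checked with {\tt Sage}. The one place where an error is easiest to make is the evaluation of the cubic term over $\Delta_{\hat\pi}^{(3)}$, where one must correctly identify from Table~\ref{tab:conj} exactly which pairs of classes share a common cube. I would verify that step most carefully by hand before trusting the machine computation.
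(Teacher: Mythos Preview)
Your proposal is correct and follows essentially the same approach as the paper: apply the character-theoretic formula of Proposition~\ref{prop:dim_formulas} using the data in Table~\ref{tab:ch} and Table~\ref{tab:conj} (noting that all characters of $I^\ast$ are real, so the computation reduces to the original formula of \cite{OW}), and then deduce the $\mathrm{Ker}\,\ve$ value from Proposition~\ref{prop:graph-inv}-2 together with Lemma~\ref{lem:involution-inv}. The paper likewise treats this as a direct bookkeeping computation, cross-checked by {\tt Sage}.
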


\subsection{Direct products with $\Z_m$: Proof of Theorem~\ref{thm:dim-formula}}
According to the list given in Theorem~\ref{thm:classification}, the remaining case is that $\pi$ is given by a direct product of any of the groups $D_{4p}^{\ast}, D_{2^{k+2}p}', T^{\ast}, T_{8\cdot 3^k}', O^{\ast}, I^{\ast}$  with a cyclic group of relatively prime order.

\begin{lemma}\label{lem:dim_cpi_z2}
Let $m$ be an odd positive integer and $\pi = \Z_m \times \pi'$ where $\pi'$ is a finite group. We denote by $\hat{\pi}_0'$ the set of fixed points of $\hat{\pi}'$ under the $\Z_2$-action, i.e., $\hat{\pi}_0'=\{[x] \in \hat{\pi}'\mid [x^{-1}]= [x]\}$, and set $\hat{\pi}_1'= \hat{\pi}' \setminus \hat{\pi}_0'$. Then, we have
\begin{equation}
	\dim(\C \hat{\pi})_{\Z_2} = \frac{m+1}{2} |\hat{\pi}_0'| + \frac{m}{2} |\hat{\pi}_1'|.
\end{equation} 
\end{lemma}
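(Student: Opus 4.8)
The plan is to reduce the computation of $\dim(\C\hat{\pi})_{\Z_2}$ to a count of $\Z_2$-orbits on the finite set $\hat{\pi}$, and then to evaluate that count by the orbit-counting theorem. First I would observe that since $\Z_m$ is abelian (indeed central in $\pi=\Z_m\times\pi'$), the conjugacy classes of $\pi$ split as a product: every class is of the form $\{\gamma^j\}\times[x]$ for $\gamma^j\in\Z_m$ and $[x]\in\hat{\pi}'$, giving a bijection $\hat{\pi}\cong\Z_m\times\hat{\pi}'$. Under this identification the inversion $g\mapsto g^{-1}$ acts diagonally, sending $(\gamma^j,[x])$ to $(\gamma^{-j},[x^{-1}])$, so the relevant $\Z_2$-action on $\hat{\pi}$ is the product of inversion on $\Z_m$ and the induced involution on $\hat{\pi}'$.

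Next I would use the elementary fact that for any finite set $S$ with a $\Z_2$-action, the permutation module $\C S$ decomposes as the direct sum of the subspaces spanned by the individual orbits, and each orbit contributes a one-dimensional space of coinvariants; hence $\dim(\C S)_{\Z_2}$ equals the number of $\Z_2$-orbits of $S$. Applying this to $S=\hat{\pi}$, it remains to count orbits, which I would do via Burnside's lemma: the number of orbits equals $\tfrac12\bigl(|\mathrm{Fix}(e)|+|\mathrm{Fix}(\tau)|\bigr)$, where $\tau$ denotes the nontrivial element of $\Z_2$. The identity fixes all $m|\hat{\pi}'|$ elements, while a point $(\gamma^j,[x])$ is fixed by $\tau$ precisely when $\gamma^{2j}=e$ in $\Z_m$ and $[x^{-1}]=[x]$ in $\hat{\pi}'$. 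This is the one place where the hypothesis that $m$ is odd enters: since $\gcd(2,m)=1$, the condition $2j\equiv 0\pmod m$ forces $j\equiv 0\pmod m$, so $\gamma^j=e$ is the unique inversion-fixed element of $\Z_m$. Thus $|\mathrm{Fix}(\tau)|=|\hat{\pi}_0'|$, and writing $|\hat{\pi}'|=|\hat{\pi}_0'|+|\hat{\pi}_1'|$ yields
\[ \dim(\C\hat{\pi})_{\Z_2}=\tfrac12\bigl(m|\hat{\pi}'|+|\hat{\pi}_0'|\bigr)=\frac{m+1}{2}\,|\hat{\pi}_0'|+\frac{m}{2}\,|\hat{\pi}_1'|, \]
as claimed.

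I do not expect a substantial obstacle: the argument is routine once the product structure of the conjugacy classes is recognized. The only point demanding care is the role of the oddness of $m$, which guarantees that inversion on the cyclic factor has exactly one fixed point; were $m$ even, the element $\gamma^{m/2}$ would be a second inversion-fixed point and the stated formula would require an extra term. I would therefore state this dependence explicitly to make clear where the hypothesis is used.
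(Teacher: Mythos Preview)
Your proposal is correct and follows essentially the same approach as the paper: both identify $\hat{\pi}\cong\Z_m\times\hat{\pi}'$ with the diagonal inversion action, use the oddness of $m$ to see that only $e\in\Z_m$ is inversion-fixed, and then count $\Z_2$-orbits on $\hat{\pi}$. The only cosmetic difference is that you invoke Burnside's lemma explicitly, while the paper writes the orbit count directly as (fixed points) $+\,\tfrac12$(non-fixed points).
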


\begin{proof}
Since $m$ is odd, the equality $x^{-1} = x \in \Z_m$ holds if and only if $x = e \in \Z_m$. Therefore, $[(x_1, x_2)^{-1}] = [(x_1, x_2)] \in \hat{\pi}$ holds if and only if $x_1 = e \in \Z_m$ and $[x_2] \in \hat{\pi}_0'$. It implies that 
\begin{equation}
	 \dim(\C \hat{\pi})_{\Z_2} = |[e] \times \hat{\pi}_0'| + \frac{1}{2} \sum_{x \in \Z_m; x \neq e} | [x] \times \hat{\pi}_0'| + \frac{1}{2} \sum_{x \in \Z_m} | [x] \times \hat{\pi}_1'| 
     = \frac{m+1}{2} |\hat{\pi}_0'| + \frac{m}{2} |\hat{\pi}_1'|.
\end{equation}
\end{proof}
Using Lemma~\ref{lem:dim_cpi_z2} and the computation in previous sections, we get the following dimension formula for $\dim(\C \hat{\pi})_{\Z_2}$.

\begin{lemma}\label{lem:dim_cpi_z2_sherical}
    Let $\pi$ be the fundamental group of a spherical $3$-manifold. Then, the dimension $\dim(\C \hat{\pi})_{\Z_2}$ is given as follows.
    \begin{enumerate}
        \item[\rm (a)] When $\pi = \Z_n$, 
        \begin{equation}
        \dim(\C \hat{\pi})_{\Z_2} = p_2(n)=1 + \floor{\frac{n}{2}} = \begin{cases}
 	1 + \frac{n}{2} & ( n \equiv 0 \bmod 2),\\
 	1 + \frac{n-1}{2} & (n \not \equiv 0 \bmod 2),
 \end{cases}
\end{equation}
where $p_2(n)$ is the number of partitions of $n$ into at most $2$ and $\floor{\frac{n}{2}}$ denotes the greatest integer less than or equal to $\frac{n}{2}$.
        \item[\rm (b)]  $(1)$ When $\pi = \Z_m \times D_{4p}^{\ast}$ where $m \geq 1$, $p>0$ even, and $(m, 2p)=1$,
        \begin{equation}
            \dim(\C \hat{\pi})_{\Z_2} = \frac{1}{2}mp+ \frac{3}{2}m + \frac{1}{2}p + \frac{3}{2}.
        \end{equation}
        $(2)$ When $\pi = \Z_m \times D_{4p}^{\ast}$ where $m \geq 1$, $p>0$ odd, and $(m, 2p)=1$,	
        \begin{equation}
            \dim(\C \hat{\pi})_{\Z_2} = \frac{1}{2} \, m p + \frac{3}{2} \, m + \frac{1}{2} \, p + \frac{1}{2}.
        \end{equation}
        \item[\rm (c)] When $\pi = \Z_m \times D_{2^{k+2}p}'$ where $m\geq 1$, $k\geq 0$, $p \geq 3$ odd, and $(m, 2p)=1$,
        \begin{equation}
             \dim(\C \hat{\pi})_{\Z_2} =\frac{1}{2} \cdot 2^{k} m p + \frac{3}{2} \cdot 2^{k} m + \frac{1}{2} \, p + \frac{1}{2}.
        \end{equation}
        \item[\rm (d)]  When $\pi = \Z_m \times T^{\ast}$ where $m\geq 1$ and $(m, 6)=1$,
        \begin{equation}
             \dim(\C \hat{\pi})_{\Z_2} = \frac{7}{2}m + \frac{3}{2}.
        \end{equation}
        \item[\rm (e)] When $\pi = \Z_m \times T_{8\cdot 3^k}'$ where $m\geq 1$, $(m, 6)=1$, and $k \geq 2$,
        \begin{equation}
            \dim(\C \hat{\pi})_{\Z_2} = \frac{7}{6} \cdot 3^{k} m + \frac{3}{2}.
        \end{equation}
        \item[\rm (f)]  When $\pi=\Z_m \times O^{\ast}$ where $m\geq 1$ and $(m,6)=1$, 
        \begin{equation}
            \dim(\C \hat{\pi})_{\Z_2} = 4m + 4.
        \end{equation}
        \item[\rm (g)]   When $\pi = \Z_m \times I^{\ast}$ where $m\geq 1$ and $(m,30)=1$,
        \begin{equation}
             \dim(\C \hat{\pi})_{\Z_2} = \frac{9}{2}m + \frac{9}{2}.
        \end{equation}
    \end{enumerate}
\end{lemma}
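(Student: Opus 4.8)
The plan is to reduce every case to the single counting formula of Lemma~\ref{lem:dim_cpi_z2}. Case (a), $\pi=\Z_n$, is not of direct-product type with nontrivial second factor, so I would simply restate Lemma~\ref{lem:conj_z2_zn}, recording that the resulting $1+\floor{n/2}$ equals the partition count $p_2(n)$. For the remaining cases (b)--(g), each group is $\pi=\Z_m\times\pi'$ with $m$ odd and $\pi'$ one of $D_{4p}^\ast$, $D_{2^{k+2}p}'$, $T^\ast$, $T_{8\cdot3^k}'$, $O^\ast$, $I^\ast$, so Lemma~\ref{lem:dim_cpi_z2} applies verbatim. The first useful observation is that its output simplifies considerably: writing $|\hat{\pi}'|=|\hat{\pi}_0'|+|\hat{\pi}_1'|$ one gets
\[
\dim(\C\hat{\pi})_{\Z_2}=\frac{m+1}{2}\,|\hat{\pi}_0'|+\frac{m}{2}\,|\hat{\pi}_1'|
=\frac{m}{2}\,|\hat{\pi}'|+\frac{1}{2}\,|\hat{\pi}_0'|,
\]
so that for each $\pi'$ the entire computation is pinned down by exactly two integers: the total number of conjugacy classes $|\hat{\pi}'|$ and the number $|\hat{\pi}_0'|$ of self-inverse (equivalently, real) conjugacy classes.

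Next I would read off these two integers from the earlier subsections. The total $|\hat{\pi}'|$ is the number of irreducible representations, already computed when each character table was produced ($p+3$ for $D_{4p}^\ast$, $2^k(p+3)$ for $D_{2^{k+2}p}'$, $7$ for $T^\ast$, $7\cdot 3^{k-1}$ for $T_{8\cdot3^k}'$, $8$ for $O^\ast$, $9$ for $I^\ast$). The count $|\hat{\pi}_0'|$ is extracted from the $\Z_2$-action lemmas: either directly, since those lemmas name the fixed classes, or by solving $|\hat{\pi}_0'|=2\dim(\C\hat{\pi}')_{\Z_2}-|\hat{\pi}'|$ from the already-computed value $\dim(\C\hat{\pi}')_{\Z_2}=|\hat{\pi}_0'|+\tfrac12|\hat{\pi}_1'|$. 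Concretely, Lemma~\ref{lem:conj_inv_D_4n_even} gives $|\hat{\pi}_0'|=p+3$ (all classes real) for even $p$; Lemma~\ref{lem:conj_z2_D4n_odd} gives $|\hat{\pi}_0'|=p+1$ for odd $p$; Lemma~\ref{lem:conj_z2_D_2k+2p} gives $|\hat{\pi}_0'|=p+1$ for $D_{2^{k+2}p}'$; Lemma~\ref{lem:z2_act_hat_T} gives $3$; Lemma~\ref{lem:conj_z2_t83k} gives $3$ (only $C_{1,0},C_{2,0},C_{3,0}$ are fixed); and Lemmas~\ref{lem:involution-inv-O*} and \ref{lem:involution-inv} give $8$ and $9$ (all classes real). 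Substituting each pair $(|\hat{\pi}'|,|\hat{\pi}_0'|)$ into the boxed formula and expanding yields exactly the stated polynomials in $m,p,k$; these are the only routine algebraic simplifications required.

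The main obstacle is the bookkeeping for the three parametrized families (b), (c), and (e), where $|\hat{\pi}_0'|$ must be determined as an explicit function of the parameters rather than a constant. The delicate point is that the $\Z_2$-orbit structure on $\hat{\pi}'$ genuinely depends on arithmetic of the parameters --- for instance, in $D_{4p}^\ast$ the two ``reflection'' classes $C_{\mathrm{even},1},C_{\mathrm{odd},1}$ are fixed when $p$ is even but swapped when $p$ is odd, and in $T_{8\cdot3^k}'$ the orbits pair the $z^{3m}$-sectors $C_{i,3m}\leftrightarrow C_{i,3^k-3m}$ while interchanging the sectors $C_4\leftrightarrow C_6$ and $C_5\leftrightarrow C_7$. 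Since those orbit descriptions are precisely what the cited lemmas already establish, I would not reprove them; the remaining work is only to confirm that the extracted $|\hat{\pi}_0'|$ is consistent with the previously computed $\dim(\C\hat{\pi}')_{\Z_2}$ (a single arithmetic identity per family) and then to carry out the substitution. For the fixed groups (d), (f), (g) the counts are unambiguous and can additionally be cross-checked by the \texttt{Sage} computation referenced earlier, so no subtlety arises there.
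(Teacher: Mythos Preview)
Your proposal is correct and follows essentially the same approach as the paper: the paper's proof simply cites Lemma~\ref{lem:dim_cpi_z2} together with the individual $\Z_2$-action lemmas (\ref{lem:conj_z2_zn}, \ref{lem:conj_inv_D_4n_even}, \ref{lem:conj_z2_D4n_odd}, \ref{lem:conj_z2_D_2k+2p}, \ref{lem:z2_act_hat_T}, \ref{lem:conj_z2_t83k}, \ref{lem:involution-inv-O*}, \ref{lem:involution-inv}), and your write-up spells out exactly this computation. Your rewriting of the output of Lemma~\ref{lem:dim_cpi_z2} as $\tfrac{m}{2}|\hat{\pi}'|+\tfrac{1}{2}|\hat{\pi}_0'|$ is a clean organizational device, but it is the same argument.
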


\begin{proof}
    The assertions are consequences of Lemma~\ref{lem:dim_cpi_z2} and the arguments in the proofs of Lemma~\ref{lem:conj_z2_zn}, Lemma~\ref{lem:conj_inv_D_4n_even}, Lemma~\ref{lem:conj_z2_D4n_odd}, Lemma~\ref{lem:conj_z2_D_2k+2p}, Lemma~\ref{lem:conj_z2_t83k}, Lemma~\ref{lem:involution-inv-O*}, and Lemma~\ref{lem:involution-inv}.
\end{proof}
We are now in the position to give a proof of Theorem~\ref{thm:dim-formula} stated in the introduction, which determines the dimensions $\dim \calA_\Theta^\odd(\C\pi)$ and $\dim \calA_\Theta^\odd(\mathrm{Ker}\,\ve)$ for the fundamental group $\pi$ of a spherical $3$-manifold as follows.

\begin{proof}[Proof of Theorem~\ref{thm:dim-formula}]
When $\pi = \Z_n$, it follows from Proposition~\ref{prop:OW_dim_cyclic} and Proposition~\ref{prop:dim_zn}. Thus, we suppose that $\pi$ is given as  $\pi = \Z_m \times \pi'$ for some finite group $\pi'$ as in the above list. Recall that there is a bijection between the set  $\hat{\pi}$ of conjugacy classes of $\pi$ and the direct product $\hat{\Z}_m \times \hat{\pi}'$ of those of $\Z_m$ and $\pi'$. Thus, we have
\begin{equation}
    \begin{split}
        &\sum_{C(q) \in \hat{\pi}} \biggl(\frac{1}{ |C(q)|} |\pi|^2+3 \frac{|C(q)|}{|C(q^2)|} |\pi|\biggr) = \sum_{C(x) \in \hat{\Z}_m} \sum_{C(g) \in \hat{\pi}'} \biggl( \frac{1}{|C(x)| |C(g)|} m^2 |\pi'|^2+3 \frac{|C(x)||C(g)|}{|C(x^2)||C(g^2)|} m|\pi'|\biggr)\\
        &=\sum_{C(g) \in \hat{\pi}'} \biggl(\frac{1}{ |C(g)|}m^3 |\pi'|^2+3 \frac{|C(g)|}{|C(g^2)|} m^2|\pi'|\biggr),\\
        & \sum_{(C(q), C(r)) \in \Delta^{(3)}_{\hat{\pi}} } \frac{|C(q)| |C(r)|}{|C(q^3)|} = \sum_{(C(x), C(y)) \in \Delta^{(3)}_{\hat{\Z}_m}} \sum_{(C(g), C(h)) \in \Delta^{(3)}_{\hat{\pi}'} } \frac{|C(x)||C(g)| |C(y)||C(h)|}{|C(x^3)||C(g^3)||C(y^3)||C(h^3)|}\\
        &=|\Delta^{(3)}_{\hat{\Z}_m}| \sum_{(C(g), C(h)) \in \Delta^{(3)}_{\hat{\pi}'}} \frac{|C(g)| |C(h)|}{|C(g^3)|} = \begin{cases}
             \displaystyle  m \sum_{(C(g), C(h)) \in \Delta^{(3)}_{\hat{\pi}'}} \frac{|C(g)| |C(h)|}{|C(g^3)|}  &(m\not \equiv 0 \bmod 3),\\
             \displaystyle  3m \sum_{(C(g), C(h)) \in \Delta^{(3)}_{\hat{\pi}'}} \frac{|C(g)| |C(h)|}{|C(g^3)|}  &(m \equiv 0 \bmod 3),
        \end{cases}
    \end{split}
\end{equation}
where in the last equality we use Lemma~\ref{lem:zn_dim1_chiw_cubic}. Hence, by substituting these identity into the formula to compute $d_1(\C\pi)$ in Proposition~\ref{prop:dim_formulas}, one obtains

\begin{equation}\label{eq:modified_ow_formula_m}
    \begin{split}
  &d_1(\C \pi) = \frac{1}{6|\pi|}    \Biggl(\sum_{C(q) \in \hat{\pi}} \biggl(\frac{1}{ |C(q)|} |\pi|^2+3 \frac{|C(q)|}{|C(q^2)|} |\pi|\biggr) +2\sum_{(C(q), C(r)) \in \Delta^{(3)}_{\hat{\pi}} } \frac{|C(q)| |C(r)|}{|C(q^3)|}\Biggr) \\
  &= \begin{cases}
\displaystyle \frac{1}{6|\pi'|}\Biggl( \sum_{C(g) \in \hat{\pi}'} \biggl(\frac{m^2}{ |C(g)|} |\pi'|^2+3m \frac{|C(g)|}{|C(g^2)|} |\pi'|\biggl) + 2\sum_{(C(g),C(h)) \in \Delta_{\hat{\pi}'}^{(3)}} \frac{|C(g)| |C(h)|}{|C(g^3)|}\Biggr) & ( m \not \equiv 0 \bmod 3),\\
\displaystyle \frac{1}{6|\pi'|} \Biggl( \sum_{C(g) \in \hat{\pi}'} \biggl(\frac{m^2}{ |C(g)|} |\pi'|^2+3m \frac{|C(g)|}{|C(g^2)|} |\pi'|\biggl) + 6\sum_{(C(g),C(h)) \in \Delta_{\hat{\pi}'}^{(3)}} \frac{|C(g)| |C(h)|}{|C(g^3)|}\Biggr) & ( m  \equiv 0 \bmod 3).
\end{cases}
\end{split}
\end{equation}
Similarly, we have
\begin{equation}\label{eq:modified_ow_formula_tau_m}
d_2(\C\pi) = \frac{1}{6 |\pi'|} \sum_{C(g) \in \hat{\pi}'} |C(g)| \Bigl\{\Bigl(
\sum_{\chi_{A_i'}: \text{real}}\chi_{A_i'}(g)\Bigr)^3+3\frac{|\pi'|}{|C(g)|} \Bigl(\sum_{\chi_{A_j'}: \text{real}}\chi_{A_j'}(g)\Bigr)+2\sum_{\chi_{A_i'}: \text{real}}\chi_{A_i'}(g^3)\Bigr\},
\end{equation}
where we denote the distinct irreducible representations of $\pi'$ by  $A_i'$. Note that, here, we use the assumption that $m$ is odd and apply Lemma~\ref{lem:chi_W_g_zn}. Then, by using the modified formulas \eqref{eq:modified_ow_formula_m} and \eqref{eq:modified_ow_formula_tau_m}, we obtain the dimension formula for $\dim \calA_\Theta^\odd(\C\pi)$ by applying lemmas obtained in the previous sections. Once we have obtained the formula for $\dim \calA_\Theta^\odd(\C\pi)$, that for $\dim \calA_\Theta^\odd(\mathrm{Ker}\,\ve)$ are computed directly from Proposition~\ref{prop:graph-inv}-2 and Lemma~\ref{lem:dim_cpi_z2_sherical}. 
\end{proof}

\begin{remark}
    For $\pi=\Z_m \times D_{4p}^{\ast}, \Z_m \times D_{2^{k+2}p}'$, the resulting dimension formulas when $p\equiv 0 \bmod 3, m \not \equiv 0 \bmod 3$ and those when $p \not \equiv 0 \bmod 3, m \equiv 0 \bmod 3$ happen to coincide although their computations are different.
\end{remark}

\appendix
\section{Proofs of technical Lemmas} \label{appendix:A}

\subsection{Cyclic group $\Z_n$}
\begin{proof}[Proof of Lemma~\ref{lem:zn_dim1_chiw_cubic}]
Since $\Z_n$ is abelian, $|C(g)|=1$ for any $C(g) \in \hat{\pi}$. Thus,
\begin{equation}\label{eq:cube_sum_zn}
     \sum_{(C(g),C(h)) \in \Delta_{\hat{\pi}}^{(3)}} \frac{|C(g)| |C(h)|}{|C(g^3)|}   = \# \{ (m_1, m_2) \in \Z_n \times \Z_n \mid 3 m_1 \equiv 3 m_2 \bmod n \}.
\end{equation}
When $n \not \equiv 0 \bmod 3$, the pairs $(m_1, m_2) \in \Z_n^2$ with $3 m_1 \equiv 3 m_2 \bmod n$ are given by $m_1 = m_2 \in \Z_n$. Thus, the RHS of \eqref{eq:cube_sum_zn} is equal to $n$. 

When $n \equiv 0 \bmod 3$, the condition $3m_1 \equiv 3 m_2 \bmod n$ is equivalent to $m_1 \equiv m_2 \bmod n/3$. Therefore, the RHS of \eqref{eq:cube_sum_zn} is equal to the number of intersections of $\{0,1,\ldots, n-1\}^2$ with $m_2 = m_1 \pm \frac{n}{3}k$ $(k=0,1,2)$, which equals to $3n$ as desired.
\end{proof}

\subsection{Binary dihedral group $D_{4p}^{\ast}$}
\begin{proof}[Proof of Lemma~\ref{lem:d4n_even_dim1_chiw_cubic}]
    By Lemma~\ref{lem:sq_cub_conj_cls_d4n_even},  $C_{k,0}^3$ and $C_{l,1}^3$ are not equal to each other for any $k, l$. Thus, the set $\Delta_{\hat{\pi}}^{(3)}$ is disjoint union of the following subsets $D$ and $E$:
    \[
    D = \{(C_{k,0},C_{l,0})\mid 3k\equiv 3l \bmod {2p}\}, \quad E=\{(C_{\mathrm{odd},1},C_{\mathrm{odd},1}),(C_{\mathrm{even},1},C_{\mathrm{even},1})\}.
    \]
    Therefore, it is enough to show the following claim.
\begin{claim}
Let $\pi = D_{4p}^{\ast}$ with even $p$. Then, the following holds.
\begin{enumerate}
    \item 
\begin{equation}
    \sum_{(C(g), C(h)) \in D } \frac{|C(g)| |C(h)|}{|C(g^3)|}=  \begin{cases}
        6p & (p \equiv 0 \bmod 3),\\
        2 p & (p \not \equiv 0 \bmod 3).
    \end{cases}
\end{equation}
\item 
\begin{equation}
	\sum_{(C(g), C(h)) \in E } \frac{|C(g)| |C(h)|}{|C(g^3)|} = 2p.
    \end{equation}
\end{enumerate}
\end{claim}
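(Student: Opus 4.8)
The plan is to reduce both displayed sums to a single element-counting identity and then analyze the cubing map $g\mapsto g^3$ on $D_{4p}^{\ast}$ separately on the rotation subgroup $\langle a\rangle$ and on the reflection coset $\langle a\rangle x$. For any conjugacy class $K$ of $\pi=D_{4p}^{\ast}$, I set $n_K:=\#\{g\in\pi\mid C(g^3)=K\}$. The key observation is that summing class sizes over all classes whose cube is $K$ recovers $n_K$, i.e. $\sum_{C(g^3)=K}|C(g)|=n_K$; grouping the pairs appearing in $D$ (resp.\ $E$) according to the common value $K=C(g^3)=C(h^3)$ therefore yields
\[
\sum_{(C(g),C(h))}\frac{|C(g)|\,|C(h)|}{|C(g^3)|}=\sum_{K}\frac{n_K^{\,2}}{|K|},
\]
where $K$ runs over the rotation classes in case (1) and over $\{C_{\mathrm{even},1},C_{\mathrm{odd},1}\}$ in case (2). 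By Lemma~\ref{lem:sq_cub_conj_cls_d4n_even} the cube of a rotation is again a rotation while the cube of a reflection stays in the same $l=1$ class, so $\Delta_{\hat{\pi}}^{(3)}$ splits as the disjoint union $D\sqcup E$, and the two contributions may be computed independently.

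Part (2) is then immediate: the classes $C_{\mathrm{even},1}$ and $C_{\mathrm{odd},1}$ each have size $p$ and are fixed by cubing (Lemma~\ref{lem:sq_cub_conj_cls_d4n_even}), whence $n_{C_{\mathrm{even},1}}=n_{C_{\mathrm{odd},1}}=p$ and each class contributes $p^2/p=p$, giving the total $2p$.

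For part (1), the main step, I would study the map $j\mapsto 3j$ on $\Z/2p\cong\langle a\rangle$, so that $n_K=\#\{j\mid C(a^{3j})=K\}$, and split according to whether $3\mid p$. If $3\nmid p$, then $\gcd(3,2p)=1$, so multiplication by $3$ is a bijection of $\Z/2p$; substituting shows $n_K=|K|$ for every rotation class $K$, and hence $\sum_K n_K^{\,2}/|K|=\sum_K|K|=2p$, the number of rotation elements. If $3\mid p$, then $\gcd(3,2p)=3$ and the map is $3$-to-$1$ onto the subgroup of multiples of $3$; a rotation class $K$ with exponent set $E_K$ lies in the image iff every element of $E_K$ is divisible by $3$ (using $3\mid 2p$ to see that $k$ and $2p-k$ have the same divisibility by $3$), and in that case $n_K=3|K|$, while otherwise $n_K=0$. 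Therefore
\[
\sum_{K}\frac{n_K^{\,2}}{|K|}=9\sum_{K\ \mathrm{in\ image}}|K|=9\cdot\frac{2p}{3}=6p,
\]
since $\#\{j\in\Z/2p\mid 3\mid j\}=2p/3$. This matches the two claimed values.

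I expect the only real obstacle to be the bookkeeping of class sizes, where $|C_{0,0}|=|C_{p,0}|=1$ but $|C_{k,0}|=2$ for $1\le k\le p-1$: a direct case-by-case count over conjugacy classes would have to track these separately and also handle the coincidences $C_{2k,0}=C_{p,0}$ noted in Remark~\ref{rem:d4n_even_c^2}. The element-counting reformulation $\sum_K n_K^{\,2}/|K|$ is designed precisely to absorb these size distinctions automatically, so that the computation reduces to counting how many exponents $j$ are divisible by $3$, which is uniform and avoids any individual inspection of the classes.
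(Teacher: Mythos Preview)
Your proof is correct, and it takes a genuinely different and cleaner route than the paper's own argument. The paper proves part (1) by a direct case-by-case enumeration of the pairs in $D$: when $p\not\equiv 0\bmod 3$ it lists $D=\{(C_{k,0},C_{k,0})\}$ and sums the terms individually, and when $p\equiv 0\bmod 3$ it splits $D$ into two subsets $A$ and $B$ according to whether $3k_1\equiv 3k_2\equiv 0$ or $p\bmod 2p$, writes out all contributing index pairs $(k_1,k_2)$ explicitly, and adds up $18+(6p-18)=6p$. This requires tracking the exceptional class sizes $|C_{0,0}|=|C_{p,0}|=1$ versus $|C_{k,0}|=2$ and the coincidences flagged in Remark~\ref{rem:d4n_even_c^2}.

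Your reformulation
\[
\sum_{(C(g),C(h))\in D}\frac{|C(g)|\,|C(h)|}{|C(g^3)|}=\sum_{K}\frac{n_K^{\,2}}{|K|},\qquad n_K=\#\{g\in\langle a\rangle\mid g^3\in K\},
\]
bypasses all of that bookkeeping: the identity $\sum_{C(g^3)=K}|C(g)|=n_K$ absorbs the varying class sizes automatically, and the computation then reduces to the structure of the cubing map on the cyclic group $\Z/2p$, which is transparent (a bijection when $\gcd(3,2p)=1$, and $3$-to-$1$ onto the index-$3$ subgroup when $3\mid p$). The observation that $3\mid 2p$ forces $k$ and $2p-k$ to have the same $3$-divisibility, so that each rotation class is either entirely in the image or entirely out of it, is exactly what makes $\sum_{K\ \mathrm{in\ image}}|K|=2p/3$ work without inspecting individual classes. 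Your argument is shorter, more conceptual, and would generalize immediately to the other dihedral-type computations in the appendix; the paper's approach has the minor advantage of being completely explicit but is otherwise just a hands-on verification. Part (2) is handled identically in both.
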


\begin{proof}[Proof of Claim]
	1. First, let us suppose that $p \not \equiv 0 \bmod 3$. In this case, $D=\{(C_{k,0},C_{l,0})\mid k\equiv l \bmod {2p}\} = \{(C_{k,0},C_{k,0})\mid 1\leq k \leq p\}$ holds and hence, by using Lemma~\ref{lem:sq_cub_conj_cls_d4n_even} and the character Table~\ref{tab:ch_d4n_even}, we have
\begin{equation}
\sum_{(C(g), C(h)) \in D } \frac{|C(g)| |C(h)|}{|C(g^3)|} = |C_{0,0}|^2 + \sum_{k=1}^{p-1} \frac{|C_{k,0}|^2}{2}  + |C_{p,0}|^2 
		 = 2p.
\end{equation}

    Next, suppose that $p  \equiv 0 \bmod 3$ and set $m=p/3$. Since $|C_{0,0}|=|C_{p,0}|=1$ and $|C_{k,0}|=2$ $(k=1,2,\ldots, p-1)$ hold, the sum decomposes into 
    \begin{equation}\label{eq:A_d4n_1}
        \sum_{(C(g), C(h))\in A} |C(g)| \cdot |C(h)| + \sum_{(C(g),C(h)) \in B} \frac{|C(g)| \cdot |C(h)|}{2}.
    \end{equation}
    where 
    \[
    A \coloneqq  \{ (C_{k_1,0}, C_{k_2,0})  \mid 3k_1 \equiv3 k_2 \equiv 0, p \bmod 2p \}, \quad B \coloneqq  \{ (C_{k_1,0}, C_{k_2,0})  \mid 3k_1 \equiv3 k_2 \not \equiv 0, p \bmod 2p \}.
    \]
    By Lemma~\ref{lem:sq_cub_conj_cls_d4n_even} and noting Remark \ref{rem:d4n_even_c^2}, the subset $A$ is equal to the following set
    \begin{equation}
        \begin{split}
          A \cong &\{ (k_1, k_2) \in \{0,1,\ldots, p\}^2 \mid 3k_1 \equiv3 k_2 \equiv 0, p \bmod 2p \}\\
            =& \{(0,0), (m, m),(2m,2m), (3m,3m)=(p,p)\} \\
            &\cup \{(0,2m), (m,3m)=(m,p) \} \cup (\text{similar set with $k_1$ and $k_2$ interchanged}),
        \end{split}
    \end{equation}
    and the subset $B$ is 
    \begin{equation}
	\begin{split}
		B \cong &\{ (k_1, k_2) \in \{0,1,\ldots, p\}^2 \mid 3k_1 \equiv3 k_2 \not \equiv 0, p \bmod 2p \}\\
        =&\{(k,k) \mid 1 \leq k \leq p-1, k \neq m, 2m\}\\
        &\cup \{(k, k+m) \mid 1 \leq k \leq 2m-1, k\neq m \} \cup \{(k, k+2m) \mid 1 \leq k \leq m-1\}\\
        & \cup (\text{similar set with $k_1$ and $k_2$ interchanged}).
		\end{split}
	\end{equation}
    Note that $|B|=3(p-3)$ and $|C(g)| \cdot |C(h)|=4$ for any $(C(g),C(h)) \in B$.
    Therefore, by character Table~\ref{tab:ch_d4n_even}, we have
    \begin{equation} \label{eq:chiw_33_d4neven1}
        \sum_{(C(g),C(h)) \in A} |C(g)| \cdot |C(h)| = 18.
    \end{equation}
    and 
    \begin{equation} \label{eq:chiw_33_d4neven2}
        \sum_{(C(g), C(h)) \in B} \frac{|C(g)| \cdot |C(h)|}{2} = 2 |B| = 6p-18.
    \end{equation}
By adding \eqref{eq:chiw_33_d4neven1} and \eqref{eq:chiw_33_d4neven2}, we obtain $6p$ as desired.

2. It follows immediately by Lemma~\ref{lem:sq_cub_conj_cls_d4n_even} and Table~\ref{tab:ch_d4n_even}.
\end{proof}
By adding the identities in Claim 1 and 2, we get the desired formula.
\end{proof}

\subsection{Group $D_{2^{k+2}p}'$}
\begin{proof}[Proof of Lemma~\ref{lem:d2k+2p_dim1_chiw_cubic}]
The proof is similar to Lemma~\ref{lem:d4n_even_dim1_chiw_cubic}. 
By Lemma~\ref{lem:sq_cub_conj_cls_d2k+2p},  $C_{2m,l}^3$ and $C_{2m'+1,0}^3$ are not equal to each other for any $m, m', l$. Thus, the set $\Delta_{\hat{\pi}}^{(3)}$ is disjoint union of the following subsets $D$ and $E$:
    \begin{align*}
    D &= \{(C_{2m_1,l_1},C_{2m_2,l_2})\mid 3m_1\equiv 3m_2 \bmod {2^{k+1}}, 3l_1 \equiv 3l_2 \bmod p\} \\
    &= \{(C_{2m,l_1},C_{2m,l_2})\mid 0 \leq m \leq 2^{k+1} -1, 3l_1 \equiv 3l_2 \bmod p\}\\
    E& =\{(C_{2m_1+1,0},C_{2m_2+1,0})\mid 3m_1\equiv 3m_2 \bmod {2^{k+1}}\}\\
    &= \{(C_{2m+1,0},C_{2m+1,0})\mid 0 \leq m \leq 2^{k+1} -1\}
    \end{align*}
    where we use the fact that $3m_1 \equiv 3m_2 \bmod 2^{k+1}$ is equivalent to $m_1 = m_2$ for $m_1, m_2$ with $0 \leq m_1, m_2 \leq 2^{k+1}-1$. Then, the proof boils down to the following claim.
\begin{claim}
Let $\pi = D_{2^{k+2}p}'$. Then,  the following equations hold.
\begin{enumerate}
\item 
    \begin{equation}
   \sum_{(C(g), C(h)) \in D } \frac{|C(g)| |C(h)|}{|C(g^3)|}  = 
   \begin{cases}
       3\cdot 2^{k+1}p & (p \equiv 0 \bmod 3),\\
       2^{k+1}p & (p \not \equiv 0 \bmod 3).
   \end{cases}
       \end{equation}
\item 
\begin{equation}
	 \sum_{(C(g), C(h)) \in E } \frac{|C(g)| |C(h)|}{|C(g^3)|}   = 2^{k+1}p.
\end{equation}
\end{enumerate}
\end{claim}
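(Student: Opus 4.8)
The plan is to handle the two parts of the Claim separately, reducing part (2) to a one-line computation and part (1) to a count over the second index $l$.

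For part (2), the elements of $E$ are the diagonal pairs $(C_{2m+1,0},C_{2m+1,0})$ with $0\le m\le 2^{k+1}-1$. Each odd-index class $C_{2m+1,0}$ has $p$ elements, and by Lemma~\ref{lem:sq_cub_conj_cls_d2k+2p} its cube lies in $C_{6m+3,0}$, which is again an odd-index class and hence also has $p$ elements. Thus every summand equals $p\cdot p/p=p$, and since there are exactly $2^{k+1}$ values of $m$, the total is $2^{k+1}p$. This is immediate once the class sizes and the cube rule of Lemma~\ref{lem:sq_cub_conj_cls_d2k+2p} are in hand.

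For part (1), I would first note that $D$ splits as a disjoint union over $m\in\{0,\dots,2^{k+1}-1\}$, and that the inner sum over the second indices $l_1,l_2$ is the same for every $m$ (the cube rule $C_{2m,l}^3=C_{6m,3l}$ and the class sizes depend on $l$, not on $m$). Hence the whole sum equals $2^{k+1}$ times this common inner sum $T$. The key structural observation is that the classes $C_{2m,l}$ are indexed by the $\pm$-orbits $\{l,-l\}\subset\Z/p$ and that, because $p$ is odd, the conjugacy-class size $|C_{2m,l}|$ equals the size of that orbit (likewise $|C_{6m,3l}|$ equals the orbit size of $3l$). Writing $T$ as a sum over orbit representatives and then unfolding it to a sum over all pairs $(a,b)\in(\Z/p)^2$, the orbit-size weights produced by the unfolding cancel against the orbit sizes in numerator and denominator, leaving
\[
  T=\sum_{\substack{a,b\in\Z/p\\ 3a\equiv\pm 3b\bmod p}}\frac{1}{|\{3a,-3a\}|}.
\]
Using the bijection $b\mapsto -b$ together with inclusion--exclusion on the overlap $3a\equiv 3b\equiv 0$, this evaluates to $Kp$, where $K=\#\{a\in\Z/p\mid 3a\equiv 0\bmod p\}$ equals $3$ when $3\mid p$ and $1$ otherwise; equivalently $T$ coincides with the cyclic count $\#\{(a,b)\in(\Z/p)^2\mid 3a\equiv 3b\}$ already evaluated in Lemma~\ref{lem:zn_dim1_chiw_cubic}. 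Multiplying by $2^{k+1}$ yields $3\cdot2^{k+1}p$ when $p\equiv 0\bmod 3$ and $2^{k+1}p$ otherwise, as claimed.

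The main obstacle is the case $p\equiv 0\bmod 3$: there the cube map $l\mapsto 3l$ on $\Z/p$ is three-to-one, so several classes $C_{2m,l}$ share the same cube $C_{6m,0}$, and this merging interacts with the $\pm$-folding of the second index (cf. Remark~\ref{rem:d2k+2p_c^2}). The orbit-weight cancellation above is precisely what neutralizes this complication and lets the count collapse to the cyclic one; alternatively, one can carry out the explicit case-by-case partition of $D$ exactly as in the proof of Lemma~\ref{lem:d4n_even_dim1_chiw_cubic} for $D_{4p}^{\ast}$, separating the pairs whose common cube is $C_{6m,0}$ from the rest.
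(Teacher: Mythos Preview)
Your argument is correct. Part~(2) is handled exactly as in the paper: each of the $2^{k+1}$ diagonal pairs contributes $p\cdot p/p=p$.

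For part~(1) you take a genuinely different route from the paper. The paper treats the two cases $p\not\equiv 0\bmod 3$ and $p\equiv 0\bmod 3$ separately: in the first case $D$ is diagonal in the $l$-index and the sum is computed directly; in the second case $D$ is partitioned, for each fixed $m$, into a set $A$ where $3l_1\equiv 3l_2\equiv 0\bmod p$ and a complementary set $B$, and both pieces are enumerated explicitly (mirroring the proof of Lemma~\ref{lem:d4n_even_dim1_chiw_cubic}). Your approach instead factors out the $m$-sum immediately, then unfolds the remaining sum over $\pm$-orbit representatives $l_1,l_2\in\{0,\dots,(p-1)/2\}$ to a sum over all $(a,b)\in(\Z/p)^2$, observing that the orbit-size weights $s(l_1)s(l_2)/s(3l_1)$ cancel against the multiplicities of the unfolding. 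The resulting expression is handled uniformly by inclusion--exclusion and identified with the cyclic count of Lemma~\ref{lem:zn_dim1_chiw_cubic}. This buys you a single calculation covering both residue classes of $p\bmod 3$ and a conceptual explanation for why the inner sum matches the $\Z_p$ answer; the paper's enumeration is more concrete but requires tracking several explicit index sets in the $3\mid p$ case.
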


\begin{proof}[Proof of Claim]
1. Suppose that $p \not \equiv 0 \bmod 3$. By Lemma~\ref{lem:sq_cub_conj_cls_d2k+2p} and Remark \ref{rem:d2k+2p_c^2}, for any $l$ $(l=1,2,\ldots, (p-1)/2)$, $C_{2m,l}^3 = C_{6m, 3l} = C_{6m, l'}$ for some $l'$ $(1\leq l'\leq (p-1)/2)$. Therefore, we can compute the sum easily by applying the character Table~\ref{tab:ch_d2k+2p} and Lemma~\ref{lem:sq_cub_conj_cls_d2k+2p} as follows:
\begin{equation}
    \sum_{(C(g), C(h)) \in D } \frac{|C(g)| |C(h)|}{|C(g^3)|}
         = \sum_{m =0}^{2^{k+1}-1} \frac{|C_{2m, 0}| \cdot |C_{2m,0}|}{|C_{2m, 0}^3|} + \sum_{l=1 }^{(p-1)/2} \sum_{m =0}^{2^{k+1}-1} \frac{|C_{2m, l}| \cdot |C_{2m,l}|}{|C_{2m, l}^3|} =  2^{k+1} p.
\end{equation}

 Next, suppose that $p \equiv 0 \bmod 3$. Since $|C_{2m,0}|=1$ and $|C_{2m,l}|=2$ $(l=1,2,\ldots, (p-1)/2)$ hold, the sum decomposes into 
    \begin{equation}\label{eq:A_d2k=2p_1}
       \sum_{(C(g), C(h))\in A} |C(g)| \cdot |C(h)| + \sum_{(C(g),C(h)) \in B} \frac{|C(g)| \cdot |C(h)|}{2}
    \end{equation}
    where we set
    \begin{align*}
    A &= \{(C_{2m,l_1},C_{2m,l_2})\mid 0 \leq m \leq 2^{k+1} -1, 3l_1 \equiv 3l_2 \equiv 0 \bmod p\}\\
    B &= \{(C_{2m,l_1},C_{2m,l_2})\mid 0 \leq m \leq 2^{k+1} -1, 3l_1 \equiv 3l_2 \not \equiv 0 \bmod p\}
    \end{align*}
    
    By Lemma~\ref{lem:sq_cub_conj_cls_d2k+2p} and Remark \ref{rem:d2k+2p_c^2}, the subset $A$ is equal to the following set
    \begin{equation}
        \begin{split}
            A \cong  & \{(2m,2m) \mid 0 \leq m \leq 2^{k+1} -1\}  \times \{ (l_1, l_2) \in \{0,1,\ldots, (p-1)/2\}^2 \mid 3l_1 \equiv3 l_2 \equiv 0 \bmod p \}\\
            =&\{(2m,2m) \mid 0 \leq m \leq 2^{k+1} -1\}  \times \biggl( \{(0,0), (p/3, p/3)\} \cup \{(0,p/3), (p/3,0)\}\biggr),
        \end{split}
    \end{equation}
    and the set $B$ is 
    \begin{equation}
        \begin{split}
            B \cong &  \{(2m,2m) \mid 0 \leq m \leq 2^{k+1} -1\}  \times \{ (l_1, l_2) \in \{0,1,\ldots, (p-1)/2\}^2 \mid 3l_1 \equiv3 l_2 \not \equiv 0 \bmod p \}\\
            =&  \{(2m,2m) \mid 0 \leq m \leq 2^{k+1} -1\} \\
            &\times  \biggl(\{(l,l) \mid 1 \leq l \leq (p-1)/2, l \neq p/3\} \cup \{(l, l+p/3) \mid 1 \leq l \leq (p-3)/6\} \\
		&\cup \{ (l, p/3-l) \mid 1\leq l \leq (p/3-1)/2\} \cup \{(l, 2p/3-l) \mid (p+3)/6 \leq l \leq p/3 - 1\}\\
        & \cup (\text{similar set with $l_1$ and $l_2$ interchanged})\bigg).
        \end{split}
    \end{equation}
    Here, note that $|B| = 3(p-3)/2$ and $|C(g)||C(h)| = 4$ for any $(C(g), C(h)) \in B$. Thus, by character Table~\ref{tab:ch_d2k+2p}, we have 
    \begin{equation} \label{eq:chiw_33_d2k+2_1}
        \sum_{(C(g), C(h))\in A} |C(g)| \cdot |C(h)| = 3^2 \cdot 2^{k+1}
    \end{equation}
    and
    \begin{equation} \label{eq:chiw_33_d2k+2_2}
        \sum_{(C(g), C(h))\in B} \frac{|C(g)| \cdot |C(h)|}{2} = 3(p-3)\cdot 2^{k+1}.
    \end{equation}
By equations \eqref{eq:chiw_33_d2k+2_1} and \eqref{eq:chiw_33_d2k+2_2}, we obtain the desired formula.

2. It is immediate by Lemma~\ref{lem:sq_cub_conj_cls_d2k+2p} and the character Table~\ref{tab:ch_d2k+2p}.
\end{proof}
By adding the equations in Claim 1 and 2, we get the desired formula.
\end{proof}

\subsection{Group $T'_{8 \cdot 3^k}$}

\begin{proof}[Proof of Lemma~\ref{lem:t83k_dim1_chiw_cubic}]
    By Lemma~\ref{lem:sq_cub_conj_cls_t83k}, $C_{i,j}^3$ and $C_{i',j'}^3$ are not equal to each other for $i \neq i'$. Therefore, it is enough to show that
    \begin{equation}
         \sum_{i=1}^7 \sum_{j,j'=0; 3j \equiv 3j' \bmod 3^k}^{3^{k-1}-1} \frac{|C_{i,j}| \cdot |C_{i,j'}|}{|C_{i,j}^3|}  = 2^3 \cdot 3^{k+2}.
    \end{equation}
   
    For simplicity of computation, we divide the sum into the on-diagonal part and the off-diagonal part with respect to indices $(j,j')$. The on-diagonal part is computed by using Lemma~\ref{lem:sq_cub_conj_cls_t83k} and Table~\ref{tab:ch_t83k} as follows.
    \begin{equation}\label{eq:on_diag_T83k_dim1}
        \begin{split}
            &  \sum_{i=1}^7 \sum_{j=0}^{3^{k-1}-1} \frac{|C_{i,j}|^2}{|C_{i,j}^3|}  \\
            &=  3^{k-1}\cdot  \left( \frac{|C_{1,j}|^2}{|C_{1,j}|} + \frac{|C_{2,j}|^2}{|C_{2,j}|} + \frac{|C_{3,j}|^2}{|C_{3,j}|} + \frac{|C_{4,j}|^2}{|C_{1,j}|} + \frac{|C_{5,j}|^2}{|C_{2,j}|} + \frac{|C_{6,j}|^2}{|C_{1,j}|} + \frac{|C_{7,j}|^2}{|C_{2,j}|}  \right)=  2^3 \cdot 3^{k+1}.
        \end{split}
    \end{equation}
    We then compute the off-diagonal part with respect to $(j,j')$. By Lemma~\ref{lem:sq_cub_conj_cls_t83k}, the sum with respect to $(j,j')$ runs over all indices satisfying  $|j'-j| = 3^{k-2}$ or $|j'-j| = 2 \cdot 3^{k-2}$. More concretely, such indices $(j,j')$ belong to the set
    \begin{equation}
    \begin{split}
       & \{ (j, j+3^{k-2}) \mid j =0, 1, \ldots, 2 \cdot 3^{k-2}-1 \}\cup \{(j, j + 2\cdot 3^{k-2}) \mid j=0, 1, \ldots, 3^{k-2}-1\} \\
        \cup &  \{ (j,j - 3^{k-2} ) \mid j =3^{k-2}, 3^{k-2} + 1, \ldots, 3 \cdot 3^{k-2}-1 \}\\
        \cup &\{(j, j - 2\cdot 3^{k-2}) \mid j=2\cdot 3^{k-2}, 2\cdot 3^{k-2} + 1, \ldots, 3\cdot 3^{k-2}-1\}.
        \end{split}
    \end{equation}
    Its cardinality is $2 \cdot 3^{k-1}$. Again by Lemma~\ref{lem:sq_cub_conj_cls_t83k}, for any such pair $(j,j')$, we have 
    \begin{equation}
         |C_{i,j}^3| = |C_{i,j'}^3|= 1 \quad (i\neq 3), \quad  |C_{3,j}^3|=|C_{3,j'}^3| = 6.
    \end{equation}
    Therefore, by Table~\ref{tab:ch_t83k}, we can compute the off-diagonal part as follows:
    \begin{equation}\label{eq:off_diag_T83k_dim1}
        \begin{split}
            &  \sum_{i} \sum_{j\neq j'; j\equiv j' \bmod 3^{k-1}}\frac{|C_{i,j}| \cdot |C_{i,j'}|}{|C_{i,j}^3|} =  (2 \cdot 3^{k-1}) \left( \frac{|C_{3,0}|\cdot |C_{3,3^{k-2}}|}{6} + \sum_{i \neq 3} |C_{i,0}|\cdot |C_{i,3^{k-2}}|   \right) =  2^4 \cdot 3^{k+1}.
        \end{split}
    \end{equation}
    Hence, by adding on-diagonal part \eqref{eq:on_diag_T83k_dim1} with off-diagonal part \eqref{eq:off_diag_T83k_dim1},  we obtain the desired formula.
\end{proof}

\bibliographystyle{amsalpha}
\bibliography{refs}

\providecommand{\bysame}{\leavevmode\hbox to3em{\hrulefill}\thinspace}
\providecommand{\MR}{\relax\ifhmode\unskip\space\fi MR }
\providecommand{\MRhref}[2]{%
  \href{http://www.ams.org/mathscinet-getitem?mr=#1}{#2}
}
\providecommand{\href}[2]{#2}
\begin{thebibliography}{BNGRT02b}

\bibitem[AFW15]{AFW}
Matthias Aschenbrenner, Stefan Friedl, and Henry Wilton, \emph{3-manifold groups}, EMS Series of Lectures in Mathematics, European Mathematical Society (EMS), Z\"urich, 2015.

\bibitem[AS94]{AS}
Scott Axelrod and I.~M. Singer, \emph{Chern-{S}imons perturbation theory. {II}}, J. Differential Geom. \textbf{39} (1994), no.~1, 173--213.

\bibitem[Ati90]{At}
Michael Atiyah, \emph{On framings of {$3$}-manifolds}, Topology \textbf{29} (1990), no.~1, 1--7.

\bibitem[BC99]{BC}
Raoul Bott and Alberto~S. Cattaneo, \emph{Integral invariants of 3-manifolds. {II}}, J. Differential Geom. \textbf{53} (1999), no.~1, 1--13.

\bibitem[BNGRT02a]{BGRT}
Dror Bar-Natan, Stavros Garoufalidis, Lev Rozansky, and Dylan~P. Thurston, \emph{The {{\AA}rhus} integral of rational homology 3-spheres. {I}. {A} highly non trivial flat connection on {$S^3$}}, Selecta Math. (N.S.) \textbf{8} (2002), no.~3, 315--339.

\bibitem[BNGRT02b]{BGRT2}
\bysame, \emph{The {{\AA}rhus} integral of rational homology 3-spheres. {II}. {I}nvariance and universality}, Selecta Math. (N.S.) \textbf{8} (2002), no.~3, 341--371.

\bibitem[BNGRT04]{BGRT3}
\bysame, \emph{The {{\AA}rhus} integral of rational homology 3-spheres. {III}. {R}elation with the {L}e-{M}urakami-{O}htsuki invariant}, Selecta Math. (N.S.) \textbf{10} (2004), no.~3, 305--324.

\bibitem[BT82]{BT}
Raoul Bott and Loring~W. Tu, \emph{Differential forms in algebraic topology}, Graduate Texts in Mathematics, vol.~82, Springer-Verlag, New York-Berlin, 1982.

\bibitem[CE56]{CE}
Henri Cartan and Samuel Eilenberg, \emph{Homological algebra}, Princeton University Press, Princeton, NJ, 1956.

\bibitem[CMW21]{CMW}
Alberto~S. Cattaneo, Pavel Mn\"{e}v, and Konstantin Wernli, \emph{Theta invariants of lens spaces via the {BV}-{BFV} formalism}, Representation theory, mathematical physics, and integrable systems, Progr. Math., vol. 340, Birkh\"auser/Springer, Cham, 2021, pp.~71--110.

\bibitem[CS21]{CS}
Alberto~S. Cattaneo and Tatsuro Shimizu, \emph{A note on the {$\Theta$}-invariant of 3-manifolds}, Quantum Topol. \textbf{12} (2021), no.~1, 111--127.

\bibitem[FH91]{FH}
William Fulton and Joe Harris, \emph{Representation theory}, Graduate Texts in Mathematics, vol. 129, Springer-Verlag, New York, 1991, A first course, Readings in Mathematics.

\bibitem[GGP01]{GGP}
Stavros Garoufalidis, Mikhail Goussarov, and Michael Polyak, \emph{Calculus of clovers and finite type invariants of 3-manifolds}, Geom. Topol. \textbf{5} (2001), 75--108.

\bibitem[GL01]{GL}
Stavros Garoufalidis and Jerome Levine, \emph{Homology surgery and invariants of 3-manifolds}, Geom. Topol. \textbf{5} (2001), 551--578.

\bibitem[Hab00]{Hab}
Kazuo Habiro, \emph{Claspers and finite type invariants of links}, Geom. Topol. \textbf{4} (2000), 1--83.

\bibitem[HS97]{HS}
Peter~John Hilton and Urs Stammbach, \emph{A course in homological algebra}, second ed., Graduate Texts in Mathematics, vol.~4, Springer-Verlag, New York, 1997.

\bibitem[HW17]{HW}
Kazuo Habiro and Tamara Widmer, \emph{Kirby calculus for null-homologous framed links in 3-manifolds}, J. Topol. \textbf{10} (2017), 585--631.

\bibitem[KL23]{KL}
Hisatoshi Kodani and Bingxiao Liu, \emph{Integral invariants for framed 3-manifolds associated to trivalent graphs possibly with self-loops}, preprint, 2023, arXiv:2311.02682.

\bibitem[KM99]{KM}
Rob Kirby and Paul Melvin, \emph{Canonical framings for {$3$}-manifolds}, Proceedings of 6th {G}\"{o}kova {G}eometry-{T}opology {C}onference, vol.~23, 1999, pp.~89--115.

\bibitem[Kon94]{Kon}
Maxim Kontsevich, \emph{Feynman diagrams and low-dimensional topology}, First {E}uropean {C}ongress of {M}athematics, {V}ol. {II} ({P}aris, 1992), Progr. Math., vol. 120, Birkh\"{a}user, Basel, 1994, pp.~97--121.

\bibitem[KSW]{KSWII}
Hisatoshi Kodani, Tatsuro Shimizu, and Tadayuki Watanabe, \emph{Theta-invariants of $\mathbb{Z}\pi$-homology equivalences to spherical 3-manifolds {II}}, work in progress.

\bibitem[KT99]{KT}
Greg Kuperberg and Dylan~P. Thurston, \emph{Perturbative 3-manifold invariants by cut-and-paste topology}, preprint, 1999, arXiv:math/9912167.

\bibitem[Les04a]{Les}
Christine Lescop, \emph{On the {K}ontsevich-{K}uperberg-{T}hurston construction of a configuration-space invariant for rational homology 3-spheres}, preprint, 2004, arXiv:math/0411088.

\bibitem[Les04b]{Les2}
\bysame, \emph{Splitting formulae for the kontsevich-kuperberg-thurston invariant of rational homology 3-spheres}, preprint, 2004, arXiv:math/0411431.

\bibitem[Les24]{Les3}
\bysame, \emph{Invariants of links and 3-manifolds from graph configurations}, EMS Monographs in Mathematics, EMS Press, Berlin, [2024] \copyright 2024.

\bibitem[LMO98]{LMO}
Thang T.~Q. Le, Jun Murakami, and Tomotada Ohtsuki, \emph{On a universal perturbative invariant of {$3$}-manifolds}, Topology \textbf{37} (1998), no.~3, 539--574.

\bibitem[LT03]{LT}
Thang Le and Vladimir Turaev, \emph{Quantum groups and ribbon {$G$}-categories}, J. Pure Appl. Algebra \textbf{178} (2003), no.~2, 169--185.

\bibitem[Mil57]{Mil}
John Milnor, \emph{Groups which act on {$S^n$} without fixed points}, Amer. J. Math. \textbf{79} (1957), 623--630.

\bibitem[OE02]{Ohts2}
Tomotada Ohtsuki~(Ed.), \emph{Problems on invariants of knots and 3-manifolds}, Invariants of knots and 3-manifolds ({K}yoto, 2001), Geom. Topol. Monogr., vol.~4, Geom. Topol. Publ., Coventry, 2002, With an introduction by J. Roberts, pp.~i--iv, 377--572.

\bibitem[{OEI}]{oeis}
{OEIS Foundation Inc. (2025)}, \emph{The {O}n-{L}ine {E}ncyclopedia of {I}nteger {S}equences}, Published electronically at https://oeis.org.

\bibitem[Oht96]{Ohts}
Tomotada Ohtsuki, \emph{Finite type invariants of integral homology {$3$}-spheres}, J. Knot Theory Ramifications \textbf{5} (1996), no.~1, 101--115.

\bibitem[Oht22]{Oh}
Yuji Ohta, \emph{Representations of binary icosahedral group and weights of trivalent graphs}, Master thesis, Shimane University, 2022.

\bibitem[OW23]{OW}
Yuji Ohta and Tadayuki Watanabe, \emph{Unstable pseudo-isotopies of spherical $3$-manifolds}, preprint, 2023, arXiv:2303.13877.

\bibitem[Per03]{pere}
Grisha Perelman, \emph{Ricci flow with surgery on three-manifolds}, preprint, 2003, arXiv:math/0303109.

\bibitem[Rud76]{Ru}
Walter Rudin, \emph{Principles of {M}athematical {A}nalysis}, 3rd ed., McGraw Hill, 1976.

\bibitem[Sav02]{Sav02}
Nikolai Saveliev, \emph{Invariants for homology {$3$}-spheres}, Encyclopaedia of Mathematical Sciences, vol. 140, Springer-Verlag, Berlin, 2002, Low-Dimensional Topology, I.

\bibitem[Ser77]{Se77}
Jean-Pierre Serre, \emph{Linear representations of finite groups}, french ed., Graduate Texts in Mathematics, vol. Vol. 42, Springer-Verlag, New York-Heidelberg, 1977.

\bibitem[Shi23]{Sh}
Tatsuro Shimizu, \emph{Morse homotopy for the {$SU(2)$}-{C}hern-{S}imons perturbation theory}, J. Differential Geom. \textbf{123} (2023), no.~2, 363--390.

\bibitem[{The}22]{sagemath}
{The Sage Developers}, \emph{{S}agemath, the {S}age {M}athematics {S}oftware {S}ystem ({V}ersion 9.6)}, 2022, {\tt https://www.sagemath.org}.

\bibitem[Tur10]{Tu}
Vladimir Turaev, \emph{Homotopy quantum field theory}, EMS Tracts in Mathematics, vol.~10, European Mathematical Society (EMS), Z\"{u}rich, 2010, Appendix 5 by Michael Müger and Appendices 6 and 7 by Alexis Virelizier.

\bibitem[Wer22]{Wer}
Konstantin Wernli, \emph{Notes on {C}hern-{S}imons perturbation theory}, Rev. Math. Phys. \textbf{34} (2022), no.~3, Paper No. 2230003, 89.

\bibitem[Wol11]{Wol11}
Joseph~A. Wolf, \emph{Spaces of constant curvature}, sixth ed., AMS Chelsea Publishing, Providence, RI, 2011.

\end{thebibliography}

\ 

\noindent
Hisatoshi Kodani kodani@imi.kyushu-u.ac.jp \\ 
Institute of Mathematics for Industry,
Kyushu University,
744, Motooka, Nishi-ku, Fukuoka, 819-0395,
Japan

\

\noindent
Tadayuki Watanabe tadayuki.watanabe@math.kyoto-u.ac.jp \\
Department of Mathematics, Faculty of Science, Kyoto University, 
Kitashirakawa, Oiwake-cho, Sakyo-ku, Kyoto 606-8502, Japan

\end{document}